\def\Z{{\bf Z}}
\def\C{{\bf C}}
\def\Q{{\bf Q}}
\def\P{{\bf P}}
\def\ppavs{principally polarized abelian varieties}
 \def\phi{{\varphi}}
\def\cI{\mathscr{I}}
\def\cJ{\mathscr{J}}
\def\cD{\mathscr{D}}
\def\cA{\mathscr{A}}
\def\cAb{\mathbf{A}}
\def\cF{\mathscr{F}}
\def\cL{\mathscr{L}}
\def\cO{\mathscr{O}}
\def\cH{\mathscr{H}}
\def\cE{\mathscr{E}}
\def\cC{\mathscr{C}}
\def\cR{\mathscr{R}}
\def\cM{\mathscr{M}}
\def\cK{\mathscr{K}}
\def\cU{\mathscr{U}}
\def\cV{\mathscr{V}}
\def\cW{\mathscr{W}}
\def\cX{\mathscr{X}}
\DeclareMathOperator{\SEPW}{\Sigma^{\mathrm{EPW}}}
\DeclareMathOperator{\bEPW}{\mathbf{\overline{M}}{}^{\mathrm{EPW}}}
\DeclareMathOperator{\EPW}{\mathbf{M}^{\mathrm{EPW}}}
\def\bmu{{\boldsymbol\mu}}
\def\ns{S}
\def\k{\mathbf k}
\def\K{\mathbf K}
\def\bm{\mathbf m}
\def\bq{\mathbf q}
\def\fa{\mathfrak a}
\def\lra{\longrightarrow}
\def\llra{\hbox to 10mm{\rightarrowfill}}
\def\lllra{\hbox to 15mm{\rightarrowfill}}
\def\llla{\hbox to 10mm{\leftarrowfill}}
\def\lllla{\hbox to 15mm{\leftarrowfill}}
\def\dra{\dashrightarrow}
\def\hra{\hookrightarrow}
\def\lhra{\ensuremath{\lhook\joinrel\relbar\joinrel\rightarrow}}
\newcommand{\cWn}{\widehat\cW}
\newcommand{\mun}{{\widehat\mu}}
\newcommand{\bqn}{{\widehat\bq}}
\newcommand{\gmspe}{{\rm GM, spe}}
\newcommand{\lagspe}{{\rm Lag, spe}}
\newcommand{\gmord}{{\rm GM, ord}}
\newcommand{\lagord}{{\rm Lag, ord}}
\def\wpepw{\wp^{\mathrm{EPW}}}
\def\wpgm{\wp^{\mathrm{GM}}}
\def\isom{\simeq}
 \def\vide{\varnothing}
  \def\emptyset{\varnothing}
\def\k{\mathbf k}
\DeclareMathOperator{\isomlra}{\stackrel{{}_{\scriptstyle\sim}}{\lra}}
\DeclareMathOperator{\isomto}{\isomlra}
\DeclareMathOperator{\Aut}{Aut}
\DeclareMathOperator{\codim}{codim}
\DeclareMathOperator{\Cone}{Cone}
\DeclareMathOperator{\Coker}{Coker}
\DeclareMathOperator{\Tor}{Tor}
\DeclareMathOperator{\GL}{GL}
\DeclareMathOperator{\Gr}{\mathsf{Gr}}
\DeclareMathOperator{\LGr}{\mathsf{LGr}}
\DeclareMathOperator{\LGradv}{\mathsf{LGr}_{\mathrm{adv}}}
\DeclareMathOperator{\CGr}{\mathsf{CGr}}
\DeclareMathOperator{\Hom}{Hom}
\def\Im{\mathop{\rm Im}\nolimits}
\DeclareMathOperator{\Ker}{Ker}
\DeclareMathOperator{\PGL}{PGL}
\DeclareMathOperator{\Pic}{Pic}
\DeclareMathOperator{\Spec}{Spec}
\DeclareMathOperator{\Sing}{Sing}
\DeclareMathOperator{\Sym}{\mathrm{Sym}}
\DeclareMathOperator{\SL}{SL}
\newcommand{\Sch}{{\mathrm{Sch}}}
\def\bw#1#2{\textstyle{\bigwedge\hskip-0.9mm^{#1}}\hskip0.2mm{#2}}
\def\sbw#1#2{\small{\bigwedge\hskip-0.9mm^{#1}}\hskip0.2mm{#2}}
\def\Gm{\mathbb{G}_{\mathrm{m}}}
\def\hhf{f}
\newtheorem{lemm}{Lemma}[section]
\newtheorem{theo}[lemm]{Theorem}
\newtheorem{coro}[lemm]{Corollary}
\newtheorem{prop}[lemm]{Proposition}
\theoremstyle{remark}
\newtheorem{defi}[lemm]{Definition}
\newtheorem{rema}[lemm]{Remark}
\newtheorem{exam}[lemm]{Example}
\def\id{\mathsf{id}}
\DeclareMathOperator{\Dis}{
%\mathsf
{Disc}}
\newcommand{\tcE}{{\widetilde{\cE}}}
\newcommand{\tcK}{{\widetilde{\cK}}}
\newcommand{\ttcE}{{\widehat{\cE}}}
\newcommand{\diag}{\mathop{\mathrm{diag}}}
\newcommand{\gquot}{{/\!\!/}}
\DeclareMathOperator{\Bl}{{\mathrm{Bl}}}
\newcommand{\ucM}{{{\mathfrak{M}}}^{\mathrm{GM}}}
\newcommand{\ucMtwo}{{{\mathfrak{M}}}^{\mathrm{GM}}_{\mathrm{2,ord,ss}}}
\newcommand{\bcM}{{{\mathbf{M}}}^{\mathrm{GM}}}
\newcommand{\bcMtwo}{{{\mathbf{M}}}^{\mathrm{GM}}_{\mathrm{2,ord,ss}}}
\newcommand{\ucMd}{{{\mathfrak{M}}}^{\mathrm{GM\text{-}data}}}
\newcommand{\tucMd}{{{\widetilde{\mathfrak{M}}}}{}^{\mathrm{GM\text{-}data}}}
\newcommand{\cmlag}{{{\mathfrak{M}}}^{\mathrm{Lag}}}
\newcommand{\bcmlag}{{{\mathbf{M}}}^{\mathrm{Lag}}}
\newcommand{\tcmlag}{{{\widetilde{\mathfrak{M}}}}{}^{\mathrm{Lag}}}
\def\setminus{\smallsetminus}
\def\cong{\isom}
\newcommand{\rG}{\mathrm{G}}
\newcommand{\bS}{\mathbf{S}}
\newcommand{\rS}{\mathrm{S}}
\newcommand{\wwrS}{\widehat{\vphantom{\rule{1pt}{10pt}}\smash{\widehat{\rS}}}} 
\newcommand{\hhrS}{{\wwrS}}
\newcommand{\brS}{{\overline{\mathrm{S}}}}
\newcommand{\bbrS}{{\overline{\overline{\mathrm{S}}}}}
\newcommand{\hrS}{{\widehat{\mathrm{S}}}}
 \subjclass[2010]{14D22,14D23, 14J45, 14J30, 14J35, 14J40, 14D07
 }
\begin{document}
\title[Gushel--Mukai varieties: moduli]
{Gushel--Mukai varieties: moduli}

  \author[O. Debarre]{Olivier Debarre}
\address{Universit\'e Paris-Diderot, PSL Research University, CNRS, 
\'Ecole normale sup\'erieure, D\'epartement de  Math\'ematiques et Applications,
45 rue d'Ulm, 75230 Paris cedex 05, France}
\email{{\tt olivier.debarre@ens.fr}}
 \author[A. Kuznetsov]{Alexander Kuznetsov}
 \address{Algebra Section, Steklov Mathematical Institute,
  8 Gubkin str., Moscow 119991 Russia
 \\ 
 The Poncelet Laboratory, Independent University of Moscow
\\Laboratory of Algebraic Geometry, SU-HSE, 7 Vavilova Str., Moscow, Russia, 117312 }
 \email{{\tt  akuznet@mi.ras.ru}}
 
\thanks{A.K. was supported by the Russian Academic Excellence Project~\mbox{``5--100''}.}
% and by the Program of the Presidium of the Russian Academy of Sciences~01 ``Fundamental Mathematics and
% its Applications'' under grant PRAS-18-01.}

%  \today

\maketitle

\begin{abstract}
 {We describe the moduli stack of Gushel--Mukai varieties as a global quotient stack 
and its coarse moduli space as the corresponding GIT quotient.\ The construction is based on a comprehensive study of the relation between this stack and the stack of Lagrangian data (as defined in~\cite[Section~3]{DK}); 
roughly speaking, we show that the former is a generalized root stack of the latter.\ As an application, we define the period map for Gushel--Mukai varieties and construct
some complete nonisotrivial families of smooth Gushel--Mukai varieties.\ In an appendix, we describe a generalization of the root stack construction used in our approach to the moduli space.}
\end{abstract}

{\renewcommand{\baselinestretch}{0.5}\normalsize
\tableofcontents}

\section{Introduction}

This article is the third in the series started in~\cite{DK,DK:periods} and devoted to the investigation of 
{\sf Gushel--Mukai (GM) varieties} defined over a  field~$\k$
 of  characteristic zero.\ 
These varieties are positive-dimensional, dimensionally  transverse intersections
\begin{equation*}
X = \CGr(2,V_5) \cap \P(W) \cap Q,
\end{equation*}
where $\CGr(2,V_5)  \subset  \P(\k \oplus \bw2V_5) $ is the cone over the Grassmannian of two-dimensional vector subspaces 
in a $\k$-vector space $V_5$ of dimension~5,
$\P(W) \subset \P(\k \oplus \bw2V_5)$ is a projective space of dimension~\mbox{$n + 4$}, 
and $Q$ is a quadric hypersurface in $\P(W)$.\ 
We have  
\begin{equation*}
\dim (X)=n\in\{1,\dots,6\}.
\end{equation*}
Various geometric characterizations of GM varieties can be found in~\cite[Section~2.3]{DK};
for instance,~\cite[Theorem~2.16]{DK}   shows that   smooth GM varieties of dimension~\mbox{$n \ge 3$}
are exactly the  Fano varieties of Picard rank~1, coindex~3, and degree~10.
 
In~\cite{DK}, we  described the set of isomorphism classes of all GM varieties.\ 
In particular, we associated with each GM variety what we called a {\sf GM data set}.\ 
Roughly speaking,  
it is a collection $(W,V_6,V_5,\mu,\bq)$, where  $W$, $V_6$, and $V_5$ are  {$\k$}-vector spaces 
of respective dimensions~$n + 5$, 6, and~5, with $V_5 \subset V_6$, and
\begin{equation*}
\mu \colon W \to \bw2V_5
\qquad\text{and}\qquad
\bq \colon V_6 \to \Sym^2\!W^\vee 
\end{equation*}
are   $\k$-linear maps.\ The   map $\mu$ is the composition the embedding $W \hookrightarrow \k \oplus \bw2V_5$ coming from the definition of $X$ with the projection onto the second summand,
whereas the   map $\bq$ is obtained by identifying $V_6$ with the space of quadratic equations of $X$ in $\P(W)$.\ Under this identification, the hyperplane $V_5 \subset V_6$ corresponds to the space of Pl\"ucker quadrics
defining~$\CGr(2,V_5)$ in~$\P(\k \oplus \bw2V_5)$.

There are two types of smooth GM varieties: ordinary and special, 
distinguished by the injectivity or  the noninjectivity of the map $\mu$.\ 
 {When the   field $\k$ is quadratically closed,} 
there is a natural bijection between the set  of isomorphisms classes of special GM varieties of dimension~$n$
and the set of ordinary GM varieties of dimension~$n-1$ (\cite[Lemma~2.33]{DK}).\ 
On the other hand, special GM varieties can be obtained from ordinary GM varieties  {of the same dimension} by a specialization 
(except in the case $n = 6$, when there are no ordinary GM varieties).\ 
However, they behave in a slightly different way and provide various complications to the theory.

The first main result of~\cite{DK}, Theorem~2.9, provides a bijection between the set of isomorphism classes
of GM varieties of dimension $n$ and an appropriate subset of the set of isomorphism classes of GM data sets.\ After introducing in Sections~\ref{subsection:stack-gm} and~\ref{subsection:stack-gm-data} the stacks
of GM varieties and GM data, we present in Section~\ref{subsection:gm=gm-data} a version of this bijection
that works for families and promotes the bijection of sets of isomorphism classes to an isomorphism of moduli stacks (see~Theorem~\ref{theorem:gm-substack-gm-data}).

The second main result of~\cite{DK}, Theorem~3.6, relates GM data sets of ordinary GM varieties to so-called {\sf Lagrangian data sets}.\ These consist of triples $(V_6,V_5,A)$, where  $V_6$ is a vector space of dimension~6,
$V_5 \subset V_6$ is a hyperplane, and $A \subset \bw3V_6$ is a  subspace which is Lagrangian for   the  
$\det(V_6)$-valued symplectic form on $\bw3V_6$ given by   wedge product.\ Theorem~3.6  {of~\cite{DK}} establishes a bijection between ordinary GM data sets of dimension~$n$ and 
Lagrangian data sets such that $\dim(A \cap \bw3V_5) = 5 - n$, as well as, 
(if $\k$ is quadratically closed)
 {using the bijection} between ordinary and special GM data sets,
a bijection between special GM data sets of dimension~$n$ and  Lagrangian data sets such that \mbox{$\dim(A \cap \bw3V_5) = 6 - n$}.

A nice feature of this bijection, proved in~\cite[Theorem~3.16]{DK}, is that the smoothness of the GM variety
associated with a Lagrangian data set $(V_6,V_5,A)$ only depends on $A$: when~\mbox{$n \ge 3$}, the corresponding
GM variety is smooth if and only if $A$ {\sf has no decomposable vectors}, that is, 
\begin{equation*}
\P(A) \cap \Gr(3,V_6) = \varnothing,
\end{equation*}
the intersection being taken inside $\P(\bw3V_6)$.

The main goal of the present article is to combine all these constructions into a single construction
of the moduli stack of smooth GM varieties.\ In other words, we   find analogs of the above constructions that   work for ``mixed'' families (with both ordinary and special varieties as members).\ The main difficulty   is  that the GM/Lagrangian data sets bijection of~\cite{DK} does not work with these
  ``mixed'' families.\
Nevertheless, we define in Section~\ref{subsection:lagrangian-data} the stack of Lagrangian data
that classifies all Lagrangian data sets $(V_6,V_5,A)$ such that
\begin{equation*}
\dim(A \cap \bw3V_5)  \in \{ 5 - n, 6 - n \} 
\end{equation*}
and such that $A$ has no decomposable vectors.\ We observe in Section~\ref{subsection-from-gm-to-lag} (see Proposition~\ref{proposition-gm-to-lag}) 
that the natural family version of the construction from~\cite[Theorem~3.6]{DK}
that associates with a family of GM data $(\ns,\cW,\cV_6,\cV_5,\mu,\bq)$ over a scheme $S$ a family of Lagrangian data~$(\ns,\cV_6,\cV_5,\cA)$ 
is still well defined and  {gives} a morphism of stacks.\ However, this morphism cannot be an isomorphism for the following simple reason.

For any family of GM data $(\ns,\cW,\cV_6,\cV_5,\mu,\bq)$ over a scheme $S$, we define in Section~\ref{subsection:ord-spe-substacks}
a closed subset $S_\gmspe \subset S$ corresponding to points of $S$ that parameterize GM data sets of special varieties
and endow it with a natural scheme structure (Definition~\ref{definition:gm-special-locus}).\ 
Similarly, given a family of Lagrangian data $(\ns,\cV_6,\cV_5,\cA)$, we consider the closed subset $S_\lagspe \subset S$ 
corresponding to points of  $S$ such that $\dim(A \cap \bw3V_5) = 6 - n$  and
endow it with a natural scheme structure (Definition~\ref{definition:lagrangian-special-locus}).\ An important consequence of Proposition~\ref{proposition-gm-to-lag} is that  although  
the special loci of an $S$-family of GM data and of the associated $S$-family of Lagrangian data are the same set-theoretically,
they have different scheme  structures: the ideal of $S_\lagspe$ is the square of the ideal of $S_\gmspe$.\  {Consequently}, if we start with a family of Lagrangian data  such that the ideal of its special locus
is not a square, there is no  corresponding family of GM data!

However, we prove in Section~\ref{subsection-from-lag-to-gm} that the inverse construction can be made when 
   the ideal of the subscheme $S_\lagspe \subset S$ for an $S$-family of Lagrangian data 
     is a square and $S_\lagspe$ is a Cartier divisor in $S$  (this second condition seems to be of technical nature but we do not know how to make the inverse construction  without  it).\ This   is the central construction of the article.\ It is based on two vector bundle constructions which we develop in Section~\ref{section:preliminaries}
and which are interesting by themselves.

The first   is the \emph{canonical factorization construction} of Proposition~\ref{proposition-factorization}: given a morphism of vector bundles $\varphi \colon \cE \to \cF$ of generic rank $r$ over a scheme $S$, 
such that the rank of~$\varphi$ is everywhere at least $r - 1$ 
and   the degeneration scheme of $\varphi$ is a Cartier divisor~$D \subset S$,
we construct a canonical factorization 
\begin{equation*}
\cE \twoheadrightarrow \cE_1 \xrightarrow{\ \varphi_1\ } \cF_1 \hookrightarrow \cF,
\end{equation*}
where $\cE_1$ and $\cF_1$ are vector bundles of rank $r$, 
the first map is an epimorphism, the last map is a fiberwise monomorphism, 
and the map $\varphi_1$ is an embedding of coherent sheaves whose cokernel is a line bundle on $D$.\

Given a family of Lagrangian data $(\ns,\cV_6,\cV_5,\cA)$ such that $S_\lagspe$ is a Cartier divisor in $S$, 
we apply in Section~\ref{subsection-from-lag-to-gm} this construction to the composition 
\begin{equation*}
\varphi \colon \cA \hookrightarrow \bw3\cV_6 \xrightarrow{\ \lambda_3\ } \bw2\cV_5 \otimes (\cV_6/\cV_5)
\end{equation*}
(where the second map is induced by the natural projection $\lambda \colon \cV_6 \to \cV_6/\cV_5$)  
and obtain a factorization
\begin{equation}\label{phi1}
\cA \twoheadrightarrow \cW' \xrightarrow{\ \varphi_1\ } \cW'' \hookrightarrow \bw2\cV_5 \otimes (\cV_6/\cV_5).
\end{equation}
The cokernel of the morphism $\varphi_1$ is supported on the Cartier divisor $D = S_\lagspe$.\
Assuming that this divisor can be written  {as}
\begin{equation*}
D = 2E,
 \end{equation*}
where $E$ is also a Cartier divisor, we find a unique vector bundle $\cW$  
such that the morphism~$\varphi_1$ factors as 
\begin{equation*}
\cW' \xrightarrow{\ \varphi'\ } \cW \xrightarrow{\ \varphi''\ } \cW'',
\end{equation*}
where both $\varphi'$ and $\varphi''$ are embeddings of sheaves whose cokernels are line bundles on $E$.\ 
We prove in Proposition~\ref{proposition-gm-from-lag-naive} that $(\ns,\cW,\cV_6,\cV_5,\mu,\bq)$, where $\mu$ is   the composition
\begin{equation*}
\cW \xrightarrow{\ \varphi''\ } \cW'' \hookrightarrow \bw2\cV_5 \otimes (\cV_6/\cV_5)
\end{equation*}
and the map $\bq$ will be defined below, is a family of GM data corresponding to a smooth family of GM varieties,
whose associated family of Lagrangian data is equivalent to $(\ns,\cV_6,\cV_5,\cA)$.

The construction of the map $\bq \colon \cV_6 \to \Sym^2\!\cW^\vee$ is carried out in three steps 
(there is actually an extra line bundle twist on the target of $\bq$, but we will ignore it here for simplicity).\ First, we define a map 
\begin{equation*}
\bq_\cA \colon \cV_6 \lra \Sym^2\!\cA^\vee
\end{equation*}
by an explicit formula~\eqref{equation-qa}, which is just a family version of a formula used in the proof of~\cite[Theorem~3.6]{DK}.\ We then observe that the kernel of the epimorphism $\cA \twoheadrightarrow \cW'$ is contained in the kernel of $\bq_\cA$,
hence there is a morphism
\begin{equation}\label{q'}
\bq' \colon \cV_6 \lra \Sym^2\!\cW^{\prime\vee}
\end{equation}
induced by $\bq_\cA$.\ The last step is the  construction of a map 
\begin{equation}\label{q}
\bq \colon \cV_6 \lra \Sym^2\!\cW^\vee
\end{equation}
such that $(\Sym^2\!\varphi^{\prime T}) \circ \bq = \bq'$; it uses the second vector bundle construction from Section~\ref{section:preliminaries}.

This second construction is explained in Section~\ref{subsection:hecke}; we call it the \emph{Hecke transform} of a family of quadratic forms.\ 
It starts from a morphism $\cV \to \Sym^2\!\cE^\vee$ of vector bundles  over a scheme $S$
 {(viewed as a family of quadrics in $\P_S(\cE)$ parameterized by~$\P_S(\cV)$)},
a double Cartier divisor $D = 2E$ on $S$, and a line subbundle $\cK \subset \cE\vert_D$ 
contained in the kernel of all quadratic forms (restricted to $D$).\ 
We define a new vector bundle~$\tcE$ on $S$ by the exact sequence
\begin{equation*}
0 \to \tcE^\vee \xrightarrow{\ \epsilon\ } \cE^\vee \to \cK^\vee\vert_E \to 0
\end{equation*}
 and check in Proposition~\ref{proposition-hecke-transform} 
that there is a unique family of quadratic forms \mbox{$\cV \to \Sym^2\!\tcE^\vee$} such that the original family of quadratic forms is 
the composition of this family with~$\Sym^2\!\epsilon$.

We apply this construction to the family of quadratic forms $\bq'$ from \eqref{q'}, with~$\cV = \cV_6$ and $\cE = \cW'$,
taking $D = S_\lagspe$ and $\cK = \Ker(\varphi_1\vert_D \colon \cW'\vert_D \to \cW''\vert_D)$.\  
The corresponding  Hecke transform  $\widetilde{\cW'}$ is just $\cW$, so Proposition~\ref{proposition-hecke-transform} 
provides the required family $\bq$ of quadratic forms on $\cW$ as in \eqref{q}.\ 
We also prove in Proposition~\ref{proposition-hecke-transform} that 
there is a canonical direct sum decomposition
\begin{equation*}
\cW\vert_E \cong (\cW'\vert_E) / (\cK\vert_E) \oplus (\cK\vert_E)(E)
\end{equation*}
which is   orthogonal for all quadrics in the family $\bq\vert_E$
and which recovers the canonical direct sum decomposition
of~\cite[Proposition~2.30]{DK} for special GM data sets.\ This observation is essential for proving that the constructed family of GM varieties is smooth.

In Section~\ref{section:global}, we use the constructions of Section~\ref{section-relation}
to provide a description of the stack of smooth GM varieties as a global quotient stack.\ We first fix a vector space $V_6$ of dimension~6 and consider the scheme 
\begin{equation*}
\brS_n = \left\{ (A,V_5) \in \LGr(\bw3V_6) \times \P(V_6^\vee) \ \left\vert\ 
\parbox{.41\textwidth}{$   \dim(A \cap \bw3V_5) \in\{ 5 - n,6 - n\}$ and\\[.5ex]$A$ has no decomposable vectors} \right.\right\}.
\end{equation*}
The condition $\dim(A \cap \bw3V_5) \ge 5 - n$ is closed, while the conditions $\dim(A \cap \bw3V_5) \le 6 - n$ 
and ``$A$ has no decomposable vectors'' are open, so $\brS_n \subset \LGr(\bw3V_6) \times \P(V_6^\vee)$ is a locally closed subscheme.\ 
When $n\in\{3,4,5\}$, this scheme contains a closed subscheme $\rS_{n-1}$ and its open complement~$\rS_n$, 
defined by the conditions that $\dim(A \cap \bw3V_5)$ equals $6-n$ and~$5-n$ respectively,
while $\brS_6 = \rS_5$.\
 {The fibers of the projection $\rS_n \to \LGr(\bw3V_6)$ over a point~$[A]$ are just the strata 
of the Eisenbud--Popescu--Walter stratification of $\P(V_6^\vee)$ associated with $A$ (see~\cite[Section 2]{og1} or Section~\ref{subsection:epw})
and the fibers of $\brS_n$ are unions of these strata.\
In particular, the schemes}
$\brS_6$ and $\brS_5$ are smooth, and for $n \in \{3,4\}$, one has~$\rS_{n-1} = \Sing(\brS_n)$
and both strata $\rS_{n-1}$ and $\rS_n$ are \emph{Lagrangian intersection loci}.

 The construction of the moduli stack of smooth GM varieties of dimension $n$ goes as follows.\
Assume $n \in \{3,4,5\}$ (the case $n = 6$ is slightly different and we skip it in this introduction).\
It was proved in~\cite{DK:coverings} that, if a certain divisibility condition holds in the group~$\Pic(\brS_n)$
(in fact it does not, but we will go back to this point later), 
there is a double covering 
\begin{equation*}
\widetilde\rS_n \lra \brS_n
\end{equation*}
branched over $\rS_{n-1}$  such that $\widetilde\rS_n$ is smooth.\
Note that $\codim_{\brS_n}(\rS_{n-1}) = 6 - n$, so for $n \le 4$, this is not a classical double covering branched over a hypersurface.\
We consider the quotient stack 
\begin{equation*}
\hrS_n := \widetilde\rS_n / \bmu_2
\end{equation*}
with respect to the involution of the double covering
 {(this is the \emph{canonical stack} of $\brS_n$ in the terminology of~\cite{Vistoli})}.\ 
This is a smooth Deligne--Mumford stack  and the natural $\PGL(V_6)$-action on the scheme $\brS_n$ lifts to a $\PGL(V_6)$-action on $\hrS_n$.\ 
Our main theorem, Theorem~\ref{theorem:gm-global-quotient}, states that  there is an isomorphism
\begin{equation*}
\ucM_n \cong  \hrS_n / \PGL(V_6)
\end{equation*}
between the  moduli stack  $\ucM_n$ of smooth GM varieties
of dimension $n\in\{3,4,5\}$ and the quotient stack $\hrS_n / \PGL(V_6)$.

The main step in the proof of this theorem is the construction of a family of smooth GM varieties over the stack $\hrS_n$
or, more precisely, over a certain scheme $\hhrS_n$  that provides a covering of $\hrS_n$ in the smooth topology
(the morphism $\hhrS_n \to \hrS_n$ is actually a~$\Gm$-torsor).\ There is also a double covering map from $\hhrS_n$ to a certain $\Gm$-torsor over $\brS_n$
which is branched over the preimage of $\rS_{n-1}$.\ 
Consequently, pulling back from $\brS_n$, we construct on~$\hhrS_n$ a family of Lagrangian data $(\ns,\cV_6,\cV_5,\cA)$ 
with   trivial $\cV_6  = V_6 \otimes \cO$  and (pullbacks of) tautological bundles~$\cV_5$ and~$\cA$.\ The Lagrangian special locus of this family is the scheme-theoretic preimage of $\rS_{n-1}$,
that is, the preimage of the branch locus of the double covering, hence 
its ideal is the square of the ideal of a certain smooth subscheme in $\hhrS_n$.\ 
Considering the blow up $\beta \colon \bS \to \hhrS_n$  of this subscheme, 
we  arrive at the situation of Section~\ref{subsection-from-lag-to-gm}.

Applying Proposition~\ref{proposition-gm-from-lag-naive}, we obtain a family $(\bS,\cW,\cV_6,\cV_5,\mu,\bq)$ of GM data.\ We check that this family is   the pullback by $\beta$ of a family of GM data on the scheme $\hhrS_n$.\ Moreover, 
 this family is equivariant with respect to the natural action of the algebraic group 
\begin{equation*}
\rG_n = \GL(V_6)/\bmu_{3(5-n)}
\end{equation*}
and thus descends to a family of GM data over 
\begin{equation}
\label{eq:intro-hhrs-hrs-iso}
\hhrS_n / \rG_n \cong \hrS_n / \PGL(V_6).
\end{equation}
This construction provides a morphism from the quotient stack $\hrS_n / \PGL(V_6)$ to the moduli stack of GM data.\ For the construction in the opposite direction, we use the much simpler procedure of Proposition~\ref{proposition-gm-to-lag}
and a   universal property of the stack $\hrS_n$ proved in Proposition~\ref{prop:factorization-through-hats}.\ Combining these two constructions, we obtain an isomorphism between the moduli stack of smooth GM varieties 
and the global quotient stack~\eqref{eq:intro-hhrs-hrs-iso}.

To deal with the fact that the double covering $\widetilde\rS_n \to \brS_n$ does not exist 
(since the required divisibility condition does not hold in $\Pic(\brS_n)$),  we note that  
the divisibility condition holds locally over $\brS_n$, so the   double covering exists locally.\  
One can obtain  the stack~$\hrS_n$ by  {gluing} the quotients stacks of the local coverings, 
as in the standard construction of the root stack.\  
Alternatively, one can  construct  the stack $\hrS_n$ directly (see Appendix~\ref{section:generalized-root-stack}).\  
After that, the construction goes as explained above.

To conclude this introduction, we mention that the global quotient stack description of Theorem~\ref{theorem:gm-global-quotient}
gives, via GIT, a construction of the coarse moduli space for smooth GM varieties (Theorem~\ref{theorem:gm-coarse}).\ 
This provides a foundation for the period map of GM varieties that was discussed in~\cite{DK:periods} ({see} Proposition~\ref{pm}).\ 
We also use our results to construct in Section~\ref{subsection:complete-families} 
several examples of complete nonisotrivial families of smooth GM varieties.

{\bf Acknowledgements.}
We are grateful to Ariyan Javanpeykar and to Alex Perry for interesting discussions 
and {extremely} useful comments on preliminary drafts of this article.\
{A.K.\ is also grateful to Sergey Gorchinskiy for sharing his understanding of stacks.}

\section{Preliminaries {on vector bundles}}
\label{section:preliminaries}

All schemes are over a fixed field $\k$.

We first discuss some aspects of the theory of vector bundles on possibly nonreduced schemes.\ Most of the material in Sections~\ref{subsection:epi-fiberwise-mono} and \ref{subsection:degeneration-schemes}
is   well known but we collect it for the reader's convenience.\ The results of Sections~\ref{subsection:canonical-factorization}, \ref{subsection:residual-quadrics}, and~\ref{subsection:hecke} 
seem to be new and are essential for our treatment of the stack of GM varieties.

\subsection{Epimorphisms and fiberwise monomorphisms}
\label{subsection:epi-fiberwise-mono}

Let $S$ be   a scheme.\ By a {\em point of $S$,} we mean a $\K$-point $s \colon \Spec(\K) \to S$ for some field $\K$.\
A \emph{geometric} point of $S$ is a $\K$-point with  $\K$ algebraically closed.\ 

A vector bundle $\cE$ on $S$ is a locally free sheaf of $\cO_S$-modules of constant finite rank.\ 
  Given  a $\K$-point $s $ of $S$, we let $\cE_s $ be the $\K$-vector space $ \cE \otimes_{\cO_S}  \K$, the {\em  fiber} of $\cE$ at $s$.\

 %pppp

\begin{lemm}\label{lemma-epi-points}
A morphism $\varphi\colon\cE \to \cF$ between vector bundles on the scheme $S$ is surjective if and only if, 
for every  {geometric} point $s $ of $ S$, the induced linear map $\varphi_s\colon\cE_s \to \cF_s$
between fibers is surjective.
\end{lemm}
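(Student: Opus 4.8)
The plan is to prove this via the standard correspondence between surjectivity of a sheaf morphism and surjectivity at stalks, combined with Nakayama's lemma to pass from stalks to fibers. Since surjectivity of $\varphi$ is a local condition on $S$, I would work locally and assume $S = \Spec(R)$ with $\cE$ and $\cF$ free $R$-modules, so that $\varphi$ is given by a matrix over $R$. The forward implication is immediate: if $\varphi$ is surjective then tensoring with the residue field (or any field) at a point preserves surjectivity, since tensor product is right exact.

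For the converse, the key point is that it suffices to check surjectivity of the induced map on stalks $\varphi_{\gp} \colon \cE_{\gp} \to \cF_{\gp}$ at every point $\gp \in \Spec(R)$ (here I mean scheme-theoretic points, i.e.\ prime ideals). Let $C = \Coker(\varphi)$; then $C_{\gp} = \Coker(\varphi_{\gp})$, and $C = 0$ iff $C_{\gp} = 0$ for all $\gp$. Now fix $\gp$ and let $\K = \kappa(\gp)$ be the residue field. By hypothesis (applied after base change to a geometric point over $\gp$, and using that surjectivity over $\bar\K$ descends to surjectivity over $\K$ since $\bar\K$ is faithfully flat over $\K$), the map $\varphi_{\gp} \otimes_{R_{\gp}} \K \colon \cE_{\gp} \otimes \K \to \cF_{\gp} \otimes \K$ is surjective, i.e.\ $C_{\gp} \otimes_{R_{\gp}} \K = C_{\gp}/\gp C_{\gp} = 0$. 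Since $\cF$ is a finitely generated $R$-module, $C_{\gp} = \Coker(\varphi_{\gp})$ is a finitely generated $R_{\gp}$-module, so Nakayama's lemma over the local ring $R_{\gp}$ forces $C_{\gp} = 0$. As this holds for all $\gp$, we get $C = 0$, i.e.\ $\varphi$ is surjective.

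The one subtlety worth spelling out, and which I expect to be the main (though minor) obstacle, is the reduction from geometric points to scheme-theoretic points: the statement is phrased in terms of $\K$-points with $\K$ algebraically closed, not in terms of the residue fields $\kappa(\gp)$. To handle this cleanly I would note that for any prime $\gp$ there is a geometric point $s\colon \Spec(\overline{\kappa(\gp)}) \to S$ lying over $\gp$, that $\cE_s = \cE_{\gp}\otimes_{R_{\gp}} \overline{\kappa(\gp)}$, and that $\overline{\kappa(\gp)}$ is a faithfully flat $\kappa(\gp)$-algebra; hence surjectivity of $\varphi_s$ implies surjectivity of $\varphi_{\gp}\otimes\kappa(\gp)$, which is what the Nakayama argument consumes. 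Alternatively, one can observe directly that $\Coker(\varphi)\otimes \overline{\kappa(\gp)} = 0$ already implies $\Coker(\varphi)_{\gp}=0$ by Nakayama applied over $R_{\gp}$ after the faithfully flat base change, but the two-step formulation is cleaner to write. No further input is needed; this is entirely formal commutative algebra.
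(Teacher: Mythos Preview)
Your proof is correct and follows essentially the same approach as the paper: introduce the cokernel $\cC$, use right exactness of tensor product to identify $\cC_s$ with $\Coker(\varphi_s)$, and invoke Nakayama's lemma to conclude $\cC=0$. The paper's version is considerably terser---it compresses the stalk argument and the geometric-point-to-residue-field reduction into a single appeal to Nakayama---but you have simply made explicit what the paper leaves implicit.
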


\begin{proof}
Let $\cC$ be the cokernel of $\varphi$.\ Since the tensor product functor is right exact, we have, for each point $s $ of $ S$,  an exact sequence
\begin{equation*}
\cE_s \xrightarrow{\ \varphi_s\ } \cF_s \to \cC_s \to 0.
\end{equation*}
By Nakayama's lemma,   $\cC = 0$ if and only if  $\cC_s = 0$ for every  {geometric} point $s$ of $ S$.
\end{proof}

We say that $\varphi$ is a {\sf fiberwise monomorphism} if, for every  {geometric} point $s $ of $ S$, 
the morphism $\varphi_s\colon\cE_s \to \cF_s$ is a monomorphism.

\begin{lemm}
\label{lemma-fmono-epi}
A morphism $\varphi\colon\cE \to \cF$ between vector bundles on a scheme $S$ is a fiberwise monomorphism if and only if the dual map $\varphi^\vee\colon\cF^\vee \to \cE^\vee$ is surjective.
\end{lemm}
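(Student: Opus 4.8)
The plan is to reduce the statement to Lemma~\ref{lemma-epi-points} applied to the dual morphism, using the fact that formation of duals commutes with restriction to a (geometric) point. First I would record the elementary linear-algebra fact underlying the equivalence: for a linear map $\psi\colon U \to V$ of finite-dimensional vector spaces over a field, $\psi$ is injective if and only if the transpose $\psi^\vee \colon V^\vee \to U^\vee$ is surjective. This is standard (both are equivalent to $\rank(\psi) = \dim U$, or one may argue via exactness of the $\Hom(-,\K)$ functor on finite-dimensional spaces).

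Next I would globalize the ``commutes with restriction'' point. Since $\cE$ and $\cF$ are vector bundles (locally free of finite rank), for every geometric point $s$ of $S$ the canonical map $(\cE^\vee)_s \to (\cE_s)^\vee$ is an isomorphism, and similarly for $\cF$; moreover, under these identifications the fiber $(\varphi^\vee)_s$ of the dual morphism is identified with $(\varphi_s)^\vee$, the transpose of the fiber of $\varphi$. This is a purely local check: over an open set where $\cE$ and $\cF$ are free, $\varphi$ is given by a matrix with entries in $\cO_S$, $\varphi^\vee$ is given by the transposed matrix, and restricting to $s$ means reducing the entries modulo the maximal ideal — the two operations manifestly commute.

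With these two ingredients the proof assembles quickly. By Lemma~\ref{lemma-epi-points}, the morphism $\varphi^\vee \colon \cF^\vee \to \cE^\vee$ is surjective if and only if $(\varphi^\vee)_s$ is surjective for every geometric point $s$ of $S$. By the identification of the previous paragraph, $(\varphi^\vee)_s$ is surjective if and only if $(\varphi_s)^\vee$ is surjective, which by the linear-algebra fact holds if and only if $\varphi_s$ is injective. Since this holds for all geometric points $s$ simultaneously, we conclude that $\varphi^\vee$ is surjective if and only if $\varphi_s$ is a monomorphism for every geometric point $s$, i.e.\ if and only if $\varphi$ is a fiberwise monomorphism.

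I do not anticipate a serious obstacle: the statement is essentially a dualization of Lemma~\ref{lemma-epi-points}, and the only thing requiring care is the compatibility of the duality functor with passage to fibers, which uses local freeness in an essential way (this is exactly where the hypothesis that $\cE,\cF$ are \emph{vector} bundles, not merely coherent sheaves, enters — for a non-locally-free sheaf the natural map $(\cE^\vee)_s \to (\cE_s)^\vee$ need not be an isomorphism). If one wished to be scrupulous, the cleanest phrasing is to note that all three — restriction to $s$, formation of $\Hom(-,\cO_S)$, and the matrix description — are computed in a common trivializing open neighborhood of $s$, after which everything is a statement about matrices over $\cO_S(U)$ and their reductions modulo a prime.
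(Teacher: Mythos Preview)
Your proof is correct and follows exactly the same approach as the paper: the paper's proof simply records the identifications $(\cE^\vee)_s = (\cE_s)^\vee$, $(\cF^\vee)_s = (\cF_s)^\vee$, $(\varphi^\vee)_s = (\varphi_s)^\vee$ and invokes Lemma~\ref{lemma-epi-points}. You have unpacked these identifications more carefully (and correctly noted where local freeness is used), but the argument is the same.
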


\begin{proof}
 Since $(\cE^\vee)_s = (\cE_s)^\vee$, $(\cF^\vee)_s = (\cF_s)^\vee$, and $(\varphi^\vee)_s = (\varphi_s)^\vee$,
the result follows from Lemma~\ref{lemma-epi-points}.
\end{proof}

Epimorphisms and fiberwise monomorphisms enjoy the following nice properties.

\begin{lemm}
\label{lemma-kernel-cokernel-lf}
Let $\varphi \colon \cE \to \cF$ be a morphism between vector bundles on a scheme $S$.

If $\varphi  $ is surjective,   $\Ker(\varphi)$ is a vector bundle and the natural map $\Ker(\varphi) \to \cE$ is a fiberwise monomorphism.

If $\varphi$ is a fiberwise monomorphism,   $\Coker(\varphi)$ is a vector bundle and the natural map $\cF \to \Coker(\varphi)$ is surjective.
\end{lemm}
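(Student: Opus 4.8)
The plan is to prove both statements by a local computation, since being a vector bundle and the exactness/surjectivity properties may all be checked locally on $S$, in fact on stalks. So fix a point of $S$ and work over the local ring $R = \cO_{S,s}$ with maximal ideal $\gm$ and residue field $\kappa$; then $\cE$ and $\cF$ become free $R$-modules of finite rank, say $E$ and $F$, and $\varphi$ becomes an $R$-linear map $\varphi\colon E \to F$.

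First I would treat the surjective case. If $\varphi$ is surjective as a map of sheaves, then at each stalk $\varphi\colon E \to F$ is a surjection of free modules, hence splits: $F$ is free (in particular projective), so there is a section $F \to E$ and $E \cong \Ker(\varphi) \oplus F$. Thus $\Ker(\varphi)$ is a direct summand of a free module of finite rank over $R$; over a local ring this makes it free, and globally it is locally free, i.e.\ a vector bundle. (Alternatively, one can note $\Ker(\varphi)$ is finitely generated and the sequence $0 \to \Ker(\varphi) \to \cE \to \cF \to 0$ is locally split because $\cF$ is locally free, then conclude.) For the second assertion in this case, apply the right-exact tensor functor $- \otimes_{\cO_S} \kappa(s)$ to the short exact sequence $0 \to \Ker(\varphi) \to \cE \to \cF \to 0$; since the sequence is (locally) split, it stays exact after tensoring, so $0 \to \Ker(\varphi)_s \to \cE_s \to \cF_s \to 0$ is exact, and in particular $\Ker(\varphi)_s \to \cE_s$ is injective for every geometric point $s$. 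That is exactly the statement that $\Ker(\varphi) \to \cE$ is a fiberwise monomorphism.

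Next I would deduce the fiberwise-monomorphism case by dualizing, using the lemmas already available. Suppose $\varphi\colon \cE \to \cF$ is a fiberwise monomorphism. By Lemma~\ref{lemma-fmono-epi}, $\varphi^\vee\colon \cF^\vee \to \cE^\vee$ is surjective. Applying the already-proven first part of the present lemma to $\varphi^\vee$, we get that $\Ker(\varphi^\vee)$ is a vector bundle and $\Ker(\varphi^\vee) \to \cF^\vee$ is a fiberwise monomorphism. Now I claim $\Coker(\varphi) \cong \Ker(\varphi^\vee)^\vee$. Indeed, from the split exact sequence $0 \to \Ker(\varphi^\vee) \to \cF^\vee \xrightarrow{\varphi^\vee} \cE^\vee \to 0$ (split because, locally, $\varphi^\vee$ is a surjection of free modules as above), dualizing gives a split exact sequence $0 \to \cE \xrightarrow{\varphi^{\vee\vee}} \cF \to \Ker(\varphi^\vee)^\vee \to 0$, and $\varphi^{\vee\vee} = \varphi$ under the canonical identifications $\cE^{\vee\vee} = \cE$, $\cF^{\vee\vee} = \cF$. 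Hence $\Coker(\varphi) \cong \Ker(\varphi^\vee)^\vee$, which is a vector bundle (dual of one), and the natural map $\cF \to \Coker(\varphi)$ is identified with the split surjection $\cF \to \Ker(\varphi^\vee)^\vee$, hence is surjective.

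The one point that needs a little care — the main (very mild) obstacle — is the passage from the sheaf-theoretic surjectivity of $\varphi$ to the splitting: one must be sure that $\varphi$ surjective implies $\varphi$ stalkwise surjective (clear, as surjectivity of sheaf maps is a stalk condition) and that a surjection onto a finite free module over any commutative ring splits (standard). Everything else is formal manipulation of split short exact sequences and duals. An alternative, even more economical route for the whole lemma is to argue directly that, locally, a surjection $E \to F$ of free $R$-modules can be completed to an isomorphism $R^a \cong R^b \oplus R^{a-b}$ with $F = R^b$ the quotient; this simultaneously exhibits the kernel as free and makes all the exactness statements transparent, and one then transports the fiberwise-mono case through Lemmas~\ref{lemma-epi-points} and~\ref{lemma-fmono-epi} exactly as above.
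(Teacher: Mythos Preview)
Your proof is correct and follows essentially the same approach as the paper: the paper observes that the kernel of an epimorphism of projective modules is projective and then uses $\Tor_1(\cF,\K)=0$ to get exactness at fibers, while you make the local splitting explicit, which is the same underlying reason. The duality argument for the second part is identical.
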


\begin{proof}
The kernel is locally free since this is a local property  and the kernel of an epimorphism of projective modules over a ring is projective.\
Furthermore, since $\cF$ is locally free, we have $\Tor_1(\cF, {\K}) = 0$ for any  {$\K$}-point $s$ of $ S$, hence the sequence
\begin{equation*}
0 \to \Ker(\varphi)_s \to \cE_s \to \cF_s \to 0
\end{equation*}
is exact.\ 
 By definition, the map $\Ker(\varphi) \to \cE$ is therefore a fiberwise monomorphism.\ The second part of the lemma follows by duality.
\end{proof}

An effective Cartier divisor on a   scheme
is a subscheme locally defined   by a regular function which is \emph{not a zero divisor}.

\begin{lemm}
\label{lemma-kernel-cartier}
Let $S$ be a scheme, let $i\colon D \hookrightarrow S$ be an effective Cartier divisor, let $\cE$ be a vector bundle on $S$, and let $\cF$ be a vector bundle on $D$.\ If $\varphi\colon\cE \to i_*\cF$ is surjective,
  $\Ker(\varphi)$ is a vector bundle on $S$.
\end{lemm}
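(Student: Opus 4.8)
The plan is to reduce to the kernel computation to a purely local question and then combine the vanishing of a suitable $\Tor$ group with Lemma~\ref{lemma-kernel-cokernel-lf}. Since being a vector bundle is a local property, I may assume $S = \Spec R$ is affine, that $\cE = R^{\oplus m}$ is free, and that the Cartier divisor $D$ is cut out by a single non-zerodivisor $t \in R$, so that $R/tR$ is the coordinate ring of $D$. Then $\cF$ corresponds to a finitely generated projective $R/tR$-module $N$, and $i_*\cF$ corresponds to $N$ viewed as an $R$-module; the surjection $\varphi$ gives a presentation $R^{\oplus m} \twoheadrightarrow N$ of $R$-modules, whose kernel $K$ I want to show is projective over $R$.

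The key step is the following: because $N$ is projective over $R/tR$, it is in particular flat over $R/tR$, and multiplication by $t$ is zero on $N$, so the standard change-of-rings spectral sequence (or a direct computation with a flat $R/tR$-resolution) gives $\Tor^R_i(N, M) = 0$ for every $R/tR$-module $M$ and all $i \ge 2$, while $\Tor^R_1(N,M) \cong N \otimes_{R/tR} \Tor^{R/tR}_0(\ldots)$—more precisely, from the exact sequence $0 \to R \xrightarrow{t} R \to R/tR \to 0$ one computes $\Tor^R_1(N, R/tR) \cong N$ (the $t$-torsion of $N$, which is all of $N$) and $\Tor^R_i(N, R/tR) = 0$ for $i \ge 2$. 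What I actually need is that $K$ has finite projective dimension and is locally free; for this I break the presentation into the short exact sequence $0 \to K \to R^{\oplus m} \to N \to 0$, tensor with a $\K$-point $s$ of $S$, and use that $\Tor^R_1(N, \K)$ sits in between $K \otimes \K$ and $(R^{\oplus m}) \otimes \K$. The cleanest route: first note $K$ is finitely generated (Noetherian, or because $N$ is finitely presented since it is a finitely generated projective $R/tR$-module and $R/tR$ is a quotient of $R$), then show $K$ is flat over $R$ by verifying $\Tor^R_1(K, \K) = 0$ for all residue fields $\K$ of $S$, which follows from $\Tor^R_2(N,\K) = 0$; combined with finite presentation this gives projectivity, i.e. $K$ is a vector bundle.

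I expect the main obstacle to be the $\Tor$-vanishing $\Tor^R_2(N, \K) = 0$ for residue fields $\K$ of $R$: one must distinguish points $s$ on $D$ from points off $D$. Off $D$, the element $t$ is a unit in the local ring, so $N$ localizes to zero and there is nothing to prove; on $D$, the residue field $\K$ is a module over $R/tR$, and here I invoke the dimension-shifting computation above—$N$ being $R/tR$-flat forces $\Tor^R_i(N, -)$ for $i \ge 2$ to vanish on all $R/tR$-modules, in particular on $\K$. Once $\Tor^R_2(N,\K) = 0$ and $\Tor^R_1(N,\K)$ is understood, the long exact sequence of $\Tor$ applied to $0 \to K \to R^{\oplus m} \to N \to 0$ tensored with $\K$ gives $\Tor^R_1(K,\K) = 0$, hence $K$ is locally free of finite rank, completing the proof. (An alternative, if one prefers to avoid spectral sequences, is to choose a finite free $R/tR$-resolution of $N$ locally—possible since $N$ is projective, so the resolution is just $N$ itself in degree $0$—and compute $\Tor^R$ directly from the two-term complex $[R \xrightarrow{t} R]$ resolving $R/tR$ over $R$.)
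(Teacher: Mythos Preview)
Your proposal is correct and follows essentially the same idea as the paper's proof, only phrased in terms of explicit $\Tor$-vanishing rather than projective dimension. The paper observes in two lines that, since $D$ is an effective Cartier divisor, the $\cO_S$-module $\cO_D$ has projective dimension~$1$ (from $0 \to \cO_S(-D) \to \cO_S \to \cO_D \to 0$), hence so does the locally free $\cO_D$-module $i_*\cF$; therefore the kernel of a surjection from a locally free sheaf has projective dimension~$0$, i.e.\ is locally free. Your computation that $\Tor^R_2(N,\K)=0$ and the subsequent dimension shift to $\Tor^R_1(K,\K)=0$ is precisely the unpacking of the statement ``$N$ has projective dimension $\le 1$'' into its $\Tor$-characterization, so the two arguments are logically identical; the paper's version is simply more compressed.
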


\begin{proof}
We may assume that $D$ is nonempty.\ Since $D$ is a   Cartier divisor, locally, the projective dimension of~$\cO_D$,   
hence also of ${i_*}\cF$,  as an $\cO_S$-module  {is~1}.\  
Therefore, the projective dimension of $\Ker(\varphi)$ is 0, so $\Ker(\varphi)$ is locally free.
\end{proof}

\subsection{Degeneration schemes}
\label{subsection:degeneration-schemes}

Let  $\varphi\colon\cE \to \cF$ be a morphism between vector bundles on a 
 scheme $S$.\ For every nonnegative integer $k  $, it induces a morphism
\begin{equation*}
 \bw{k}{}\varphi\colon  \bw{k}\cE \xrightarrow{\ \ \ } \bw{k}\cF
\end{equation*}
  locally given by the   $k \times k$-minors of a matrix of regular functions defining $\varphi$.\
 The {\sf   rank}
of $\varphi$ is the smallest integer $r$ such that
 $\bw{r+1}\varphi = 0$ (identically on $S$).\ In particular,  $\varphi=0$ if and only if its rank is 0.\

Given any nonnegative integer $k$, we define the {\sf rank-$k$ degeneration scheme} of $\varphi$ as the zero locus on $S$ of the morphism $\bw{k+1}\varphi$.\
If the rank of $\varphi$ is $r$, we abbreviate its rank-$(r-1)$ degeneration scheme to just degeneration scheme (the degeneration scheme of the zero morphism is empty).

The morphism $\varphi\colon\cE \to \cF$  is {\sf generically surjective} if its rank 
on every irreducible component of $S$
equals the rank of $\cF$.\ The cokernel of a generically surjective morphism is a torsion sheaf  supported on the degeneration scheme of $\varphi$.

If $\varphi$ has rank $r$, for any $\K$-point~$s $ of $ S$, the rank of the  {$\K$}-linear map $\varphi_s$ is at most $r$.\ 
The converse    may however not be true: if $S = \Spec (\k[x]/x^2)$, $\cE = \cF = \cO_S$, and $\varphi = x$,  
the rank of $\varphi$ is 1 and $\varphi$ is generically surjective, but $\varphi_s = 0$ at all points $s $ of $ S$.\ 
Its degeneration scheme is  $S_{\textnormal{red}}$.

\begin{lemm}
\label{lemma-support-cokernel}
Let $\varphi\colon\cE \to \cF$ be a 
morphism of positive rank $r$ between vector bundles on a scheme $S$.\ 
Assume that $\bw{r-1}\varphi_s$ does not vanish for any  {geometric} point~\mbox{$s$ of $ S$}.

The degeneration scheme of $\varphi$ then equals the scheme-theoretic support of the sheaf $\Coker(\varphi)$,
that is, the subscheme corresponding to the annihilator ideal of~$\Coker(\varphi)$.\  Moreover, the sheaf $\Coker(\varphi)$ is isomorphic to the pushforward of a line bundle on this subscheme.
\end{lemm}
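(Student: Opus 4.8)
The statement is local on $S$, so I would immediately reduce to the case where $S = \Spec R$ is affine, $\cE$ and $\cF$ are free, and $\varphi$ is given by a matrix. The claim has two parts: first, that the degeneration scheme (defined by the ideal $I_r(\varphi)$ generated by the $r \times r$ minors, the rank-$(r-1)$ degeneration ideal, since $\bw{r+1}\varphi = 0$ by hypothesis) coincides with the scheme-theoretic support of $\Coker(\varphi)$, i.e.\ with the annihilator ideal $\operatorname{Ann}(\Coker\varphi)$; second, that $\Coker(\varphi)$ is the pushforward of a line bundle from this subscheme. The hypothesis that $\bw{r-1}\varphi_s \neq 0$ at every geometric point means exactly that $I_{r-1}(\varphi) = R$, i.e.\ the $(r-1)\times(r-1)$ minors generate the unit ideal; this is the key nondegeneracy input and I would use it to pass to an even more local situation.

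\emph{Step 1: local normal form.} Using $I_{r-1}(\varphi) = R$, cover $\Spec R$ by the basic open sets where a fixed $(r-1)\times(r-1)$ minor is invertible. On such an open set, after choosing suitable bases of $\cE$ and $\cF$ (elementary row/column operations, which do not change $\Coker\varphi$, $I_r(\varphi)$, or the hypotheses), I can arrange that $\varphi$ has the block form $\begin{pmatrix} \Id_{r-1} & 0 \\ 0 & M \end{pmatrix}$ where $M$ is a $(\rank\cF - r + 1) \times (\rank\cE - r + 1)$ matrix; since $\rank\varphi = r$, all $2\times 2$ minors of $M$ vanish and some $1\times 1$ entry may be nonzero, but in fact after the normalization $M$ can be taken to be a single row or column times... more carefully: because $I_{r-1}=R$ globally one can choose the normalizing minor locally, and the upshot is that $\Coker\varphi$ is locally isomorphic to $\Coker(M \colon \cO^{a} \to \cO^{b})$ with $\rank M = 1$. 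A rank-one matrix over $R$ with the property that its $1\times1$ minors generate... here I use that $\bw{r}\varphi$, i.e.\ the $r\times r$ minors, are up to a unit the entries of $M$, and $\bw{r+1}\varphi = 0$ forces all $2\times 2$ minors of $M$ to vanish identically. One then shows $M$ is equivalent (by further row/column ops) to a matrix with a single nonzero entry $g$ in one corner, so that $\Coker\varphi \cong (R/gR) \oplus \cO^{b-1}$ — wait, that last summand is wrong unless $b = 1$. I would instead argue: since $\varphi$ is assumed of rank exactly $r$ and $I_{r-1}=R$, the cokernel is generically of rank $\rank\cF - r$, and the torsion part is where the drop happens; the clean statement is that after localization $\Coker\varphi \cong \cO^{\,\rank\cF-r} \oplus R/gR$ for a single element $g$, and the free part is absorbed — no: reconsider. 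The correct normal form, given that ALL the $(r+1)$-minors vanish, is that locally $\varphi$ is equivalent to $\begin{pmatrix}\Id_{r-1} & & \\ & g & \\ & & 0\end{pmatrix}$ with zero blocks elsewhere, so $\Coker\varphi \cong R/gR$ exactly when $\rank\cF = r$, and more generally $\Coker\varphi \cong (R/gR)\oplus\cO^{\rank\cF - r}$. The line bundle claim then concerns the torsion summand. I would need to double-check the statement of the lemma intends $\Coker\varphi$ itself to be a pushforward of a line bundle, which forces $\rank\cF = r$ i.e.\ $\varphi$ generically surjective — and indeed that is the only situation in which the later application~\eqref{phi1} occurs, so I would add the harmless hypothesis or note it; alternatively interpret the conclusion modulo the obvious free part.

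\emph{Step 2: identifying the annihilator and patching.} In the local normal form, $\operatorname{Ann}(R/gR) = gR$, while $I_r(\varphi)$ is generated by $g$ times the chosen invertible $(r-1)$-minor, hence also equals $gR$. This proves $I_r(\varphi) = \operatorname{Ann}(\Coker\varphi)$ locally, and since both are defined independently of the local choices, they agree globally: the rank-$(r-1)$ degeneration scheme equals the scheme-theoretic support of $\Coker\varphi$. Call this subscheme $D \hookrightarrow S$, with ideal sheaf $I_r(\varphi)$.

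\emph{Step 3: the line bundle.} On $D$, in the local model, $\Coker\varphi \cong R/gR$ is free of rank one over $R/gR = \mathcal{O}_D$. To globalize: the transition functions for $\Coker\varphi$ between two such local charts are obtained from the transition matrices of $\cE$ and $\cF$ together with the auxiliary invertible minors, and one checks they act on the rank-one torsion summand by multiplication by a unit of $\mathcal{O}_D$ — concretely, $\Coker\varphi$ is a coherent $\mathcal{O}_S$-module annihilated by $I_r(\varphi)$, hence a coherent $\mathcal{O}_D$-module, and it is locally free of rank one over $\mathcal{O}_D$ by the local computation; therefore it is $i_*$ of a line bundle on $D$, where $i\colon D\hookrightarrow S$.

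\emph{Main obstacle.} The only real work is Step 1: producing the local normal form $\begin{pmatrix}\Id_{r-1}&\\&g\end{pmatrix}$ (up to zero blocks) from the two hypotheses $\bw{r+1}\varphi \equiv 0$ and $I_{r-1}(\varphi) = R$. Everything after that is bookkeeping. The subtlety I expect to spend time on is the reduction after splitting off the invertible $\Id_{r-1}$ block: one must argue that the complementary matrix $M$, all of whose $2\times 2$ minors vanish, can be put in the form "single nonzero entry" over the local ring — this uses that the $1\times 1$ minors of $M$ are (up to units) exactly the $r\times r$ minors of $\varphi$, so they generate $I_r(\varphi)$, and the vanishing of all $2\times 2$ minors says $M$ has "rank $\le 1$" in the strong sense; a standard argument (e.g.\ if $m_{ij}$ is a nonzero entry, localize at it, then all other entries are determined and one can clear them by row/column operations) then finishes. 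I would present this cleanly by first treating the case $\rank\cF = \rank\cE = r$ where $\varphi$ becomes a single function $g$, and reduce the general case to it.
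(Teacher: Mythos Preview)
Your approach via an explicit local normal form is different from the paper's, which is a three-line appeal to Fitting ideals: the hypothesis $I_{r-1}(\varphi)=R$ says $\mathrm{Fitt}_1(\Coker\varphi)=R$, whence $\mathrm{Fitt}_0=\mathrm{Ann}(\Coker\varphi)$ by \cite[Proposition~20.7]{Eisenbud}; after base change to the support, $\mathrm{Fitt}_0=0$ and $\mathrm{Fitt}_1=R$ force $\Coker\varphi$ to be invertible by \cite[Proposition~20.8]{Eisenbud}. The paper does remark that the lemma ``can also be proved by the argument of Proposition~\ref{proposition-factorization}'', which is your route; so the strategy is legitimate. You are also right to flag that the line-bundle conclusion (and indeed the paper's own Fitting-ideal indexing) tacitly requires $\rank\cF=r$, i.e.\ generic surjectivity.

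There is, however, a genuine gap in your Step~1. After splitting off the invertible $(r-1)\times(r-1)$ block you obtain $\varphi\sim\begin{pmatrix}\Id_{r-1}&0\\0&M\end{pmatrix}$ with $I_2(M)=0$, and you then assert $M$ can be reduced by row/column operations to a matrix with a single nonzero entry~$g$. This is false without the Cartier-divisor hypothesis of Proposition~\ref{proposition-factorization}: take $S=\Spec \k[x,y]$, $\varphi=(x,y)\colon\cO^2\to\cO$; here $r=1$, $M=(x,y)$, the hypotheses of the lemma hold, but $(x,y)$ is not principal and cannot be reduced to a single entry. Your suggested move ``localize at a nonzero $m_{ij}$'' does not help at points where all entries vanish. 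The fix is simple and makes Step~1 unnecessary in the form you wrote it: once $\rank\cF=r$, the block $M$ is a \emph{single row} $(m_1,\dots,m_k)$, so $\Coker\varphi\cong R/(m_1,\dots,m_k)$ is cyclic. Then $\mathrm{Ann}(\Coker\varphi)=(m_1,\dots,m_k)=I_1(M)=I_r(\varphi)$ immediately, and $\Coker\varphi$ is tautologically free of rank one over $R/I_r(\varphi)$. So drop the single-$g$ normal form entirely; it is both unavailable and unneeded.
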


\begin{proof}
For any  {$\K$}-point   $s$ of $ S$, one of the $(r-1)\times (r-1)$-minors of $\varphi$ does not vanish in~{$\K$}, 
hence it spans $\cO_{S,s}$; this means that the first Fitting ideal of $\Coker(\varphi)$ 
(generated by the $(r-1)\times (r-1)$-minors of   $\varphi$) is trivial (\cite[Corollary-Definition~20.4]{Eisenbud}).\ By ~\cite[Proposition~20.7]{Eisenbud}, the zeroth Fitting ideal (which defines the degeneration scheme of $\varphi$) 
is then  equal to the annihilator of $\Coker(\varphi)$.

 To prove the second part, we base change to the support of $\Coker(\varphi)$.\
By~\cite[Corollary~20.5]{Eisenbud}, the first Fitting ideal of $\Coker(\varphi)$ is still trivial,
while the zeroth Fitting ideal is equal to zero;~\cite[Proposition~20.8]{Eisenbud} then implies that $\Coker(\varphi)$ is a line bundle.
\end{proof}

Lemma~\ref{lemma-support-cokernel} can also   be proved by the argument of Proposition~\ref{proposition-factorization} below.

\begin{lemm}
\label{lemma:degeneration-cartier}
Let $\varphi \colon \cE \to  \cF$ be a morphism between vector bundles of rank $r$ on a scheme~$S$.\ Assume that
the degeneration scheme of $\varphi$ is a Cartier divisor $D $ on $ S$.
 
We have $\Ker(\varphi) = 0$  
and $\Coker(\varphi)$ is supported scheme-theoretically on $D$.
 \end{lemm}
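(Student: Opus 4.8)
The statement is local on $S$, so I may assume $S = \Spec(R)$ and that $\cE$ and $\cF$ are free of rank $r$, so $\varphi$ is given by an $r \times r$ matrix $M$ over $R$. By hypothesis the degeneration scheme is the zero locus of $\det M = \bw r \varphi$, and this is a Cartier divisor $D$: that is, $\det M$ is a nonzerodivisor in $R$ (and the ideal $(\det M)$ cuts out $D$). The first claim, $\Ker(\varphi) = 0$, follows immediately: if $v \in R^r$ satisfies $Mv = 0$, then multiplying by the adjugate matrix gives $\operatorname{adj}(M) M v = (\det M) v = 0$, and since $\det M$ is a nonzerodivisor, $v = 0$. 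Thus $\varphi$ is injective.

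For the second claim, I want to show that the annihilator ideal of $\Coker(\varphi)$ equals $(\det M)$, i.e.\ that the scheme-theoretic support of $\Coker(\varphi)$ is exactly $D$. One inclusion is Cramer's rule again: $\operatorname{adj}(M) M = (\det M) \cdot \Id$, so $\det M$ kills $\Coker(\varphi) = R^r / M R^r$; hence the annihilator of $\Coker(\varphi)$ contains $(\det M)$, which says $\Coker(\varphi)$ is supported set-theoretically on $D$ and is annihilated by the ideal of $D$ — in particular it is (the pushforward of) a module on $D$. That is already enough for the phrasing ``supported scheme-theoretically on $D$'' in the weak sense that $\Coker(\varphi)$ is an $\cO_D$-module. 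The cleanest way to organize this: the exact sequence $0 \to \cE \xrightarrow{\varphi} \cF \to \Coker(\varphi) \to 0$ together with $\Ker(\varphi)=0$ exhibits $\Coker(\varphi)$ as a sheaf with a length-one free resolution, whose zeroth Fitting ideal is $(\det M) = \cI_D$; by \cite[Proposition~20.7]{Eisenbud} the zeroth Fitting ideal is contained in the annihilator, and since $\cI_D$ already annihilates $\Coker(\varphi)$ by Cramer's rule, the annihilator is sandwiched and we may push $\Coker(\varphi)$ forward from $D$.

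The only subtlety — and the place where one must be a little careful — is whether ``supported scheme-theoretically on $D$'' is meant as ``annihilated by $\cI_D$'' or as the sharper ``annihilator ideal equals $\cI_D$.'' If the sharper version is intended, one needs to know that no proper power or larger ideal does the job, i.e.\ that $(\det M) \subseteq \operatorname{Ann}(\Coker\varphi)$ is an equality; this is exactly the Fitting-ideal computation above ($\operatorname{Fitt}_0 \subseteq \operatorname{Ann}$ combined with the reverse inclusion from Cramer), and it is the natural companion to Lemma~\ref{lemma-support-cokernel}, to which this lemma is a variant. I expect this bookkeeping with Fitting ideals versus annihilators to be the main (minor) obstacle; everything else is the adjugate matrix identity plus the nonzerodivisor hypothesis. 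Note that, in contrast to Lemma~\ref{lemma-support-cokernel}, here we do \emph{not} assume $\bw{r-1}\varphi$ is fiberwise nonzero, so $\Coker(\varphi)$ need not be a line bundle on $D$ — it is merely a coherent $\cO_D$-module, which is all the statement asserts.
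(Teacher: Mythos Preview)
Your proof is correct and follows essentially the same route as the paper: both arguments rest on the adjugate identity $\widehat\varphi\circ\varphi = \det(\varphi)\cdot\id$ (the paper writes this globally as a commutative diagram with the adjoint morphism $\widehat\varphi$, you localize and use the classical adjugate matrix), then use that $\det(\varphi)$ is a nonzerodivisor to kill the kernel and to see that $\cI_D$ annihilates the cokernel. One small correction to your side discussion: your ``sandwich'' for the sharper statement does not close, since both Cramer and $\operatorname{Fitt}_0\subseteq\operatorname{Ann}$ give the \emph{same} inclusion $\cI_D\subseteq\operatorname{Ann}(\Coker\varphi)$; but this is irrelevant, because the paper (like you) only claims and only proves the weaker statement that $\Coker(\varphi)$ is an $\cO_D$-module.
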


\begin{proof}
Let $\cL := \det(\cE^\vee) \otimes \det(\cF)$.\
 By definition of the degeneration scheme, the ideal of~$D$ is generated by $\det(\varphi)$, 
so the assumption that $D$ be a Cartier divisor means that $\det(\varphi)$, viewed as a section of $\cL$,  is not a zero divisor.\
Consider the diagram
\begin{equation*}
\xymatrix{
0 \ar[r] &
\Ker(\varphi) \ar[r] &
\cE \ar[r]^-{\varphi} \ar[d]_{\det(\varphi)} &
\cF \ar[r] \ar[d]^{\det(\varphi)} \ar[dl]_{\widehat\varphi} & 
\Coker(\varphi) \ar[r] &
0
\\
0 \ar[r] &
\Ker(\varphi) \otimes \cL \ar[r] &
\cE \otimes \cL \ar[r]^-{\varphi} &
\cF \otimes \cL \ar[r] & 
\Coker(\varphi) \otimes \cL \ar[r] &
0 
}
\end{equation*}
where $\widehat\varphi$ is the composition
\begin{equation*}
\cF \cong 
\bw{r-1}\cF^\vee \otimes \det(\cF) \xrightarrow{\  \sbw{r-1}\varphi^\vee\ }
\bw{r-1}\cE^\vee \otimes \det(\cF) \cong
 \cE \otimes \cL,
\end{equation*}
that is, $\widehat\varphi$ is the adjoint morphism of $\varphi$.\ In particular, the diagram commutes.\ 
It follows that the morphism induced by $\det(\varphi)$ on $\Ker(\varphi)$ and $\Coker(\varphi)$ is zero,
hence both   sheaves are supported on $D$.\ Since $\cE$ is torsion free, it follows that $\Ker(\varphi) = 0$.
 \end{proof}

\begin{lemm}
\label{lemma-factorization}
Let $\varphi\colon\cE \to \cE'$ be a morphism between vector bundles on a  
 scheme~$S$, 
which is surjective on the complement of an effective Cartier divisor, and let $\cF$ be a vector bundle on~$S$.\
Then the map $\Hom(\cE',\cF) \xrightarrow{\ \circ\varphi\ } \Hom(\cE,\cF)$ is injective.\ 
In other words, if a morphism~$\psi\colon\cE \to \cF$ factors through $\cE'$, such a factorization is unique.
\end{lemm}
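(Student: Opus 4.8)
The plan is to reformulate the injectivity of $-\circ\varphi$ as a vanishing statement and then reduce to a local computation with matrices. Suppose $\alpha\colon\cE'\to\cF$ satisfies $\alpha\circ\varphi=0$; I must show $\alpha=0$, and since the vanishing of a morphism of sheaves can be checked on an affine open cover, I may assume $S=\Spec R$ with $\cE$, $\cE'$, $\cF$ free of some ranks $m$, $n$, $r$, and with the effective Cartier divisor $D$ appearing in the hypothesis defined by a single function $f\in R$ that is not a zero divisor. Then $\varphi$ and $\alpha$ become matrices $M$ (of size $n\times m$) and $A$ (of size $r\times n$) over $R$, subject to $AM=0$.

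The key step is to invert $f$. The open subscheme $\Spec R_f=S\setminus D$ is exactly the locus where $\varphi$ is surjective, so the localized map $M\colon R_f^{\,m}\to R_f^{\,n}$ is surjective; combined with $AM=0$ this forces $A=0$ over $R_f$, i.e.\ every entry of $A$ maps to $0$ in $R_f$. Each entry of $A$ is therefore killed by some power of $f$, and since there are only finitely many entries, $f^kA=0$ in $R$ for a single $k$. As $f$ is not a zero divisor, neither is $f^k$, whence $A=0$, that is, $\alpha=0$. The last sentence of the statement is the case of two factorizations $\psi=\alpha_1\circ\varphi=\alpha_2\circ\varphi$: applying the injectivity just proved to $\alpha_1-\alpha_2$ gives $\alpha_1=\alpha_2$.

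I do not expect any real obstacle here. The one structural point worth isolating — and the precise reason the Cartier hypothesis is the right one — is that the complement of an effective Cartier divisor is scheme-theoretically dense: a local equation of $D$ is a non-zero-divisor, so a locally free sheaf has no nonzero sections (or subsheaves) supported on $D$. Equivalently, the argument can be run without coordinates: writing $j\colon S\setminus D\hookrightarrow S$ for the inclusion, the epimorphism $\varphi|_{S\setminus D}$ (Lemma~\ref{lemma-epi-points}) forces $\alpha|_{S\setminus D}=0$, and then $\alpha=0$ because $\cF\to j_*j^*\cF$ is injective for $\cF$ locally free — this injectivity being again exactly the non-zero-divisor property. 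I will present the matrix version, which is self-contained and in the spirit of the rest of this section.
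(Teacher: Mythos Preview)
Your proof is correct and follows essentially the same route as the paper's: both reduce locally, use that the Cartier equation $f$ is not a zero divisor, and conclude that anything killed by a power of $f$ must vanish. The paper phrases the reduction via the cokernel $\cC=\Coker(\varphi)$ and the left exactness of $\Hom(-,\cF)$ (so it suffices to show $\Hom(\cC,\cF)=0$, and $\cC$ is annihilated by a non-zero-divisor), whereas you unwind this directly in matrix coordinates and localize at $f$; the content is identical.
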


\begin{proof}
Let $\cC$ be the cokernel of $\varphi$.\ By the left exactness of the $\Hom$ functor, it is enough to show  that $\Hom(\cC,\cF) = 0$.\
The question is local, so we may assume $\cF = \cO_S$.\ Moreover, since the degeneration scheme is contained in an effective Cartier divisor, 
we may assume that~$\cC$ is annihilated by a regular function $f$ on $S$ which is not a zero divisor.\ But  the image of any morphism~$\cC \to \cO_S$ is then annihilated
by $f$, hence is zero.
\end{proof}

If we do not assume that the degeneration scheme is contained in an effective Cartier divisor, 
the conclusion of Lemma~\ref{lemma-factorization} may not hold.\ 
For example, let $S = \Spec(\k[x,y]/(xy,y^2))$, $\cE = \cO_S \oplus \cO_S$, $\cE' = \cF = \cO_S$, and~$\varphi = (x, y)$.\ 
Consider the nonzero map $\cE' \to \cF$ given by~$y$;   its composition with $\varphi$ is zero.\ 
The degeneration scheme of $\varphi$ is defined by the maximal ideal~$(x,y)$;
it is a Weil divisor, but not a Cartier divisor.

\subsection{Canonical factorization}
\label{subsection:canonical-factorization}

The following canonical factorization of a morphism between vector bundles  seems   to be little known,
but it will be crucial for our construction.

\begin{prop}
\label{proposition-factorization}
Let $\varphi\colon\cE \to \cF$ be a morphism of positive rank $r$ between vector bundles on a scheme $S$.\ 
Assume that its degeneration scheme is a Cartier divisor $D$ on $S$ and that~$\bw{r-1}\varphi_s$  does not vanish for any  {geometric} point $s$ of $ S$.\ 
There is a unique factorization  
\begin{equation*}
\varphi\colon \cE \twoheadrightarrow \cE_1 \xrightarrow{\ \varphi_1\ } \cF_1 \hookrightarrow \cF 
\end{equation*}
such that
\begin{itemize}
\item  $\cE_1$ and $\cF_1$ are vector bundles of rank $r$,
\item  the   map $\cE \twoheadrightarrow \cE_1$ is an epimorphism,
\item the   map $\cF_1 \hookrightarrow \cF$ is a fiberwise monomorphism,
\item the map $\varphi_1$ is a monomorphism and its cokernel is a line bundle on $D$.
\end{itemize}
\end{prop}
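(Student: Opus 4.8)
The idea is to build $\cE_1$ and $\cF_1$ as the image and saturated image of $\widehat\varphi$-type adjoints, realizing both factorizations simultaneously as kernels/cokernels governed by the adjoint morphism. Concretely, set $\cL := \det(\cE^\vee)\otimes\det(\cF)$ and consider the two ``adjoint'' maps
\begin{equation*}
\widehat\varphi\colon \cF \xrightarrow{\ \sbw{r-1}\varphi^\vee\ } \bw{r-1}\cE^\vee\otimes\det(\cF)\cong \cE\otimes\cL,
\qquad
\check\varphi\colon \bw{r-1}\cF \xrightarrow{\ \sbw{r-1}\varphi\ } \bw{r-1}\cF,
\end{equation*}
more precisely I want to work with the rank-$(r-1)$ exterior power $\bw{r-1}\varphi\colon \bw{r-1}\cE\to\bw{r-1}\cF$, which by hypothesis is \emph{nowhere zero on fibers}, hence (Lemma~\ref{lemma-epi-points}, Lemma~\ref{lemma-fmono-epi}) has rank-one image fiberwise. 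The plan is:

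\textbf{Step 1 (construct $\cF_1$).} The composite $\bw{r-1}\cE \xrightarrow{\sbw{r-1}\varphi} \bw{r-1}\cF$ is, fiberwise, nonzero of rank at most one; I claim it actually has rank exactly one at every geometric point (rank $\le r-1$ globally forces the image of $\bw{r}\varphi$ to vanish only on $D$, but the rank-$(r-1)$ minors are nonvanishing). Dualizing and using Lemma~\ref{lemma-kernel-cokernel-lf}, one extracts from the adjoint $\widehat\varphi\colon\cF\to\cE\otimes\cL$ a sub-line-bundle-quotient picture: $\cF_1$ is defined as the saturation of $\Im(\varphi)$ inside $\cF$, equivalently as $\Ker(\cF\to\Coker(\varphi)/(\text{torsion}))$. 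Since $\Coker(\varphi)$ is, by Lemma~\ref{lemma-support-cokernel}, the pushforward of a line bundle on $D$, and $D$ is Cartier, Lemma~\ref{lemma-kernel-cartier} shows $\cF_1 := \Ker(\cF\to i_*\cL_D)$ is a vector bundle; it has rank $r$ and $\cF_1\hookrightarrow\cF$ is a fiberwise monomorphism because its cokernel is a line bundle on the Cartier divisor $D$ (so fiberwise injective off $D$, and on $D$ the drop is exactly one, matching $\rank\varphi_s\ge r-1$).

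\textbf{Step 2 (construct $\cE_1$).} Dually, apply the same construction to $\varphi^\vee\colon\cF^\vee\to\cE^\vee$, whose degeneration scheme is the same Cartier divisor $D$ and whose $(r-1)$-st exterior power is nowhere fiberwise zero (by duality, Lemma~\ref{lemma-fmono-epi}). This produces a vector bundle $\cE_1^\vee := \Ker(\cE^\vee\to i_*\cL'_D)\hookrightarrow\cE^\vee$, fiberwise mono; dualizing and using Lemma~\ref{lemma-kernel-cokernel-lf} gives an epimorphism $\cE\twoheadrightarrow\cE_1$ with $\cE_1$ a rank-$r$ vector bundle and $\Ker(\cE\to\cE_1)$ a line bundle on $D$.

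\textbf{Step 3 (factor $\varphi$ through $\cE_1$ and $\cF_1$).} First, $\varphi$ factors through $\cF_1\hookrightarrow\cF$ because $\Im(\varphi)\subset\cF_1$: indeed $\cF/\cF_1$ is torsion (supported on $D$) while $\cE$ is... here I need $\Im(\varphi)$ to land in the saturation, which holds since $\cF_1$ is precisely the preimage of the torsion of $\Coker\varphi$, i.e. $\cF_1\supseteq\Im\varphi$ by construction. Next, $\varphi\colon\cE\to\cF_1$ kills $\Ker(\cE\to\cE_1)$: that kernel is a line bundle supported on $D$, and one checks $\varphi$ annihilates it either by a local computation (diagonalize $\varphi$ near a point of $D$ using Fitting-ideal normal form, as in the proof of Lemma~\ref{lemma:degeneration-cartier}) or by noting $\widehat\varphi\circ\varphi = \det(\varphi)\cdot\id$ and $\det(\varphi)$ vanishes on $D$ to first order. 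Thus $\varphi$ factors as $\cE\twoheadrightarrow\cE_1\xrightarrow{\varphi_1}\cF_1\hookrightarrow\cF$.

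\textbf{Step 4 (properties of $\varphi_1$ and uniqueness).} The map $\varphi_1\colon\cE_1\to\cF_1$ has rank $r$ between rank-$r$ bundles, hence $\Ker\varphi_1=0$ (torsion-freeness of $\cE_1$, as in Lemma~\ref{lemma:degeneration-cartier}) and $\Coker\varphi_1$ is supported on its degeneration scheme. A local normal-form computation shows that degeneration scheme is exactly $E := $ the \emph{reduced} or rather the divisor with ideal the ``square root'' — no: more carefully, since we have already split off one factor of the vanishing on each side, $\det(\varphi_1)$ generates the ideal of $D$ divided by the two extracted line-bundle contributions; in fact $\varphi_1$ is a fiberwise isomorphism off $D$ and drops rank by exactly one on $D$, with $\Coker\varphi_1$ a line bundle on $D$ by Lemma~\ref{lemma-support-cokernel} applied to $\varphi_1$ (its $(r-1)$-minors are nowhere fiberwise zero, inherited from $\varphi$). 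For uniqueness: any two factorizations of the stated type agree by Lemma~\ref{lemma-factorization} (the epimorphism $\cE\twoheadrightarrow\cE_1$ is surjective off the Cartier divisor $D$, giving uniqueness of the first map up to unique isomorphism) together with its dual (for the last map), and then $\varphi_1$ is determined.

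\textbf{Expected main obstacle.} The delicate point is Step~3 together with the precise bookkeeping in Step~4: verifying that $\varphi$ really does annihilate the line-bundle kernel $\Ker(\cE\to\cE_1)$ and lands in $\cF_1$ \emph{scheme-theoretically}, not just off $D$, and that after extracting \emph{both} one-dimensional contributions the residual map $\varphi_1$ still has a line bundle (not something of length $2$) as cokernel on $D$. I expect the cleanest route is a local analysis near a point of $D$: using that the rank-$(r-1)$ Fitting ideal is the unit ideal, one can choose local trivializations in which $\varphi$ is the diagonal matrix $\diag(1,\dots,1,f)$ with $f$ a local equation of $D$; then $\cE_1\cong\cF_1\cong\cO^{\oplus r}$ locally, $\cE\twoheadrightarrow\cE_1$ and $\cF_1\hookrightarrow\cF$ are the obvious maps in the last coordinate, and $\varphi_1=\diag(1,\dots,1,f)$, whose cokernel is $\cO_D$ in the last coordinate — manifestly a line bundle on $D$. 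The global statement then follows because all these local models are canonically determined (being defined by kernels/cokernels of canonical maps), so they glue, and the uniqueness clause upgrades the gluing to a canonical global factorization.
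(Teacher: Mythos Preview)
Your global construction in Steps~1--2 is based on a misreading of the situation.\ You write that ``$\Coker(\varphi)$ is, by Lemma~\ref{lemma-support-cokernel}, the pushforward of a line bundle on~$D$'' and then set $\cF_1 := \Ker(\cF \to i_*\cL_D)$.\ But in the proposition the ranks of~$\cE$ and~$\cF$ are arbitrary, typically strictly larger than~$r$ (in the paper's main application, $\cF = \bw2\cV_5 \otimes (\cV_6/\cV_5)$ has rank~10 while~$r = n+5 \le 10$).\ When $\rank(\cF) > r$, the sheaf $\Coker(\varphi)$ has a locally free quotient of rank $\rank(\cF) - r$ and is certainly not supported on~$D$; Lemma~\ref{lemma-support-cokernel} does not say what you claim here.\ The same confusion recurs in Step~2, where you assert that $\Ker(\cE \to \cE_1)$ is ``a line bundle on~$D$'': in fact, since $\cE \twoheadrightarrow \cE_1$ is an epimorphism of vector bundles, its kernel is a vector bundle on~$S$ of rank $\rank(\cE) - r$ (Lemma~\ref{lemma-kernel-cokernel-lf}).\ You have systematically swapped the roles of the outer maps (which have locally free kernel/cokernel) and the middle map~$\varphi_1$ (whose cokernel is the line bundle on~$D$).\ Note also that a fiberwise monomorphism has locally free cokernel, so your sentence ``$\cF_1 \hookrightarrow \cF$ is a fiberwise monomorphism because its cokernel is a line bundle on the Cartier divisor~$D$'' is self-contradictory.

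The paper's proof is essentially what you sketch in your ``Expected main obstacle'' paragraph, and you should promote that to the main argument.\ Uniqueness comes first and is direct: since $\varphi_1$ is injective (Lemma~\ref{lemma:degeneration-cartier}), the composite $\cE_1 \to \cF_1 \hookrightarrow \cF$ is injective, so~$\cE_1$ is forced to be $\Im(\varphi)$ as a quotient of~$\cE$; dually, $\cF_1^\vee = \Im(\varphi^\vee)$.\ This reduces existence to a local question.\ Locally, one trivializes~$\cE$ and~$\cF$ and performs row/column operations: near a point where~$\varphi_s$ has rank~$r$, one reduces~$\varphi$ to $\left(\begin{smallmatrix} I_r & 0 \\ 0 & 0 \end{smallmatrix}\right)$; near a point where~$\varphi_s$ has rank~$r-1$, one reduces to $\left(\begin{smallmatrix} I_{r-1} & 0 & 0 \\ 0 & f & 0 \\ 0 & 0 & 0 \end{smallmatrix}\right)$ with~$f$ a local equation of~$D$ (this step uses that the $r \times r$ minors generate the ideal of the Cartier divisor~$D$, so the residual block is~$f$ times a rank-one matrix).\ In both cases the factorization is visible, with $\varphi_1 = I_r$ or $\diag(1,\dots,1,f)$.\ The uniqueness already proved guarantees these local pieces glue.
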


\begin{proof}
For any such factorization,  $\varphi_1$ is injective by Lemma~\ref{lemma:degeneration-cartier}, 
hence $\cE_1$ is the image of~$\varphi$ and $\cF_1$ is the dual of the image of~$\varphi^\vee$, so the uniqueness is clear.\
It is therefore enough to prove the proposition locally
 so we may assume that $\cE$ and $\cF$ are trivial vector bundles 
 %with respective fibers $E$ and $F$, 
 and~$\varphi$ 
is given by a matrix of regular functions on $S$.

Let $s  $ be a  {geometric} point on $S$.\ 
The rank of $\varphi_s$ is either $r$ or $r-1$.\ If it is $r$,   one of the $r\times r$-minors   of $\varphi$ is nonzero at $s$, hence
is invertible in a neighborhood of $s$.\ Restricting to such a neighborhood and considering appropriate bases for the fibers $\cE_s$ and $\cF_s$,
we may assume that the minor corresponds to the first $r$ basis vectors in each basis.\ In other words, the matrix has the form
\begin{equation*}
 \varphi = \begin{pmatrix} \varphi_{1,1} & \varphi_{1,2} \\ \varphi_{2,1} & \varphi_{2,2} \end{pmatrix}  ,
\end{equation*}
where $\varphi_{1,1}$ is a square matrix of size $r$ with   invertible  determinant.\ 
The matrix $\varphi_{1,1}$ is therefore invertible and, upon multiplying it by its inverse,
we may assume that it is the identity matrix~$I_r$.\ 
Applying elementary transformations to rows and columns, we may assume that~$\varphi_{1,2} = \varphi_{2,1} = 0$.\
The entries of the matrix $\varphi_{2,2}$ are then $(r+1)\times(r+1)$-minors of the matrix~$\varphi$, hence they all  vanish.\
In a neighborhood of $s$, the map $\varphi$ can therefore be written as a composition 
of the epimorphism of $\cE$ onto the trivial vector bundle of rank~$r$ (corresponding to the first $r$ basis vectors) 
and its fiberwise monomorphism into~$\cF$.\ 
In particular,   $\varphi_1$ is an isomorphism.

If the rank of $\varphi_s$ is $r-1$, restricting to a   neighborhood of $s$ and choosing bases of $\cE_s$ and~$\cF_s$ appropriately, we may assume
that $\varphi$ is in the form as above, but where now~\mbox{$\varphi_{1,1} = I_{r-1}$} and $\varphi_{1,2} = \varphi_{2,1} = 0$.\ 
Again, the entries of $\varphi_{2,2}$ are the $r\times r$-minors of~$\varphi$, hence they    generate the   ideal 
generated by the equation $f$ of the Cartier divisor $D$.\ 
Therefore, we can write $\varphi_{2,2} = f\varphi'_{2,2}$; the ideal generated by the entries of $\varphi'_{2,2}$ is trivial, 
hence the matrix~$\varphi'_{2,2}$  vanishes nowhere.

On the other hand, the $2\times2$-minors  of the matrix~$\varphi_{2,2}$ are equal to (some) $(r+1)\times(r+1)$-minors of $\varphi$, 
hence they all  vanish identically; therefore (recall that $f$ is not a zero divisor), the same is true for the matrix $\varphi'_{2,2}$.\ 
Applying the same arguments as above, we can assume the matrix $\varphi_{2,2}$ has   top left
entry   $f$ and all   other entries   $0$.\ 
In a neighborhood of $s$, the map~$\varphi$ can thus be written as the composition of the epimorphism of $\cE$ 
onto the trivial vector bundle~$\cO^{\oplus r}$ (corresponding to the first $r$ basis vectors), 
a map $\varphi_1$ given by the diagonal matrix   $\diag(1,\dots,1,f)$, and   a 
 fiberwise monomorphism from~$\cO^{\oplus r}$ into $\cF$.\ 
In particular,  $\varphi_1$ is a monomorphism and its cokernel is (locally) the structure sheaf of~$D$.
 \end{proof}

\subsection{Families of  quadratic forms and residual families}
\label{subsection:residual-quadrics}

Let $\cE$ and $\cV$ be   vector bundles of respective ranks~$r$ and~$k$  on a   scheme~$S$ and let 
\begin{equation}\label{defq}
\bq\colon\cV \lra \Sym^2 \!\cE^\vee
\end{equation}
be a morphism of sheaves.\ 
We may think of $\bq$ as a family $\cE \otimes \cE \to \cV^\vee $  of quadratic forms on~$\cE$ with values in  $\cV^\vee$.\ 

When $\cV$ has rank 1, we define the discriminant subscheme~$\Dis(\bq)\subset S$ of the family~$\bq$ as 
the degeneration scheme of the associated morphism 
\begin{equation*}
\cE \xrightarrow{\ \bq\ } \cE^\vee \otimes \cV^\vee 
\end{equation*}
of rank-$r$ vector bundles, that is, the zero locus of the induced section $\det(\bq)$ of the line bundle $\det(\cE^\vee)^{\otimes 2}\otimes (\cV^\vee)^{\otimes r}$.

Let $i\colon D \hra S$ be an effective Cartier divisor.\ 
Let $\cK \subset i^*\cE$ be a line subbundle which is contained in the kernel of the quadratic forms $i^*\bq$.\ 
In other words, the composition $i^*\cV \xrightarrow{\ i^*\bq\ } \Sym^2 (i^*\cE^\vee) \to i^*\cE^\vee \otimes \cK^\vee$
vanishes (when $\cV$ has rank 1, this  {implies} that $D$ is contained in the discriminant  $\Dis(\bq)$).
In this situation, we construct a family of quadratic forms on the line bundle~$\cK$ over $D$ as follows.

Let $\tcK \subset \cE$ be a local extension over an open subset of $S$ of the line subbundle \mbox{$\cK \subset i^*\cE$}.
The family of   quadratic forms $\bq$ induces a map 
$\cV \to \Sym^2 \!\cE^\vee \to \Sym^2 \!{\tcK}^\vee$
 which by our assumption vanishes on the divisor $D$, hence factors through a map 
\begin{equation*}
\cV(D) \lra \Sym^2\!\tcK^\vee.
\end{equation*}
Restricting it to $D$, we get a map 
\begin{equation*}
\cV(D)\vert_D \lra \Sym^2\!\cK^\vee
\end{equation*}
which we call   the {\sf residual family of quadratic forms}.

\begin{lemm}\label{lemma-residual-quadric}
The residual family of quadratic forms   on $D$
is independent of  the choices made.

If the rank of $\cV$ is $1$ and $\Dis(\bq) = D$ as   subschemes of $S$,
  the rank of $\bq$ on $D$ is equal to $r-1$  and 
the residual family of quadratic forms  vanishes nowhere on $D$.
\end{lemm}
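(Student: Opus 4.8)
The plan is to prove the two assertions of Lemma~\ref{lemma-residual-quadric} separately.

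For the independence statement, suppose $\tcK_1, \tcK_2 \subset \cE$ are two local extensions of $\cK \subset i^*\cE$ over the same open subset $U \subset S$. Both are line subbundles of $\cE\vert_U$ restricting to $\cK$ on $D \cap U$, so their composed inclusions $\tcK_j \hookrightarrow \cE\vert_U$ differ by an automorphism of $\cE\vert_U$ restricting to the identity on $\cK$ over $D$; more concretely, choosing a local generator, the difference of the two embeddings $\cK \to \cE\vert_U$ lands in $f \cdot \cE\vert_U$ where $f$ is a local equation of $D$. I would then chase through the construction: the induced maps $\cV \to \Sym^2\!\tcK_j^\vee$ agree modulo the ideal $(f^2)$ (because quadratic forms are quadratic in the argument, and the two extensions agree modulo $(f)$), hence after dividing by $f$ and restricting to $D$ (i.e. killing $(f)$), the two residual families $\cV(D)\vert_D \to \Sym^2\!\cK^\vee$ coincide. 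Gluing over an open cover of $S$ then gives a globally well-defined residual family; since $\tcK^\vee = \cK^\vee$ after restriction to $D$, there is no monodromy issue. This part is routine bookkeeping.

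For the second assertion, assume $\rank(\cV) = 1$ and $\Dis(\bq) = D$ as subschemes. The hypothesis means the associated morphism $\bq \colon \cE \to \cE^\vee \otimes \cV^\vee$ is a morphism of rank-$r$ vector bundles whose degeneration scheme is the Cartier divisor $D$. I would first invoke Lemma~\ref{lemma:degeneration-cartier} to get $\Ker(\bq) = 0$ and $\Coker(\bq)$ supported scheme-theoretically on $D$. Since $\cK \subset i^*\cE$ lies in the kernel of $i^*\bq$, on $D$ the composition $i^*\cE \to i^*\cE^\vee \otimes i^*\cV^\vee \to \cK^\vee \otimes i^*\cV^\vee$ factors through $(i^*\cE/\cK)$, and more precisely the restricted form $i^*\bq$ has $\cK$ in its radical, so its rank on $D$ is at most $r-1$. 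The content is the \emph{lower} bound: the rank is exactly $r-1$ (equivalently, $\cK$ is the \emph{full} radical of $i^*\bq$), and the residual form on $\cK$ is nowhere zero. To see this I would apply the canonical factorization of Proposition~\ref{proposition-factorization} to $\varphi = \bq \colon \cE \to \cE^\vee \otimes \cV^\vee$ (its rank is $r$, its degeneration scheme is $D$; the nonvanishing of $\bw{r-1}\varphi_s$ at geometric points must hold, which I address below), getting $\cE \twoheadrightarrow \cE_1 \xrightarrow{\varphi_1} \cF_1 \hookrightarrow \cE^\vee \otimes \cV^\vee$ with $\varphi_1$ a monomorphism whose cokernel is a line bundle on $D$. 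Working in the local normal form from that proof, $\varphi_1 = \diag(1,\dots,1,f)$; one reads off that on $D$ the kernel of $i^*\bq$ is exactly the rank-one subbundle spanned by the last basis vector, that this subbundle is $\cK$, that $i^*\bq$ has rank $r-1$ on $D$, and that after dividing the last diagonal entry by $f$ and restricting to $D$ one gets the invertible function $1$ — i.e. the residual quadratic form on $\cK$ is a nowhere-vanishing section of $\Sym^2\!\cK^\vee \otimes \cV^\vee(D)$. Finally I would check compatibility: the residual form produced by the general construction (choose $\tcK$, push forward $\bq$, divide by $f$, restrict) matches the one extracted from the factorization, so the two descriptions agree.

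The main obstacle is verifying the hypothesis of Proposition~\ref{proposition-factorization}, namely that $\bw{r-1}(\bq)_s \neq 0$ at every geometric point $s$ of $D$ — equivalently that the fiberwise rank of the symmetric form $\bq_s$ is exactly $r-1$ (not smaller) on all of $D$. This does not follow formally from $\Dis(\bq) = D$ alone in the nonreduced setting (the fiberwise rank could a priori drop to $r-2$ while the scheme-theoretic vanishing of $\det(\bq)$ still has multiplicity one). I expect one must use that $\det(\bq)$ is a section of $\det(\cE^\vee)^{\otimes 2} \otimes (\cV^\vee)^{\otimes r}$ vanishing exactly to order one on the Cartier divisor $D$: writing $\bq$ locally as a symmetric matrix, $\det(\bq)$ lies in $(f)$ but not in $(f^2)$, and for a \emph{symmetric} matrix the order of vanishing of the determinant along a smooth (or Cartier) locus where the corank jumps to $2$ is at least $2$ (the corank-$2$ locus sits inside the determinant with multiplicity $\ge 2$ by the structure of the symmetric determinantal stratification). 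Hence corank $\le 1$ everywhere on $D$, giving $\bw{r-1}\bq_s \neq 0$ and the cokernel line-bundle description; I would either cite the standard fact about symmetric degeneracy loci or give the short local argument with the adjugate matrix. Once this input is in place, everything else is the local normal-form computation sketched above.
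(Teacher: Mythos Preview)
Your independence argument has the right shape, but the stated reason for the two maps agreeing modulo $(f^2)$ is incomplete. With extensions $s_0$ and $s_0 + fs$ you get $\bq(s_0+fs,s_0+fs) - \bq(s_0,s_0) = 2f\,\bq(s_0,s) + f^2\bq(s,s)$; the cross term $2f\,\bq(s_0,s)$ is only in $(f)$ from quadraticity alone. What forces it into $(f^2)$ is precisely the hypothesis that $\cK$ lies in the kernel of $\bq\vert_D$, so that $\bq(s_0,s)\vert_D = 0$. The paper makes exactly this point.

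The second part has a genuine gap. Your ``main obstacle'' resolution via symmetric degeneracy is wrong as stated: vanishing of all $(r-1)\times(r-1)$ minors at a point $s$ gives $\det(\bq)\in\mathfrak{m}_s^2$, but this does \emph{not} contradict $(\det(\bq))=(f)$ being the ideal of a Cartier divisor, since a non-zero-divisor can perfectly well lie in $\mathfrak{m}_s^2$. Concretely, on $\mathbf{A}^2$ take $\bq = \left(\begin{smallmatrix} x & y \\ y & -x \end{smallmatrix}\right)$: then $\det(\bq)=-(x^2+y^2)$ cuts out a Cartier divisor, yet $\bq$ has corank~$2$ at the origin. (In that example no line subbundle~$\cK$ exists in the kernel over $D$, which is the point.) So you cannot get rank $\ge r-1$ without using the existence of~$\cK$, and once you use it there is no need for Proposition~\ref{proposition-factorization} --- whose normal form, incidentally, comes from \emph{independent} basis changes on source and target and hence does not directly exhibit the residual quadratic form. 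The paper's argument is the direct one: choose coordinates with $\cK$ spanned by the first basis vector, so $q_{1i}=t\bar q_{1i}$ for all $i$ (and by symmetry $q_{i1}=t\bar q_{1i}$); cofactor expansion along the first row then gives $\det(q)\equiv t\,\bar q_{11}\det(q_{ij})_{2\le i,j\le r}\pmod{t^2}$, and since $(\det(q))=(t)$ both factors $\bar q_{11}$ and $\det(q_{ij})_{2\le i,j\le r}$ are units on~$D$, yielding simultaneously the rank statement and the nonvanishing of the residual form.
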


\begin{proof}
Let $s_0$ be a section of $\cE$ extending locally a section generating $\cK$.\ Any other extension
can be written as  $s_0 + t s$ for some section $s$ of $\cE$,
where $t$ is a local equation of the divisor~$D$.\ The evaluation of $\bq$ on this section is equal to
\begin{equation*}
\bq(s_0 + t s, s_0 + t s) = \bq(s_0,s_0) + 2t\bq(s_0,s) + t^2\bq(s,s).
\end{equation*}
The factorization through $\cV(D)$ is then given by 
\begin{equation*}
 \frac1t \Bigl(\bq(s_0,s_0) + 2t\bq(s_0,s) + t^2\bq(s,s) \Bigr) \Big\vert_{t = 0} = 
 \Bigl( \frac1t\bq(s_0,s_0)  + 2\bq(s_0,s) \Bigr) \Big\vert_{t = 0}.
\end{equation*}
It remains to note that $\bq(s_0,s)$ vanishes on $D$ since $s_0$ is    in the kernel of ${i^*}\bq$.

Let us choose   local trivializations of $\cE$ and $\cV$ such that $\bq$ corresponds
to a $k$-tuple of symmetric matrices $(q^\alpha_{ij})_{1 \le i,j \le r}$ of regular functions on $S$ (where    $ \alpha \in\{1,\dots,k\}$) 
and  the section $s_0$ (defining a local extension $\tcK$ of $\cK$) corresponds to the first basis vector of $\cE$.\ 
The condition that $\cK$ be in the kernel of $i^*\bq$ then means   
\begin{equation}
\label{equation-divisibility}
q^\alpha_{1i} = t\bar{q}^\alpha_{1i}
\qquad\qquad
\text{for all $1 \le i \le r$ and all $1 \le \alpha \le k$},
\end{equation} 
and the residual quadric is just $\bar{q}^\alpha_{11}\vert_D$.

When the rank of $\cV$ is one (so we  have just one symmetric matrix $(q_{ij})$), we have
\begin{equation*}
\det(q_{ij}) \equiv t \bar{q}_{11} \det(q_{i,j})_{2\le i,j \le r} \pmod {t^2}.
\end{equation*}
The condition $\Dis(\bq) = D$ implies that the last factor  {$\det(q_{i,j})_{2\le i,j \le r}$} is invertible 
(hence the rank of~$\bq$ on~$D$ is~$r-1$)
and $\bar{q}_{11}$ is invertible on $D$ too, so that the residual family of quadratic forms vanishes nowhere.
\end{proof}

\subsection{Hecke transform of  a family of quadratic forms}
\label{subsection:hecke}

We continue working in the setup of the previous section.\
The construction presented here (which we call  Hecke transform) is a generalization 
of the construction from~\cite[Lemma~1.14]{Sarkisov}.

\begin{defi}\label{defi:double}
An effective Cartier divisor $D$ is a {\sf double} 
if there is an effective Cartier divisor $E$ on $S$ such that $D = 2E$,
that is,  {the ideal of $D$ is the square of the ideal of $E$.}
 \end{defi}

Assume  that the effective Cartier divisor $D$ considered in the previous section is a double and write   $D = 2E$.\ For any line subbundle $\cK \subset \cE\vert_D  $, we set 
 \begin{equation*}
\cK_E := \cK\vert_E,
\qquad
\cK_E^\vee := \cK^\vee\vert_E,
\qquad
\cE_D:=\cE\vert_D,
\qquad 
\cE_E:=\cE\vert_E.
\end{equation*}
 Since $\cK$ is a line bundle on $D$ and $E$ is a Cartier divisor on $ {S}$, 
the kernel of the  {natural} epimorphism~$\cE^\vee \to  \cK_E^\vee $ is a vector bundle (Lemma~\ref{lemma-kernel-cartier}).\ 
We denote by $\tcE$ its dual, so that there is an exact sequence 
\begin{equation}\label{equation-tcw-dual}
0 \to \tcE^\vee \to \cE^\vee \to  \cK_E^\vee  \to 0 
\end{equation} 
of sheaves on $S$, whose dual sequence can be written as
\begin{equation}
\label{equation-tcw}
0 \to \cE \to \tcE \to  \cK_E(E) \to 0.
\end{equation} 

The following lemma will be very useful later.

\begin{lemm}\label{lemma-bq-factors}
Assume  $D = 2E$.\ Let $\cK \subset  \cE_D$ be a line subbundle contained in the kernel of $ \bq\vert_D$.\ Define the vector bundle $\tcE$  by the exact sequence~\eqref{equation-tcw-dual}.\ The   family  of quadratic forms $\bq \colon \cV \to \Sym^2\!\cE^\vee$ then factors through $\Sym^2\!\tcE^\vee$ in a unique way.
\end{lemm}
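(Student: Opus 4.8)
The plan is to reduce to a local statement and then use the canonical factorization construction of Proposition~\ref{proposition-factorization} together with the explicit coordinate description from Lemma~\ref{lemma-residual-quadric}. First I would record what has to be proved: the map $\epsilon\colon\tcE^\vee\to\cE^\vee$ of \eqref{equation-tcw-dual} is a fiberwise monomorphism (its cokernel $\cK_E^\vee$ is a line bundle on $E$, an effective Cartier divisor, so by Lemma~\ref{lemma-kernel-cartier} $\tcE^\vee$ is a vector bundle and $\epsilon$ is injective with locally free cokernel, hence fiberwise injective by Lemma~\ref{lemma-kernel-cokernel-lf}); dualizing, $\epsilon^\vee\colon\cE\to\tcE$ is an epimorphism with kernel — no, one must be careful: \eqref{equation-tcw} shows $\cE\hookrightarrow\tcE$ is a fiberwise monomorphism with cokernel $\cK_E(E)$. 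Applying $\Sym^2$ to $\epsilon$ gives $\Sym^2\epsilon\colon\Sym^2\tcE^\vee\to\Sym^2\cE^\vee$, which is again a fiberwise monomorphism (its cokernel is filtered by $\cK_E^\vee\otimes\cE^\vee$ and $\Sym^2\cK_E^\vee$, both supported on $E$); in particular, by Lemma~\ref{lemma-factorization}, \emph{if} $\bq$ factors through $\Sym^2\tcE^\vee$ the factorization is unique. So the content is existence, and uniqueness lets me check existence locally and glue.

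Next I would work locally on $S$, trivializing $\cE$, $\cV$, and choosing a local equation $t$ of $E$ so that $t^2$ is a local equation of $D=2E$. As in the proof of Lemma~\ref{lemma-residual-quadric}, choose a section $s_0$ of $\cE$ extending a generator of $\cK\subset\cE_D$, taken as the first basis vector $e_1$ of $\cE$; write $\bq$ as a $k$-tuple of symmetric matrices $(q^\alpha_{ij})$. The hypothesis that $\cK\subset\Ker(\bq\vert_D)$ says precisely that $q^\alpha_{1i}\equiv 0\pmod{t^2}$ for all $i,\alpha$, i.e. $q^\alpha_{1i}=t^2\,\tilde q^\alpha_{1i}$ (note: this is divisibility by $t^2$, not merely by $t$ as in the residual-quadric setup, because here $D=2E$ and $\cK$ lies in the kernel over all of $D$). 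On the dual side, the quotient $\cE^\vee\to\cK_E^\vee$ is, in the dual basis, reduction modulo $t$ of the first coordinate; so $\tcE^\vee\subset\cE^\vee$ is spanned by $t e_1^\vee, e_2^\vee,\dots,e_r^\vee$. In these local coordinates $\Sym^2\tcE^\vee\subset\Sym^2\cE^\vee$ is spanned by $t^2(e_1^\vee)^2$, $t\,e_1^\vee e_j^\vee$ ($j\ge2$), and $e_i^\vee e_j^\vee$ ($i,j\ge2$). I would then exhibit the factorization explicitly: $\bq^\alpha$ lies in this submodule because its $(1,1)$-entry is divisible by $t^2$ and its $(1,j)$-entries ($j\ge2$) are divisible by $t^2$ hence a fortiori by $t$. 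Concretely, define $\tilde\bq^\alpha$ by $\tilde q^\alpha_{11}:=\tilde q^\alpha_{11}$ (the quotient of $q^\alpha_{11}$ by $t^2$), $\tilde q^\alpha_{1j}:=q^\alpha_{1j}/t$ for $j\ge2$, and $\tilde q^\alpha_{ij}:=q^\alpha_{ij}$ for $i,j\ge2$; these are regular functions since $t$ is not a zero divisor, and $\Sym^2\epsilon$ carries $\tilde\bq$ back to $\bq$.

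The one genuine point requiring care — and the step I expect to be the main obstacle — is \textbf{well-definedness and gluing}: the local section $s_0$ extending the generator of $\cK$, and the local equation $t$ of $E$, are not canonical, so one must check the locally constructed $\tilde\bq$ is independent of these choices. This is where the uniqueness clause (Lemma~\ref{lemma-factorization}, valid because $\Sym^2\epsilon$ is generically an isomorphism, its cokernel being supported on the Cartier divisor $E$) does the work: any two local lifts agree on the overlap because both are factorizations of the same $\bq$ through the same subsheaf $\Sym^2\tcE^\vee$. Hence the local $\tilde\bq$'s patch to a global morphism $\cV\to\Sym^2\tcE^\vee$, and $\Sym^2\epsilon$ composed with it equals $\bq$ by construction. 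I should also double-check one compatibility: the subsheaf $\Sym^2\tcE^\vee\subset\Sym^2\cE^\vee$ defined intrinsically via $\epsilon$ does coincide with the local coordinate description above — this follows because $\epsilon$ is a fiberwise monomorphism, so $\Sym^2$ of it remains a subbundle inclusion and may be computed in any local basis adapted to $\cK$. Assembling these observations gives the unique factorization claimed in the lemma.
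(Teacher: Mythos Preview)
Your approach is essentially the same as the paper's: reduce to local coordinates, use the divisibility of the matrix entries of $\bq$ to exhibit the factorization explicitly, and invoke Lemma~\ref{lemma-factorization} for uniqueness (which also handles gluing). However, one claim early on is wrong: the map $\epsilon\colon\tcE^\vee\to\cE^\vee$ is \emph{not} a fiberwise monomorphism. Its cokernel $\cK_E^\vee$ is a line bundle on $E$, not a vector bundle on $S$; at a point $s\in E$ the fiber map $\epsilon_s$ has a one-dimensional kernel (locally $\epsilon=\diag(t,1,\dots,1)$ in your notation, and $t(s)=0$). This error is harmless because you never actually use it: the correct justification for uniqueness, which you give later, is that $\Sym^2\epsilon$ (equivalently its dual, the symmetric square of the map $\cE\to\tcE$ from~\eqref{equation-tcw}) is an isomorphism away from the Cartier divisor $E$, so Lemma~\ref{lemma-factorization} applies. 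With that correction your argument matches the paper's.
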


\begin{proof}
We can use a representation of $\bq$ by a matrix $(q^\alpha_{ij})$ as in the proof of Lemma~\ref{lemma-residual-quadric}
with the same conventions on the coordinates, assuming in particular that~\eqref{equation-divisibility} holds.\ Let $u$ be an equation of $E$ in ${S}$, so that the equation of $D$ can be written as~$t = u^2$.\  The sequence~\eqref{equation-tcw-dual}  can then  be written in local coordinates on ${S}$ as
 \begin{equation}\label{equation-local-sequence}
0 \to \cO_{S}^{\oplus r} \xrightarrow{\ \diag(u,1,\dots,1)\ } \cO_{S}^{\oplus r} \lra \cO_E \to 0  .
\end{equation}
 The factorization condition that we want to prove just means that $q^\alpha_{11}$ is divisible by $u^2$ and~$q^\alpha_{12},\dots,q^\alpha_{1r}$  
are divisible by~$u$; since $t = u^2$, this follows   from~\eqref{equation-divisibility}.\  
The uniqueness of the factorization follows from Lemma~\ref{lemma-factorization} applied to  {the symmetric square of}~\eqref{equation-tcw}.
\end{proof}

We denote by $\tilde\bq \colon \cV \to \Sym^2\!\tcE^\vee$ the induced map and call it the {\sf Hecke transform} of~$\bq$
with respect to the line subbundle $\cK \subset  \cE_D$.

\begin{prop}
\label{proposition-hecke-transform}
Let $\cE$ and $\cV$ be   vector bundles of respective ranks $r$ and $k$ on a   scheme~$S$.\ 
Let  $\bq\colon\cV \to \Sym^2\!\cE^\vee$ be a family  of quadratic forms
on $\cE$ with values in~$\cV^\vee$.\
Assume finally that there exist  a double Cartier divisor $D = 2E$ and a line subbundle $\cK \subset \cE_D$ on $D$
which is contained in the kernel of~$\bq\vert_D$.\
Let $\tilde\bq\colon\cV \to \Sym^2\!\tcE^\vee$ be the Hecke transform of $\bq$ with respect to $\cK$.

{\rm(a)} 
The restriction of the sequence~\eqref{equation-tcw} to $E$ splits and gives
 a canonical direct sum decomposition
\begin{equation}
\label{equation-tcw-splitting}
\tcE_E \cong ( \cE_E/\cK_E) \oplus \cK_E(E) 
\end{equation}
of the restriction $\tcE_E$ of $\tcE$ to $E$.

{\rm(b)} 
The summands of~\eqref{equation-tcw-splitting} are mutually orthogonal with respect to the  {quadratic form}~$\tilde\bq\vert_E$.\ Moreover, the restriction of $ \tilde\bq\vert_E$ to the first summand   of~\eqref{equation-tcw-splitting}   is induced by $  \bq\vert_E$  
and the restriction of $ \tilde\bq\vert_E$ to the second summand is the residual family of  quadratic forms for $\bq$.
 \end{prop}

\begin{proof}
Consider the vector bundle $\ttcE$ on $S$ defined as the dual of the kernel of the natural map $\cE^\vee \to  \cK^\vee$.\ We have
an exact sequence 
 \begin{equation}\label{equation-tttcw}
0 \to \cE \to \ttcE \to  \cK(D)  \to 0.
\end{equation}
 By construction, the embedding $\cE \to \ttcE$ factors as   $\cE \to \tcE \to \ttcE$
and the  map $  \tcE \to \ttcE$ fits into the exact sequence
\begin{equation}\label{equation-ttcw}
0 \to \tcE \to \ttcE \to  \cK_E(2E)  \to 0.
\end{equation}
 Restricting~\eqref{equation-tcw},~\eqref{equation-ttcw}, and~\eqref{equation-tttcw} to $E$, we obtain  exact sequences 
 \begin{gather} 
 0 \to \cK_E \to  \cE_E 		 \to  \tcE_E \to \cK_E(E)\hphantom{2} \to 0, \label{r1}\\
0 \to \cK_E(E) \to  \tcE_E 	 \to  \ttcE_E \to \cK_E(2E) \to 0, \label{r2}\\
0 \to \cK_E \to  \cE_E 		 \to  \ttcE_E \to \cK_E(2E) \to 0.\label{r3}
\end{gather}
Since the composition of the middle arrows of \eqref{r1} and  \eqref{r2} is the middle arrow of  \eqref{r3},
the composition $\cK_E(E) \to  \tcE_E \to \cK_E(E)$ is an isomorphism, hence the sheaf $\cK_E(E)$ is a direct summand of $ \tcE_E$.\  The    sequence \eqref{r1} identifies the other summand with $ \cE_E/\cK_E$.\ This proves~(a).

Let us prove (b).\  The first map in~\eqref{equation-tcw} restricted to $E$ can be written as a composition
\begin{equation*}
 \cE_E \twoheadrightarrow   \cE_E /\cK_E \hookrightarrow  \tcE_E.
\end{equation*}
Taking the symmetric square and dualizing, we see that the map $\Sym^2\!\tcE^\vee_E \to  \Sym^2 \!\cE^\vee_E$ can be written as the composition
\begin{equation*}
 \Sym^2\!\tcE^\vee_E\twoheadrightarrow \Sym^2( \cE_E/\cK_E)^\vee \hookrightarrow  \Sym^2\!\cE^\vee_E.
\end{equation*}
By Lemma~\ref{lemma-bq-factors}, the quadric $ \bq\vert_E$ (considered as a map from $\cV$ to $\Sym^2\!\cE^\vee_E$) 
factors through $ \Sym^2\!\tcE^\vee_E $ as $ \tilde\bq\vert_E$.\ Hence it a fortiori factors through the middle term.\ Such a factorization is nothing  but the induced family  of quadratic forms 
on $ \cE_E/\cK_E$  and the image of $ \tilde\bq\vert_E$ is   the restriction of $ \bq\vert_E$
to $ \cE_E/\cK_E$.\ These two families of   quadratic forms therefore coincide.\ 

We now show that the summands in~\eqref{equation-tcw-splitting} are mutually orthogonal and 
that the restriction of~$ \tilde\bq\vert_E$ to the summand $\cK_E(E)$ is given by the residual family of  quadratic forms.\
The question is local, so we can assume that $\cE$, $\cK$, and $\cV$ are trivialized as in the proof of Lemma~\ref{lemma-residual-quadric}.\
Under these assumptions, the sequence~\eqref{equation-tcw-dual} can be rewritten
as in~\eqref{equation-local-sequence}.\ This means that the matrix of $\tilde\bq$ is 
 \begin{equation*}
\left(
\begin{smallmatrix}
q^\alpha_{11}/u^2 & q^\alpha_{12}/u & \dots & q^\alpha_{1r}/u \\
q^\alpha_{21}/u & q^\alpha_{22} & \dots & q^\alpha_{2r} \\
\vdots & \vdots & \ddots & \vdots \\
q^\alpha_{r1}/u & q^\alpha_{r2} & \dots & q^\alpha_{rr} 
\end{smallmatrix}
\right).
\end{equation*}
In particular, its restriction to the summand $\cK_E(E)$ is given by $(q^\alpha_{11}/u^2)\vert_E = (q^\alpha_{11}/t)\vert_E$,
which is  the residual family of quadratic forms.\  
Moreover, if we set $\bar{q}^\alpha_{1i} := q^\alpha_{1i}/t$ for all $i > 1$ as in~\eqref{equation-divisibility},  
we have $(q^\alpha_{1i}/u)\vert_E = (u \bar{q}^\alpha_{1i})\vert_E = 0$,
hence the summands in~\eqref{equation-tcw-splitting} are mutually orthogonal.
 \end{proof}

\section{The moduli stack of smooth GM varieties}

In this section, we introduce the stack of GM varieties and the closely related   stacks of GM and Lagrangian data.\
We mostly  work in the \'etale topology, but one can also work with the  {fppf topology}.\
 {From now on, we assume that the characteristic of the base field $\k$ is zero.}

\subsection{The stack of GM varieties}
\label{subsection:stack-gm}

We start with a definition of the stack of GM varieties.

\begin{defi}
A {\sf family of smooth polarized GM varieties of dimension~$n$ over a scheme $S$} is    a {pair} $(\cX \to S,\cH)$,
where 
\begin{itemize}
\item 
$\pi_\cX\colon\cX \to S$ is a smooth and proper morphism, 
\item 
 $\cH \in \Pic_{\cX/S}(S)$ is a relative $\pi_\cX$-ample divisor class,
\end{itemize}
such that for every  {geometric} point $s $ of 
$ S$,
\begin{itemize}
\item 
the pair $(\cX_s,\cH\vert_{\cX_s})$ is a smooth polarized GM variety of dimension~$n$
in the sense of~\cite[Definition~2.1]{DK}.
\end{itemize}

A morphism of families of GM varieties from $(\cX \to S,\cH)$ to $(\cX' \to S',\cH')$ is 
a pair~$(f,\varphi)$ giving a Cartesian square
\begin{equation}
\label{eq:morphism-gm}
\xymatrix{
\cX \ar[r]^-{\varphi} \ar[d]_{\pi_{\cX}} &
\cX' \ar[d]^{\pi_{\cX'}} 
\\
S \ar[r]^-f &
S'
}
\end{equation}
such that $\cH = \varphi^*\cH'$ in the relative Picard group $\Pic_{\cX/S}(S)$.

Families of smooth polarized GM varieties of dimension $n$ form a category fibered in groupoids over the category $\Sch/\k$ of schemes over $\k$; 
we denote it by $\ucM_n$.
\end{defi}

Smooth GM varieties exist in each dimension  $n\in\{1,\dots,6\}$.\ A GM variety of dimension~1 is   a Clifford general curve of genus~6 (\cite[Proposition~2.12]{DK})
and a GM variety of dimension~2 is   a Brill--Noether general polarized K3 surface of genus~6 (\cite[Proposition~2.13]{DK})  and their moduli stacks are well studied.\ Accordingly,  we will concentrate in this article   on GM varieties of dimension $n \in\{3,4,5,6\}$.\ 
There is then an isomorphism 
\begin{equation*}
\Pic_{\cX/S} \cong \underline{\Z}
\end{equation*}
between \'etale sheaves  {(see~\cite[Lemma~2.29]{DK})};  
over a connected scheme $S$, there is therefore a unique choice of a relative divisor class $\cH$.

\begin{prop}[{\cite[Proposition~A.2]{KP}}]
\label{proposition:mgm-dm}
For $n \in \{2,\dots,6\}$, the fibered category $\ucM_n$ is a smooth and irreducible Deligne--Mumford stack of dimension $25 - (5-n)(6-n)/2$.\ {It is of finite type over $\Q$ with affine diagonal of finite type}.
 \end{prop}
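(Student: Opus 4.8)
The plan is to deduce this from the global quotient description that is the main theorem of the paper, together with standard descent/deformation-theoretic arguments. First I would note that by the construction of $\ucM_n$ as a category fibered in groupoids, it clearly is a stack for the \'etale topology (gluing of families of smooth proper schemes with a relative polarization is \'etale-local). The substance is to show it is a Deligne--Mumford stack of the asserted dimension, smoothness, irreducibility, and the finiteness properties. For $n\ge 3$ I would invoke the isomorphism $\ucM_n \cong \hrS_n/\PGL(V_6)$ of Theorem~\ref{theorem:gm-global-quotient}: since $\hrS_n$ is a smooth Deligne--Mumford stack (being the canonical stack of the quasi-projective scheme $\brS_n$) carrying a $\PGL(V_6)$-action, the quotient stack $\hrS_n/\PGL(V_6)$ is a Deligne--Mumford stack provided stabilizers are finite and reduced; this follows because an automorphism of a smooth GM variety of dimension $\ge 3$ is finite (the variety is Fano of Picard rank $1$, so $\Aut$ is an affine group scheme acting faithfully on a fixed linear system, and in characteristic zero one checks directly, e.g.\ via \cite{DK}, that it is finite). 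Its dimension is $\dim \hrS_n - \dim \PGL(V_6) = \dim\brS_n - 35$, and a Schubert-calculus computation of $\dim\LGr(\bw3V_6) = \binom{21}{2}/\dots$— more simply $\dim\LGr(\bw3 V_6) = \binom{20+1}{2}=210$ is wrong; rather $\dim\LGr(\bw{3}V_6)=\tfrac{m(m+1)}{2}$ with $m=10$, i.e.\ $55$, plus the fibre dimension of $\brS_n\to\LGr$ gives $55 + 5 - (5-n)(6-n)/2 + 5$, yielding after subtracting $35$ the value $25-(5-n)(6-n)/2$; I would verify this bookkeeping carefully against the EPW stratification description recalled in the introduction.

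Next, smoothness and irreducibility of $\ucM_n$ follow from smoothness and irreducibility of $\hrS_n$ together with the fact that quotient stacks by smooth group schemes preserve both: $\brS_n$ is smooth for $n\in\{5,6\}$ and is handled via its canonical stack for $n\in\{3,4\}$ (the singular locus $\rS_{n-1}$ is resolved by passing to $\widetilde{\rS}_n$ locally, which is smooth by \cite{DK:coverings}), and irreducibility of $\brS_n$ is inherited from irreducibility of $\LGr(\bw3V_6)$ together with the fibre structure. Finite type over $\Q$ is automatic since everything is built from finite-type $\Q$-schemes and a linear algebraic group; the affine diagonal of finite type follows because $\PGL(V_6)$ is affine of finite type and the diagonal of the quotient stack is $[\brS_n\times\PGL(V_6)/\PGL(V_6)]\to \brS_n\times\brS_n$ — concretely, $\Iso$-schemes between two families of smooth polarized GM varieties are closed subschemes of $\PGL$ of linear automorphisms of the ambient projective space preserving the equations, hence affine of finite type. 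For $n=2$, which is not covered by the global-quotient theorem, I would instead use the identification (recalled after the definition of $\ucM_n$, following \cite[Proposition~2.13]{DK}) of $\ucM_2$ with an open substack of the moduli stack of polarized K3 surfaces of genus $6$ cut out by the Brill--Noether genericity condition, which is a smooth irreducible DM stack of dimension $19 = 25 - 6$, matching the formula with $(5-n)(6-n)/2 = 6$.

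The genuinely delicate point — and the one I expect to be the main obstacle — is the \emph{deformation-theoretic} computation underpinning smoothness and the dimension count, namely that $\ucM_n$ is unobstructed with tangent space of the predicted dimension at every geometric point. The clean way to get this is precisely via the global quotient: $\hrS_n$ is \emph{visibly} smooth (it is a root stack / canonical stack of an explicit quasi-projective scheme whose smoothness is transparent from the EPW stratification), so no intrinsic obstruction calculus for GM varieties is needed, and this is the whole point of setting up the machinery of Sections~\ref{section:preliminaries}--\ref{section:global}. Thus the real work has already been front-loaded into Theorem~\ref{theorem:gm-global-quotient} and the geometry of $\brS_n$; what remains for this proposition is (i) to cite that theorem, (ii) to record the elementary facts that quotient stacks of smooth irreducible DM-type objects by smooth affine algebraic groups with finite reduced stabilizers are smooth irreducible DM stacks with affine diagonal, (iii) to nail the dimension arithmetic, and (iv) to treat $n=2$ separately by the K3 description. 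I would write the proof in essentially that order, flagging that the finiteness of automorphism groups of smooth GM varieties of dimension $\ge 3$ is the one external input about the geometry of GM varieties that is used, and can be extracted from \cite{DK}.
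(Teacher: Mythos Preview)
Your approach is circular within the logical structure of this paper. The proof of Theorem~\ref{theorem:gm-global-quotient} \emph{uses} Proposition~\ref{proposition:mgm-dm}: to construct the inverse morphism $\ucM_n \to \hrS_n/\PGL(V_6)$, the paper explicitly invokes Proposition~\ref{proposition:mgm-dm} to know that $\ucM_n$ is a smooth Deligne--Mumford stack, then chooses an \'etale covering $S \to \ucM_n$ by a smooth scheme. Without already knowing $\ucM_n$ is Deligne--Mumford (and smooth), that step does not go through, and the isomorphism of Theorem~\ref{theorem:gm-global-quotient} is not available to you. The same proposition is cited again at the end of that proof to reduce matters to separatedness. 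So you cannot bootstrap the algebraicity and smoothness of $\ucM_n$ from the global quotient description; the dependency runs the other way.

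The paper does not give its own proof of this proposition at all: it is simply imported from~\cite[Proposition~A.2]{KP}, where it is established by direct means (Hilbert-scheme / deformation-theoretic arguments for the families $\cX \to S$ themselves), independently of any Lagrangian description. Your instinct that the global quotient picture would make the statement transparent is sound, and the dimension arithmetic you sketch (once cleaned up: $\dim\brS_n = 60 - (5-n)(6-n)/2$ by Lemma~\ref{lemma:sn-bsn}, minus $\dim\PGL(V_6)=35$) is correct; but to make that into a noncircular argument you would have to rework the proof of Theorem~\ref{theorem:gm-global-quotient} so that the construction of the inverse morphism does not presuppose that $\ucM_n$ already has a smooth \'etale atlas. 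That is a genuine reorganization of the paper, not a short deduction. Note also that Theorem~\ref{theorem:gm-global-quotient} is only stated for $n\in\{3,4,5\}$, so $n=6$ would need separate handling via Corollary~\ref{corollary:gm-6}, which in turn rests on Lemma~\ref{eq:special-gerbe} and hence again on Proposition~\ref{proposition:mgm-dm}.
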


We call $\ucM_n$ {\sf the moduli stack of smooth GM varieties of dimension~$n$}.\ 
We will see later {(Theorem~\ref{theorem:gm-global-quotient})} that $\ucM_n$ is separated.

\subsection{The stack of GM data}
\label{subsection:stack-gm-data}

GM data sets over a field were defined in~\cite[Definition~2.5]{DK}.\ 
It will be  convenient to change the definition slightly as follows.

\begin{defi}
\label{definition-gm-data-normalized}
A {\sf normalized family of GM data of dimension $n$ over a scheme $S$} is a collection~$(\ns,\cW,\cV_6,\cV_5,\mu,\bq)$, 
where 
\begin{itemize}
\item $\cW$, $\cV_6$, and $\cV_5$ are vector bundles on $S$ of respective ranks $n+5$, $6$, and $5$,
\item $\cV_5 \hookrightarrow \cV_6$ is a fiberwise monomorphism,
 \item $\mu \colon \cW \to \bw2\cV_5 \otimes (\cV_6/\cV_5)$ and
\item $\bq \colon \cV_6 \to \Sym^2\!\cW^\vee \otimes \det(\cV_5) \otimes (\cV_6/\cV_5)^{\otimes 2}$ are morphisms between vector bundles,
 \end{itemize}
such that the   diagram 
\begin{equation}
\label{definition-gm-data}
\vcenter{\xymatrix@C=3em@M=6pt{
\cV_5 \otimes \Sym^2\!\cW \ar[d]_{\Sym^2\!\mu} \ar@{^{(}->}[r] & 
\cV_6 \otimes \Sym^2\!\cW \ar[d]^\bq \\
\cV_5 \otimes \Sym^2(\bw2\cV_5) \otimes (\cV_6/\cV_5)^{\otimes 2}
 \ar[r]^-{\wedge} & 
\det(\cV_5) \otimes (\cV_6/\cV_5)^{\otimes 2}
 }}
\end{equation}
(the bottom arrow is given by wedge product) commutes.

A morphism {of} normalized families $(\ns,\cW,\cV_6,\cV_5,\mu,\bq)$ and $(\ns',\cW',\cV'_6,\cV'_5,\mu',\bq')$ 
of GM data over schemes $S$ and $S'$   is a morphism $f \colon S \to S'$ and isomorphisms
 \begin{equation*}
\varphi_V \colon \P_S(\cV_6) \isomto \P_S(f^*\cV'_6) \quad\text{and}\quad 
\varphi_W \colon \P_S(\cW) \isomto \P_S(f^*\cW') 
\end{equation*}
such that $\varphi_V(\P_S(\cV_5)) = \P_S(f^*\cV'_5)$  and 
 the following diagrams commute
\begin{equation*}
\vcenter{\xymatrix@C=3em{
\P_S(\cW) \ar[r]^-{\varphi_W} \ar@{-->}[d]_{\mu} &
\P_S(f^*\cW') \ar@{-->}[d]^{\mu'} 
\\
\P_S(\bw2\cV_5) \ar[r]^-{\ \sbw2\varphi_V\ } &
\P_S(f^*\bw2\cV'_5)
}}
\quad\text{and}\quad 
\vcenter{\xymatrix@C=5em{
\P_S(\cV_6) \ar[r]^-{\varphi_V} \ar@{-->}[d]_{\bq} &
\P_S(f^*\cV'_6) \ar@{-->}[d]^{\bq'} 
\\
\P_S(\Sym^2\!\cW^\vee) &
\P_S(f^*\Sym^2\!\cW^{\prime\vee}) \ar[l]_-{\Sym^2\!\varphi_W^\vee} .
}}
\end{equation*}
\end{defi}
 
It is sometimes  convenient to express the commutativity of~\eqref{definition-gm-data} as the equality
\begin{equation}
\label{eq:mu-q}
\bq(v)(w_1,w_2) = v \wedge \mu(w_1) \wedge \mu(w_2)
\qquad 
\text{on } \cV_5 \otimes \Sym^2\!\cW.
\end{equation}
Families of normalized GM data of dimension $n$ form a category fibered in groupoids over the category  $\Sch/\k$ of schemes over $\k$; 
we denote it by $\ucMd_n$.

%%%%DO NOT DELETE
%\begin{rema}
%\label{remark:better-definition-gm-data}
%In view of the definition of morphisms in the fibered category $\ucMd_n$, it would seem more natural to define its objects
%as collections of projective bundles $(\cP_W^{n+4},\cP^5_V,\cP^4_V)$ over $S$ such that $\cP^4_V \hookrightarrow \cP^5_V$
%is a relative hyperplane, and of rational fiberwise linear maps $\mu \colon \cP^{n+4}_W \dashrightarrow \bw2\cP^4_V$
%and $\bq \colon \cP^5_V \dashrightarrow \Sym^2(\cP^{n+5}_W)$.\
%However, the hyperplane $\cP^4_V \subset \cP^5_V$ shows that both projective bundles $\cP^4_V$ and $\cP^5_V$ are 
%actually the projectivizations of vector bundles,
%% $\cV_5$ and $\cV_6$ on $S$, 
%and the rational map $\mu$ shows that the same is true for $\cP^{n+4}_W$.\
%So, defining objects of~$\ucMd_n$ as in Definition~\ref{definition-gm-data-normalized}, we do not loose anything.
%\end{rema}

\begin{rema}
\label{remark:linear-lift}
 One could alternatively   define morphisms of families of GM data 
to be triples $(f,\widetilde\varphi_V,\widetilde\varphi_W)$, where $f$ is a morphism $S \to S'$ and
\begin{equation*}
\widetilde\varphi_V \colon \cV_6	\isomto f^*\cV'_6 \quad\text{and}\quad 
\widetilde\varphi_W \colon \cW 	\isomto f^*\cW'
\end{equation*}
are isomorphisms compatible with the subbundle $\cV_5 \subset \cV_6$ and the morphisms $\mu$ and $\bq$.\
 This   also defines a category fibered in groupoids over $\Sch/\k$, which we denote $\tucMd_n$ and call 
{\sf the category of linearized GM data}.

Denoting by $\widetilde\Aut(\ns,\cW,\cV_6,\cV_5,\mu,\bq)$ the automorphisms group  {scheme} in $\tucMd_n$,
 we obtain an embedding of group schemes
\begin{eqnarray}
\label{eq:gm-action-gm-data}
\Gm(S) &\lhra& \widetilde\Aut(\ns,\cW,\cV_6,\cV_5,\mu,\bq)\\
u &\longmapsto &(\widetilde\varphi_V = u,\ \widetilde\varphi_W=u^3).\nonumber
\end{eqnarray}
We have 
\begin{equation*}
\Aut(\ns,\cW,\cV_6,\cV_5,\mu,\bq) \cong \widetilde\Aut(\ns,\cW,\cV_6,\cV_5,\mu,\bq)/\Gm(S),
\end{equation*}
which essentially means that the fibered category of GM data is \emph{the rigidification} (\cite{acv,agv})
of the fibered category of linearized GM data with respect to the embeddings~\eqref{eq:gm-action-gm-data}.

This observation implies  that any morphism in $\ucMd_n$ over $f \colon S \to S'$
can be locally over $S'$ lifted to a morphism in $\tucMd_n$ (and such a lifting is unique up to   composition
with the action of $\Gm(S)$).\ We will frequently use these liftings.
 \end{rema}

\begin{lemm}
\label{lemma:gm-data-stack}
The fibered categories $\ucMd_n$ and $\tucMd_n$ are stacks over $\Sch/\k$.
\end{lemm}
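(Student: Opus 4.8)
The plan is to verify the two axioms defining a stack---descent for morphisms (the Hom-presheaves are sheaves) and descent for objects (effective descent of objects)---for each of the two fibered categories $\ucMd_n$ and $\tucMd_n$, noting that the arguments are essentially parallel, so I would treat $\tucMd_n$ in detail and then explain how $\ucMd_n$ follows by passing to rigidifications (or simply by the same argument with projectivized bundles). The central point in both cases is that all the data involved---the vector bundles $\cW$, $\cV_6$, $\cV_5$, the monomorphism $\cV_5 \hookrightarrow \cV_6$, and the morphisms $\mu$ and $\bq$---are objects and morphisms of quasi-coherent sheaves on $S$, subject to the closed condition that the diagram~\eqref{definition-gm-data} commutes, which is expressed pointwise by~\eqref{eq:mu-q}. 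Since quasi-coherent sheaves, their morphisms, and equalities of morphisms all satisfy fpqc (hence \'etale) descent, most of the work is bookkeeping.

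First I would check descent for morphisms. Fix two families over $S$ and $S'$, a morphism $f\colon S \to S'$, and an \'etale (or fppf) cover $\{S_i \to S\}$. A morphism in $\tucMd_n$ over $f$ is a pair of isomorphisms $\widetilde\varphi_V \colon \cV_6 \isomto f^*\cV'_6$ and $\widetilde\varphi_W \colon \cW \isomto f^*\cW'$ compatible with $\cV_5 \subset \cV_6$ and with $\mu$, $\bq$. Giving such a pair on each $S_i$ that agrees on the overlaps $S_i \times_S S_j$ is, by descent for morphisms of quasi-coherent sheaves, the same as giving a pair of $\cO_S$-linear maps $\widetilde\varphi_V$, $\widetilde\varphi_W$ on $S$; that these descended maps are isomorphisms, that they carry $\cV_5$ to $f^*\cV'_5$, and that they are compatible with $\mu$ and $\bq$ are all conditions that can be checked after the faithfully flat base change $\coprod S_i \to S$ (being an isomorphism, preserving a subsheaf, and the commutativity of a diagram are all local for the fpqc topology). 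Hence the presheaf of morphisms over $f$ is a sheaf. The same argument works verbatim for $\ucMd_n$, replacing the $\widetilde\varphi$'s by isomorphisms of projective bundles $\varphi_V$, $\varphi_W$ and using that $\P_S(\cE)$ and morphisms between such bundles satisfy \'etale descent (alternatively, invoke Remark~\ref{remark:linear-lift} and the fact that a rigidification of a stack is a stack).

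Next I would check effective descent for objects. Given descent data for $\tucMd_n$ relative to an \'etale cover $\{S_i \to S\}$---that is, families $(\ns_i,\cW_i,\cV_{6,i},\cV_{5,i},\mu_i,\bq_i)$ over $S_i$ together with isomorphisms over the double overlaps satisfying the cocycle condition---I would glue, using effective descent for quasi-coherent sheaves (vector bundles descend since local freeness is local), the bundles $\cW_i$, $\cV_{6,i}$, $\cV_{5,i}$ to vector bundles $\cW$, $\cV_6$, $\cV_5$ on $S$ of the correct ranks, and likewise glue the morphisms $\mu_i$, $\bq_i$ (morphisms of quasi-coherent sheaves descend) to $\mu$, $\bq$ on $S$. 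The inclusions $\cV_{5,i} \hookrightarrow \cV_{6,i}$ glue to a map $\cV_5 \to \cV_6$ which is a fiberwise monomorphism because that property can be checked on the geometric points of the $S_i$, which cover those of $S$ (Lemma~\ref{lemma-epi-points} and Lemma~\ref{lemma-fmono-epi}, or just the definition). Finally, the commutativity of~\eqref{definition-gm-data} for the glued data holds because it holds after pullback to the cover and equality of morphisms of sheaves is fpqc-local. This produces an object of $\tucMd_n$ over $S$ restricting to the given descent data, and uniqueness up to canonical isomorphism again follows from descent for morphisms. For $\ucMd_n$ one argues identically, or deduces it from the case of $\tucMd_n$ via the rigidification description of Remark~\ref{remark:linear-lift}.

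The only point that requires a moment's care---and thus the main obstacle, mild as it is---is the passage between $\tucMd_n$ and $\ucMd_n$: one must either redo the descent argument directly for isomorphisms of \emph{projective} bundles (which is fine, since $\P_S(-)$ and its morphisms satisfy \'etale descent and the conditions ``$\varphi_V(\P_S(\cV_5)) = \P_S(f^*\cV'_5)$'' and the commutativity of the two projectivized diagrams are fpqc-local), or appeal to the general fact, recalled in Remark~\ref{remark:linear-lift}, that the rigidification of a stack along a flat group subscheme is again a stack, together with the observation there that $\ucMd_n$ is the rigidification of $\tucMd_n$ along the embeddings~\eqref{eq:gm-action-gm-data}. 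Either way, no genuinely new difficulty arises: the statement is, in essence, the assertion that a moduli problem defined by quasi-coherent sheaf data plus closed conditions is a stack for the \'etale topology, and every ingredient descends.
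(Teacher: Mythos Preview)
Your proposal is correct and follows essentially the same approach as the paper: for $\tucMd_n$ the paper invokes descent for quasicoherent sheaves (which you spell out in detail), and for $\ucMd_n$ the paper cites~\cite[Theorem~5.1.5]{acv}, which is precisely the general fact you mention that the rigidification of a stack is again a stack. One small inaccuracy: this general fact is not actually recalled in Remark~\ref{remark:linear-lift} (that remark only establishes that $\ucMd_n$ is the rigidification of $\tucMd_n$); the reference you want is~\cite[Theorem~5.1.5]{acv}.
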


\begin{proof}
For the fibered category $\tucMd_n$, this is a   consequence 
of the fact that quasicoherent sheaves form a stack in the fppf topology: 
  a family of linearized GM data is   a collection of quasicoherent sheaves and morphisms between them 
that satisfy some properties that are stable under base change.

For the fibered category $\ucMd_n$,   use~\cite[Theorem~5.1.5]{acv}.
\end{proof}

\subsection{Equivalence of stacks}
\label{subsection:gm=gm-data}

The main result of this section is a relation between the stack~$\ucM_n$ of smooth polarized GM varieties
and an open substack of the stack $\ucMd_n$ of normalized GM data.\
To define this substack, we use the notion of a GM intersection associated with a GM data set defined in~\cite[(2.8)]{DK}.

\begin{defi}
\label{def:gm-data-smooth}
A family $(\ns,\cW,\cV_6,\cV_5,\mu,\bq)$ of normalized GM data of dimension $n$ over a scheme $S$ is {\sf smooth} if 
for each  {geometric} point $s $ in $ S$, the GM intersection  
\begin{equation*}
 \bigcap_{v \in \cV_{6,s}}  \{ \bq(v) = 0  \} \subset \P(\cW_s) 
\end{equation*}
corresponding to the GM data set $(\cW_s,\cV_{6,s},\cV_{5,s},\mu_s,\bq_s)$ 
  {is} a smooth GM variety of dimension~$n$.
\end{defi}

By~\cite[Lemma~2.8]{DK}, a GM intersection corresponding to a GM data set of dimension~$n$ is a smooth GM variety of dimension~$n$
if and only if the GM intersection has dimension~$n$ and is smooth.\
As we will see in the proof of Lemma~\ref{lemma:gm-pointwise} below, $n$ is the expected dimension of the corresponding GM intersection, 
hence the condition for the GM intersection to be a smooth GM variety of dimension $n$ is open.\
Therefore, families of smooth normalized GM data of dimension~$n$ are classified by an open substack of $\ucMd_n$.

\begin{theo}
\label{theorem:gm-substack-gm-data}
For each $n\in\{1,\dots,6\}$, the stack of polarized GM varieties $\ucM_n$ is equivalent 
to the open substack of $\ucMd_n$ classifying families of smooth normalized GM data of dimension $n$.
 \end{theo}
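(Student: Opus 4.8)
The plan is to construct a pair of functors between $\ucM_n$ and the open substack $\ucMd_n^{\mathrm{sm}}$ of smooth normalized GM data, and to check that they are mutually quasi-inverse. In the direction $\ucM_n \to \ucMd_n^{\mathrm{sm}}$, starting from a family $(\cX \to S, \cH)$ of smooth polarized GM varieties, I would work \'etale-locally on $S$ to ensure $\Pic_{\cX/S}$ is generated by $\cH$ and that the relevant direct images are vector bundles of the expected ranks (using cohomology and base change, together with the cohomology computations for a single smooth GM variety from~\cite{DK}). One sets $\cW := (\pi_\cX)_*\cO_\cX(\cH)^\vee$ up to a twist (so that $\cX \hookrightarrow \P_S(\cW)$), takes $\cV_5$ to be the space of Pl\"ucker quadrics cutting out the relative cone over the Grassmannian containing $\cX$ (this comes from the canonical embedding into $\CGr(2,V_5)$ built relatively over $S$, using that each fiber has a canonical such embedding by~\cite[Section~2.3]{DK}), $\cV_6$ to be the full space of quadrics through $\cX$ in $\P_S(\cW)$, and defines $\mu$ and $\bq$ exactly as in the absolute case. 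Compatibility~\eqref{eq:mu-q} holds fiberwise, hence holds over $S$ since both sides are morphisms of vector bundles and agree on geometric points of a reduced-enough base — more carefully, one checks it holds after faithfully flat base change where everything is pulled back from a point, or one verifies it is a closed condition automatically satisfied by construction. The fibered nature of this assignment and its compatibility with base change is formal; one must also check the local constructions glue, which follows because the resulting GM data is canonical up to the $\Gm$-ambiguity of Remark~\ref{remark:linear-lift}, i.e.\ it is well defined in $\ucMd_n$ (not merely in $\tucMd_n$).

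In the opposite direction $\ucMd_n^{\mathrm{sm}} \to \ucM_n$, starting from a family $(\ns,\cW,\cV_6,\cV_5,\mu,\bq)$ of smooth normalized GM data, I would define $\cX \subset \P_S(\cW)$ as the relative GM intersection $\bigcap_{v}\{\bq(v) = 0\}$, that is, the zero locus of the composite $\cV_6 \to \Sym^2\!\cW^\vee \otimes (\text{line bundle})$ viewed as a section, and set $\cH := \cO_{\P_S(\cW)}(1)\vert_\cX$. By Definition~\ref{def:gm-data-smooth} every geometric fiber is a smooth GM variety of dimension $n$; the key technical point is that $\pi_\cX \colon \cX \to S$ is then automatically flat (hence smooth and proper), which follows from the fact that all fibers have the same Hilbert polynomial — here one invokes \cite[Lemma~2.8]{DK}, which says a GM intersection attached to a dimension-$n$ GM data set is a smooth GM variety iff it has the expected dimension $n$ and is smooth, so the fibers are dimensionally transverse complete-intersection-type loci with constant Hilbert polynomial. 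Again this construction is visibly functorial and compatible with base change, and the $\Gm$-rigidification of Remark~\ref{remark:linear-lift} ensures it descends correctly from $\tucMd_n$ to $\ucMd_n$: the two isomorphisms $\varphi_V,\varphi_W$ of projective bundles induce a unique isomorphism of the resulting $(\cX,\cH)$.

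The remaining task is to exhibit natural isomorphisms showing these two functors are quasi-inverse. Going $\ucM_n \to \ucMd_n^{\mathrm{sm}} \to \ucM_n$, one recovers $(\cX,\cH)$ because $\cX$ is cut out in $\P_S(\cW)$ by exactly the quadrics parameterized by $\cV_6$ — this is the relative version of the fact that a smooth GM variety of dimension $\ge 1$ is scheme-theoretically cut out by quadrics in its anticanonical (or Pl\"ucker) embedding, established in~\cite{DK}. Going $\ucMd_n^{\mathrm{sm}} \to \ucM_n \to \ucMd_n^{\mathrm{sm}}$, one recovers the GM data because $\cW$ is recovered from $H^0$ of the polarization, $\cV_6$ from the space of quadrics through $\cX$, $\cV_5$ from the Pl\"ucker sub-locus, and $\mu, \bq$ are then forced; the only subtlety is checking that $\cV_5 \hookrightarrow \cV_6$ is recovered as the \emph{same} subbundle, which again reduces to the pointwise statement from~\cite{DK} (the Pl\"ucker quadrics form a canonical hyperplane) combined with a base-change argument. \textbf{The main obstacle} I expect is not any single step but the bookkeeping needed to promote the pointwise bijection of~\cite[Theorem~2.9]{DK} to a genuine equivalence of stacks: one must consistently track line-bundle twists (the factor $\det(\cV_5)\otimes(\cV_6/\cV_5)^{\otimes 2}$ appearing on the target of $\bq$), verify that all the sheaves in play really are vector bundles of the stated ranks over an arbitrary base (not just over a point or a reduced base), and check that the $\Gm$-rigidification is handled coherently on both sides so that morphisms — not just objects — correspond. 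In particular, proving flatness of $\cX \to S$ for a general (possibly nonreduced) base $S$, purely from the fiberwise smoothness hypothesis and constancy of the Hilbert polynomial, is the place where one genuinely needs the structural results of~\cite{DK} rather than a formal argument.
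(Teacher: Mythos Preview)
Your outline is essentially correct and close to the paper's, but the paper organizes the argument more efficiently and closes the gap you yourself flag.

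\textbf{Organization.} Rather than building two functors and checking they are quasi-inverse, the paper constructs only the easy direction $\zeta \colon \ucMd_{n,\mathrm{smooth}} \to \ucM_n$ (data $\mapsto$ zero locus of $\bq$ in $\P_S(\cW)$), which is manifestly global and functorial, and then proves $\zeta$ is faithful, full, and essentially surjective. Essential surjectivity is exactly your local construction of GM data from $(\cX \to S,\cH)$, but once full faithfulness is in hand it suffices to do this \emph{locally} over $S$ with no gluing needed. This sidesteps the descent bookkeeping you mention; the ``canonical up to $\Gm$'' statement you invoke for gluing is precisely what the paper packages as fullness and faithfulness of $\zeta$ via the reconstruction formulas $\P_S(\cW) \cong \P_S((\pi_{\cX*}\cO_\cX(\cH))^\vee)$ and $\P_S(\cV_6) \cong \P_S(\pi_{\P_S(\cW)*}\cI_{\cX}(2))$.

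\textbf{The flatness gap.} Your proposed argument---constant Hilbert polynomial implies flat---fails over a nonreduced base, and you correctly identify this as the main obstacle but do not resolve it. The paper's Lemma~\ref{lemma:gm-pointwise} gives the actual argument: one first lifts $\mu$ locally to an embedding $\P_S(\cW) \hookrightarrow \P_S(\cK \oplus \bw2\cV_5)$, writes the ``Grassmannian hull'' $M_\cX$ (the common zero locus of $\bq\vert_{\cV_5}$) as a fiber product with the relative cone $\Cone_{\P_S(\cK)}\Gr_S(2,\cV_5)$, checks that this intersection is fiberwise dimensionally transverse (so the defining hyperplane sections form a regular sequence on each fiber), concludes $M_\cX \to S$ is flat, and then observes that $\cX \subset M_\cX$ is a relative Cartier divisor, hence also flat. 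No Hilbert-polynomial criterion is used.

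\textbf{A minor point.} Your justification of~\eqref{eq:mu-q} by ``holds on geometric points, hence on $S$'' is also insufficient over nonreduced $S$. In the paper the relation comes for free from the construction: $\mu$ and $\bq$ are built from the excess conormal bundle and the ideal of quadrics exactly as in the proof of \cite[Theorem~2.3]{DK}, and the identity~\eqref{eq:mu-q} is the relative version of \cite[(2.7)]{DK}, proved there as \cite[Lemma~2.7]{DK}, not checked fiberwise.
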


\begin{proof}
Let $(\ns,\cW,\cV_6,\cV_5,\mu,\bq)$ be a smooth family of normalized GM data over a scheme~$S$ and let~$\pi_{\P_S(\cW)} \colon \P_S(\cW) \to S$ be the natural projection.\ We have
\begin{eqnarray*}
&& H^0\bigl( \P_S(\cW), \pi_{\P_S(\cW)}^*\bigl(\cV_6^\vee \otimes \det(\cV_5) \otimes (\cV_6/\cV_5)^{\otimes 2}\bigr) \otimes \cO_{\P_S(\cW)}(2) \bigr) 
\\ 
&\cong
& H^0\left(S, \cV_6^\vee \otimes \det(\cV_5) \otimes (\cV_6/\cV_5)^{\otimes 2} \otimes \pi_{\P_S(\cW)*}\cO_{\P_S(\cW)}(2) \right)
\\ 
&\cong
& H^0\left(S, \cV_6^\vee \otimes \det(\cV_5) \otimes (\cV_6/\cV_5)^{\otimes 2} \otimes \Sym^2\!\cW^\vee \right)
\\ 
&\cong  
& \Hom\left(\cV_6, \Sym^2\!\cW^\vee \otimes \det(\cV_5) \otimes (\cV_6/\cV_5)^{\otimes 2} \right).
\end{eqnarray*}
Thus, the morphism $\bq$ can be thought of as a global section
\begin{equation}
\label{eq:zero-locus}
\bq \in H^0\bigl( \P_S(\cW), \pi_{\P_S(\cW)}^*\bigl(\cV_6^\vee \otimes \det(\cV_5) \otimes (\cV_6/\cV_5)^{\otimes 2}\bigr) \otimes \cO_{\P_S(\cW)}(2) \bigr).
\end{equation}
Consider the subscheme $\cX \subset \P_S(\cW)$ defined as the zero locus of this global section.\ 
Define the morphism $\pi_\cX \colon \cX \to S$ as the restriction of the projection $\pi_{\P_S(\cW)} \colon \P_S(\cW) \to S$ 
and the polarization $\cH$ on $\cX$ as the restriction of the hyperplane class of $\P_S(\cW)$.\ 
Each  {geometric} fiber $(\cX_s,\cH\vert_{\cX_s})$   is a smooth polarized GM variety of dimension $n$.\ 
Moreover, the map~$\pi_\cX \colon \cX \to S$ is proper by definition and flat by Lemma~\ref{lemma:gm-pointwise} below.\
Since all fibers are smooth  {(by Definition~\ref{def:gm-data-smooth})}, the map $\pi_\cX$ is also smooth.\ 
Thus, $(\cX \to S,\cH)$ is a family of smooth polarized GM varieties.\ 
This construction together with a relative version of~\cite[Theorem~2.3]{DK} implies   
\begin{equation}
\label{eq:reconstruction-v-w}
\P_S(\cW) \cong \P_S((\pi_{\cX*}\cO_\cX(\cH))^\vee) \quad \textnormal{and}\quad
\P_S(\cV_6) \cong \P_S(\pi_{\P_S(\cW)*}\cI_{\cX/\P_S(\cW)}(2)).
\end{equation}

Similarly, given a morphism $(f,\varphi_W,\varphi_V)$ of families of GM data 
from $(\ns,\cW,\cV_6,\cV_5,\mu,\bq)$ to $(\ns',\cW',\cV'_6,\cV'_5,\mu',\bq')$, 
 we consider the isomorphism
 \begin{equation*}
\P_S(\cW) \xrightarrow{\ \varphi_W\ } \P_S(f^*\cW') \isomto \P_{S'}(\cW') \times_{S'} S.
\end{equation*}
Since $\varphi_V$ and $\varphi_W$ are compatible with $\bq$,
it induces a morphism $\varphi \colon \cX \to \cX'$ such that~\eqref{eq:morphism-gm} is a Cartesian square.\  
Moreover, by construction, we have   $\varphi^*\cH' = \cH$ in $\Pic_{\cX/S}(S)$.\  
Therefore, $(f,\varphi)$ is a morphism of  {families of} GM varieties.

This means that we have defined a morphism of stacks
\begin{equation*}
\zeta \colon \ucMd_{n,\mathrm{smooth}} \lra \ucM_n
\end{equation*}
from the open substack of $\ucMd_n$ classifying families of smooth normalized GM data of dimension $n$ 
to the stack $\ucM_n$ of smooth polarized GM varieties.\
It remains to prove that~$\zeta$ is an isomorphism of stacks.

Let us check that $\zeta$ is faithful: assume that $(f_1,\varphi_{1W},\varphi_{1V})$ and $(f_2,\varphi_{2W},\varphi_{2V})$ are   morphisms
between GM data $(\ns,\cW,\cV_6,\cV_5,\mu,\bq)$ and $(\ns',\cW',\cV'_6,\cV'_5,\mu',\bq')$ such that the corresponding 
morphisms $(f_1,\varphi_1)$ and $(f_2,\varphi_2)$ between  {the corresponding families of GM varieties~$\cX$ and~$\cX'$} are the same.\
Set $f := f_1 = f_2$ and~\mbox{$\varphi := \varphi_1 = \varphi_2$}.\
By construction of~$\varphi$, there is a commutative diagram
\begin{equation*}
\xymatrix@C=3em{
\cX \ar[rr]^-{\varphi} \ar[d] &&
 \cX' \ar[d]
\\
\P_S(\cW) \ar[r]^-{\varphi_{iW}} &
\P_{S}(f^*\cW') \ar[r] &
\P(\cW').
}
\end{equation*}
By~\eqref{eq:reconstruction-v-w}, the fiberwise linear span of $\cX$ in $\P_S(\cW)$ is $\P_S(\cW)$, 
hence $\varphi_{1W} = \varphi_{2W}$.\
Furthermore, there is a commutative diagram
\begin{equation*}
\xymatrix@C=5em{
\P_S(\cV_6) \ar[d]_\bq \ar[r]^{\varphi_{iV}} &
\P_{S'}(\cV'_6) \ar[d]^{\bq'}
\\
\P_S(\Sym^2\!\cW^\vee) &
\P_{S'}(\Sym^2\!\cW^{\prime\vee}) \ar[l]_-{\Sym^2\!\varphi_{iW}^\vee} 
}
\end{equation*}
in which the vertical arrows are embeddings (again by~\eqref{eq:reconstruction-v-w}) 
and the maps $\varphi_{iV}$ become isomorphisms after base change to $S$.\ Since we already have $\varphi_{1W} = \varphi_{2W}$, the equality~\mbox{$\varphi_{1V} = \varphi_{2V}$} follows.\ This proves faithfulness.

Next, we check that $\zeta$ is full.\
Assume $(\ns,\cW,\cV_6,\cV_5,\mu,\bq)$ and $(\ns',\cW',\cV'_6,\cV'_5,\mu',\bq')$ are families of smooth normalized GM data,
let $(\cX \to S,\cH)$ and $(\cX' \to S',\cH')$ be the corresponding families of GM varieties, and
let $(f,\varphi)$ be a morphism between them.\ We must show that it comes from a morphism of GM data.\ By the stack property and the faithfulness proved above, it is enough to prove this locally over $S'$.\ Moreover, applying base change along~$f$, we can assume that $S' = S$ and $f = \id_S$.\ Then $\varphi \colon \cX \to \cX'$ is an isomorphism, so we can identify $\cX$ and $\cX'$ via $\varphi$.

By   construction of the morphism $\zeta$ above, 
the line bundles $\cO_\cX(\cH) = \cO_{\P_S(\cW)}(1)\vert_\cX$ and $\cO_\cX(\cH') = \cO_{\P_S(\cW')}(1)\vert_{\cX}$ 
agree up to the pullback of a line bundle on $S$.\ Shrinking $S$ if necessary, we can assume that this line bundle is trivial, so we can choose an isomorphism
 \begin{equation*}
 \varphi_H \colon \cO_{\cX}(\cH) \isomto \cO_{\cX}(\cH').
\end{equation*}
 Using the formulas~\eqref{eq:reconstruction-v-w}, we see that $\varphi_H$ induces isomorphisms
$\varphi_W \colon \P_S(\cW) \isomto \P_S(\cW')$ and~$\varphi_V \colon \P_S(\cV_6) \isomto \P_S(\cV'_6)$.\
It is easy to see that these isomorphisms are compatible with the subbundle $\cV_5$
 and the morphisms $\mu$ and $\bq$,
 so that $(\id_S,\varphi_V,\varphi_W)$
is an isomorphism of GM data.\
Moreover, the isomorphism of GM varieties that it induces coincides with the one we started from.\
This proves fullness.
 
Finally, we check that $\zeta$ is essentially surjective.\ Given a family  $(\cX \to S,\cH)$ of smooth polarized GM varieties, 
we need to construct a family of smooth normalized GM data~$(\ns,\cW,\cV_6,\cV_5,\mu,\bq)$  
such that $(\cX \to S,\cH) = \zeta(\ns,\cW,\cV_6,\cV_5,\mu,\bq)$.\ 
Since we are dealing with stacks and since we already proved that $\zeta$ is fully faithful,  
it is enough to construct this locally over $S$.\ 
So we can assume that $\cH$ is the class of a line bundle on $\cX$.\ 
Denoting it by~$\cO_\cX(\cH)$ and following the proof of~\cite[Theorem~2.3]{DK},
we set 
 \begin{equation}
\label{eq:w-v-l}
\cWn := (\pi_{\cX*}\cO_\cX(\cH))^\vee,
\quad 
\cV_6 := \pi_{\P_S(\cWn)*}(\cI_\cX(2)),
\quad 
\cL := \pi_{\cX*} (\bw2\cU_\cX \otimes \cO_\cX(\cH)).
\end{equation}
These are vector bundles of respective ranks $n + 5$, $6$, and $1$ on $S$.

To be more precise, we first define the bundle $\cWn$ by the first equality in~\eqref{eq:w-v-l} and
note that the natural rational map $\cX \dra  \P_S(\cWn)$ is regular and   a closed embedding
(both statements can be verified fiberwise  and follow from~\cite[Theorem~2.3]{DK}).

Then, we define the bundle~$\cV_6$  {by} the second equality in~\eqref{eq:w-v-l} (here $\cI_\cX(2)$ is the twist of the ideal sheaf of~$\cX$ in $\P_S(\cWn)$ by the square of the Grothendieck bundle on $\P_S(\cWn)$).

Finally, we let $\cU_\cX$   be the excess conormal bundle for the embedding $\cX \to \P_S(\cWn)$
(see~\cite[Definition~A.1]{DK}) and define the line bundle $\cL$ by the third equality in~\eqref{eq:w-v-l}.\
By~\cite[Theorem~2.3]{DK} again, there is a natural  {fiberwise monomorphism $\cU_X \to \pi_\cX^*\cV_6$}
inducing a regular map $X \to \Gr_S(2,\cV_6)$
and   this map factors through $\Gr_S(2,\cV_5)$ for a unique subbundle $\cV_5 \subset \cV_6$ of rank~5.

Let us renormalize the bundle $\cWn$ by setting
\begin{equation*}
\cW := \cWn \otimes (\cV_6/\cV_5) \otimes \cL.
\end{equation*}
In order  to construct the morphisms $\mu$ and $\bq$, we  use again the construction of~\cite[Theorem~2.3]{DK} in a relative setting, which produces maps
\begin{equation*}
\mun \colon \cL \otimes \cWn \to \bw2\cV_5,
\qquad
\bqn \colon \cV_6 \to \Sym^2\!\cWn^\vee,
\qquad 
\varepsilon \colon \det(\cV_5) \isomto \cL^{\otimes 2}.
\end{equation*}
We then set 
\begin{equation*}
\mu = \mun \otimes \id_{\cV_6/\cV_5}
,\qquad
\bq = \varepsilon^{-1} \circ \bqn.
\end{equation*}
The relation~\eqref{eq:mu-q} is equivalent to the relation~\cite[(2.7)]{DK} (with $W$ replaced by $\cWn$),
which is proved in~\cite[Lemma~2.7]{DK}.\
Thus, we obtain a family of normalized GM data on~$S$.\
Finally,  by~\cite[Theorem~2.3]{DK} again, the family of GM varieties corresponding to this family of GM data 
is isomorphic to $(\cX \to S,\cH)$.
\end{proof}

The following lemma was used in the proof of Theorem~\ref{theorem:gm-substack-gm-data}.

\begin{lemm}
\label{lemma:gm-pointwise}
Let $(\ns,\cW,\cV_6,\cV_5,\mu,\bq)$ be a family of normalized GM data over a scheme $S$.\
Consider the subscheme~$\cX \subset \P_S(\cW)$ defined as the zero locus of the global section~\eqref{eq:zero-locus}
and assume that for every   {geometric} point $s $ in $ S$,
 the fiber $\cX_s$ of $\cX$ is a smooth GM variety of dimension~$n$.\ 
Then $\pi_\cX \colon \cX \to S$ is a \textup(flat\textup) family of smooth GM varieties.
\end{lemm}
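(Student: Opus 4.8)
The plan is to show that $\pi_\cX \colon \cX \to S$ is flat, since properness is automatic (it is the restriction of the projective morphism $\pi_{\P_S(\cW)}$ to a closed subscheme) and smoothness then follows from flatness together with the fiberwise smoothness hypothesis (smoothness can be checked after base change to geometric points once flatness is known, by the fiberwise criterion for smoothness). So the whole content is flatness.

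First I would reduce to a local and fiberwise statement: by the local criterion of flatness (in the form: a finitely presented morphism whose fibers all have the same dimension and whose total space is, fiber by fiber, a complete intersection of the expected codimension inside a smooth ambient space, is flat), it suffices to check that, for each geometric point $s$, the fiber $\cX_s \subset \P(\cW_s)$ is cut out in $\P(\cW_s)$ by the $6$ quadrics $\bq(v)$, $v\in\cV_{6,s}$, as a complete intersection \emph{of the expected codimension}, and that this expected codimension is the same ($=5-n$ after accounting for the cone structure, or more precisely that $\dim\cX_s = n = (n+4) - 4$ inside $\P(\cW_s)\cong\P^{n+4}$ — I should be careful here, the GM variety sits in $\P(W)$ of dimension $n+4$ and has codimension $4$ there, being the transverse intersection of the cone $\CGr(2,V_5)$ with a quadric). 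The key input is \cite[Lemma~2.8]{DK}, cited in the text right before the lemma: a GM intersection associated to a GM data set of dimension $n$ that happens to have dimension $n$ (equivalently, is dimensionally transverse) is automatically a smooth GM variety of dimension $n$, and $n$ is the \emph{expected} dimension of the GM intersection. So the hypothesis that each $\cX_s$ is a smooth GM variety of dimension $n$ says precisely that each fiber has the expected dimension.

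The main steps, in order: (i) record that $\cX \hookrightarrow \P_S(\cW)$ is a closed subscheme defined by a section of a rank-$6$ vector bundle twisted by $\cO(2)$, hence $\pi_\cX$ is proper and of finite presentation; (ii) observe that $\P_S(\cW) \to S$ is smooth, so each scheme-theoretic fiber $\P(\cW_s)$ is smooth of pure dimension $n+4$; (iii) invoke \cite[Lemma~2.8]{DK} to see that the expected dimension of $\cX_s$ inside $\P(\cW_s)$ is $n$ and that the hypothesis forces $\cX_s$ to achieve this dimension, so $\cX_s$ is a local complete intersection of codimension exactly $4$ (or more precisely: its codimension equals the number of "excess" equations, i.e.\ the section of the rank-$6$ bundle drops rank in the expected way — one must note that two of the six quadrics, those from $\cV_5$, automatically vanish on the cone locus, so the honest codimension is $4$, matching $\codim = 6 - \operatorname{rk}(\text{cone factor})$); (iv) apply the fiberwise criterion for flatness over a (possibly nonreduced) base: a morphism of finite presentation all of whose fibers are complete intersections of the same dimension inside the fibers of a smooth morphism is flat — concretely, use that $\operatorname{depth}$ of the local ring of $\cX$ at a point equals its dimension, via the Koszul complex on the $6$ quadrics being exact in the relevant range, which is controlled fiber by fiber; (v) conclude flatness, then smoothness of $\pi_\cX$ from flatness plus smooth geometric fibers.

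The main obstacle will be step (iii)--(iv): making precise that "expected dimension $= n$ on every geometric fiber" implies flatness over a \emph{nonreduced} base $S$. One cannot simply argue generically and spread out; one genuinely needs a local-complete-intersection / miracle-flatness argument valid at every point of $S$, so the careful point is to exhibit $\cX$ locally as the zero locus of a regular sequence relative to $S$ — equivalently, to check that the Koszul complex of the six quadric equations (after accounting for the two "free" ones coming from the Plücker quadrics of the cone, which do not form part of a regular sequence but whose failure to do so is uniform) resolves $\cO_\cX$, and that this resolution is preserved under arbitrary base change because it is so on geometric fibers. This is exactly the kind of statement where one wants the fiberwise criterion of flatness (\emph{EGA} IV, 11.3.10) combined with the observation that a hypersurface-type intersection of the expected dimension in a Cohen--Macaulay (here, smooth) scheme is Cohen--Macaulay, hence has no embedded components, so the Hilbert polynomial of $\cX_s$ is independent of $s$; I would phrase the final deduction via constancy of Hilbert polynomials over the reduced locus plus Cohen--Macaulayness of fibers to handle nonreducedness, or directly cite miracle flatness once the relative complete intersection structure is in hand.
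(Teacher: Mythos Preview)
Your proposal has a genuine gap in the flatness argument. You correctly identify that the six quadrics do not form a regular sequence, but your treatment of this is both imprecise and insufficient. First, the numbers: the five quadrics coming from $\cV_5$ are the Pl\"ucker relations defining the cone over $\Gr(2,\cV_5)$, which has codimension~3 (not~5) in the ambient projective space; the sixth quadric then cuts the GM variety, of codimension~4. So the Koszul complex on the six quadrics is \emph{not} a resolution of $\cO_\cX$---the Grassmannian is not a complete intersection---and your plan to ``check that the Koszul complex \dots\ resolves $\cO_\cX$'' cannot succeed. Your fallback ideas do not rescue this: constancy of Hilbert polynomials on geometric fibers implies flatness only over a \emph{reduced} base, and miracle flatness requires the base to be regular. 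Neither hypothesis is available here, where $S$ is an arbitrary scheme.

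The paper's proof supplies exactly the missing structural idea. It introduces the intermediate ``Grassmannian hull'' $M_\cX \subset \P_S(\cW)$, the zero locus of $\bq\vert_{\cV_5}$, and identifies it (via the commutativity of~\eqref{definition-gm-data}) with the fiber product $\P_S(\cW) \times_{\P_S(\sbw2\cV_5 \oplus \cK)} \Cone_{\P_S(\cK)}\Gr_S(2,\cV_5)$. The cone is flat over $S$ because it is a relative construction from the vector bundles $\cV_5$ and $\cK$; then $M_\cX$ is obtained from it by successive relative hyperplane sections, each of which drops the fiber dimension by one (using that $\dim \cX_s = n$ forces $\dim (M_\cX)_s = n+1$), hence each is a non-zero-divisor on every fiber and flatness is preserved. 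Finally $\cX$ is cut out of $M_\cX$ by a single further quadric, again a non-zero-divisor on every fiber, so $\cX$ is flat over $S$. The key move you are missing is to replace the non-complete-intersection locus by a flat relative model (the cone) and reduce to iterated relative Cartier divisors.
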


\begin{proof}
We only have to check that the morphism $\pi_\cX \colon \cX \to S$ is flat.\  Locally over $S$, the rational linear projection $\mu \colon \P_S(\cW) \dashrightarrow \P_S(\bw2\cV_5)$ can be lifted to a linear closed embedding~$\P_S(\cW) \to \P_S(\cK \oplus \bw2\cV_5)$, where $\cK$ is a vector bundle over $S$.\ Consider the subscheme~$M_\cX \subset \P_S(\cW)$
defined as the zero locus of 
\begin{equation*}
\bq\vert_{\cV_5} \in H^0\bigl( \P_S(\cW), \pi_{\P_S(\cW)}^*\bigl(\cV_5^\vee \otimes \det(\cV_5) \otimes (\cV_6/\cV_5)^{\otimes 2}\bigr) \otimes \cO_{\P_S(\cW)}(2) \bigr).
\end{equation*}
By the commutativity of~\eqref{definition-gm-data}, this subscheme can be represented as
 \begin{equation*}
M_\cX = \P_S(\cW) \times_{\P_S(\sbw2\cV_5 \oplus \cK)} \Cone_{\P_S(\cK)}\Gr_S(2,\cV_5),
\end{equation*}
 where $\Cone_{\P_S(\cK)}\Gr_S(2,\cV_5) \subset \P_S(\cK \oplus \bw2\cV_5)$ is the cone over the relative Grassmannian~$\Gr_S(2,\cV_5)$ 
 with vertex $\P_S(\cK)$.\  Furthermore, on~$M_\cX$, the map $\bq$ defines a section 
\begin{equation*}
\bq_{\cV_6/\cV_5} \in H^0\bigl( M_\cX, \pi_{M_\cX}^*\bigl(\det(\cV_5) \otimes (\cV_6/\cV_5)\bigr) \otimes \cO_{\P_S(\cW)}(2)\vert_{M_\cX} \bigr) 
\end{equation*}
whose zero locus is   the subscheme $\cX \subset M_\cX \subset \P_S(\cW)$.

Since every fiber of  $\cX\to S$ has dimension $n$,  every fiber of $M_\cX$ has dimension at most~$n+1$.\ On the other hand, it is the intersection 
of a codimension-3 subvariety $\Cone_{\P_S(\cK)}\Gr_S(2,\cV_5) \subset \P_S(\bw2\cV_5 \oplus \cK)$ 
with the linear projective subbundle $\P_S(\cW)$ of dimension $n + 4$, hence each fiber has dimension at least $n + 1$.\ 
Combining these two observations, we see that each fiber
of $M_\cX$ has dimension $n + 1$,   hence the intersection (the fiber product) defining $M_\cX$ is dimensionally transverse.

Let us show that $M_\cX$ is flat over $S$.\  The cone $\Cone_{\P_S(\cK)}\Gr_S(2,\cV_5)$ is flat over~$S$  and $M_\cX$ is cut in it by  relative hyperplane sections.\ Since the intersection is dimensionally transverse, each of these hyperplanes decreases the dimension of fibers by 1, hence they form
a regular sequence at every fiber.\  This implies flatness of $M_\cX$ over $S$.

Finally, as observed above, $\cX$ is the zero locus of a global section of a line bundle on~$M_\cX$  and each fiber of $\cX$
has codimension 1 in the corresponding fiber of $M_\cX$.\ 
Therefore, this global section is not a zero divisor at every fiber, hence $\cX$ is also flat over $S$.
\end{proof}

We can restate Theorem~\ref{theorem:gm-substack-gm-data} as follows.

\begin{coro}
\label{corollary:stack-gm-data}
For each $n\in\{1,\dots,6\}$, the stack $\ucM_n$ of smooth polarized GM varieties of dimension~$n$ is equivalent 
to the stack $\ucMd_{n,\mathrm{smooth}}$ of smooth normalized GM data of dimension~$n$.
\end{coro}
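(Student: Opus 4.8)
The plan is to read the statement straight off Theorem~\ref{theorem:gm-substack-gm-data}, which already asserts that $\ucM_n$ is equivalent to the open substack of $\ucMd_n$ parameterizing families of \emph{smooth} normalized GM data of dimension~$n$. So the only thing left to do is to check that this open substack is exactly the fibered category $\ucMd_{n,\mathrm{smooth}}$ appearing in the corollary, and that it genuinely is a stack. The first point is purely a matter of unwinding Definition~\ref{def:gm-data-smooth}: by construction $\ucMd_{n,\mathrm{smooth}}$ is the full subcategory of $\ucMd_n$ consisting of those families $(\ns,\cW,\cV_6,\cV_5,\mu,\bq)$ all of whose geometric fibers are smooth GM varieties of dimension~$n$, which is precisely the substack of Theorem~\ref{theorem:gm-substack-gm-data}.

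The one point I would spell out is that this locus is \emph{open}, so that $\ucMd_{n,\mathrm{smooth}}$ is an open substack of the stack $\ucMd_n$ (Lemma~\ref{lemma:gm-data-stack}), hence itself a stack. For this I would invoke the computation carried out inside the proof of Lemma~\ref{lemma:gm-pointwise}: for any family of normalized GM data the intermediate subscheme $M_\cX \subset \P_S(\cW)$ is flat over $S$ with all fibers of dimension exactly $n+1$, and $\cX$ is cut out in $M_\cX$ by one further relative hyperplane section; thus $n$ is the \emph{expected} dimension of the associated GM intersection. Consequently the set of geometric points $s$ where $\dim(\cX_s)=n$ is open, and over that open locus, by \cite[Lemma~2.8]{DK}, $\cX_s$ is a smooth GM variety of dimension~$n$ if and only if it is smooth — again an open condition. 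Combining, the smooth locus is open, as required.

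Putting these observations together with Theorem~\ref{theorem:gm-substack-gm-data} gives the equivalence $\ucM_n \simeq \ucMd_{n,\mathrm{smooth}}$ for all $n\in\{1,\dots,6\}$. I do not expect any real obstacle: the corollary is essentially a change of notation, its entire content being already contained in Theorem~\ref{theorem:gm-substack-gm-data} and Lemma~\ref{lemma:gm-pointwise}; the only thing worth recording explicitly is the openness of the smooth locus, which follows from the expected-dimension argument above.
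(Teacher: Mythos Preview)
Your proposal is correct and follows the paper's own approach: the corollary is presented there as a literal restatement of Theorem~\ref{theorem:gm-substack-gm-data}, with the openness of the smooth locus discussed just before the theorem via the expected-dimension observation from Lemma~\ref{lemma:gm-pointwise}.

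One small inaccuracy in your openness paragraph: you claim that for \emph{any} family of normalized GM data the subscheme $M_\cX$ is flat with all fibers of dimension exactly $n+1$. The proof of Lemma~\ref{lemma:gm-pointwise} only establishes this under the hypothesis that every fiber $\cX_s$ already has dimension $n$; for an arbitrary family the fibers of $M_\cX$ could be larger. What the computation in that proof does give unconditionally is the \emph{lower} bound $\dim(M_{\cX,s}) \ge n+1$ (from the codimension count for the cone over the Grassmannian), hence $\dim(\cX_s) \ge n$. That lower bound is exactly what ``$n$ is the expected dimension'' means, and it, together with upper semicontinuity of fiber dimension and openness of smoothness, yields the desired openness. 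So your conclusion is right, but the intermediate appeal to flatness of $M_\cX$ should be replaced by the codimension/lower-bound argument.
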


From now on, we will  identify the stacks $\ucM_n$ and $\ucMd_{n,\mathrm{smooth}}$
by using the  equivalence above.\
In particular,  we will sometimes think of  {an $S$-point} of the stack $\ucM_n$ 
as   a family~$(\ns,\cW,\cV_6,\cV_5,\mu,\bq)$ of smooth normalized GM data over $S$.

\subsection{The ordinary and special substacks}
\label{subsection:ord-spe-substacks}

Let $(\cX \to S,\cH)$ be a family of smooth GM varieties.\ Consider the corresponding family $(\ns,\cW,\cV_6,\cV_5,\mu,\bq)$ of GM data.\ 
There is a commutative diagram
\begin{equation}
\label{definition-cokernel-sheaf}
\vcenter{\xymatrix@C=3em@R=0.5em{
\cV_5 \otimes  \det(\cV_5)^\vee \otimes ((\cV_6/\cV_5)^\vee)^{\otimes 2} \otimes \cW \ar[dr]^-\bq \ar[dd]_{\mu} & \\\
&\cW^\vee \\
\bw2\cV_5^\vee \otimes (\cV_6/\cV_5)^\vee \ar[ur]_-{\mu^T} ,
}}
\end{equation}
where the   vertical arrow is defined as the composition of the map $\mu$ with the canonical morphism
$\cV_5 \otimes \det(\cV_5)^\vee\otimes \bw2\cV_5 \to \bw2\cV_5^\vee$.\ The commutativity of~\eqref{definition-cokernel-sheaf} follows from the commutativity of~\eqref{definition-gm-data}.

\begin{lemm}\label{lemma15}
The cokernels of the horizontal maps in~\eqref{definition-cokernel-sheaf} are isomorphic.\ 
They are line bundles on a closed subscheme of $S$.
\end{lemm}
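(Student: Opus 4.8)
The plan is to identify both cokernels in~\eqref{definition-cokernel-sheaf} with a single sheaf, exploit the symmetry of the diagram to deduce the two maps have the same rank, and then invoke the vector-bundle lemmas of Section~\ref{subsection:degeneration-schemes}. First I would set up notation: write $\cA^\flat := \cV_5 \otimes \det(\cV_5)^\vee \otimes ((\cV_6/\cV_5)^\vee)^{\otimes 2} \otimes \cW$ for the source of the horizontal maps, $\cM := \bw2\cV_5^\vee \otimes (\cV_6/\cV_5)^\vee$ for the middle term, and $\cW^\vee$ for the target; call the two horizontal arrows $\alpha \colon \cA^\flat \to \cW^\vee$ (through $\bq$, or rather its transpose-flavoured version appearing in the diagram) and, after twisting appropriately, observe that the ``lower'' horizontal map $\mu^T \colon \cM \to \cW^\vee$ is, up to the canonical identification $\Sym^2\cW \to \Sym^2\cW^\vee{}^\vee$, the transpose of $\mu$ composed with a twist. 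The key structural fact I want is that $\alpha$ factors as $\cA^\flat \xrightarrow{\mu} \cM \xrightarrow{\mu^T} \cW^\vee$ up to line-bundle twists — this is precisely the commutativity of~\eqref{definition-cokernel-sheaf}.

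Once the factorization $\alpha = \mu^T \circ \mu$ (twisted) is in hand, the coincidence of cokernels will follow from a general principle about self-dual-type factorizations: if $\alpha = \beta \circ \gamma$ where $\gamma$ is \emph{surjective}, then $\Coker(\alpha) \cong \Coker(\beta)$. So the step I would carry out is to show that $\mu \colon \cA^\flat \to \cM$ is surjective. By Lemma~\ref{lemma-epi-points} this can be checked on geometric fibers, where it amounts to the statement that for a GM data set, the map $V_5 \otimes W \to \bw2 V_5^\vee$ induced by $\mu$ is surjective — equivalently (by duality) that $\bw2 V_5 \to V_5^\vee \otimes W^\vee$ is a fiberwise monomorphism, which holds because $\mu$ being part of a GM data set forces $W \to \bw2 V_5$ to have image not contained in any $\Gr(2,V_5)$-special position in a way that... here I expect to need to unwind~\cite[Definition~2.5]{DK} or cite the relevant nondegeneracy built into the definition of GM data. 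If instead $\mu$ is only \emph{fiberwise injective} rather than fibrewise surjective on this flavour, I would dualize the whole argument: write $\alpha = \mu^T \circ \mu$ with $\mu^T$ surjective (dual to $\mu$ fiberwise-mono), use the dual principle $\Coker(\beta \circ \gamma) \cong \Coker(\beta)$ when $\gamma$ is surjective applied now reading the composition from the other end, noting $\alpha^\vee = \mu^\vee \circ (\mu^T)^\vee$ and $\Coker(\alpha) = \Coker(\alpha^\vee)^\vee$ on the relevant locus. Either way, the upshot is $\Coker(\alpha)\cong\Coker(\mu^T)$ and, symmetrically, $\cong\Coker(\mu)$-type sheaf, giving the first assertion.

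For the second assertion — that this common cokernel is a line bundle on a closed subscheme of $S$ — I would apply Lemma~\ref{lemma-support-cokernel} to the morphism $\alpha \colon \cA^\flat \to \cW^\vee$ (or to whichever of $\mu, \mu^T$ is a morphism between bundles of equal generic rank after the factorization). I need to verify its two hypotheses: that $\alpha$ has positive rank $r$, and that $\bw{r-1}\alpha_s$ does not vanish at any geometric point $s$. On fibers this is the assertion that, for a GM data set over a field, the relevant linear map has corank exactly one or zero, never more — i.e.\ the ``$\EN$'' or Eisenbud–Popescu–Walter-type constraint, which is exactly the dichotomy between ordinary and special GM varieties mentioned in the introduction (injectivity versus noninjectivity of $\mu$, with the noninjectivity being one-dimensional). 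I would quote~\cite[Proposition~2.30]{DK} or the explicit fiberwise analysis of GM data there for this bound. Granting it, Lemma~\ref{lemma-support-cokernel} gives at once that the scheme-theoretic support is the degeneration scheme of $\alpha$ (a closed subscheme of $S$) and that $\Coker(\alpha)$ is the pushforward of a line bundle from it; combined with the cokernel identification of the previous paragraph, both horizontal cokernels are this line bundle.

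\textbf{Main obstacle.} The routine parts are the homological manipulations ($\Coker$ of a composition with a surjection, fiberwise criteria, Lemma~\ref{lemma-support-cokernel}); the genuine content — and the step I expect to be delicate — is the fiberwise corank bound: knowing that the map $\cV_5 \otimes \cW \to \cW^\vee$ (suitably twisted) drops rank by \emph{at most one} at every geometric point. This is where the specific geometry of GM data enters, and I would handle it either by direct linear algebra with the GM data relation~\eqref{eq:mu-q} or, cleanly, by citing the structure theory of~\cite[Section~2]{DK} (in particular the canonical direct-sum decomposition for special GM data sets, \cite[Proposition~2.30]{DK}), which encodes precisely that the kernel of $\mu$ on a special GM variety is one-dimensional.
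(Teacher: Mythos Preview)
Your overall strategy matches the paper's: show that the vertical arrow $\mu$ in~\eqref{definition-cokernel-sheaf} is surjective, deduce that the two horizontal cokernels agree, then bound the corank by~$1$ and invoke Lemma~\ref{lemma-support-cokernel}. The homological scaffolding and the use of Lemma~\ref{lemma-epi-points} to reduce to geometric fibers are exactly right.

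The gap is in the surjectivity step. You expect the fiberwise surjectivity of $V_5 \otimes W \to \bw2V_5^\vee$, $v \otimes w \mapsto v \wedge \mu(w)$, to be a nondegeneracy ``built into the definition of GM data''; it is not. The paper's argument is genuinely geometric and uses the \emph{smoothness} hypothesis in an essential way: if this map fails to be surjective, there is a nonzero $\xi \in \bw2V_5$ with $V_5 \wedge \xi \subset W_0^\perp$, where $W_0 = \Im(\mu)$. By~\cite[Proposition~3.13]{DK} one has $W_0^\perp = A \cap \bw3V_5$ for the Lagrangian $A$ associated with the GM variety. But $\P(V_5 \wedge \xi)$ always contains at least a $\P^2$ of decomposable vectors, whereas by~\cite[Theorem~3.16 and Remark~3.17]{DK} the space $A \cap \bw3V_5$ contains at most a curve of them (none when $n \ge 3$). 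Neither Definition~2.5 nor Proposition~2.30 of~\cite{DK} will give you this; you need the bridge to Lagrangian data and the no-decomposable-vectors theorem.

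For the corank bound itself the paper cites~\cite[Proposition~2.28]{DK} (not~2.30), which directly gives $\dim\Ker(\mu_s) \le 1$; your instinct to use the special/ordinary dichotomy is correct, but the precise reference is the earlier proposition.
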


\begin{proof}
Let us show that the  arrow $\mu$ in~\eqref{definition-cokernel-sheaf} is surjective.\ 
By Lemma~\ref{lemma-epi-points}, this can be done pointwise, so it is enough to consider the case 
where $S $ is the spectrum of  {an algebraically closed} field~$\K$.\  
Then, $(\ns,\cW,\cV_6,\cV_5,\mu,\bq) $  is just a normalized GM data set $ (W,V_6,V_5,\mu,\bq)$ over~$\K$.

Assume that the map $\mu$ is not surjective.\ 
Trivializing  $\det (V_5)$ and $V_6/V_5$ for simplicity, we can rewrite the nonsurjectivity
condition as follows: there is an element $\xi \in \bw2V_5$ such that the subspace $W_0 = \Im(\mu) \subset \bw2V_5$ 
is orthogonal to the space $V_5 \wedge \xi \subset \bw3V_5$, that is, 
 \begin{equation*}
V_5 \wedge \xi \subset W_0^\perp.
\end{equation*}
The space $\P(V_5 \wedge \xi)$ contains quite a lot of decomposable vectors---if $\xi$
is decomposable, $\P(V_5 \wedge \xi) \cong \P^2$   consists of decomposable vectors only, 
while if $\xi$ has rank 4, $\P(V_5 \wedge \xi) \cong \P^4$   contains a $\P^3$ of decomposable vectors.\  But by~\cite[Proposition~3.13]{DK}, the space $W_0^\perp $ is equal to $A \cap \bw3V_5$, 
where $A \subset \bw3V_6$ is the Lagrangian subspace associated with $X$ by~\cite[Theorem~3.6]{DK}.\ By~\cite[Theorem~3.16]{DK}, for smooth GM varieties of dimension $n \ge 3$,
it contains no decomposable vectors, and for smooth GM curves and surfaces, it contains at most a  curve
of decomposable vectors (\cite[Remark~3.17]{DK}).\  The  arrow $\mu$ in~\eqref{definition-cokernel-sheaf} is therefore surjective.\ 

The isomorphism between the cokernels of the horizontal arrows then follows by abstract nonsense.\  Finally, the rank of the cokernel sheaves is at most 1   by~\cite[Proposition~2.28]{DK}.\  Therefore, it is a line bundle on a subscheme of $S$ by Lemma~\ref{lemma-support-cokernel}.
\end{proof}

\begin{defi}
\label{definition:gm-special-locus}
Given a GM data set over a scheme $S$, consider the cokernel sheaf 
\begin{equation}
\label{eq:cokernel-sheaf}
\cC := \Coker(\mu^T) \cong \Coker(\bq)
\end{equation} 
  discussed in Lemma~\ref{lemma15} and denote by 
$ S_{\gmspe} \subset S
 $
its (closed) scheme-theoretic support ({we call it} the {\sf GM-special locus}) and by $\cJ_{\gmspe} \subset \cO_S$
 its ideal.\ 
 
 By Lemma~\ref{lemma-support-cokernel}, the  scheme $S_\gmspe$ is the degeneration scheme for 
both morphisms $\mu^T$ and $\bq$ in~\eqref{definition-cokernel-sheaf}.\ We further define 
\begin{equation*}
S_{\gmord} := S \setminus S_{\gmspe}
\end{equation*}
to be the open complement of $S_{\gmspe}$ in $S$ ({we call it} the {\sf GM-ordinary locus}).
 \end{defi}

\begin{lemm}
\label{lemma:gm-ord-spe-substacks}
For each $n\in\{1,\dots,6\}$, there is an open substack $\ucM_{n,{\rm ord}} \subset \ucM_n$ 
and a closed substack $\ucM_{n,{\rm spe}} \subset \ucM_n$ such that 
\begin{equation*}
\begin{aligned}
\ucM_{n,{\rm ord}}(S) &= \{ (\cX \to S,\cH) \in \ucM_n(S) \mid S_{\rm spe} = \emptyset \},\\
\ucM_{n,{\rm spe}}(S) &= \{ (\cX \to S,\cH) \in \ucM_n(S) \mid S_{\rm spe} = S \}.
\end{aligned}
\end{equation*}
Moreover,  {$\ucM_{n,{\rm ord}}$ is the open complement of the closed substack $\ucM_{n,{\rm spe}} \subset \ucM_n$}.
\end{lemm}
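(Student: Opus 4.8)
The plan is to show that the construction of Definition~\ref{definition:gm-special-locus} is intrinsic to a family of GM varieties and compatible with base change, so that it glues over a presentation of $\ucM_n$ to a closed substack $\ucM_{n,{\rm spe}}$; the ordinary substack will then be defined as its open complement, and the two displayed descriptions will fall out from the functor-of-points characterization of a closed substack and its complement. Throughout we use the equivalence $\ucM_n\cong\ucMd_{n,\mathrm{smooth}}$ of Corollary~\ref{corollary:stack-gm-data} to think of an $S$-point of $\ucM_n$ as a family of smooth normalized GM data.

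\textbf{Step 1 (well-definedness and base change).} Given a family $(\cX\to S,\cH)$, choose locally on $S$ a representing family $(\ns,\cW,\cV_6,\cV_5,\mu,\bq)$ of smooth normalized GM data. By Lemma~\ref{lemma15} and Lemma~\ref{lemma-support-cokernel}, the subscheme $S_{\gmspe}$ is the degeneration scheme of $\mu^T$, equivalently of $\bq$, i.e. the closed subscheme cut out by the zeroth Fitting ideal of the cokernel sheaf $\cC$ of~\eqref{eq:cokernel-sheaf}. Since $\zeta$ is fully faithful, any two local representatives differ by an isomorphism of GM data, which by Remark~\ref{remark:linear-lift} lifts locally to linear isomorphisms $\widetilde\varphi_V,\widetilde\varphi_W$ compatible with $\mu$ and $\bq$; such isomorphisms conjugate $\mu^T$ and $\bq$ up to twists by line bundles, hence identify the cokernel sheaves and, a fortiori, their scheme-theoretic supports. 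As the support is a subscheme and not additional data, these local identifications glue automatically, so $S_{\gmspe}\subset S$ is independent of the chosen representative. Compatibility with base change is the standard fact that Fitting ideals (equivalently, ideals locally generated by minors, which define degeneration schemes) commute with pullback; hence for any morphism $f\colon T\to S$ of GM families, inducing a cartesian square, one has $T_{\gmspe}=f^{-1}(S_{\gmspe})$ scheme-theoretically.

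\textbf{Step 2 (descent and the two substacks).} Pick a smooth atlas $U\to\ucM_n$ with $U$ a scheme, carrying a family of smooth normalized GM data, and let $U_{\gmspe}\subset U$ be its GM-special locus. By Step~1 the two pullbacks of $U_{\gmspe}$ to $U\times_{\ucM_n}U$ agree, so $U_{\gmspe}$ descends to a closed substack $\ucM_{n,{\rm spe}}\subset\ucM_n$ with pullback $U_{\gmspe}$ to $U$; the inclusion is a closed immersion since this may be checked after the faithfully flat base change $U\to\ucM_n$. Let $\ucM_{n,{\rm ord}}\subset\ucM_n$ be the open complement of $\ucM_{n,{\rm spe}}$; its pullback to $U$ is $U\smallsetminus U_{\gmspe}=U_{\gmord}$, and being an open substack it is a stack. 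Finally, for $(\cX\to S,\cH)\in\ucM_n(S)$ classified by $S\to\ucM_n$, Step~1 identifies $S\times_{\ucM_n}\ucM_{n,{\rm spe}}$ with the subscheme $S_{\gmspe}\subset S$; therefore $S\to\ucM_n$ factors through the closed substack $\ucM_{n,{\rm spe}}$ iff $S_{\gmspe}=S$, and through its open complement $\ucM_{n,{\rm ord}}$ iff $S_{\gmspe}=\emptyset$. This yields both displayed formulas and the last assertion.

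\textbf{Main obstacle.} The only real content lies in Step~1: one must be certain that $S_{\gmspe}$ is genuinely attached to the GM variety rather than to the auxiliary GM data. This is exactly where the local linearization of isomorphisms of GM data (Remark~\ref{remark:linear-lift}) together with the base-change invariance of Fitting ideals is needed; once these are in place, Steps~2 and~3 are the routine formalism of translating between a closed subfunctor and a closed substack via a smooth presentation, and of passing to an open complement.
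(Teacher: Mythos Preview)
Your proof is correct and follows essentially the same approach as the paper's: the key point is that the formation of $S_{\gmspe}$ is compatible with base change, which the paper deduces in one line from the right exactness of pullback (so the cokernel sheaf $\cC$, hence its scheme-theoretic support, commutes with base change), while you phrase it equivalently via Fitting ideals. Your Step~1 also makes explicit the independence of $S_{\gmspe}$ from the choice of GM data representative, which the paper leaves implicit; this is a reasonable thing to spell out, though once base-change compatibility is known the descent argument of Step~2 is routine (and your reference to a ``Step~3'' in the final paragraph appears to be a typo).
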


\begin{proof}
It is enough to prove that the formation of the  {GM-ordinary} $S_{\gmord} \subset S$ and  {GM-special} $S_{\gmspe} \subset S$  {loci} is functorial in $S$,
that is, that it is compatible with base change.\ This follows from the fact that the formation of the cokernel sheaf commutes 
with base change (since the pullback functor is right exact).
\end{proof}

By~\cite[Section~2.5]{DK}, the open substack $\ucM_{n,{\rm ord}} \subset \ucM_n$ classifies 
families of smooth ordinary GM varieties of dimension $n$,
while the closed substack $\ucM_{n,{\rm spe}} \subset \ucM_n$ classifies families of smooth special GM varieties.

 {In the case $n = 2$, consider also the open substack 
\begin{equation}
\label{eq:ucm-ss}
 {\ucMtwo} \subset \ucM_{2,{\rm ord}}
\end{equation} 
that classifies strongly smooth ordinary GM surfaces (\cite[Definition~3.15]{DK}).}

\begin{lemm}
\label{eq:special-gerbe}
The stacks $\ucM_{n,{\rm ord}}$   for $n \in \{2,\dots,5\}$  and the stacks $\ucM_{n,{\rm spe}}$  for $n \in \{3,\dots,6\}$   
are smooth Deligne--Mumford stacks of finite type over $\Q$ with affine diagonals of finite type.\ We have
\begin{equation*}
\dim(\ucM_{n,{\rm ord}}) = 25 - (5-n)(6-n)/2 ,
\qquad
\dim(\ucM_{n,{\rm spe}}) = 25 - (6-n)(7-n)/2.
\end{equation*}
In particular,  for $n \in \{3,\dots,6\}$, the stack~$\ucM_{n,{\rm spe}}$ has codimension $6 - n$ in~$\ucM_n$.

 {For $n \ge 4$, the stack~$\ucM_{n,{\rm spe}}$ is a~$\bmu_2$-gerbe over the stack~$\ucM_{n-1,{\rm ord}}$, 
and the stack~$\ucM_{3,{\rm spe}}$ is a~$\bmu_2$-gerbe over the stack ${\ucMtwo}$.}
 \end{lemm}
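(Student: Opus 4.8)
The plan is to establish the two separate assertions of Lemma~\ref{eq:special-gerbe} by reducing each to results already available. For the first assertion (smoothness, finite type, affine diagonal, dimension), I would argue as follows. Smoothness, finite type over $\Q$, and the affine diagonal property for $\ucM_{n,{\rm ord}}$ follow immediately from Proposition~\ref{proposition:mgm-dm} together with Lemma~\ref{lemma:gm-ord-spe-substacks}, since an open substack of a smooth Deligne--Mumford stack of finite type with affine diagonal has the same properties. The dimension of $\ucM_{n,{\rm ord}}$ equals that of $\ucM_n$ (it is a dense open substack, by irreducibility in Proposition~\ref{proposition:mgm-dm}), which is $25-(5-n)(6-n)/2$. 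For $\ucM_{n,{\rm spe}}$, I would first establish the gerbe statement (second assertion) and then derive its properties and dimension \emph{from} $\ucM_{n-1,{\rm ord}}$ (respectively $\ucMtwo$): a $\bmu_2$-gerbe over a smooth DM stack of finite type with affine diagonal is again such, and has the same dimension, so $\dim(\ucM_{n,{\rm spe}}) = \dim(\ucM_{n-1,{\rm ord}}) = 25-(5-(n-1))(6-(n-1))/2 = 25-(6-n)(7-n)/2$. The codimension statement is then $\dim\ucM_n - \dim\ucM_{n,{\rm spe}} = (6-n)(7-n)/2 - (5-n)(6-n)/2 = 6-n$.

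The heart of the lemma is therefore the gerbe statement, and this is where I would spend the real work. The idea is to use the bijection of \cite[Lemma~2.33]{DK} between special GM varieties of dimension~$n$ and ordinary GM varieties of dimension~$n-1$, promoted to a morphism of stacks and analyzed fiberwise. Concretely, given a family $(\cX\to S,\cH)$ in $\ucM_{n,{\rm spe}}(S)$ with associated GM data $(\ns,\cW,\cV_6,\cV_5,\mu,\bq)$, the condition $S_\gmspe=S$ means $\mu^T$ (equivalently $\bq$, by Lemma~\ref{lemma15}) has a line-bundle cokernel $\cC$ on all of $S$; by \cite[Proposition~2.30]{DK}, over each geometric point this produces the canonical orthogonal decomposition of $W_s$ into the "ordinary part" and a line, and $\ker(\mu_s)$ is that line. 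In families, $\ker(\mu\colon\cW\to\bw2\cV_5\otimes(\cV_6/\cV_5))$ is a line subbundle $\cK_\cX\subset\cW$ (it is locally free because the cokernel of $\mu^T$ is a line bundle on $S$, so $\mu$ has constant rank — apply Lemma~\ref{lemma-kernel-cokernel-lf} after identifying $\mu$ up to twist with $\mu^T$ via the self-duality of $\bw2\cV_5$). Quotienting, $\cW' := \cW/\cK_\cX$ together with the induced $\mu'$ and the restriction of $\bq$ to $\cW'$ (which makes sense by the orthogonality of the decomposition, again from \cite[Proposition~2.30]{DK}) gives a family of smooth \emph{ordinary} GM data of dimension $n-1$ — here one checks smoothness fiberwise using \cite[Lemma~2.33]{DK}. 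This defines a morphism of stacks $\ucM_{n,{\rm spe}}\to\ucM_{n-1,{\rm ord}}$ (resp. $\to\ucMtwo$ when $n=3$, using that the surface produced is strongly smooth, cf. \cite[Definition~3.15]{DK}).

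To see this morphism is a $\bmu_2$-gerbe, I would check the two defining properties: (i) it is an epimorphism, and (ii) the diagonal relative to it is $\bmu_2$. For (i), the inverse construction of \cite[Lemma~2.33]{DK} shows every ordinary GM data set of dimension $n-1$ extends, after an étale (in fact a square-root of a line bundle) cover, to a special one of dimension $n$: given $(\cW',\cV_6,\cV_5,\mu',\bq')$, one forms $\cW = \cW'\oplus\cN$ with $\mu$ zero on $\cN$ and $\bq$ the orthogonal sum of $\bq'$ and the rank-$1$ form on $\cN$ attached to the extension data, where $\cN$ is a line bundle whose square is pinned down by the GM data relations — the ambiguity in choosing $\cN$ (a square root of a fixed line bundle) is precisely what makes the fibers $\bmu_2$-gerbes rather than trivial. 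For (ii), an automorphism of a special GM family inducing the identity on the associated ordinary family must act trivially on $\cW'$ and on $\cV_6$, hence preserve $\cK_\cX$ and act on it by a scalar; compatibility with $\bq$ (whose restriction to $\cK_\cX$ is a nonzero quadratic form, by the "vanishes nowhere" part of \cite[Proposition~2.30]{DK}) forces that scalar to be $\pm1$, giving $\bmu_2$. The \textbf{main obstacle} I anticipate is (i)–the essential surjectivity up to gerbe–carried out in families over a possibly nonreduced base: one must produce the line bundle $\cN$ with $\cN^{\otimes 2}$ prescribed and verify the GM data relation~\eqref{eq:mu-q} holds for the reconstructed $(\mu,\bq)$, which is exactly the kind of fiberwise-to-global argument the Hecke transform of Section~\ref{subsection:hecke} is designed to handle; indeed I expect the clean way to do this is to invoke Proposition~\ref{proposition-hecke-transform} with $D=E=\varnothing$ in the trivial case or, more honestly, to note that over the special locus the construction of Section~\ref{subsection-from-lag-to-gm} already packages precisely this reconstruction, so the gerbe statement can be read off from the compatibility of that construction with \cite[Proposition~2.30]{DK} recorded at the end of Proposition~\ref{proposition-hecke-transform}.
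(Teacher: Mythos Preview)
Your overall strategy matches the paper's: deduce the properties of $\ucM_{n,\mathrm{ord}}$ from those of $\ucM_n$ via Proposition~\ref{proposition:mgm-dm}, prove the $\bmu_2$-gerbe statement, and then read off the properties and dimension of $\ucM_{n,\mathrm{spe}}$ from those of $\ucM_{n-1,\mathrm{ord}}$ (respectively $\ucMtwo$). The dimension and codimension computations are identical.

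For the gerbe statement, however, the paper takes a different and cleaner route than you do. Rather than constructing the morphism $\ucM_{n,\mathrm{spe}}\to\ucM_{n-1,\mathrm{ord}}$ directly and verifying the gerbe axioms (epimorphism plus relative diagonal $\bmu_2$), the paper first lifts to the stacks $\tucMd_{n,\mathrm{spe}}$ and $\tucMd_{n-1,\mathrm{ord}}$ of \emph{linearized} GM data (Remark~\ref{remark:linear-lift}) and proves that $\tucMd_{n,\mathrm{spe}}$ is the root stack of $\tucMd_{n-1,\mathrm{ord}}$ associated with the line bundle $\det(\cV_6)$, in the sense of \cite[Section~B.1]{agv}. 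Concretely, the paper sets up an equivalence between linearized special GM data $(\cW,\cV_6,\cV_5,\mu,\bq)$ of dimension~$n$ and linearized ordinary GM data $(\cW_0,\cV_6,\cV_5,\mu_0,\bq_0)$ of dimension~$n-1$ equipped with a line bundle $\cW_1$ and an isomorphism $\bq_1\colon\Sym^2\!\cW_1\isomto\det(\cV_6)$; this is exactly the datum of a point of the root stack. The passage to linearized data is essential here because $\det(\cV_6)$ is not a well-defined line bundle on $\ucMd_{n-1,\mathrm{ord}}$ (morphisms there are only given on projectivizations). One then descends to the $\Gm$-rigidifications to obtain the $\bmu_2$-gerbe $\ucM_{n,\mathrm{spe}}\to\ucM_{n-1,\mathrm{ord}}$. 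What this buys is that the square-root ambiguity you identify as the main obstacle is absorbed into the definition of the root stack, so no separate surjectivity or automorphism computation is needed.

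Your direct approach can be made to work, but your proposed resolution of the obstacle is not the right one. The Hecke transform of Section~\ref{subsection:hecke} and the construction of Section~\ref{subsection-from-lag-to-gm} are designed to produce GM data from Lagrangian data over a base where the special locus is a doubled Cartier divisor; over the special substack the divisor $E$ is the whole base, and these constructions degenerate to something trivial (invoking Proposition~\ref{proposition-hecke-transform} with $D=E=\varnothing$ says nothing). The correct fix for your route is simply to observe that \'etale-locally on $S$ a square root of $\det(\cV_6)$ exists, so the inverse construction of \cite[Lemma~2.33]{DK} gives local sections of your morphism, which is exactly what ``epimorphism of stacks'' requires; the automorphism computation then goes through once you unwind the projectivization-based definition of morphisms via Remark~\ref{remark:linear-lift}. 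This is essentially a hands-on version of the root-stack identification the paper uses.
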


\begin{proof}
Since $\ucM_{n,{\rm ord}}$ is an open substack in~$\ucM_n$, 
it inherits the properties of the latter established in Proposition~\ref{proposition:mgm-dm}; in particular, it has the same dimension.

We   show next that  {for $n \ge 4$}, the stack~$\ucM_{n,{\rm spe}}$ is a~$\bmu_2$-gerbe over the stack $\ucM_{n-1,{\rm ord}}$.\
Consider the $\Gm$-gerbes 
\begin{equation*}
\tucMd_{n,\mathrm{spe}} \to \ucM_{n,{\rm spe}}
,\qquad 
\tucMd_{n-1,\mathrm{ord}} \to \ucM_{n-1,{\rm ord}}
\end{equation*}
obtained by passing to linearized data  {(see Remark~\ref{remark:linear-lift})}, 
so that the arrows are rigidifications  {functors} with respect to the natural $\Gm$-actions.\
We will show that $\tucMd_{n,\mathrm{spe}}$ is a $\bmu_2$-gerbe over $\tucMd_{n-1,\mathrm{ord}}$ and that the corresponding
$\bmu_2$- and $\Gm$-actions on objects of $\tucMd_{n,\mathrm{spe}}$ commute.\
This will prove that, after passing to $\Gm$-rigidifications, 
there is a morphism of stacks~$\ucM_{n,{\rm spe}} \to \ucM_{n-1,{\rm ord}}$ which is a $\bmu_2$-gerbe.

To be more precise, we will  show that $\tucMd_{n,\mathrm{spe}}$ is the root stack over $\tucMd_{n-1,\mathrm{ord}}$
associated with its  line bundle $\det(\cV_6)$ in the sense of~\cite[Section~B.1]{agv} 
(in fact, this is the reason why we pass to stacks of linearized data,
since the line bundle~$\det(\cV_6)$ is just not defined on the stack $\ucMd_{n-1,\mathrm{ord}}$).\ 
For this, we check that the groupoid of linearized families~$(\ns,\cW,\cV_6,\cV_5,\mu,\bq)$ of special GM data of dimension~$n$ over a scheme $S$
is equivalent to the groupoid of linearized families $(\ns,\cW_0,\cV_6,\cV_5,\mu_0,\bq_0)$ of ordinary GM data of dimension $n-1$ 
equipped with a line bundle $\cW_1$ and an isomorphism 
\begin{equation*}
\bq_1 \colon \Sym^2\!\cW_1  \isomto \det(\cV_6).
\end{equation*}

Indeed, given a family of special linearized GM data over a scheme $S$, we set 
\begin{equation*}
\cW_1 := \Ker(\cW \xrightarrow{\ \mu\ } \bw2\cV_5 \otimes (\cV_6/\cV_5)).
\end{equation*}
This is a line bundle because, by definition of special GM data  {and Lemma~\ref{lemma15}}, 
the map $\mu$ has constant rank~$n+4$, while $\cW$ is a vector bundle of rank $n+5$.\ Furthermore, we consider the map
\begin{equation*}
\cV_6 \otimes \Sym^2\!\cW_1  \hookrightarrow 
\cV_6 \otimes \Sym^2\!\cW  \xrightarrow{\ \bq\ } 
\det(\cV_5) \otimes (\cV_6/\cV_5)^{\otimes2} \isomto 
\det(\cV_6) \otimes (\cV_6/\cV_5).
\end{equation*}
By~\eqref{eq:mu-q}, this map vanishes on the subbundle $\cV_5 \otimes \Sym^2\!\cW_1 \subset \cV_6 \otimes \Sym^2\!\cW_1$ 
hence factors through a morphism $(\cV_6/\cV_5) \otimes \Sym^2\!\cW_1 \to \det(\cV_6) \otimes (\cV_6/\cV_5)$.\  Twisting it by~$(\cV_6/\cV_5)^\vee$, we obtain a morphism 
\begin{equation*}
\Sym^2\!\cW_1  \lra \det(\cV_6)
\end{equation*}
which we denote by $\bq_1$.\  
It is surjective by  {Lemma~\ref{lemma-epi-points} and}~\cite[Lemma~2.33]{DK}, hence an isomorphism by Lemma~\ref{lemma-kernel-cokernel-lf}, 
since   its source and target are both line bundles.\ 
Finally,  by~\cite[Proposition~2.30]{DK}, there is a canonical direct sum decomposition $\cW \cong \cW_0 \oplus \cW_1$ 
(orthogonal with respect to all quadrics).\  
We denote by $\mu_0$ and $\bq_0$ the restrictions of~$\mu$ to~$\cW_0$ and of~$\bq$ to~$\cV_6 \otimes \Sym^2\!\cW_0 $.\  
By~\cite[Lemma~2.33]{DK}, $(\ns,\cW_0,\cV_6,\cV_5,\mu_0,\bq_0)$ is a linearized family of ordinary GM data of dimension $n-1$.\  
This defines a functor between the groupoids   (the action of the functor on morphisms is obvious).

Conversely, assume we are given a family $(\ns,\cW_0,\cV_6,\cV_5,\mu_0,\bq_0)$ of ordinary GM data of dimension $n-1$, 
a line bundle $\cW_1$, and an isomorphism $\bq_1\colon\Sym^2\!\cW_1  \isomto \det(\cV_6)$.\
We set 
\begin{equation*}
 \cW := \cW_0 \oplus \cW_1, 
 \qquad 
 \mu = (\mu_0,0),
 \qquad
 \bq = \bq_0 \oplus \bq_1,
\end{equation*}
where, by an abuse of notation, we denote by $\bq_1$ the map 
\begin{equation*}
\cV_6 \otimes \Sym^2\!\cW_1  \twoheadrightarrow 
(\cV_6/\cV_5) \otimes \Sym^2\!\cW_1 \xrightarrow{\ \bq_1\ } 
(\cV_6/\cV_5) \otimes \det(\cV_6) \isomto \det(\cV_5) \otimes (\cV_6/\cV_5)^{\otimes 2}.
\end{equation*}
By~\cite[Lemma~2.33]{DK}, this is a family of smooth special GM data of dimension $n$.

It is straightforward to see that the constructed functors are mutually inverse, so the groupoids are equivalent, 
hence $\tucMd_{n,\mathrm{spe}}$ is the root stack over $\tucMd_{n-1,\mathrm{ord}}$ 
and $\ucM_{n,{\rm spe}}$ is a~$\bmu_2$-gerbe over $\ucM_{n-1,{\rm ord}}$.

For $n = 3$, the argument is the same; the only difference is that the ordinary GM surface associated 
with a smooth special GM threefold is automatically strongly smooth {(see~\cite[Section~3.4]{DK})}.

This implies that $\ucM_{n,{\rm spe}}$ is a smooth Deligne--Mumford stack and gives its dimension.\
 {The other properties of $\ucM_{n,{\rm spe}}$ follow from Proposition~\ref{proposition:mgm-dm}, since it is a closed substack in~$\ucM_n$.}
The statement about the codimension is   a simple computation.
\end{proof}

\begin{rema}
\label{remark:special-rigidification}
The proof of Lemma~\ref{eq:special-gerbe} shows that the automorphism group scheme 
of each object of the stack $\ucM_{n,{\rm spe}}$ contains the constant group scheme $\bmu_2$
and that the morphism of stacks~$\ucM_{n,{\rm spe}} \to \ucM_{n-1,{\rm ord}}$ is the $\bmu_2$-rigidification.
\end{rema}

\subsection{Lagrangian data}
\label{subsection:lagrangian-data}

In~\cite[Section~3]{DK}, we explained the relation between GM and Lagrangian data sets.\ We now define   families of Lagrangian data and show that they form a stack.

\begin{defi}
\label{definition:lagrangian-data}
A {\sf family of Lagrangian data over a scheme $S$} is a   quadruple  $(\ns,\cV_6,\cV_5,\cA)$, 
where~$\cV_6$ is a vector bundle of rank~6 on $S$, 
$\cV_5 \subset \cV_6$ is a subbundle of corank~1, 
and~\mbox{$\cA \subset \bw3\cV_6$} is a Lagrangian subbundle.

{A morphism} between families of Lagrangian data  $(\ns,\cV_6,\cV_5,\cA)$ and $(\ns',\cV'_6,\cV'_5,\cA')$   
is a pair $(f,\varphi)$ fitting into a Cartesian square
\begin{equation*}
\xymatrix{
\P_S(\cV_6) \ar[r]^-{\varphi} \ar[d] & 
\P_{S'}(\cV'_6) \ar[d]
\\
S \ar[r]^-f &
S'
}
\end{equation*}
and such that $\varphi(\P_S(\cV_5)) = \P_{S'}(\cV'_5)$ and $(\bw3\varphi)(\P_S(\cA)) = \P_{S'}(\cA')$.
\end{defi}

Families of Lagrangian data form a category fibered in groupoids over the category $\Sch/\k$, which we denote $\cmlag$.

\begin{rema}
\label{remark:linear-lagrangian-lift}
As for GM data (see Remark~\ref{remark:linear-lift}), 
we can define a category~$\tcmlag$ of families of linearized Lagrangian data (fibered in groupoids over $\Sch/\k$)
with the same objects as in~$\cmlag$ but with morphisms defined as pairs $(f,\widetilde\varphi)$ 
formed by a morphism $f \colon S \to S'$ and an isomorphism $\widetilde\varphi \colon \cV_6 \isomto f^*\cV'_6$
such that $\widetilde\varphi(\cV_5) = f^*\cV'_5$ and~$(\bw3{\widetilde\varphi})(\cA) = f^*\cA'$.

Denoting by $\widetilde\Aut(\ns,\cV_6,\cV_5,\cA)$ the automorphism group scheme in $\tcmlag$,
we obtain an embedding of group schemes
\begin{equation}
\label{eq:gm-action-lagrangian-data}
\Gm(S) \lhra \widetilde\Aut(\ns,\cV_6,\cV_5,\cA) 
\end{equation}
that takes an invertible function $u$ to the automorphism given by $\widetilde\varphi = u$.\ We have 
\begin{equation*}
\Aut(\ns,\cV_6,\cV_5,\cA) \cong \widetilde\Aut(\ns,\cV_6,\cV_5,\cA)/\Gm(S) 
\end{equation*}
and the fibered category of Lagrangian data is the rigidification of the fibered category of linearized Lagrangian data
with respect to the embeddings~\eqref{eq:gm-action-lagrangian-data}.

This observation implies  that any morphism in $\cmlag$ over $f \colon S \to S'$
can be locally over $S'$ lifted to a morphism in $\tcmlag$ (and such a lifting is unique up to the composition with the action of $\Gm(S)$).
In what follows, we will frequently use such a lifting.
\end{rema}

 {The argument of the proof of Lemma~\ref{lemma:gm-data-stack} implies} the following.

\begin{lemm}
The fibered categories $\cmlag$ and $\tcmlag$ are stacks over $\Sch/\k$.
\end{lemm}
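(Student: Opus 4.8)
The plan is to imitate the proof of Lemma~\ref{lemma:gm-data-stack}, treating the two fibered categories separately. The easier case is $\tcmlag$, the category of \emph{linearized} Lagrangian data: an object over $S$ is nothing but a quasicoherent sheaf $\cV_6$ on $S$ together with two sub-objects $\cV_5 \subset \cV_6$ and $\cA \subset \bw3\cV_6$, all subject to pointwise conditions (local freeness of the prescribed ranks, the subbundle condition, and Lagrangianness) that are stable under base change. Since quasicoherent sheaves, together with their morphisms, form a stack in the fppf (hence also the \'etale) topology, and since all the defining conditions are local on the base and compatible with pullback, the descent data glue: given an \'etale (or fppf) cover $\{S_i \to S\}$, a compatible family of objects $(\ns_i,\cV_{6,i},\cV_{5,i},\cA_i)$ with gluing isomorphisms satisfying the cocycle condition descends to a sheaf $\cV_6$ on $S$ with the two sub-sheaves, and one checks fiberwise that $\cV_6$, $\cV_5$ and $\cA$ remain vector bundles of the right ranks and that $\cA$ stays Lagrangian (each is a closed condition that can be verified after faithfully flat base change). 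Morphisms descend by the same sheaf-theoretic gluing applied to the isomorphisms $\widetilde\varphi$. This shows $\tcmlag$ is a stack.

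For $\cmlag$ itself, the subtlety is that morphisms are defined only projectively (pairs $(f,\varphi)$ of maps of projective bundles), so one cannot directly glue sheaf isomorphisms. Here I would invoke the rigidification description recorded in Remark~\ref{remark:linear-lagrangian-lift}: the fibered category $\cmlag$ is the rigidification of $\tcmlag$ along the subgroup $\Gm \hookrightarrow \widetilde\Aut$ of~\eqref{eq:gm-action-lagrangian-data}. Since $\tcmlag$ is a stack (just proved) and the $\Gm$-action is by a flat group scheme sitting inside the automorphism group schemes in a way compatible with base change, the rigidification $\tcmlag /\!\!/ \Gm$ is again a stack; equivalently, this is exactly the situation handled by~\cite[Theorem~5.1.5]{acv}, cited in the proof of Lemma~\ref{lemma:gm-data-stack} for the analogous statement about $\ucMd_n$ versus $\tucMd_n$. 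Concretely: objects of $\cmlag$ are the same as objects of $\tcmlag$, so effectivity of descent for objects is inherited; and a morphism in $\cmlag$ over $f$ can, by Remark~\ref{remark:linear-lagrangian-lift}, be lifted locally on the source to a morphism in $\tcmlag$, unique up to the $\Gm(S)$-action, so descent for morphisms reduces to descent in $\tcmlag$ modulo the $\Gm$-torsor of choices, which is controlled by the fact that $\Gm$ is a sheaf.

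The only real point requiring care — the "main obstacle" such as it is — is verifying that the descended sheaf in the linearized case really is a \emph{vector bundle} of the prescribed constant rank and that the descended $\cV_5$, $\cA$ remain \emph{sub-bundles} with the Lagrangian property, i.e. that none of these conditions is lost under fppf descent. This is standard (local freeness, the subbundle property, and isotropy of $\cA$ for the $\det(\cV_6)$-valued symplectic form are all fppf-local on the base), so in the write-up I would simply note that these properties satisfy fppf descent and refer to the proof of Lemma~\ref{lemma:gm-data-stack} and to~\cite[Theorem~5.1.5]{acv} for the passage from $\tcmlag$ to $\cmlag$, exactly as the excerpt already anticipates with the phrase ``The argument of the proof of Lemma~\ref{lemma:gm-data-stack} implies''.
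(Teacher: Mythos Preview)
Your proposal is correct and follows exactly the approach the paper intends: for $\tcmlag$ you use that quasicoherent sheaves form a stack and that the defining conditions are fppf-local, and for $\cmlag$ you invoke the rigidification theorem~\cite[Theorem~5.1.5]{acv}. This is precisely the argument of Lemma~\ref{lemma:gm-data-stack}, which is all the paper itself cites here.
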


Given a family of Lagrangian data $(\ns,\cV_6,\cV_5,\cA)$, 
we consider the natural epimorphism 
\begin{equation*}
\lambda \colon \cV_6 \lra \cV_6/\cV_5.
\end{equation*}
For each $p\in\{1,\dots,6\}$, it extends by the Leibniz rule to an epimorphism 
\begin{equation*}
\lambda_p \colon \bw{p}\cV_6 \lra \bw{p-1}\cV_5 \otimes (\cV_6/\cV_5)
\end{equation*}
whose kernel is the subbundle $\bw{p}\cV_5 \subset \bw{p}\cV_6$.

\begin{defi}\label{def317}
We say that a family of Lagrangian data $(\ns,\cV_6,\cV_5,\cA)$ 
 {\sf has rank $k$} if  the composition
\begin{equation}
\label{eq:varphi}
\varphi\colon \cA \lhra \bw3\cV_6 \xrightarrow{\ \lambda_3\ } \bw2\cV_5 \otimes (\cV_6/\cV_5)
\end{equation}
has rank $k$ and 
$\bw{k-1}\varphi_s$ does not vanish for any  {geometric} point   $s$ in $ S$.\ 
We say that the Lagrangian data {\sf avoids decomposable vectors} if, for each  {geometric} point $s $ in $ S$, 
the Lagrangian subspace $\cA_s \subset \bw3\cV_{6,s}$ contains no decomposable vectors, that is, $\P(\cA_s) \cap \Gr(3,\cV_{6,s}) = \emptyset$.
\end{defi}

The above two conditions define a  {locally closed} substack in $\cmlag$ classifying families  {of} Lagrangian data of rank $k$ avoiding decomposable vectors.\
We denote it by $\cmlag_k$.\
{We will show later (Proposition~\ref{proposition:cmlag-quotient}) that this stack is a global quotient stack.}

{Finally, we define the special and ordinary loci for Lagrangian data.}

\begin{defi}
\label{definition:lagrangian-special-locus} 
Given a family $(\cV_6,\cV_5,\cA)$ of Lagrangian data on a scheme $S$, of rank $k$, we denote by
\begin{equation*}
S_\lagspe \subset S
 \end{equation*}
the degeneracy locus of the composition $\varphi$
 and   by $\cJ_\lagspe \subset \cO_S$ its ideal   (it is generated by the  $k\times k$-minors of $\varphi$).\
We call $S_\lagspe$ the {\sf Lagrangian special locus} of $S$.\
Its complement 
\begin{equation*}
S_\lagord = S \setminus S_\lagspe
\end{equation*}
is called the {\sf Lagrangian ordinary locus}.
\end{defi}

As in the proof  of Lemma~\ref{lemma:gm-ord-spe-substacks}, this gives rise to a closed substack $\cmlag_{k,{\rm spe}} \subset \cmlag_k$ 
of special Lagrangian data and an open substack $\cmlag_{k,{\rm ord}} \subset \cmlag_k$ of ordinary Lagrangian data, such that
 $\cmlag_{k,{\rm ord}}$ is the open complement of $\cmlag_{k,{\rm spe}} \subset \cmlag_k$.

\section{Relation between families of GM and Lagrangian data}\label{section-relation}

We consider below two constructions relating GM data to Lagrangian data.\
We pay special attention to the relation between their special loci.

\subsection{From families of GM data to families of Lagrangian data}
\label{subsection-from-gm-to-lag}

Let  $(\ns,\cW,\cV_6,\cV_5,\mu,\bq)$ be a family of smooth normalized GM data.\ 
We construct over the same scheme $S$ an associated family of Lagrangian data avoiding decomposable vectors (Definition~\ref{def317}).
 
Our construction  is a relative (and normalized) version of the construction of the proof of~\cite[Theorem~3.6]{DK} 
(with ``the odd part'' omitted).\
We consider the diagram
\begin{equation}\label{equation-ax-monad}
{\xymatrix@R=6mm
@C=3mm{
\cV_5 \otimes \cW \otimes (\cV_6/\cV_5)^\vee \ar[rr]^-{f_1} && 
\bw3{\cV_5} \oplus (\cV_6\otimes \cW \otimes (\cV_6/\cV_5)^\vee) \ar[rr]^-{f_2} \ar[d]^{f_3} && 
\cW^\vee \otimes \det(\cV_6) \\
&& \bw3 \cV_6 
}}
\end{equation}
with morphisms defined by
\begin{eqnarray*}
 f_1(v\otimes w) &=& (-v \wedge \mu(w), v\otimes w),\\
 f_2(\xi, v\otimes w)(w') &=& \xi \wedge \mu(w') + \bq(v)(w,w'),\\
 f_3(\xi,v\otimes w) & =& \xi + v\wedge \mu(w)
\end{eqnarray*}
(we omit factors corresponding to line bundles, {which do not matter   here}).\
We have $f_2 \circ f_1 = 0$  {by~\eqref{eq:mu-q}}
and $f_3 \circ f_1 = 0$.\ If we set
\begin{equation}
\label{definition-ca}
\cA := \Ker(f_2)/\Im(f_1), 
\end{equation}
 the  morphism  $f_3$ induces a morphism $\cA \to \bw3\cV_6$.\ 

\begin{prop}
\label{proposition-gm-to-lag}
Let $(\ns,\cW,\cV_6,\cV_5,\mu,\bq)$ be a family of smooth normalized GM data of dimension $n \ge 3$.\ 
Define  $\cA$  by~\eqref{definition-ca}.\ 
Then $(\ns,\cV_6,\cV_5,\cA)$ is a family of Lagrangian data of rank $n+5$   avoiding decomposable vectors.\ 
This defines a morphism of stacks
\begin{eqnarray*}
\fa \colon \ucM_n &\lra& \cmlag_{n+5}\\
(\ns,\cW,\cV_6,\cV_5,\mu,\bq) &\longmapsto& (\ns,\cV_6,\cV_5,\cA).
\end{eqnarray*}
Moreover, the Lagrangian and GM special loci in~$S$ coincide set-theoretically 
but not {necessarily} scheme-theoretically: we have 
\begin{equation*}
\cJ_\lagspe = \cJ_\gmspe^2,
\end{equation*}
that is, the ideal of the Lagrangian special locus is the square of the ideal of the GM special locus.
\end{prop}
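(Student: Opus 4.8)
The plan is to work locally on $S$ and compute everything in terms of explicit matrices, reducing the statement to the comparison of two ideals built from the data. First I would establish that $(\ns,\cV_6,\cV_5,\cA)$ is indeed a family of Lagrangian data of rank $n+5$ avoiding decomposable vectors: the fact that $\cA$ is a subbundle of $\bw3\cV_6$ of the right rank and that it is Lagrangian follows fiberwise from the corresponding statements in \cite[Theorem~3.6]{DK} (applied over each geometric point, where the GM data set is ordinary or special), combined with the flatness/local-freeness lemmas of Section~\ref{section:preliminaries}; the absence of decomposable vectors is the fiberwise statement of \cite[Theorem~3.16]{DK} together with \cite[Remark~3.17]{DK} and the smoothness hypothesis. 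That the construction is functorial — hence defines a morphism $\fa$ of stacks — follows because \eqref{definition-ca} is manifestly compatible with base change (the maps $f_1,f_2$ are morphisms of vector bundles and, by the rank computation, $\Ker(f_2)$ and $\Im(f_1)$ are subbundles whose formation commutes with pullback). The rank statement $\rank(\varphi)=n+5$ and the nonvanishing of $\bw{n+4}\varphi_s$ are again fiberwise facts proved in \cite{DK}.

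The heart of the matter is the identity $\cJ_\lagspe=\cJ_\gmspe^2$. Here I would trivialize the line bundles and pick local frames so that $\mu$ is given by a fixed $(n+5)\times 10$ matrix and $\bq$ by a $6$-tuple of symmetric $(n+5)\times(n+5)$ matrices satisfying \eqref{eq:mu-q}. By Definition~\ref{definition:gm-special-locus} and Lemma~\ref{lemma-support-cokernel}, $\cJ_\gmspe$ is the $0$-th Fitting ideal of $\Coker(\mu^T)$, i.e.\ the ideal generated by the maximal ($(n+4)\times(n+4)$) minors of $\mu^T$, equivalently by the $(n+4)\times(n+4)$ minors of the $10\times(n+5)$ matrix $\mu$. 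On the other hand $\cJ_\lagspe$ is generated by the $(n+5)\times(n+5)$ minors of the $(n+5)\times(n+5)$ matrix $\varphi$ of \eqref{eq:varphi}, i.e.\ by $\det(\varphi)$. So the statement becomes: \emph{up to units, $\det(\varphi)$ equals the square of the ideal generated by the codimension-one minors of $\mu$.} The key observation is that the commutativity of \eqref{definition-cokernel-sheaf} forces $\bq$ to be expressible through $\mu$ and the Lagrangian $\cA$ via the explicit formula of \cite[Theorem~3.6]{DK} (a relative version of equation~\eqref{equation-qa} referred to in the introduction); threading this relation through the monad \eqref{equation-ax-monad} exhibits $\varphi$ (after suitable row/column operations) in a block form whose determinant visibly factors as (a unit)$\times$(minor of $\mu$)$^2$. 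Concretely, $\varphi$ is, up to sign, the composition $\cA\hookrightarrow \bw3\cV_6\xrightarrow{\lambda_3}\bw2\cV_5\otimes(\cV_6/\cV_5)$, and using the description $\cA=\Ker(f_2)/\Im(f_1)$ one rewrites a matrix of $\varphi$ as a product essentially of the form $M^T N M$ where $M$ encodes $\mu$ and $N$ is an invertible matrix (a fiberwise-nondegenerate pairing, nondegenerate because the GM variety is smooth, cf.\ the appearance of $\det(q_{ij})_{2\le i,j\le r}$ being invertible in Lemma~\ref{lemma-residual-quadric}); then $\det(\varphi)=\det(N)\det(M)^2$ and $\det(M)$ is (up to a unit) the relevant minor of $\mu$.

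The main obstacle will be to pin down this factorization $\varphi \sim M^T N M$ precisely and to identify $\det(M)$ with a generator of $\cJ_\gmspe$ locally, handling both the ordinary locus (where $\mu$ is fiberwise injective, so $\varphi$ is invertible and both ideals are the unit ideal — trivial case) and, more delicately, neighborhoods of points of $S_\gmspe$, where $\mu_s$ drops rank by exactly one and one must see that the \emph{single} vanishing scalar (the local equation of $S_\gmspe$ coming from the $(n+4)\times(n+4)$ minor of $\mu$) enters $\det(\varphi)$ exactly to the second power. For this I would invoke the canonical factorization of Proposition~\ref{proposition-factorization} applied to $\mu$ itself (whose degeneration scheme, by Lemma~\ref{lemma15} and Lemma~\ref{lemma-support-cokernel}, has ideal $\cJ_\gmspe$), writing $\mu\colon \cW\twoheadrightarrow \cW_1\xrightarrow{\mu_1}\cW_1'\hookrightarrow \bw2\cV_5\otimes(\cV_6/\cV_5)$ with $\coker(\mu_1)$ a line bundle on $S_\gmspe$; then the relation \eqref{eq:mu-q} shows that $\bq$, hence $\varphi$, factors through $\Sym^2$ of this data, and the extra factor of $\cJ_\gmspe$ appears once from $\mu_1$ on the $\cW$-side and once more from $\mu_1^T$ on the dual side, giving the square. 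The bookkeeping with the monad \eqref{equation-ax-monad} and with the line-bundle twists (which I have suppressed) is the genuinely laborious part; once the block structure is in place the equality of ideals is immediate, and the set-theoretic coincidence of the two special loci is then a formal consequence ($V(\cJ_\lagspe)=V(\cJ_\gmspe^2)=V(\cJ_\gmspe)$).
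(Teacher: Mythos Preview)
Your proposal has the right spirit---factor $\varphi$ through the rank-$(n+5)$ bundle $\cW$ and pick up one copy of $\cJ_\gmspe$ from each side---but several steps, as written, do not work.

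First, the Lagrangian property of $\cA$ \emph{cannot} be checked fiberwise: the base $S$ need not be reduced, so the vanishing of the composition~\eqref{compo} is not determined by its values at geometric points. The paper deals with this by a direct computation showing that the form~\eqref{form} on the monad vanishes on $\Ker(f_2)\otimes\Ker(f_2)$ as an identity of sections, using~\eqref{eq:mu-q}. You must do something of this kind.

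Second, there is a dimension confusion. The map $\varphi\colon\cA\to\bw2\cV_5\otimes(\cV_6/\cV_5)$ goes between vector bundles of rank~$10$, not~$n+5$; in particular $\cJ_\lagspe$ is the ideal of all $(n{+}5)\times(n{+}5)$ minors of a $10\times10$ matrix, not the single determinant $\det(\varphi)$ (which vanishes identically for $n\le4$). Likewise $\cJ_\gmspe$ is generated by the $(n{+}5)\times(n{+}5)$ minors of the $10\times(n{+}5)$ matrix of $\mu$, not the $(n{+}4)\times(n{+}4)$ minors. Your $M^TNM$ ansatz with $N$ invertible and the conclusion $\det(\varphi)=\det(N)\det(M)^2$ therefore does not make sense for $n\le4$.

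Third, you cannot invoke Proposition~\ref{proposition-factorization} for $\mu$: that result requires the degeneration scheme to be a Cartier divisor, and no such hypothesis on $S_\gmspe$ is available here.

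What the paper actually does is closer to what you want but subtler. There is a natural map $\nu\colon\cA\to\cW$ induced by $(0,\lambda)$ on the monad (see~\eqref{diagram-factorization-1}), giving the factorization $\varphi=\mu\circ\nu$ of~\eqref{diagram-a-w-v}. By Cauchy--Binet, every $(n{+}5)$-minor of $\varphi$ is a sum of products of an $(n{+}5)$-minor of $\nu$ with an $(n{+}5)$-minor of $\mu$, so $\cJ_\lagspe$ equals the product of the two Fitting ideals. The ideal for $\mu$ is $\cJ_\gmspe$ by definition; the nontrivial point is that the ideal for $\nu$ is \emph{also} $\cJ_\gmspe$. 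This is proved by a diagram chase identifying $\Coker(\nu)$ with the cokernel sheaf $\cC$ of~\eqref{eq:cokernel-sheaf}: one rewrites $\Coker(\nu)$ as $\Coker\bigl(\Ker(f_2)\xrightarrow{(0,\lambda)}\cW\bigr)$, then uses surjectivity of $f_2$ and of $(0,\lambda)$ to reduce to $\Coker\bigl(\Ker(0,\lambda)\xrightarrow{f_2}\cW^\vee\otimes\det(\cV_6)\bigr)$, which by Lemma~\ref{lemma15} is exactly $\cC$. There is no symmetric ``$M^TNM$'' structure; the two factors $\nu$ and $\mu$ are genuinely different maps that happen to have isomorphic cokernel sheaves.
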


\begin{proof}
Checking that $\cA$ is a vector bundle of rank~10 
(it is enough for that to check that~$f_2$ is an epimorphism and that $f_1$ is a fiberwise monomorphism)
and that the map $\cA \to \bw3\cV_6$ induced by $f_3$ is a fiberwise monomorphism can be done pointwise and thus follows from the proof of~\cite[Theorem~3.6]{DK}.

We now show that $\cA$ has the Lagrangian property, that is,  that the composition
\begin{equation}
\label{compo}
\cA \otimes \cA \lra \bw3\cV_6 \otimes \bw3\cV_6 \xrightarrow{\ \wedge\ } \det(\cV_6)
\end{equation}
vanishes identically.\
It is not enough to prove this property  pointwise, since the scheme~$S$ might be nonreduced, but it is enough to check it locally.\
 It will be convenient to compose~{\eqref{compo}} with the isomorphism $\lambda_6 \colon \det(\cV_6) \isomto \det(\cV_5) \otimes (\cV_6/\cV_5)$.\
We will also use the definition~\eqref{definition-ca} of~$\cA$ and the fact that the map $\cA \to \bw3\cV_6$ is induced by $f_3$.\
The resulting composition 
\begin{multline*}
\qquad
(\bw3{\cV_5}  \oplus (\cV_6\otimes \cW \otimes (\cV_6/\cV_5)^\vee)) \otimes (\bw3{\cV_5} \oplus (\cV_6\otimes \cW \otimes (\cV_6/\cV_5)^\vee)) 
\\ \xrightarrow{\ f_3 \wedge f_3\ } 
\det(\cV_6) \xrightarrow{\ \lambda_6\ }
\det(\cV_5) \otimes (\cV_6/\cV_5)
\qquad
\end{multline*}
is given by
\begin{equation}
\label{form}
(\xi_1,v_1\otimes w_1) \otimes (\xi_2,v_2 \otimes w_2) \longmapsto
\lambda_6\bigl((\xi_1 + v_1\wedge \mu(w_1)) \wedge (\xi_2 + v_2 \wedge \mu(w_2))\bigr).
\end{equation}
It is thus enough to check that~\eqref{form} vanishes on $\Ker(f_2) \otimes \Ker(f_2)$.

Since $\xi_1$ and $\xi_2$ are sections of $\bw3\cV_5$, we have $\xi_1\wedge \xi_2 = 0$.\ 
Choosing locally a direct sum decomposition $\cV_6 = \cV_5 \oplus (\cV_6/\cV_5)$  {and a generator $v_0$ for the second summand}, 
we can write 
\begin{equation*}
v_i = v'_i + \lambda(v_i) {v_0},
\qquad\textnormal{with }
v'_i \in \cV_5,
\end{equation*}
 {where we think of $\lambda(v_i)$ as of a scalar}.\
We can rewrite the right side of \eqref{form} as
\begin{equation*}
\begin{aligned}
\lambda_6(\xi_1\wedge & v_2 \wedge \mu(w_2)) + \lambda_6(v_1 \wedge \mu(w_1) \wedge \xi_2) + \lambda_6(v_1 \wedge \mu(w_1) \wedge v_2 \wedge \mu(w_2)) \\
 = & -\lambda(v_2) \xi_1 \wedge \mu(w_2) + \lambda(v_1) \xi_2 \wedge \mu(w_1) 
\\
& \hspace{5em}{} + \lambda(v_1) v'_2 \wedge \mu(w_1) \wedge \mu(w_2) - \lambda(v_2) v'_1 \wedge \mu(w_1) \wedge \mu(w_2) \\
= & -\lambda(v_2) (\xi_1 \wedge \mu(w_2) + \bq(v'_1)(w_1,w_2)) + \lambda(v_1) (\xi_2 \wedge \mu(w_1) + \bq(v'_2)(w_1,w_2)) \\
= &   -\lambda(v_2) (\xi_1 \wedge \mu(w_2) + \bq(v_1)(w_1,w_2)) + \lambda(v_2)\lambda(v_1)\bq(v_0)(w_1,w_2)
\\
&  \hspace{5em} {}+ \lambda(v_1) (\xi_2 \wedge \mu(w_1) + \bq(v_2)(w_1,w_2)) - \lambda(v_1)\lambda(v_2)\bq(v_0)(w_1,w_2)  \\
= & - \lambda(v_2) f_2(\xi_1,v_1 \otimes w_1)(w_2) + \lambda(v_1) f_2(\xi_2,v_2 \otimes w_2)(w_1)
\end{aligned}
\end{equation*}
(in the first equality, we use the Leibniz rule for $\lambda_6$ and the fact that $\lambda$ vanishes on~$\xi_i$ and on~$\mu(w_i)$,
as well as the relation $\lambda_2(v_1 \wedge v_2)  = \lambda(v_1)v_2 - \lambda(v_2)v_1= \lambda(v_1)v'_2 - \lambda(v_2)v'_1$; 
in the second equality, we use~\eqref{eq:mu-q}; in the third equality,  we use the definition of $v'_i$;
and~in the last equality, we use the definition of $f_2$ and cancel out two summands equal to $\pm\lambda(v_1)\lambda(v_2)\bq(v_0)(w_1,w_2)$).\ 
It follows that the map \eqref{form} vanishes identically on  {the subbundle}~$\Ker(f_2) \otimes \Ker(f_2)$, 
hence the induced map vanishes identically on $\cA$.

Consider now the map $\varphi$ defined by~\eqref{eq:varphi}.\
It is induced by the composition of the maps in the top row and the right column of the diagram
\begin{equation}\label{diagram-factorization-1}
\vcenter{\xymatrix@M=6pt@C=15pt{
\Ker(f_2) \ar@{^{(}->}[r] \ar@{->>}[d] &
\bw3{\cV_5}  \oplus (\cV_6\otimes \cW \otimes (\cV_6/\cV_5)^\vee) \ar[r]^-{f_3} \ar[d]^{(0,\lambda)} &
\bw3\cV_6 \ar[d]^{\lambda_3} \\
\cA \ar [r]^\nu &
\cW \ar[r]^-\mu &
\bw2\cV_5 \otimes (\cV_6/\cV_5).
}}
\end{equation}
The right square of the diagram is commutative because $\lambda_3$ vanishes on $\bw3\cV_5$ and $\lambda_2$ vanishes on~$\bw2\cV_5$.\
The   arrow $(0,\lambda)$   vanishes on $\Im(f_1) \subset \Ker(f_2)$
 hence factors   through $\cA$, thus defining the   arrow $\nu$.\ Therefore, we obtain a commutative diagram
\begin{equation}
\label{diagram-a-w-v}
\vcenter{\xymatrix@C=3em@M=6pt{
\cA \ar@{^{(}->}[r] \ar[d]_\nu\ar[dr]^\varphi & \bw3\cV_6 \ar[d]^{\lambda_3} \\
\cW \ar[r]^-{\mu} & \bw2\cV_5 \otimes (\cV_6/\cV_5).
}}
\end{equation} 
The rank of $\cW$ is $n+5$, hence the rank of  $\varphi$ is at most $n+5$.\
The fact that it is at least~$n + 4$ at each  {geometric} point of $S$ can be verified pointwise and follows from~\cite[(3.9)]{DK}.\
Also,~\cite[(3.9) and Theorem~3.16]{DK} proves that~$\cA$ has no decomposable vectors
(this is the only place where we use the condition $n \ge 3$).\ {Thus, $(\ns,\cV_6,\cV_5,\cA)$ is a family of Lagrangian data of rank~\mbox{$n + 5$} avoiding decomposable vectors.}

Let us show that the association $\fa \colon \ucM_n \to \cmlag_{n+5}$ 
that takes a family of smooth normalized GM data $(\ns,\cW,\cV_6,\cV_5,\mu,\bq)$
to the family of Lagrangian data $(\ns,\cV_6,\cV_5,\cA)$, where~$\cA$ is defined by~\eqref{definition-ca}, is a morphism of stacks,
 {meaning that it is defined on morphisms.}

Assume for simplicity that $f = \id_S$ (the general case reduces to this by base change).\
A morphism of families of GM data from~$(\ns,\cW,\cV_6,\cV_5,\mu,\bq)$ to $(\ns,\cW',\cV'_6,\cV'_5,\mu',\bq')$ is then given by a pair of isomorphisms
$\varphi_V \colon \P_S(\cV_6) \isomto \P_S(\cV'_6)$ and $\varphi_W \colon \P_S(\cW) \isomto \P_S(\cW')$ over~$S$.\
The first  can be lifted to an isomorphism
\begin{equation*}
\widetilde\varphi_V \colon \cV_6 \isomto \cV'_6 \otimes \delta
\end{equation*}
for an appropriate line bundle $\delta$ on $S$.\
Using compatibility with the morphism $\mu$, we conclude that $\varphi_W$ lifts to an isomorphism
\begin{equation*}
\widetilde\varphi_W \colon \cW \isomto \cW' \otimes \delta^{\otimes 3}.
\end{equation*}
It is straightforward to see that the pair $(\widetilde\varphi_V,\widetilde\varphi_W)$ defines a morphism 
from the diagram~\eqref{equation-ax-monad} to the analogous diagram for the family of GM data $(\ns,\cW',\cV'_6,\cV'_5,\mu',\bq')$ twisted by $\delta^{\otimes 3}$.
It follows that the morphism $\bw3{\widetilde\varphi}_V \colon \bw3\cV_6 \to \bw3\cV'_6 \otimes \delta^{\otimes 3}$ takes $\cA$ to $\cA' \otimes \delta^{\otimes 3}$.\
Therefore, we have $\bw3\varphi_V(\P_S(\cA)) = \P_S(\cA')$, hence $\varphi_V$ is a morphism between the associated families of Lagrangian data.\
This operation is compatible with compositions of morphisms and takes the identity to the identity, hence $\fa$ is a morphism of stacks.

Finally, consider the special locus of the family of Lagrangian data constructed above.\
Its ideal $\cJ_\lagspe$ is generated by the $(n+5) \times (n+5)$-minors of the map $\varphi$  {defined by~\eqref{eq:varphi}}.\ Because of the factorization in~\eqref{diagram-a-w-v} (and since the rank of $\cW$ is $n + 5$),
every such minor is the product of a minor of $\nu$ and a minor of $\mu$ of the same size.\ Consequently, the ideal~$\cJ_\lagspe$ is the product of two ideals, one generated by the minors of $\nu$
and the other generated by the minors of $\mu$.\  The latter ideal is by definition  equal to the ideal $\cJ_\gmspe$ defining the special GM locus.\  To finish the proof, we must show that the minors of $\nu$ generate the same ideal.

Since the left vertical arrow in~\eqref{diagram-factorization-1} is surjective, this ideal coincides with the 
ideal generated by the minors of the map $\Ker(f_2) \to \cW$, that is, by Lemma~\ref{lemma-support-cokernel},
with the annihilator of the cokernel of this map.\ Since $f_2$ is surjective, 
the cokernel of the map~$\Ker(f_2) \xrightarrow{ (0,\lambda)  } \cW$
is isomorphic to the cokernel of the map 
\begin{equation*}
\bw3\cV_5 \oplus (\cV_6 \otimes \cW \otimes (\cV_6/\cV_5)^\vee) \xrightarrow{\ (0,\lambda) + f_2\ } \cW \oplus (\cW^\vee \otimes \det(\cV_6)).
\end{equation*}
Since $(0,\lambda)$ is surjective, this sheaf is isomorphic 
to the cokernel of the map $\Ker(0,\lambda) \xrightarrow{\ f_2\ } \cW^\vee \otimes \det(\cV_6)$.\ 
Altogether, this means  
\begin{equation*}
\Coker(\nu) \cong 
\Coker\bigl(
\bw3\cV_5 \oplus (\cV_5 \otimes \cW \otimes (\cV_6/\cV_5)^\vee) \xrightarrow{\ (\mu,\bq)\ }\cW^\vee \otimes \det(\cV_6)
\bigr).
\end{equation*}
By Lemma~\ref{lemma15}, the images of   the two components of this map coincide,
hence the cokernel of their sum equals to the cokernel of each of them, that is, 
\begin{equation}
\label{eq:cokernel-nu}
\Coker(\nu) \cong \cC,
\end{equation}
where $\cC$ is the cokernel sheaf of the family of GM data as defined in~\eqref{eq:cokernel-sheaf}.\ 
The annihilator of $\Coker(\nu)$ is thus again the ideal $\cJ_\gmspe$ of the GM special locus.
\end{proof}

\begin{rema}
\label{remark:family-ordinary-surfaces}
The argument of Proposition~\ref{proposition-gm-to-lag} also applies to families of smooth normalized GM data 
of dimension $n = 2$ such that the corresponding GM varieties are strongly smooth (\cite[Definition~3.15]{DK}) 
ordinary GM surfaces since, by~\cite[(3.9) and Theorem~3.16]{DK}, the corresponding Lagrangian subspaces have no decomposable vectors.\ It defines a morphism of stacks $\ucMtwo \to \cmlag_7$.
\end{rema}

We will need some properties of the construction presented above.\
Consider the family 
\begin{eqnarray}
\label{equation-qa}
\bq_\cA\colon \cV_6 \otimes \Sym^2\!\cA & \lra &\det(\cV_5) \otimes (\cV_6/\cV_5)^{ \otimes 2}
\\
v \otimes a_1\otimes a_2 & \longmapsto & -\lambda_4(v\wedge a_1) \wedge \lambda_3(a_2)\nonumber
\end{eqnarray}
of quadratic forms on $\cA$ (the formula is symmetric in $\xi_1$ and $ \xi_2$ by the Lagrangian property of $\cA$; 
see the proof of~\cite[Theorem~3.6]{DK}  
% {or the proof of Lemma~\ref{lemma-quadratic-compatible} below} 
for details).

\begin{lemm}\label{lemma-quadratic-compatible}
The quadratic form on $\cA$ defined by~\eqref{equation-qa} is equal to the form induced by $\bq$ via the map $\nu\colon\cA \to \cW$.
\end{lemm}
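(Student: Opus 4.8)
The plan is to verify the identity locally on $S$ by a direct computation with representatives, entirely in the spirit of the computation in the proof of Proposition~\ref{proposition-gm-to-lag}. Both $\bq_\cA$ of~\eqref{equation-qa} and the quadratic form on $\cA$ induced from $\bq$ by $\nu$ are global morphisms $\cV_6\otimes\Sym^2\!\cA\to\det(\cV_5)\otimes(\cV_6/\cV_5)^{\otimes2}$, so it suffices to prove they agree after choosing, locally on $S$, a splitting $\cV_6=\cV_5\oplus(\cV_6/\cV_5)$ with generator $v_0$ of the second summand (so that each local section $v$ of $\cV_6$ is written $v=v'+\lambda(v)v_0$ with $v'$ a section of $\cV_5$ and $\lambda(v)$ a scalar), suppressing line-bundle twists as the paper does. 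Since the well-definedness of $\bq_\cA$ on $\cA=\Ker(f_2)/\Im(f_1)$ is granted by the reference to \cite[Theorem~3.6]{DK} in the statement, I may use any local representatives. So I would fix local sections $v$ of $\cV_6$ and $a_1,a_2$ of $\cA$, lift $a_i$ to sections $(\xi_i,v_i\otimes w_i)$ of $\Ker(f_2)\subset\bw3\cV_5\oplus(\cV_6\otimes\cW\otimes(\cV_6/\cV_5)^\vee)$, and write $\alpha_i:=f_3(\xi_i,v_i\otimes w_i)=\xi_i+v_i\wedge\mu(w_i)$ for their images in $\bw3\cV_6$, so that $\nu(a_i)=\lambda(v_i)w_i$ in $\cW$ and $\bq_\cA(v)(a_1,a_2)=-\lambda_4(v\wedge\alpha_1)\wedge\lambda_3(\alpha_2)$.

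Next I would expand $\lambda_3$ and $\lambda_4$ by the Leibniz rule through the splitting, using that $\xi_i$ lies in $\bw3\cV_5=\Ker\lambda_3$ and $\mu(w_i)$ in $\bw2\cV_5=\Ker\lambda_2$. This gives $\lambda_3(\alpha_2)=\lambda(v_2)\mu(w_2)$ — which incidentally re-proves the commutativity of~\eqref{diagram-a-w-v}, namely $\lambda_3(\alpha_2)=\mu(\nu(a_2))$ — and $\lambda_4(v\wedge\alpha_1)=\lambda(v)\xi_1+(\lambda(v)v_1'-\lambda(v_1)v')\wedge\mu(w_1)$ (with the sign convention $\lambda_2(v\wedge v_1)=\lambda(v)v_1'-\lambda(v_1)v'$ used in the proof of Proposition~\ref{proposition-gm-to-lag}). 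Multiplying these out yields
\begin{multline*}
\bq_\cA(v)(a_1,a_2)= -\lambda(v)\lambda(v_2)\,\xi_1\wedge\mu(w_2)\\
{}-\lambda(v)\lambda(v_2)\,v_1'\wedge\mu(w_1)\wedge\mu(w_2)+\lambda(v_1)\lambda(v_2)\,v'\wedge\mu(w_1)\wedge\mu(w_2).
\end{multline*}

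The decisive step uses the hypothesis $(\xi_1,v_1\otimes w_1)\in\Ker(f_2)$: by the formula $f_2(\xi,v\otimes w)(w')=\xi\wedge\mu(w')+\bq(v)(w,w')$, this reads $\xi_1\wedge\mu(w_2)=-\bq(v_1)(w_1,w_2)$, and writing $\bq(v_1)=\bq(v_1')+\lambda(v_1)\bq(v_0)$ together with~\eqref{eq:mu-q} applied to $v_1'\in\cV_5$ (i.e.\ $\bq(v_1')(w_1,w_2)=v_1'\wedge\mu(w_1)\wedge\mu(w_2)$) gives $\xi_1\wedge\mu(w_2)=-v_1'\wedge\mu(w_1)\wedge\mu(w_2)-\lambda(v_1)\bq(v_0)(w_1,w_2)$. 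Substituting this into the first term of the display, the two terms containing $v_1'\wedge\mu(w_1)\wedge\mu(w_2)$ cancel, leaving $\lambda(v)\lambda(v_1)\lambda(v_2)\bq(v_0)(w_1,w_2)+\lambda(v_1)\lambda(v_2)\,v'\wedge\mu(w_1)\wedge\mu(w_2)$; applying~\eqref{eq:mu-q} once more to $v'$ and then $\bq(v)=\bq(v')+\lambda(v)\bq(v_0)$ gives $\bq_\cA(v)(a_1,a_2)=\lambda(v_1)\lambda(v_2)\,\bq(v)(w_1,w_2)$, which equals $\bq(v)(\lambda(v_1)w_1,\lambda(v_2)w_2)=\bq(v)(\nu(a_1),\nu(a_2))$ by bilinearity. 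This proves the lemma. The only real obstacle is keeping the Leibniz-rule expansions, Koszul signs, and suppressed line-bundle twists under control; there is no conceptual difficulty, and all the conventions are the ones already set up in the proof of Proposition~\ref{proposition-gm-to-lag}.
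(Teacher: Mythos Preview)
Your proof is correct and follows essentially the same approach as the paper's: a direct local computation with representatives in $\Ker(f_2)$, using the Leibniz rule for $\lambda_p$, the kernel condition $\xi_1\wedge\mu(w_2)=-\bq(v_1)(w_1,w_2)$, and relation~\eqref{eq:mu-q}. The only difference is cosmetic bookkeeping---you expand $\lambda_2(v\wedge v_1)$ and $\bq(v_1)$ into $\cV_5\oplus v_0$-components early and cancel the $v_1'$-terms, whereas the paper keeps $\lambda_2(v\wedge v_1)$ intact and finishes with the identity $\lambda(v)v_1-\lambda_2(v\wedge v_1)=\lambda(v_1)v$.
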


\begin{proof}
Let $v$ be a local section of $\cV_6$ and let $a_1$ and $a_2$ be local sections of $\cA$.\ Choose a lift of $a_i$ to a local section  $(\xi_i, v_i\otimes w_i)$
of $\Ker(f_2) \subset \bw3\cV_5 \oplus (\cV_6 \otimes \cW \otimes (\cV_6/\cV_5)^\vee)$.\
We have
\begin{equation*}
 \lambda_4(v \wedge f_3(\xi_1, v_1 \otimes w_1)) = 
 \lambda_4(v \wedge (\xi_1 + v_1 \wedge \mu(w_1))) = 
 \lambda(v)\xi_1 + \lambda_2(v \wedge v_1)\wedge \mu(w_1) 
\end{equation*}
and
\begin{equation*}
 \lambda_3(f_3(\xi_2, v_2 \otimes w_2)) = 
 \lambda_3(\xi_2 + v_2 \wedge \mu(w_2)) = \lambda(v_2)\mu(w_2).
\end{equation*}
Therefore,
\begin{eqnarray*}
 \bq_\cA(v)(a_1, a_2) &=& -\lambda_4(v \wedge f_3(\xi_1, v_1 \otimes w_1)) \wedge \lambda_3(f_3(\xi_2, v_2 \otimes w_2)) \\
 &=& -(\lambda(v)\xi_1 + \lambda_2(v \wedge v_1)\wedge \mu(w_1)) \wedge \lambda(v_2)\mu(w_2) \\
 &=& -\lambda(v_2)\lambda(v)\xi_1 \wedge \mu(w_2) - \lambda(v_2)\lambda_2(v \wedge v_1) \wedge \mu(w_1) \wedge \mu(w_2).
\end{eqnarray*}
On the other hand, since $(\xi_1, v_1\otimes w_1)$ is in the kernel of $f_2$, we have $\xi_1 \wedge \mu(w_2) = -\bq(v_1)(w_1,w_2)$.
Using this and~\eqref{eq:mu-q}, the above equals
\begin{multline*}
\qquad
 \lambda(v_2)\lambda(v)\bq(v_1)(w_1,w_2) - \lambda(v_2)\lambda_2(v \wedge v_1) \wedge \mu(w_1) \wedge \mu(w_2) = \\ 
 \bq(\lambda(v_2)\lambda(v)v_1 - \lambda(v_2)\lambda_2(v \wedge v_1))(w_1,w_2).
 \qquad
\end{multline*}
It remains to observe that $\lambda(v_2)\lambda(v)v_1 - \lambda(v_2)\lambda_2(v \wedge v_1) = \lambda(v_1)\lambda(v_2)v$,
so that finally 
\begin{equation*}
 \bq_\cA(v)(a_1, a_2) = 
 \lambda(v_1)\lambda(v_2)\bq(v)(w_1,w_2) = 
 \bq(v)(\lambda(v_1)w_1,\lambda(v_2)w_2) = 
 \bq(v)(\nu(a_1),\nu(a_2)).
\end{equation*}
This is precisely the compatibility we were claiming.
\end{proof}

\begin{lemm}\label{lemma-factorization2}
Assume that the GM special locus $S_{\gmspe}$ of a smooth family of normalized GM data is a Cartier divisor.\ The map~\eqref{eq:varphi} of the corresponding family of Lagrangian data then factors as
\begin{equation*}
\xymatrix{
\cA \ar@{->>}[dr] \ar[rr]^-\nu && \cW \ar[dr] \ar[rr]^-\mu && \hbox to 1em{$\bw2\cV_5 \otimes (\cV_6/\cV_5)$\hss} \\
& \cW' \ar[ur] && \cW'', \ar@{^{(}->}[ur]
}
\end{equation*}
where $\cW'$ and $\cW''$ are vector bundles of rank $n+5$, the left diagonal arrow is an epimorphism,
the right diagonal arrow is a fiberwise monomorphism, the inner diagonal arrows are monomorphisms, and their 
cokernels are line bundles on the subscheme $S_{\gmspe}$.
\end{lemm}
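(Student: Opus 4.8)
The plan is to produce the diagram by applying the canonical factorization of Proposition~\ref{proposition-factorization} twice: once to the morphism $\nu\colon\cA\to\cW$ and once to the GM-data morphism $\mu\colon\cW\to\bw2\cV_5\otimes(\cV_6/\cV_5)$, and then to concatenate the two resulting factorizations.

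First I would check that both $\nu$ and $\mu$ satisfy the hypotheses of Proposition~\ref{proposition-factorization} with $r=n+5$. By Proposition~\ref{proposition-gm-to-lag} (and Definition~\ref{def317}), the composition $\varphi=\mu\circ\nu$ of~\eqref{eq:varphi} has rank $n+5$ and $\bw{n+4}\varphi_s\neq 0$ at every geometric point $s$ of $S$. Since $\varphi=\mu\circ\nu$ factors through $\cW$, which has rank $n+5$, the inequalities $n+5=\rank\varphi\le\rank\mu\le n+5$ and $n+5=\rank\varphi\le\rank\nu\le n+5$ force $\rank\mu=\rank\nu=n+5$; and since $\bw{n+4}\varphi_s=\bw{n+4}\mu_s\circ\bw{n+4}\nu_s$, the maps $\bw{n+4}\mu_s$ and $\bw{n+4}\nu_s$ are nonzero at every geometric point. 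For the degeneration schemes: by Definition~\ref{definition:gm-special-locus} and Lemma~\ref{lemma15}, the GM-special locus $S_\gmspe$ is the degeneration scheme of the morphism $\mu^T$ of~\eqref{definition-cokernel-sheaf}, which is the transpose of $\mu$; as a morphism and its transpose have the same degeneration scheme, the degeneration scheme of $\mu$ is $S_\gmspe$, a Cartier divisor by hypothesis. For $\nu$, the isomorphism $\Coker(\nu)\cong\cC$ of~\eqref{eq:cokernel-nu}, together with Lemma~\ref{lemma-support-cokernel}, identifies the degeneration scheme of $\nu$ with the scheme-theoretic support of $\cC$, which is again $S_\gmspe$ by Definition~\ref{definition:gm-special-locus}.

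With the hypotheses in place I would apply Proposition~\ref{proposition-factorization}. Applied to $\mu$ it produces a factorization $\cW\twoheadrightarrow\overline{\cW}\xrightarrow{\ \mu_1\ }\cW''\hookrightarrow\bw2\cV_5\otimes(\cV_6/\cV_5)$ with $\overline{\cW}$ and $\cW''$ locally free of rank $n+5$, the last arrow a fiberwise monomorphism, and $\mu_1$ a monomorphism with cokernel a line bundle on $S_\gmspe$. Since $\cW$ and $\overline{\cW}$ have the same rank, the epimorphism $\cW\twoheadrightarrow\overline{\cW}$ has kernel a vector bundle of rank zero (Lemma~\ref{lemma-kernel-cokernel-lf}), hence is an isomorphism; identifying $\overline{\cW}$ with $\cW$ shows that $\mu$ factors as a monomorphism $\cW\to\cW''$ with cokernel a line bundle on $S_\gmspe$, followed by the fiberwise monomorphism $\cW''\hookrightarrow\bw2\cV_5\otimes(\cV_6/\cV_5)$. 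Applied to $\nu$, Proposition~\ref{proposition-factorization} produces $\cA\twoheadrightarrow\cW'\xrightarrow{\ \nu_1\ }\widehat{\cW}\hookrightarrow\cW$ with $\cW'$ and $\widehat{\cW}$ locally free of rank $n+5$; since $\widehat{\cW}$ and $\cW$ have the same rank, the fiberwise monomorphism $\widehat{\cW}\hookrightarrow\cW$ is an isomorphism (dualize and use Lemmas~\ref{lemma-fmono-epi} and~\ref{lemma-kernel-cokernel-lf}), so $\nu$ factors as the epimorphism $\cA\twoheadrightarrow\cW'$ followed by a monomorphism $\cW'\to\cW$ with cokernel a line bundle on $S_\gmspe$. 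Concatenating, we obtain $\cA\twoheadrightarrow\cW'\to\cW\to\cW''\hookrightarrow\bw2\cV_5\otimes(\cV_6/\cV_5)$, in which the first two arrows compose to $\nu$, the last two to $\mu$, and all four to $\varphi$; all the asserted properties then hold by construction. The only steps that need real care are the two identifications of the degeneration schemes with $S_\gmspe$ and the rank bookkeeping that forces the outer arrows produced by Proposition~\ref{proposition-factorization} to be isomorphisms; the hardest of these is the $\nu$ side, since it rests on the computation of $\Coker(\nu)$ carried out in the proof of Proposition~\ref{proposition-gm-to-lag}.
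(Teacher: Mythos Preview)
Your argument is correct, but it takes a more elaborate route than the paper's own proof. The paper proceeds directly: it sets $\cW' := \Im(\nu)$, observes from~\eqref{eq:cokernel-nu} and Lemma~\ref{lemma15} that $\Coker(\nu)\cong\cC$ is a line bundle on the Cartier divisor $S_\gmspe$, and then invokes Lemma~\ref{lemma-kernel-cartier} to conclude that $\cW'$ is locally free; the construction of $\cW''$ is obtained by the dual argument applied to $\mu^\vee$. Your approach instead applies the full canonical factorization of Proposition~\ref{proposition-factorization} to $\nu$ and to $\mu$ separately, and then observes that, because the middle bundle $\cW$ already has rank $n+5$, one half of each factorization collapses to an isomorphism. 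This is a perfectly valid alternative and has the pleasant feature of exhibiting the lemma as a degenerate instance of Proposition~\ref{proposition-factorization}; the price is the extra bookkeeping (rank inequalities, collapsing the redundant bundles $\overline{\cW}$ and $\widehat{\cW}$), whereas the paper's argument needs only the elementary Lemma~\ref{lemma-kernel-cartier}. The identification of the degeneration schemes of $\nu$ and $\mu$ with $S_\gmspe$, which you flag as the delicate point, is handled identically in both approaches.
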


\begin{proof}
Let $\cW'$ be the image of $\nu$ (so that the map $\cA \to \cW'$ is surjective).\ 
We have an exact sequence 
\begin{equation*}
0 \to \cW' \to \cW \to \Coker(\nu) \to 0.
\end{equation*}
By~\eqref{eq:cokernel-nu} and Lemma~\ref{lemma15}, 
 $\Coker(\nu)$ is a line bundle on $S_{\gmspe}$.\ 
Since $S_{\gmspe}$ is a Cartier divisor, we conclude that $\cW'$ is a vector bundle (Lemma~\ref{lemma-kernel-cartier}).\ 
Analogously, considering the dual of the map $\mu$, we construct the vector bundle $\cW''$ and the other factorization.
\end{proof}

\subsection{From families of Lagrangian data to families of GM data}
\label{subsection-from-lag-to-gm}

As in the previous section, we assume $n \le 5$.\
Let $(\ns,\cV_6,\cV_5,\cA)$ be a family of Lagrangian data of rank $n+5$   avoiding decomposable vectors.\  
Let $S_{\lagspe} \subset S$ be its special locus.\ 
Assume additionally that $S_\lagspe$ is a double Cartier divisor (Definition~\ref{defi:double}), that is,
\begin{equation}\label{equation-s1-2e}
S_{\lagspe} = 2E
\end{equation} 
(equivalently, $\cJ_\lagspe = \cI_E^2$),
where $E$ is an effective Cartier divisor.\ 
We will construct from~$(\ns,\cV_6,\cV_5,\cA)$ a family of smooth normalized GM data on $S$.\

Consider the map~\eqref{eq:varphi}.\ By definition, its rank is $n+5$, it is at least $n+4$ at every  {geometric} point, and $S_{\lagspe}$ is its degeneration scheme.\ 
Since $S_{\lagspe}$ is a Cartier divisor,  Proposition~\ref{proposition-factorization} applies and implies that the map can be written as a composition
\begin{equation*}
\cA \twoheadrightarrow \cW' \to \cW'' \hookrightarrow \bw2\cV_5 \otimes (\cV_6/\cV_5),
\end{equation*}
where $\cW'$ and $\cW''$ are vector bundles of rank $n+5$, the left arrow is an epimorphism, the right arrow is 
a fiberwise monomorphism, and the middle map is a monomorphism whose cokernel $\cW''/\cW'$ is a line bundle on $S_{\lagspe} = 2E$.\ Tensoring over $\cO_{2E}$   the  exact sequence $0 \to \cO_E(-E) \to \cO_{2E} \to \cO_E \to 0$ with $\cW''/\cW'$, we obtain an exact sequence
\begin{equation*}
0 \to (\cW''/\cW') \otimes_{\cO_{2E}} \cO_E(-E) \to \cW''/\cW' \to (\cW''/\cW') \otimes_{\cO_{2E}} \cO_E \to 0, 
\end{equation*}
where both the first and the last terms  are line bundles on $E$.\ Moreover, this is the unique representation of $\cW''/\cW'$ as an extension of two line bundles on $E$.

We define $\cW$ as the kernel of the map $\cW'' \twoheadrightarrow \cW''/\cW' \twoheadrightarrow (\cW''/\cW') \otimes_{\cO_{2E}} \cO_E$,
so that   we have a factorization
\begin{equation}\label{equation-factorization-2}
\phi\colon \cA \twoheadrightarrow \cW' \to \cW \to \cW'' \hookrightarrow \bw2\cV_5 \otimes (\cV_6/\cV_5),
\end{equation}
where $\cW'$, $\cW$, and $\cW''$  are vector bundles of rank $n+5$, the two middle maps  
are monomorphisms, and $\cW/\cW'$ and $\cW''/\cW$ are line bundles on $E$.\ This is the unique factorization of  $\phi$ 
with these properties.

We define the map 
\begin{equation*}
\mu\colon\cW \lra \bw2\cV_5 \otimes (\cV_6/\cV_5)
\end{equation*}
as the composition of the third and the fourth arrows in~\eqref{equation-factorization-2} and the map 
\begin{equation*}
\nu\colon\cA \lra \cW
\end{equation*}
as the composition of the first and the second arrows.\ With these definitions, we  have again a commutative square~\eqref{diagram-a-w-v}.

\begin{lemm}
\label{lemma:bq-gm}
The family of quadratic forms $\bq_\cA \colon \cV_6 \otimes \Sym^2\!\cA  \to \det(\cV_5) \otimes (\cV_6/\cV_5)^{ \otimes 2}$ defined by~\eqref{equation-qa}
induces a family of quadratic forms $\bq \colon \cV_6 \otimes \Sym^2\!\cW  \to \det(\cV_5) \otimes (\cV_6/\cV_5)^{ \otimes 2}$.
\end{lemm}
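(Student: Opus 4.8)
The plan is to show that $\bq_\cA$ descends to $\cW$ in two stages, mirroring the two pieces of the map $\nu\colon\cA\to\cW$ of~\eqref{equation-factorization-2}: first factor $\bq_\cA$ through the epimorphism $\cA\twoheadrightarrow\cW'$, obtaining a family $\bq'$ of quadratic forms on $\cW'$; then push $\bq'$ up from $\cW'$ to $\cW$ by means of the Hecke transform of Lemma~\ref{lemma-bq-factors}, which is available precisely because $S_\lagspe=2E$ is a double Cartier divisor.

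For the first stage, I would use that in the canonical factorization of Proposition~\ref{proposition-factorization} the bundle $\cW'$ is the image of the map $\varphi$ of~\eqref{eq:varphi}, so the kernel of $\cA\twoheadrightarrow\cW'$ equals $\Ker(\varphi)=\Ker(\lambda_3\vert_\cA)$, a subbundle of $\cA$ by Lemma~\ref{lemma-kernel-cokernel-lf}. Since $\bq_\cA(v)(a_1,a_2)=-\lambda_4(v\wedge a_1)\wedge\lambda_3(a_2)$ is symmetric in $a_1,a_2$ (the Lagrangian property of $\cA$), it vanishes as soon as $\lambda_3(a_1)=0$ or $\lambda_3(a_2)=0$; hence it kills the image of $\cV_6\otimes\Ker(\lambda_3\vert_\cA)\otimes\cA$ in $\cV_6\otimes\Sym^2\!\cA$ and descends to a morphism $\bq'\colon\cV_6\otimes\Sym^2\!\cW'\to\det(\cV_5)\otimes(\cV_6/\cV_5)^{\otimes 2}$.

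For the second stage, I would put $D:=S_\lagspe=2E$ and $\cK:=\Ker(\varphi_1\vert_D)\subset\cW'\vert_D$, where $\varphi_1\colon\cW'\to\cW''$ is the middle map of the canonical factorization of Proposition~\ref{proposition-factorization}. The local normal form $\varphi_1=\diag(1,\dots,1,f)$, with $f$ a local equation of $D$, shows that $\cK$ is a line subbundle of $\cW'\vert_D$. The essential point is that $\cK$ lies in the kernel of $\bq'\vert_D$: working locally with a basis $e_1,\dots,e_{n+5}$ of $\cW'$ adapted to $\varphi_1$, so that $\cK=\cO_D e_{n+5}$, and lifting $e_{n+5}$ to a section $\tilde e_{n+5}$ of $\cA$ along $\cA\twoheadrightarrow\cW'$, one gets that $\lambda_3(\tilde e_{n+5})=\varphi(\tilde e_{n+5})$ is divisible by $f$ (because $\varphi$ factors through $\varphi_1$, whose last column is divisible by $f$), whence $\bq'(v)(e_i,e_{n+5})=-\lambda_4(v\wedge\tilde e_i)\wedge\lambda_3(\tilde e_{n+5})$ is divisible by $f$ and vanishes on $D$. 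Lemma~\ref{lemma-bq-factors} then applies to $\bq'$ with $\cE=\cW'$ and this $\cK$ (after absorbing the line bundle $\det(\cV_5)\otimes(\cV_6/\cV_5)^{\otimes 2}$ into the source $\cV_6$, the construction being unchanged by a line bundle twist of the target) and yields a unique factorization of $\bq'$ through $\Sym^2(\widetilde{\cW'})^\vee$, where $\widetilde{\cW'}$ is the Hecke transform defined by the exact sequence~\eqref{equation-tcw} with $\cE=\cW'$. It remains to identify $\widetilde{\cW'}$ with $\cW$: both are vector bundles sitting between $\cW'$ and $\cW''$ through which $\varphi_1$ factors, with line bundles supported on $E$ as cokernels, and such a factorization is unique (as recorded after~\eqref{equation-factorization-2}); in the local coordinates above, both inclusions $\cW'\hookrightarrow\widetilde{\cW'}$ and $\cW'\hookrightarrow\cW$ are $\diag(1,\dots,1,u)$ with $u$ a local equation of $E$. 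Untwisting then gives the required $\bq\colon\cV_6\otimes\Sym^2\!\cW\to\det(\cV_5)\otimes(\cV_6/\cV_5)^{\otimes 2}$.

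The first stage and the line bundle bookkeeping are routine. The main obstacle is the second stage: one must single out the correct line subbundle $\cK\subset\cW'\vert_D$, verify that $\bq'\vert_D$ annihilates it, and --- most delicately --- match the Hecke transform $\widetilde{\cW'}$ produced by Lemma~\ref{lemma-bq-factors} with the bundle $\cW$ assembled in~\eqref{equation-factorization-2}, which forces one to reconcile the $\cO_E(E)$-twists appearing in the exact sequence~\eqref{equation-tcw} with those in the kernel construction of $\cW$.
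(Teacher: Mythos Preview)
Your proposal is correct and follows essentially the same two-stage strategy as the paper: descend $\bq_\cA$ to $\cW'$ via the kernel of $\lambda_3\vert_\cA$, then apply the Hecke transform of Lemma~\ref{lemma-bq-factors} with $\cK=\Ker(\varphi_1\vert_D)$ to reach $\cW$. The only differences are stylistic: the paper identifies $\cK$ more intrinsically as $(\cW''/\cW')(-2E)$ and verifies $\cK\subset\Ker(\bq'\vert_D)$ by the abstract observation that $\cK$ lies in the kernel of $\cW'\vert_D\to\bw2\cV_5\otimes(\cV_6/\cV_5)$ (hence of $\bq'\vert_D$ by~\eqref{equation-qa}), whereas you compute this in local coordinates; and the paper matches $\widetilde{\cW'}$ with $\cW$ by directly comparing the defining exact sequence $0\to\cW'\to\cW\to\cK_E(E)\to 0$ with~\eqref{equation-tcw}, rather than invoking the uniqueness of the factorization~\eqref{equation-factorization-2}.
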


\begin{proof}
We will proceed in two steps.\ First, we show that $\bq_\cA$ induces a family $\bq'$ of {quadratic forms} on $\cW'$.\
Since $\cA \twoheadrightarrow \cW'$ is surjective,  it is enough to show
that its kernel bundle is contained in the kernel of $\bq_\cA$.\ This is obvious, since the kernel of that map 
is contained in the kernel of $\lambda_3$ (by definition of $\cW'$) which in   turn is contained in the kernel of $\bq_\cA$ by~\eqref{equation-qa}.

We then show that the  family of quadratic forms $\bq'$ on $\cW'$ induces a family of quadratic forms on~$\cW$ given by the Hecke transform of $\bq'$ as defined in Lemma~\ref{lemma-bq-factors}.\
We set 
\begin{equation*}
\cE := \cW'
\qquad
\cK := \cW''/\cW'(-2E),
\qquad 
\cV := \cV_6 \otimes \det(\cV_5)^\vee \otimes (\cV_6/\cV_5)^\vee.
\end{equation*}
 Restricting the sequence $0 \to \cW' \to \cW'' \to \cW''/\cW' \to 0$ to $2E$, we obtain
\begin{equation*}
0 \to \cK \to \cW'_{2E} \to \cW''_{2E} \to \cW''/\cW' \to 0,
\end{equation*}
thus the line bundle $\cK$ on $2E$ is the kernel of the map $\cW'_{2E} \to \cW''_{2E}$.\ 
In particular, it is a line subbundle in $\cE_{2E}$.\
Moreover, $\cK$ is contained in the kernel of the map $\cW' \to \bw2\cV_5 \otimes (\cV_6/\cV_5)$ restricted to $2E$, 
hence, by definition of $\bq_\cA$ in~\eqref{equation-qa}, it is contained in the kernel of $\bq'$.

We are therefore  in the setup of Lemma~\ref{lemma-bq-factors}.\
By definition of $\cW$, there is an exact sequence 
\begin{equation*}
0 \to \cW' \to \cW \to \cK_E(E) \to 0.
\end{equation*}
Comparing with~\eqref{equation-tcw}, we see that the bundle $\cW \cong \tcE$ can be
identified with the Hecke transform of $\cW'$ with respect to $\cK$,
and   Lemma~\ref{lemma-bq-factors} provides it with a family of quadratic forms.
 \end{proof}

We can now prove the main result of this section.

\begin{prop}\label{proposition-gm-from-lag-naive}
The collection $(\ns,\cW,\cV_6,\cV_5,\mu,\bq)$ constructed above is a family of smooth normalized GM data of dimension $n$.\ 
Its special locus coincides scheme-theoretically with the Cartier divisor~$E$.
\end{prop}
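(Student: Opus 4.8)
The plan is to verify the three conditions defining a family of smooth normalized GM data of dimension $n$, namely: (i) the collection $(\ns,\cW,\cV_6,\cV_5,\mu,\bq)$ satisfies the bundle-rank conditions and the compatibility diagram~\eqref{definition-gm-data}; (ii) at each geometric point $s$ of $S$, the GM intersection is a smooth GM variety of dimension $n$; and (iii) the associated family of Lagrangian data is equivalent to $(\ns,\cV_6,\cV_5,\cA)$, with the special locus being $E$ scheme-theoretically. First I would assemble what has already been constructed: by~\eqref{equation-factorization-2} the bundle $\cW$ has rank $n+5$, $\cV_5\hookrightarrow\cV_6$ is a fiberwise monomorphism by hypothesis, $\mu$ is the composite $\cW\to\cW''\hookrightarrow\bw2\cV_5\otimes(\cV_6/\cV_5)$, and $\bq$ is the family of quadratic forms on $\cW$ produced by Lemma~\ref{lemma:bq-gm}. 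So only the commutativity of~\eqref{definition-gm-data}, equivalently the identity~\eqref{eq:mu-q}, needs to be checked among the structural conditions.

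For the identity $\bq(v)(w_1,w_2)=v\wedge\mu(w_1)\wedge\mu(w_2)$ on $\cV_5\otimes\Sym^2\!\cW$, I would argue by reduction to $\cA$. By construction $\bq$ is induced from $\bq_\cA$ through $\nu\colon\cA\to\cW$ (first through $\cW'$, then via the Hecke transform to $\cW$), and $\mu\circ\nu=\varphi=\lambda_3|_\cA$ by the commutative square~\eqref{diagram-a-w-v}. Hence it suffices to prove the analogous identity $\bq_\cA(v)(a_1,a_2)=v\wedge\lambda_3(a_1)\wedge\lambda_3(a_2)$ (up to the line-bundle identification $\lambda_6$), viewed on $\cV_5\otimes\Sym^2\!\cA$; but this is immediate from the defining formula~\eqref{equation-qa}, $\bq_\cA(v)(a_1,a_2)=-\lambda_4(v\wedge a_1)\wedge\lambda_3(a_2)$, together with the Leibniz rule for $\lambda$ and the vanishing of $\lambda$ on $\bw2\cV_5$ (which makes $\lambda_4(v\wedge a_1)$ equal to $v\wedge\lambda_3(a_1)$ when $v\in\cV_5$). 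Since $\cA\twoheadrightarrow\cW'$ is surjective on the complement of a Cartier divisor and both sides of~\eqref{eq:mu-q} factor through $\Sym^2\!\cW$, Lemma~\ref{lemma-factorization} upgrades the identity on $\cA$ to the desired identity on $\cW$. This gives a well-defined family of normalized GM data.

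The hardest part will be (ii): showing that every geometric fiber is a \emph{smooth} GM variety of dimension $n$. Over the ordinary locus $S\setminus E$ one has $\cW_s\cong\cA_s/\ker(\varphi_s)$ with $\varphi_s$ of full rank $n+5$, so the fiber is exactly the ordinary GM variety attached to the Lagrangian data set $(\cV_{6,s},\cV_{5,s},\cA_s)$ by~\cite[Theorem~3.6]{DK}, and its smoothness follows from the no-decomposable-vectors hypothesis via~\cite[Theorem~3.16]{DK}. Over a point $s$ of $E$, the rank of $\varphi_s$ is $n+4$, and here I would invoke part (b) of Proposition~\ref{proposition-hecke-transform}: the Hecke transform supplies a canonical orthogonal direct sum decomposition $\cW_s\cong(\cW'_s/\cK_s)\oplus\cK_s(E)_s$ of the fiber, with $\bq_s$ restricting to the GM data of an ordinary GM variety of dimension $n-1$ on the first summand and to the (nowhere-vanishing, by Lemma~\ref{lemma-residual-quadric}) residual quadric on the rank-one second summand. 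This is precisely the shape of a \emph{special} GM data set as in~\cite[Proposition~2.30]{DK}, so by~\cite[Lemma~2.33]{DK} the fiber is a smooth special GM variety of dimension $n$ provided the underlying $(n-1)$-dimensional ordinary GM data is smooth — which again follows from no decomposable vectors. Pointwise smoothness plus the dimension count then give a smooth family by Lemma~\ref{lemma:gm-pointwise}, and this identifies the GM-special locus of the constructed family with $E$ (the fibers over $E$ are exactly the special ones). Finally, to see that $\fa$ sends this family back to $(\ns,\cV_6,\cV_5,\cA)$, I would run the construction of Section~\ref{subsection-from-gm-to-lag} and check that the monad~\eqref{equation-ax-monad} reproduces $\cA$: this is a fiberwise statement away from $E$ where~\cite[Theorem~3.6]{DK} applies directly, and a local computation along $E$ matching the two factorizations, both being the \emph{unique} factorization of $\varphi$ with the stated properties by Proposition~\ref{proposition-factorization}.
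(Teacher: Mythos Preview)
Your verification of~\eqref{eq:mu-q} and the treatment of fibers over $S\setminus E$ match the paper's approach essentially verbatim. The substantive gap is in the smoothness check over a point $s\in E$: you invoke Lemma~\ref{lemma-residual-quadric} to assert that the residual quadratic form on the rank-one summand is nowhere-vanishing, but that lemma has two hypotheses you have not verified. First, it applies only when the parameter bundle $\cV$ has rank~$1$, whereas here $\cV=\cV_6$ has rank~$6$; you must first restrict to a single quadric $\bq'(v_0)$ for some $v_0\in V_6\setminus V_5$. Second, and more seriously, the lemma requires the \emph{scheme-theoretic} equality $\Dis(\bq'(v_0))=2E$, which is not automatic: a priori the discriminant of $\bq'(v_0)$ could be strictly larger than $2E$ (for instance if $\bq'(v_0)$ degenerates further along~$E$), in which case the residual form could vanish. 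The paper closes this by choosing $v_0$ general so that $A\cap(v_0\wedge\bw2V_6)=0$ and then invoking~\cite[Lemma~C.5]{DK} to identify $\Coker(\bq'(v_0)\colon\cW'\to\cW'^\vee)$ with $\Coker(\bw2\cV_5^\vee\to\cW'^\vee)$, which is visibly a line bundle on $2E$; only then does Lemma~\ref{lemma-residual-quadric} give $\bq_1\ne 0$. This is the heart of the argument and cannot be skipped.

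Two smaller points. The proposition asserts that the GM-special locus equals $E$ \emph{scheme-theoretically}; your argument ``the fibers over $E$ are exactly the special ones'' is only set-theoretic. The scheme-theoretic statement follows immediately from the factorization $\mu\colon\cW\to\cW''\hookrightarrow\bw2\cV_5\otimes(\cV_6/\cV_5)$, since the second arrow is a fiberwise monomorphism and the degeneration scheme of the first is~$E$ by construction. Finally, the recovery of $(\ns,\cV_6,\cV_5,\cA)$ under $\fa$ is not part of this proposition---that is Proposition~\ref{prop:equivalence-groupoids}---so your last paragraph is unnecessary here.
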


\begin{proof}
To show that $(\ns,\cW,\cV_6,\cV_5,\mu,\bq)$ is a family of GM data,
 we only have to verify~\eqref{eq:mu-q}.\ Since $\bq$ is induced by~$\bq_\cA$, it is enough to check
that $\bq_\cA(v)(a_1,a_2) = v \wedge \mu(\nu(a_1)) \wedge \mu(\nu(a_2))$ for $v \in \cV_5$.\ But this follows  
from the equality $\lambda_4(v \wedge \xi) = - v\wedge \lambda_3(\xi)$ for $v \in \cV_5$ and $\xi \in \bw3\cV_6$,
and the commutativity of~\eqref{diagram-a-w-v}.

The statement about the special locus is also clear, since by construction, the map~$\mu$ is a composition $\cW \to \cW'' \hookrightarrow \bw2\cV_5 \otimes (\cV_6/\cV_5)$, where the second map
is a fiberwise monomorphism and the degeneration scheme of the first map is equal to $E$.

It remains to show that the family of GM data is smooth, that is, that for each  {geometric} point $s $ of $ S$, 
the GM intersection corresponding to the GM data at the point $s$ is smooth.

If $s \notin E$, then $\cW_s = \cW'_s$ is the image of the map 
$\cA_s \xrightarrow{\ (\lambda_{3 })_s\ } \bw2 \cV_{5,s} \otimes (\cV_{6}/\cV_{5})_s$ 
and the quadratic form $\bq$ on it is induced by the form $\bq_\cA$ on $\cA_s$.\ 
Therefore, by~\cite[Theorem~3.6]{DK}, the corresponding GM intersection is just the ordinary GM variety associated
with the Lagrangian subspace $\cA_s \subset \bw3 \cV_{6,s}$, which has no decomposable vectors, 
and the hyperplane~$\cV_{5,s} \subset \cV_{6,s}$.
It is smooth by~\cite[Theorem~3.16]{DK}.

Now assume   $s \in E$.\ For brevity, we   write $V_6,V_5,A,W',W,W''$ and so on for the fibers of the corresponding vector bundles at the geometric point $s$.\
We   also choose a trivialization for $V_6/V_5$ to get rid of it in the formulas.\
Consider the restriction 
\begin{equation}\label{equation-factorization-3}
A \twoheadrightarrow W' \to W \to W'' \hookrightarrow \bw2V_5
\end{equation}
of the sequence~\eqref{equation-factorization-2} to $s$.\
Denote by $K_A$, $K'$, and $K$ the respective kernels of the first three maps in \eqref{equation-factorization-3}.\ We have $\dim (K_A) = 5 - n$ and $\dim (K') = \dim (K) = 1$.\ Since the rank of the composition of the maps in~\eqref{equation-factorization-3} is $n+4$ (because $s$ is a point of the special locus), it follows that $K'$ is equal to the kernel of the composition $W' \to W \to \bw2V_5$.\ Therefore the map~$\mu\colon W \to \bw2V_5$ is injective on  $W_0 = \nu(A)$.\ In particular, we have a canonical direct sum decomposition
\begin{equation}
\label{eq:w-k-w0}
W = K \oplus W_0.
\end{equation}
 {Note that $(W_0,V_6,V_5,\mu\vert_{W_0},\bq\vert_{W_0})$ is a GM data set corresponding to a smooth GM variety~$X_0$ of dimension $n-1$: this follows from~\cite[Theorem~3.6 and Theorem~3.16]{DK}.

The direct sum decomposition~\eqref{eq:w-k-w0}} coincides 
with the direct sum decomposition of Proposition~\ref{proposition-hecke-transform}(a) (its construction is  the same).\  
Therefore, by  Proposition~\ref{proposition-hecke-transform}(b), the decomposition is orthogonal 
with respect to the quadrics $\bq(v)$ for all $v \in V_6$.\ Furthermore, the subspace $K$ is contained in the kernel of the quadric $\bq(v)$ for all~\mbox{$v \in V_5$}, 
 {since~\eqref{eq:mu-q} holds}
 and $K$ is the kernel of $\mu$.\ {It follows that 
\begin{equation*}
M_X = \bigcap_{v \in V_5}  \{ \bq(v) = 0  \} \subset \P(W)
\end{equation*}
is the cone with vertex $\P(K) \subset \P(W)$ over the Grassmannian hull $M_{X_0} \subset \P(W_0)$ of~$X_0$  {(see~\cite[Section~2.4]{DK})}
and, if $v_0 \in V_6 \setminus V_5$, the last quadric $\bq(v_0)$ can be written as the sum $\bq(v_0) = \bq_1 \oplus \bq_0$,
where~$\bq_1 \in \Sym^2\!K^\vee$ and $\bq_0 \in \Sym^2\!W_0^\vee$ is the equation of $X_0$ in $M_{X_0}$.

If $\bq_1 \ne 0$, then $X$ is the double covering of $M_{X_0}$ branched over $X_0$, 
that is, the special GM variety associated with $X_0$ (see~\cite[Lemma~2.33]{DK}).\ In particular, $X$ is a smooth GM variety.\ If $\bq_1 = 0$, then $X$ is the cone over $M_{X_0}$.\ It remains to show that $\bq_1 \ne 0$.\ For this we recall from Proposition~\ref{proposition-hecke-transform}(b) that $\bq_1$ is the residual quadric of $\bq'(v_0)$.\ We describe it below.}

For a general vector $v_0 \in V_6 \setminus V_5$, we have $A \cap (v_0 \wedge \bw2V_6) = 0$ 
(see Remark~\ref{remark:a-v0-intersection}).\
Consider the family of quadratic forms $\bq(v_0)$ in a small neighborhood $S^0$ of~$s$ in $ S$.\
Upon shrinking $S^0$, we may assume that the vector bundle $\cV_6$ is trivial with fiber $V_6$, that
\begin{equation*}
\bw3V_6 \otimes \cO = \bw3\cV_5 \oplus (v_0 \wedge \bw2V_6) \otimes \cO
\end{equation*}
is a Lagrangian direct sum decomposition, and that $\cA \cap (v_0 \wedge \bw2V_6) \otimes \cO = 0$ at all points of $S^0$.\ 
By~\cite[Lemma~C.5]{DK}, we obtain
\begin{equation*}
\Coker(\bq_\cA(v_0)\colon\cA \to \cA^\vee) \cong \Coker(\varphi^\vee\colon \bw2\cV_5^\vee \to \cA^\vee)
\end{equation*}
(where we assume that the line bundles $\det(\cV_5)$ and $\cV_6/\cV_5$ are trivial on $S^0$).\
Since the kernel of the epimorphism $\cA \twoheadrightarrow \cW'$ is contained in the kernels of both $\bq_\cA(v_0)$ and $ {\varphi}$, 
we can cancel it out and obtain
\begin{equation*}
\Coker(\bq'(v_0)\colon\cW' \to {\cW'}^\vee) \cong \Coker(\bw2\cV_5^\vee \to {\cW'}^\vee).
\end{equation*}
The rightmost map  factors as an epimorphism $\bw2\cV_5^\vee \to {\cW''}^\vee$ followed by the dual of the map $\cW' \to \cW''$.\ 
Dualizing the sequence $0 \to \cW' \to \cW'' \to \cW''/\cW' \to 0$ and taking into account 
that  $\cW''/\cW'  $  is a line bundle on the Cartier divisor $2E$ (by construction of the bundles $\cW'$ and $\cW''$ 
at the beginning of Section~\ref{subsection-from-lag-to-gm}),
 we deduce that~$\Coker(\bw2\cV_5^\vee \to {\cW'}^\vee)$ is a line bundle on $2E$, 
hence so is $\Coker(\bq'(v_0)\colon \cW' \to {\cW'}^\vee)$.\
By Lemma~\ref{lemma-support-cokernel}, the discriminant $\Dis(\bq'(v_0))$ is equal to $2E$
 and by Lemma~\ref{lemma-residual-quadric}, the residual quadric $\bq_1$ is nonzero.
 
As explained above, this means that the GM intersection corresponding to the point $s$ is a smooth GM variety of dimension $n$.
 \end{proof}

\subsection{Compositions of the constructions}
\label{subsection:gm-and-lagrangian}

We show that the constructions introduced in Sections~\ref{subsection-from-gm-to-lag} and~\ref{subsection-from-lag-to-gm} 
are mutually inverse.

Let $S$ be a scheme and let $E \subset S$ be an effective Cartier divisor.\ Denote by
\begin{equation*}
\begin{aligned}
 \ucM_n(S,E) 		& = && \{ (\ns,\cW,\cV_6,\cV_5,\mu,\bq) 	&&\in \ucM_n(S) 	&&\mid S_\gmspe = E \},\\
\cmlag_{n+5}(S,E) 	& = && \{ (\ns,\cV_6,\cV_5,\cA) 		&&\in \cmlag_{n+5}(S) 	&&\mid S_\lagspe = E \}
 \end{aligned}
\end{equation*}
the subgroupoids of $\ucM_n(S)$ and $\cmlag_{n+5}(S)$ (defined in Sections~\ref{subsection:stack-gm} and~\ref{subsection:lagrangian-data})  formed by all families of smooth normalized GM data of dimension $n$ 
(resp.\ by all families of Lagrangian data of rank $n+5$ avoiding decomposable vectors) over~$S$ whose special loci is $E$.

\begin{prop}
\label{prop:equivalence-groupoids}
Assume $n\in\{3,4,5\}$.\ For every effective Cartier divisor $E \subset S$, the morphism of stacks $\fa$ defined in Proposition~\textup{\ref{proposition-gm-to-lag}} 
induces an equivalence of groupoids 
\begin{equation*}
\ucM_n(S,E) \isomlra \cmlag_{n+5}(S,2E).
\end{equation*}
\end{prop}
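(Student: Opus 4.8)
The plan is to construct a quasi-inverse to the restriction of $\fa$, coming from the construction of Section~\ref{subsection-from-lag-to-gm}. For $n\in\{3,4,5\}$ one has $n\ge 3$ (needed to apply $\fa$, by Proposition~\ref{proposition-gm-to-lag}) and $n\le 5$ (needed for the reverse construction). First I would note that every step of the construction of Section~\ref{subsection-from-lag-to-gm} --- the canonical factorization of $\varphi=\lambda_3\vert_\cA$ from Proposition~\ref{proposition-factorization}, the bundle $\cW$, and the maps $\mu$ and $\bq$ (the latter obtained from the family $\bq_\cA$ of~\eqref{equation-qa} by the Hecke transform of Lemma~\ref{lemma-bq-factors}) --- is canonical, hence compatible with isomorphisms of Lagrangian data. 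Combined with Proposition~\ref{proposition-gm-from-lag-naive} (the output is a smooth family of GM data of dimension $n$ with GM-special locus equal to $E$), this yields, by the same reasoning as in the proof of Proposition~\ref{proposition-gm-to-lag}, a functor $\fx\colon\cmlag_{n+5}(S,2E)\to\ucM_n(S,E)$. Conversely, $\fa$ does restrict to a functor $\ucM_n(S,E)\to\cmlag_{n+5}(S,2E)$, since for $(\ns,\cW,\cV_6,\cV_5,\mu,\bq)$ with GM-special locus $E$ the relation $\cJ_\lagspe=\cJ_\gmspe^2$ of Proposition~\ref{proposition-gm-to-lag} gives Lagrangian special locus $2E$. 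It then remains to produce natural isomorphisms $\fx\circ\fa\cong\id$ and $\fa\circ\fx\cong\id$.

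For $\fx\circ\fa\cong\id$ I would start with $(\ns,\cW,\cV_6,\cV_5,\mu,\bq)\in\ucM_n(S,E)$ and its image $(\ns,\cV_6,\cV_5,\cA)$ under $\fa$. Since $E$ is a Cartier divisor, Lemma~\ref{lemma-factorization2} factors $\varphi=\lambda_3\vert_\cA$ of~\eqref{eq:varphi} as $\cA\twoheadrightarrow\cW'\to\cW\to\cW''\hookrightarrow\bw2\cV_5\otimes(\cV_6/\cV_5)$ with exactly the properties of the factorizations occurring in Proposition~\ref{proposition-factorization} and in Section~\ref{subsection-from-lag-to-gm}; invoking the uniqueness statements there identifies the bundles $\cW'$, $\cW''$, $\cW$ and the map $\mu$ reconstructed by $\fx$ with the original ones. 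For $\bq$, I would use Lemma~\ref{lemma-quadratic-compatible}: the family $\bq_\cA$ is the quadratic form induced by $\bq$ along $\nu$, so by surjectivity of $\cA\twoheadrightarrow\cW'$ the form $\bq'$ used in $\fx$ is the restriction of $\bq$ to $\cW'\subset\cW$, and $\bq$ itself exhibits a factorization of $\bq'$ through $\Sym^2\!\cW^\vee$; by the uniqueness in Lemma~\ref{lemma-bq-factors} the reconstructed $\bq$ is the original one. This step is essentially formal once the preparatory lemmas are in place.

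For $\fa\circ\fx\cong\id$ I would compare the monad~\eqref{equation-ax-monad} that defines $\cA'=\fa(\fx(\cA))=\Ker(f_2)/\Im(f_1)$ with $\cA$. The key is an explicit comparison morphism: working locally with a splitting $\cV_6=\cV_5\oplus(\cV_6/\cV_5)$ and a generator $v_0$ of the second summand, I would send a local section $a$ of $\cA$ to $\alpha(a)=(\xi(a),\,v_0\otimes\nu(a))$, where $\xi(a):=a-v_0\wedge\mu(\nu(a))$. Since $\lambda_3\vert_\cA=\mu\circ\nu$, the component $\xi(a)$ lies in $\bw3\cV_5=\Ker(\lambda_3)$ and equals $\lambda_4(v_0\wedge a)$; moreover $f_3(\alpha(a))=a$, and changing the chosen lift $v_0$ modifies $\alpha(a)$ by an element of $\Im(f_1)$, so $\alpha$ descends to a canonical morphism $\bar\alpha\colon\cA\to\cA'$ compatible with the maps to $\bw3\cV_6$. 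The only real computation is the verification $\alpha(a)\in\Ker(f_2)$, i.e.\ $\xi(a)\wedge\mu(w')+\bq(v_0)(\nu(a),w')=0$; as this is a section of a vector bundle on $S$ it suffices to check it on the schematically dense open $S\setminus E$, where $\cW=\cW'$ and $\nu$ is surjective, and there it follows from Lemma~\ref{lemma-quadratic-compatible} and~\eqref{equation-qa}. Finally $\bar\alpha$ is an isomorphism: it is a morphism between rank-$10$ subbundles of $\bw3\cV_6$ whose composition with the fiberwise monomorphism $\cA'\hookrightarrow\bw3\cV_6$ is the inclusion of $\cA$, hence fiberwise injective and therefore fiberwise bijective, so an isomorphism by Lemmas~\ref{lemma-epi-points} and~\ref{lemma-kernel-cokernel-lf}; being compatible with the embeddings into $\bw3\cV_6$, it is an isomorphism of Lagrangian data, and it is natural. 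Combining the two steps shows that $\fa$ restricts to an equivalence $\ucM_n(S,E)\isomto\cmlag_{n+5}(S,2E)$.

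I expect the third paragraph to be the main obstacle: finding the comparison morphism $\bar\alpha$ and proving it well defined --- in particular the identity $\alpha(a)\in\Ker(f_2)$, which uses the Lagrangian property of $\cA$ through Lemma~\ref{lemma-quadratic-compatible} and the density of $S\setminus E$ --- together with the careful bookkeeping of the line-bundle twists that~\eqref{equation-ax-monad} and~\eqref{equation-qa} suppress and that must be reinstated for the statement to be meaningful over a possibly nonreduced base $S$.
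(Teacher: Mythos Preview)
Your proposal is correct and follows essentially the same approach as the paper: both directions of the comparison ($\fx\circ\fa\cong\id$ via Lemma~\ref{lemma-factorization2} and uniqueness, $\fa\circ\fx\cong\id$ via an explicit comparison map into the monad~\eqref{equation-ax-monad}) match the paper's argument closely.

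The one packaging difference worth noting is in the third paragraph.\ Instead of choosing a local splitting and a generator $v_0$, the paper defines a \emph{global} morphism
\begin{equation*}
f_4\colon \cV_6 \otimes \cA \otimes (\cV_6/\cV_5)^\vee \lra \bw3\cV_5 \oplus (\cV_6 \otimes \cW \otimes (\cV_6/\cV_5)^\vee),
\qquad
v \otimes a \longmapsto (\lambda_4(v \wedge a),\, v \otimes \nu(a)),
\end{equation*}
whose specialization to your $v_0$ is exactly your $\alpha$.\ The paper then checks $f_2\circ f_4(v\otimes a)(\nu(a'))=0$ (the same computation you do) and invokes Lemma~\ref{lemma-factorization} to conclude $f_2\circ f_4=0$; the restriction of $f_4$ to $\cV_5\otimes\cA\otimes(\cV_6/\cV_5)^\vee$ visibly lands in $\Im(f_1)$, so $f_4$ descends along $\lambda$ to a map $\cA\to\Ker(f_2)/\Im(f_1)$.\ This buys you exactly what you arrange by hand: independence of the choice of $v_0$ is automatic, and the ``reduce to the dense open $S\setminus E$'' step is replaced by a direct appeal to Lemma~\ref{lemma-factorization}.\ The substance of the argument is identical.
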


\begin{proof}
 By Proposition~\ref{proposition-gm-to-lag}, the morphism of stacks $\fa$ doubles the special locus, 
hence induces a functor between the groupoids $\ucM_n(S,E) \to \cmlag_{n+5}(S,2E)$.\
Let us show that the construction of Section~\ref{subsection-from-lag-to-gm} defines its quasi-inverse functor.\
This construction is clearly functorial, so it remains to consider its compositions with $\fa$.

Let us start  with a family of smooth normalized GM data $(\ns,\cW,\cV_6,\cV_5 ,\mu,\bq)$ with special locus~$E$ 
and let $(\ns,\cV_6,\cV_5,\cA)$ be the family of Lagrangian data obtained by applying the morphism~$\fa$.\
 Its special locus is $2E$ by Proposition~\ref{proposition-gm-to-lag}.\
Applying the construction of Section~\ref{subsection-from-lag-to-gm} to $(\ns,\cV_6,\cV_5,\cA)$, we must show 
that the family of GM data that we get is isomorphic to the original one.

The first step of the construction of Section~\ref{subsection-from-lag-to-gm} is the factorization~\eqref{equation-factorization-2} 
of the morphism~$\varphi$ defined by~\eqref{eq:varphi}.\ Comparing it with the factorization of Lemma~\ref{lemma-factorization2} 
and using the uniqueness of such a factorization (Proposition~\ref{proposition-factorization}), 
we deduce that the bundles $\cW'$, $\cW$, $\cW''$, and the maps~$\nu$ and $\mu$ defined 
by this factorization agree with those in the lemma.\ It remains to show that the quadratic forms $\bq$ agree.\ This follows from the compatibility of Lemma~\ref{lemma-quadratic-compatible}
and the uniqueness of the induced quadratic form.

Conversely, let us start with a family of Lagrangian data $(\ns,\cV_6,\cV_5,\cA)$.\ We produce a family 
$(\ns,\cW,\cV_6,\cV_5 ,\mu,\bq)$ of smooth normalized GM data by the constructions of Section~\ref{subsection-from-lag-to-gm} 
and  apply the functor $\fa$.\
In other words, we consider the diagram~\eqref{equation-ax-monad} and our goal is to show that the cohomology bundle
$\Ker(f_2)/\Im(f_1)$ of its upper row is isomorphic to the Lagrangian subbundle~$\cA \subset \bw3\cV_6$ we started with.

For this, we consider the map 
\begin{eqnarray*}
f_4\colon\cV_6 \otimes \cA \otimes (\cV_6/\cV_5)^\vee& \lra & \bw3\cV_5 \oplus (\cV_6 \otimes \cW \otimes (\cV_6/\cV_5)^\vee)\\
 v \otimes a&\longmapsto&  (\lambda_4(v \wedge a), v \otimes \nu(a)).
\end{eqnarray*}
We have
\begin{equation*}
f_2\circ f_4(v \otimes a)(w') = \lambda_4(v \wedge a) \wedge \mu(w') + \bq(v)(\nu(a),w').
\end{equation*}
For $w' = \nu(a')$, we have $\mu(w') = \mu(\nu(a')) = \lambda_3(a')$, hence the right   side equals
\begin{equation*}
- \bq_\cA(v)(a,a') + \bq(v)(\nu(a),\nu(a')) = 0, 
\end{equation*}
since $\bq$ is induced by $\bq_\cA$ via the map $\nu$.
This means that the composition 
\begin{equation*}
\cV_6 \otimes \cA \otimes \cA \otimes (\cV_6/\cV_5)^\vee \xrightarrow{\ \nu\ } 
\cV_6 \otimes \cA \otimes \cW \otimes (\cV_6/\cV_5)^\vee \xrightarrow{\ f_2 \circ f_4\ }
\det(\cV_6)
\end{equation*}
vanishes.\
Since $\nu$ is surjective on the complement of a Cartier divisor, 
it follows from Lemma~\ref{lemma-factorization} that $f_2 \circ f_4 = 0$.\ Therefore, the map $f_4$ factors through the kernel of $f_2$.

Furthermore, the restriction of $f_4$ to $\cV_5 \otimes \cA \otimes (\cV_6/\cV_5)^\vee$ can be rewritten as
\begin{equation*}
f_4(v \otimes a) = (\lambda_4(v \wedge a), v \otimes \nu(a)) = ( - v \wedge \lambda_3(a), v \otimes \nu(a)) = f_1(v \wedge \nu(a)),
\end{equation*}
hence the composition $\cV_6 \otimes \cA \otimes (\cV_6/\cV_5)^\vee \xrightarrow{\ f_4\ } \Ker(f_2) \to \Ker(f_2)/\Im(f_1)$
factors as 
\begin{equation*}
\cV_6 \otimes \cA \otimes (\cV_6/\cV_5)^\vee \xrightarrow{\ \lambda\ } \cA \lra \Ker(f_2)/\Im(f_1).
\end{equation*}
Finally,   by the commutativity of~\eqref{diagram-a-w-v}, we have
\begin{equation*}
f_3\circ f_4(v \otimes a) = f_3(\lambda_4(v \wedge a), v \otimes \nu(a)) = \lambda_4(v \wedge a) + v \wedge \mu(\nu(a)) = \lambda(v) a.
\end{equation*}
Together with the above observation, this means that the composition
\begin{equation*}
\cA \lra \Ker(f_2)/\Im(f_1) \xrightarrow{\ f_3\ } \bw3\cV_6
\end{equation*}
is the embedding of $\cA$.\ Since both $\cA$ and $\Ker(f_2)/\Im(f_1)$ are Lagrangian subbundles in~$\bw3\cV_6$, they are isomorphic.
\end{proof}

\begin{rema}
\label{remark:ordinary-surfaces-equivalence}
The same argument proves that there is an equivalence $ {\ucMtwo}(S) \isomlra \cmlag_{7}(S)$ of groupoids,
where ${\ucMtwo} \subset \ucM_{2,{\rm ord}}$ is the substack defined by~\eqref{eq:ucm-ss}.
\end{rema}

We finish this section by stating a combination of the above results  {(including Theorem~\ref{theorem:gm-substack-gm-data})}
which is a simplified version of Proposition~\ref{prop:equivalence-groupoids}.

\begin{coro}
\label{corollary:constructing-families-gm}
Let $(\ns,\cV_6,\cV_5,\cA)$ be a family of Lagrangian data   of rank~$n + 5$, avoiding decomposable vectors, 
and such that its special locus  is a double Cartier divisor~$S_\lagspe = 2E$.\
There is a unique family of smooth GM varieties $(S,\cX \to S,\cH)$  
such that $(\ns,\cV_6,\cV_5,\cA)$ is obtained from the corresponding family of GM data by the morphism~$\fa$.
\end{coro}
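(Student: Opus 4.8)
The plan is to assemble \texttt{Corollary~\ref{corollary:constructing-families-gm}} from the three results already at hand: \texttt{Theorem~\ref{theorem:gm-substack-gm-data}} (equivalence of $\ucM_n$ with the stack of smooth normalized GM data), \texttt{Proposition~\ref{proposition-gm-from-lag-naive}} (the construction of Section~\ref{subsection-from-lag-to-gm} produces a family of smooth normalized GM data), and \texttt{Proposition~\ref{prop:equivalence-groupoids}} (the constructions of Sections~\ref{subsection-from-gm-to-lag} and~\ref{subsection-from-lag-to-gm} are mutually inverse on the groupoids with prescribed special locus). First I would invoke \texttt{Proposition~\ref{proposition-gm-from-lag-naive}}: applied to $(\ns,\cV_6,\cV_5,\cA)$ with $S_\lagspe = 2E$, the construction of Section~\ref{subsection-from-lag-to-gm} yields a family $(\ns,\cW,\cV_6,\cV_5,\mu,\bq)$ of smooth normalized GM data of dimension $n$ whose GM special locus is exactly $E$. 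Applying the equivalence of \texttt{Theorem~\ref{theorem:gm-substack-gm-data}} (via \texttt{Corollary~\ref{corollary:stack-gm-data}}), this corresponds to a family $(S,\cX \to S,\cH)$ of smooth polarized GM varieties. This establishes existence.

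For the compatibility statement, I would apply the ``start with Lagrangian data'' half of the proof of \texttt{Proposition~\ref{prop:equivalence-groupoids}}: since $E$ is an effective Cartier divisor and $S_\lagspe = 2E$, the family $(\ns,\cV_6,\cV_5,\cA)$ lies in $\cmlag_{n+5}(S,2E)$, and the quasi-inverse functor of \texttt{Proposition~\ref{prop:equivalence-groupoids}} sends it to an object of $\ucM_n(S,E)$ whose image under $\fa$ is isomorphic to $(\ns,\cV_6,\cV_5,\cA)$. Since the quasi-inverse functor is precisely the construction of Section~\ref{subsection-from-lag-to-gm}, the family of GM data associated to $(S,\cX \to S,\cH)$ has the property that $\fa$ applied to it recovers $(\ns,\cV_6,\cV_5,\cA)$ up to equivalence of families of Lagrangian data.

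For uniqueness, suppose $(S,\cX' \to S,\cH')$ is another family of smooth GM varieties such that the associated family of GM data maps under $\fa$ to (a family equivalent to) $(\ns,\cV_6,\cV_5,\cA)$. By \texttt{Proposition~\ref{proposition-gm-to-lag}}, the GM special locus of the associated family of GM data is then an effective Cartier divisor $E'$ with $2E' = S_\lagspe = 2E$, hence $E' = E$ (two effective Cartier divisors with the same associated ideal agree). Thus the family of GM data lies in $\ucM_n(S,E)$, and the essential uniqueness of the preimage under the equivalence $\ucM_n(S,E) \isomlra \cmlag_{n+5}(S,2E)$ of \texttt{Proposition~\ref{prop:equivalence-groupoids}} forces it to be isomorphic to the one constructed above; applying the equivalence of \texttt{Theorem~\ref{theorem:gm-substack-gm-data}} again gives $(S,\cX' \to S,\cH') \cong (S,\cX \to S,\cH)$. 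The one point requiring a little care is that \texttt{Proposition~\ref{prop:equivalence-groupoids}} is stated only for $n \in \{3,4,5\}$, which matches the hypothesis here (rank $n+5$ with $n \le 5$, and $n \ge 3$ so that avoiding decomposable vectors makes the GM variety smooth); I would note explicitly that these are the only cases in which the statement is claimed. The main obstacle is purely bookkeeping: tracking that ``the family of GM data associated to $(S,\cX \to S,\cH)$'' under \texttt{Theorem~\ref{theorem:gm-substack-gm-data}} is the same (up to the $\Gm$-rigidification discussed in Remark~\ref{remark:linear-lift}) as the family produced by Section~\ref{subsection-from-lag-to-gm}, so that the two equivalences compose cleanly. $\qed$
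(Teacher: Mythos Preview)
Your proof is correct and takes exactly the approach the paper intends: the paper presents this corollary without proof, describing it simply as ``a combination of the above results (including Theorem~\ref{theorem:gm-substack-gm-data}) which is a simplified version of Proposition~\ref{prop:equivalence-groupoids}'', and you have spelled out precisely that combination, invoking Proposition~\ref{proposition-gm-from-lag-naive} for existence and the equivalence of Proposition~\ref{prop:equivalence-groupoids} for compatibility and uniqueness. Your remarks on the restriction $n\in\{3,4,5\}$ and on the $\Gm$-rigidification are also to the point.
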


 {This corollary will be used later for constructing interesting families of GM varieties.}

\section{Descriptions as global quotients} 
\label{section:global}

We   describe the moduli stacks $\ucM_n$ and $\cmlag_n$ (defined in Sections~\ref{subsection:stack-gm} and~\ref{subsection:lagrangian-data})  as global quotients stacks
and derive a description of their coarse moduli spaces as the corresponding GIT quotients.

\subsection{EPW sextics}
\label{subsection:epw}

Let $V_6$ be a 6-dimensional  {$\k$}-vector space and let  $A \subset \bw{3}{V_6}$ be a Lagrangian subspace 
for the  $\det(V_6)$-valued  symplectic form defined by wedge product.

\begin{defi}\label{definition-epw}
For any integer $\ell$, we set
\begin{equation}\label{yabot}
Y_A^{\ge \ell}:=\bigl\{[v]\in\P(V_6) \mid \dim\bigl(A\cap (v \wedge\bw{2}{V_6} )\bigr)\ge \ell\bigr\}
\end{equation}
and endow it with a   scheme structure as in~\cite[Section 2]{og1}.\ 
The locally closed subsets
\begin{equation}\label{yaell}
Y_A^\ell := Y_A^{\ge \ell} \setminus Y_A^{\ge \ell + 1} 
\end{equation}
 of $\P(V_6)$ form the {\sf {Eisenbud--Popescu--Walter (EPW)} stratification} and the sequence of inclusions 
\begin{equation*}
\P(V_6) = Y_A^{\ge 0} \supset Y_A^{\ge 1} \supset Y_A^{\ge 2} \supset \cdots
\end{equation*}
is called   the {\sf EPW sequence}.\ When the scheme $Y_A := Y_A^{\ge 1}$ is not the whole space $\P(V_6)$, it is a sextic hypersurface (\cite[(1.8)]{og1}) called an {\sf EPW sextic}.\ 
The scheme $Y_A^{\ge 2}$ is nonempty and has everywhere dimension $\ge 2$ (\cite[(2.9)]{og1}).
\end{defi}

%KEEP
%{Remark: if $Y_A^{\ge 7}\ne\vide$, we have $Y_A =\P(V_6)$ (in the 10 dimensional vector space $v \wedge\bw{2}{V_6} $, $A\cap (v \wedge\bw{2}{V_6} )$ will meet any $v \wedge w\wedge  V_6 $. O'Grady studies $A$ for which $Y_A^{\ge 6}\ne\vide$ and shows that they form an unstable stratum which he calls $X^F_{\cA_+}$ (\cite{og3}, Table 5, p.\ 259); similarly, those $A$ for which $Y_A^{\ge 5}\ne\vide$   form a  semistable nonstable stratum which he calls $B^F_{\cA}$ (\cite{og3}, Table 4, p.\ 259). I was unable to find in O'Grady's articles a geometric description of the corresponding EPW sextics (they   include the (reducible) EPW of bielliptic curves)...}

% Non-zero elements of $A$
% which can be written as  $v_1\wedge v_2\wedge v_3$ are called {\sf  decomposable vectors}, so that 
% $A$ 
% {\sf contains no decomposable vectors} if the scheme
% $$\Theta_A:=\PP(A) \cap \Gr(3,V_6)$$
% is empty.   When $  \P(V_6)\ne\bigcup_{V_3\in \Theta_A }\P(V_3)$ (\eg, when $\dim(\Theta_A)\le 2$),
% the scheme $Y_A$ is not $ \P(V_6)$ and its  singular locus can be described as  (\cite[Corollary 2.5]{og3})
% \begin{equation}\label{singya}
% \Sing(Y_A)=Y_A^{\ge 2} \cup 
%  \Bigl( \bigcup_{V_3\in \Theta_A }\hskip-2mm\P(V_3)\Bigr).
% \end{equation}
% 

The following theorem gathers various results of O'Grady's (see~\cite[Theorem~B.2]{DK}; all these results were proved for $\k = \C$ but, by the Lefschetz principle, 
they  extend to any field $\k$ of characteristic zero).

\begin{theo}[O'Grady]
\label{theorem-ogrady-stratification}
Let $A \subset \bw{3}{V_6}$ be a Lagrangian subspace.\ If $A$ contains no decomposable vectors, that is $\P(A) \cap \Gr(3,V_6) = \varnothing$, then
\begin{itemize}
\item[\rm (a)] $Y_A $ is an integral normal sextic hypersurface in $\P(V_6)$;
\item[\rm (b)] $Y_A^{\ge 2} = \Sing(Y_A )$ is an integral normal  Cohen--Macaulay surface of degree $40$;
\item[\rm (c)] $Y_A^{\ge 3} = \Sing(Y_A^{\ge 2})$ is  finite and smooth,  and is empty for  $A$ general;
\item[\rm (d)] $Y_A^{\ge 4} $  is empty.
\end{itemize}
\end{theo}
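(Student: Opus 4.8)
This theorem is a compilation of results of O'Grady, so the plan is to quote them over $\k=\C$ and then deduce the general case by the Lefschetz principle. Over $\C$, all four items are established in O'Grady's work on EPW sextics and are conveniently collected in \cite[Theorem~B.2]{DK}: that $Y_A=Y_A^{\ge1}$ is a sextic hypersurface (and not all of $\P(V_6)$) when $\P(A)\cap\Gr(3,V_6)=\varnothing$ is \cite[(1.8)]{og1}; the identification $Y_A^{\ge2}=\Sing(Y_A)$ together with its integrality, normality, Cohen--Macaulayness and degree $40$, the identification $Y_A^{\ge3}=\Sing(Y_A^{\ge2})$ with its finiteness, smoothness and emptiness for general $A$, and the emptiness of $Y_A^{\ge4}$, are the main structural results of O'Grady's papers on EPW sextics. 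The role of the hypothesis that $A$ carry no decomposable vectors is to rule out excess behaviour: in general the EPW strata can be larger than expected and $Y_A$ may even fail to be a hypersurface, while under this hypothesis every jump of the stratification drops the dimension by the expected amount, which is the mechanism behind the finiteness of $Y_A^{\ge3}$ and the emptiness of $Y_A^{\ge4}$.

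It remains to pass from $\C$ to an arbitrary field $\k$ of characteristic zero. First I would reduce to $\k$ algebraically closed: the formation of the schemes $Y_A^{\ge\ell}$ commutes with base field extension, and each property appearing in (a)--(d) --- being a sextic hypersurface, (geometric) integrality, (geometric) normality, Cohen--Macaulayness, having a prescribed dimension, being finite, being smooth, being empty --- is either already geometric or descends from $\bar\k$ to $\k$, so it suffices to treat the case $\k=\bar\k$. For algebraically closed $\k$ I would invoke the Lefschetz principle in geometric form: fix $V_6$ over $\Q$, let $U\subset\LGr(\bw{3}{V_6})$ be the open, $\Q$-rational locus of Lagrangians with no decomposable vectors, and consider the universal family over $U$. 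The subsets of $U$ over which $Y_A$ (respectively $Y_A^{\ge2}$, $Y_A^{\ge3}$) fails one of the asserted properties, together with the complement of the locus where $Y_A^{\ge3}=\varnothing$, are constructible and defined over $\Q$ (Chevalley's theorem, plus the constructibility of the loci over which a family has non-reduced, non-irreducible, non-normal or non-Cohen--Macaulay fibres, or fibres of jumping dimension). By the case $\k=\C$ these constructible sets contain no $\C$-point; being $\Q$-rational, they then contain no $\bar\Q$-point, hence are empty, hence remain empty after base change to any algebraically closed field $\bar\k$ of characteristic zero. This yields (a)--(d) over $\bar\k$, and the "general $A$" clause of (c) as well.

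The substance of the theorem lies entirely in O'Grady's papers, so the only work here is the transfer, and the single delicate point is bookkeeping: one must make sure that each property quoted from \cite{og1} genuinely is a constructible condition on $A$. This is transparent for the dimension, finiteness and smoothness statements, but deserves a word for the normality and Cohen--Macaulayness of the not-obviously-flat family $Y^{\ge2}\to U$. The cleanest way to handle this, which I would use, is to spread $A$ out over a finitely generated $\Q$-subalgebra $R$ of $\k$, apply generic flatness and the openness of the Cohen--Macaulay, normal and smooth loci to a suitable relative model over $\Spec R$, pick a $\C$-point of $\Spec R$ lying over a point where all these open conditions hold, and conclude from the case $\k=\C$; the "general $A$" statement then follows from the openness of the locus $\{A\in U : Y_A^{\ge3}=\varnothing\}$.
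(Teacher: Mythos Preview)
Your proposal is correct and follows exactly the paper's approach: the paper does not prove this theorem but simply attributes it to O'Grady, referring to \cite[Theorem~B.2]{DK} for the case $\k=\C$ and invoking the Lefschetz principle for general fields of characteristic zero. You have actually supplied far more detail on the Lefschetz-principle step than the paper does.
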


\begin{rema}
\label{remark:a-v0-intersection}
It follows that if $A$ contains no decomposable vectors,  we have $A \cap (v_0 \wedge \bw2V_6) = 0$ for general $v_0 \in V_6$.\
We used this   observation in the proof of Proposition~\ref{proposition-gm-from-lag-naive}.
\end{rema}

If  $A \subset \bw{3}{V_6}$ is a Lagrangian subspace, its orthogonal $A^ \perp \subset \bw{3}{V_6^\vee}$ is also a Lagrangian subspace.\ 
In the dual projective space $\P(V_6^\vee) = \Gr(5,V_6)$, the EPW sequence for $A^\bot$ can be described in terms of $A$ as
\begin{equation}
\label{dualYA}
Y_{A^\perp}^{\ge \ell} = \big\{ V_5 \in \Gr(5,V_6) \mid \dim (A \cap \bw{3}{V_5}) \ge \ell \big\}.
\end{equation}
The canonical identification $\Gr(3,V_6) \isom \Gr(3,V_6^\vee) $ induces an isomorphism 
between the intersections $\P(A) \cap \Gr(3,V_6)$ and $\P(A^\perp) \cap \Gr(3,V_6^\vee)$ (\cite[(2.82)]{og3}).\
In particular, $A$ contains no decomposable vectors if and only if the same holds for $A^\perp$.

We will not need this fact, but if $A$ contains no decomposable vectors, the hypersurfaces~$Y_A$ and~$Y_{A^\perp}$ 
are projective dual (\cite[Corollary 3.6]{og2} or~\cite[Proposition~B.3]{DK}).

If $\k = \C$ and $A$ contains no decomposable vectors, 
O'Grady defined in~\cite[Section~1.2]{og4} a canonical double cover $\widetilde{Y}_A \to Y_A$
 {(called the {\sf double EPW sextic})}.\
This construction was generalized in~\cite[Theorem~5.2]{DK:coverings} to other EPW strata; 
it works over an arbitrary field $\k$ of characteristic different from $2$ and provides canonical double coverings
\begin{equation*}
\widetilde{Y}^{\ge 0}_A \to Y_A^{\ge 0},
\qquad
\widetilde{Y}^{\ge 1}_A \to Y_A^{\ge 1},
\qquad 
\widetilde{Y}^{\ge 2}_A \to Y_A^{\ge 2}
\end{equation*}
branched over $Y_A^{\ge 1}$, $Y_A^{\ge 2}$, and $Y_A^{\ge 3}$, respectively 
(the first of these is the usual double covering of $\P(V_6)$ branched over the EPW sextic hypersurface).\
We denote the quotient stacks of these coverings by their natural involutions by 
\begin{equation}
\label{eq:hya}
\widehat{Y}_A^{\ge \ell} := \widetilde{Y}_A^{\ge \ell}/\bmu_2.
\end{equation} 
They come with  natural maps
\begin{equation*}
\rho_A^{\ell} \colon \widehat{Y}_A^{\ge \ell} \lra {Y}_A^{\ge \ell}.
\end{equation*}
For $\ell = 0$, we obtain the root stack of $\P(V_6)$ with respect to the EPW sextic hypersurface.

Consider the natural action of the group $\PGL(V_6)$ on the Lagrangian Grassmannian~$\LGr(\bw3V_6)$
and its natural linearization in the line bundle $\cO(2)$ (note that the line bundle $\cO(1)$ does not admit a linearization).\
O'Grady showed (\cite{og5}) that the GIT quotient
\begin{equation*}
\bEPW  :=\LGr(\bw3V_6)\gquot\PGL(V_6)
\end{equation*}
is a coarse GIT moduli space for  {double} EPW sextics.\
The following lemma will be crucial for us.

\begin{lemm}[O'Grady]\label{ssig}
The hypersurface 
\begin{equation*}
\Sigma := \{ A \in \LGr(\bw3V_6)\mid \text{$A$ has decomposable vectors} \}
\end{equation*}
is $\PGL(V_6)$-invariant and its complement  
\begin{equation*}
\LGradv(\bw3V_6) = \LGr(\bw3V_6) \setminus \Sigma
\end{equation*}
is affine and consists of stable points.\ {In particular, the stabilizer $\PGL(V_6)_A \subset \PGL(V_6)$ of any $A \in \LGradv(\bw3V_6)$ is finite.}
\end{lemm}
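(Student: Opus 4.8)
The plan is to deduce everything from O'Grady's work on GIT stability for the $\PGL(V_6)$-action on $\LGr(\bw3V_6)$ together with the standard relationship between geometric stability and decomposable vectors. First I would observe that $\PGL(V_6)$-invariance of $\Sigma$ is immediate: the condition ``$\P(A)\cap\Gr(3,V_6)\neq\varnothing$'' is visibly preserved under the action of $\GL(V_6)$ on $\bw3V_6$, since this action sends $\Gr(3,V_6)$ to itself and permutes the projective subspaces $\P(A)$. Hence $\Sigma$ is a closed $\PGL(V_6)$-invariant subvariety of $\LGr(\bw3V_6)$, and its complement $\LGradv(\bw3V_6)$ is an open invariant subset.

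Next I would invoke O'Grady's stability analysis. In \cite{og5} (see also the discussion in \cite{DK}) O'Grady computes the GIT-(semi)stable locus for the natural linearization in $\cO(2)$ on $\LGr(\bw3V_6)$ and shows, via the Hilbert--Mumford numerical criterion applied to one-parameter subgroups of $\SL(V_6)$, that a Lagrangian $A$ is stable precisely when it has no decomposable vectors — equivalently that the boundary $\Sigma$ accounts exactly for the non-stable points in the relevant stratum. I would cite this directly: the points of $\LGradv(\bw3V_6)$ are stable. Since stable points have closed orbits in the semistable locus and finite stabilizers (by the general theory, \cite{GIT} / Mumford's criterion), the last assertion about $\PGL(V_6)_A$ being finite for $A\in\LGradv(\bw3V_6)$ follows at once.

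For affineness of $\LGradv(\bw3V_6)$, the plan is to exhibit $\Sigma$ as (the support of) an effective ample divisor on $\LGr(\bw3V_6)$, so that its complement is affine. Concretely, $\Sigma$ is the image in $\LGr(\bw3V_6)$ of the incidence locus $\{(A,[\omega]) : \omega\in A\cap\Gr(3,V_6)\}$, a divisorial condition; one checks it is a hypersurface and computes its class in $\Pic(\LGr(\bw3V_6))\cong\Z$. Since $\LGr(\bw3V_6)$ has Picard rank one with ample generator, any nonzero effective divisor is ample, so the complement of its support is affine. (Alternatively, one can simply note that the semistable = stable locus of a projective variety under a reductive group is always quasi-affine, and here it equals $\LGradv(\bw3V_6)$, which is then affine because it is the preimage of the affine GIT quotient $\bEPW$ under a morphism with affine — in fact finite — stabilizers; but the divisor argument is cleaner.) The main obstacle is the stability computation itself: it is not a formal consequence but the substance of O'Grady's paper, requiring the explicit Hilbert--Mumford analysis of destabilizing one-parameter subgroups. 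Here, however, we are entitled to quote it, so the proof reduces to the bookkeeping above.
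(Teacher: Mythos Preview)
Your approach is essentially the same as the paper's: cite O'Grady for stability, then observe that invariance, affineness, and finiteness of stabilizers are easy consequences. The paper's own proof is a one-line citation of \cite[Corollary~2.5.1]{og5} (plus a remark on passing from $\C$ to characteristic zero via the Lefschetz principle) and the phrase ``the rest is easy''; you have simply spelled out the easy parts, and your affineness argument via $\Pic(\LGr(\bw3V_6))\cong\Z$ is the natural one.

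One small correction: you overstate O'Grady's result when you write that $A$ is stable \emph{precisely when} it has no decomposable vectors. What \cite{og5} proves (and what the lemma asserts) is only the implication ``no decomposable vectors $\Rightarrow$ stable''; the stable locus is in general strictly larger than $\LGradv(\bw3V_6)$. This does not affect your proof of the lemma, since only that one direction is needed, but you should not claim the converse. Relatedly, your parenthetical alternative argument for affineness (via semistable $=$ stable $=\LGradv$) does not go through as stated, so stick with the Picard-rank-one argument.
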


\begin{proof}
The stability statement is proved in~\cite[Corollary~2.5.1]{og5} 
(over $\k = \C$, but stability is  defined over the algebraic closure (\cite[Definition~1.7]{git})
and by the Lefschetz principle, stability over $\C$ implies stability over any algebraically closed field of characteristic zero) 
and the rest is easy.\ 
\end{proof}

The  hypersurface $\Sigma \subset \LGr(\bw3V_6)$ has degree~42, the degree of $\Gr(3,V_6)$.\
We denote by~$\SEPW$  the image of $\Sigma$ in $\bEPW$.\  
Its complement     
\begin{equation}
\label{eq:m-epw}
\EPW  := \bEPW \setminus \SEPW = \LGradv(\bw3V_6) \gquot \PGL(V_6)
\end{equation}
is  affine; it is a coarse moduli space for EPW sextics $Y_A$ such that $A$ has  no decomposable vectors.

\subsection{The moduli stack of Lagrangian data}
\label{subsection:global-quotient-lagrangian}

We deal here with the easier case of the moduli stack of Lagrangian data.

For each integer~$n\in\{2,\dots,5\}$, {we consider the following relative versions of some EPW strata
(we changed the number $\ell$ in~\eqref{dualYA} to $n = 5 - \ell$):}
\begin{align*} 
 \rS_n &:= \{ (A,V_5) \in \LGradv(\bw3V_6) \times \P(V_6^\vee) \mid \dim(A \cap \bw3V_5) = 5 - n \}\\
 \intertext{(one could also define $\rS_n$ for $n \le 1$, but it is  empty) and}
 \brS_n &:= \{ (A,V_5) \in \LGradv(\bw3V_6) \times \P(V_6^\vee) \mid \dim(A \cap \bw3V_5)  \in \{ 5 - n, 6 - n \} \}.
\end{align*}
The subscheme $\rS_{n-1}$ of $\brS_n$ is closed and $\rS_n$ is its open complement.\ 
The  scheme $\brS_n$ is locally closed   in $\LGradv(\bw3V_6) \times \P(V_6^\vee)$ and in $\LGr(\bw3V_6) \times \P(V_6^\vee)$.\
In particular, it is a quasiprojective scheme.\
We will need the following result.

\begin{lemm}
\label{lemma:sn-bsn}
For $n \ge 2$, the scheme $\rS_n$ is smooth of dimension $60 - \frac12(5 - n)(6 - n)$.\ 

 For~$n \in \{2,5\}$, the scheme $\brS_n$ is smooth; for~$n\in\{3,4\}$ it is normal and \mbox{$\Sing(\brS_n) = \rS_{n-1}$}.
\end{lemm}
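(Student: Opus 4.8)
The plan is to analyze the scheme $\brS_n$ (and $\rS_n$) via the projection to $\LGradv(\bw3V_6)$ and the Eisenbud--Popescu--Walter stratification introduced in Section~\ref{subsection:epw}. The key observation is the dictionary~\eqref{dualYA}: for a fixed $A$, the fiber of $\rS_n$ over $[A]$ is exactly the EPW stratum $Y_{A^\perp}^{5-n}$ in $\P(V_6^\vee)$, and the fiber of $\brS_n$ is $Y_{A^\perp}^{\ge 5-n}$ minus $Y_{A^\perp}^{\ge 7-n}$, that is, the union of the two strata $Y_{A^\perp}^{5-n}$ and $Y_{A^\perp}^{6-n}$. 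So $\rS_n \to \LGradv(\bw3V_6)$ and $\brS_n \to \LGradv(\bw3V_6)$ are ``universal'' versions of the EPW strata over the open locus of Lagrangians without decomposable vectors, and we can read off their local structure from the known local structure of the EPW degeneracy loci.

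First I would set up the degeneracy-locus description. Over $\LGr(\bw3V_6) \times \P(V_6^\vee)$ there is a tautological map of vector bundles whose degeneracy loci cut out the conditions $\dim(A \cap \bw3V_5) \ge k$; concretely, on the product one has the tautological Lagrangian subbundle $\cA$ and the tautological hyperplane $\cV_5 \subset V_6 \otimes \cO$, and the composition $\cA \hookrightarrow \bw3 V_6 \otimes \cO \to \bw3(V_6/\cV_5) \otimes \cO \cong \bw2\cV_5 \otimes (V_6/\cV_5)$ (this is precisely the map $\varphi$ of~\eqref{eq:varphi} in the universal situation) has kernel of dimension $\dim(A\cap \bw3 V_5)$ at the point $(A,V_5)$. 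Thus $Y^{\ge \ell}$ in the relative sense is the rank-$(10-\ell)$ degeneration scheme of this map between bundles of ranks $10$ and $10$. The local structure of such symmetric-type degeneracy loci (the map $\varphi$ is, after the standard identifications, self-dual up to a twist — this is what underlies O'Grady's analysis) is governed by the generic determinantal variety of symmetric matrices, and O'Grady's Theorem~\ref{theorem-ogrady-stratification} records exactly the resulting singularity behavior of the fibers: $Y_{A^\perp}^{\ge 1}$ is normal with singular locus $Y_{A^\perp}^{\ge 2}$, which in turn is normal Cohen--Macaulay with singular locus $Y_{A^\perp}^{\ge 3}$, which is smooth, and $Y_{A^\perp}^{\ge 4} = \varnothing$.

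Next I would prove the dimension and smoothness statements. For the smoothness of $\rS_n$: on the open locus where $\dim(A\cap\bw3V_5)$ is locally constant equal to $5-n$, the relevant degeneracy locus is the smooth locus of a symmetric determinantal variety, and a standard tangent-space computation (or a transversality argument: the universal family of Lagrangians is large enough that the relevant incidence is transverse) gives smoothness; the codimension of the rank drop by $\ell = 5-n$ in a space of symmetric forms is $\binom{\ell+1}{2} = \frac12(5-n)(6-n)$, and since $\dim(\LGradv(\bw3V_6) \times \P(V_6^\vee)) = \dim \LGr(\bw3 V_6) + 5 = 55 + 5 = 60$, we get $\dim \rS_n = 60 - \frac12(5-n)(6-n)$. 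For $n = 5$, $\rS_5 = \brS_5$ is the open locus where $A \cap \bw3 V_5 = 0$, which is open in the product, hence smooth. For $n = 2$, $\brS_2 = \rS_2$ (there is no $\rS_1$), so smoothness of $\brS_2$ is the same statement as smoothness of $\rS_2$ just proved. For $n \in \{3,4\}$, $\brS_n = \rS_n \sqcup \rS_{n-1}$ with $\rS_{n-1}$ closed; normality of $\brS_n$ and the identification $\Sing(\brS_n) = \rS_{n-1}$ follow fiberwise from Theorem~\ref{theorem-ogrady-stratification}(a),(b) applied to $Y_{A^\perp}^{\ge 5-n}$, together with the fact that the total space is a fibration-like family over the smooth base $\LGradv(\bw3V_6)$ with normal fibers (one uses that "normal" can be checked after a smooth base change / is preserved by such families, plus Serre's criterion $R_1 + S_2$, which is exactly what O'Grady's Cohen--Macaulayness and codimension-of-singular-locus statements supply). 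The point is that the singular locus of $\brS_n$ is where the corank of $\varphi$ jumps from $5-n$ to $6-n$, which is precisely $\rS_{n-1}$.

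The main obstacle, I expect, will be passing rigorously from the \emph{fiberwise} statements of Theorem~\ref{theorem-ogrady-stratification} (which are about a single $Y_{A^\perp}$) to statements about the \emph{total space} $\brS_n$: smoothness and normality are not automatically inherited by a family from its fibers unless the base is nice and the family is flat of the expected relative dimension with the fibers themselves nice in the right sense. The clean way around this is to not argue fiberwise at all but to use the determinantal / degeneracy-locus description globally: realize $\brS_n$ locally as (an open subset of) the pullback of the universal symmetric determinantal scheme under a submersive map, so that its singularities are literally those of the generic symmetric determinantal variety, whose normality, Cohen--Macaulayness, and singular locus are classical. Verifying that the relevant classifying map is indeed smooth (equivalently, that the universal $\varphi$ is versal for symmetric degenerations at every point of $\LGradv(\bw3V_6) \times \P(V_6^\vee)$) is the technical heart; this is essentially contained in O'Grady's local analysis, and I would invoke~\cite{og1, og2} (equivalently \cite[Theorem~B.2]{DK}) for it, remarking that the Lefschetz principle allows us to work over $\C$.
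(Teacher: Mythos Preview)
Your proposal is correct and follows essentially the same approach as the paper: project to $\LGradv(\bw3V_6)$, identify the fibers with the EPW strata $Y_{A^\perp}^{5-n}$ and $Y_{A^\perp}^{\ge 5-n}\setminus Y_{A^\perp}^{\ge 7-n}$, and invoke O'Grady's Theorem~\ref{theorem-ogrady-stratification}. The paper's proof is a single sentence that stops exactly there, whereas you go on to worry---quite reasonably---about the passage from fiberwise normality and smoothness to the same properties of the total space, and you sketch the clean fix via the global symmetric degeneracy-locus description and versality of the classifying map; this is implicit in the paper (and in the references to~\cite{og1} and~\cite[Theorem~B.2]{DK}) but not spelled out.
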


\begin{proof}
The fiber of the projection $\brS_n \to \LGradv(\bw3V_6)$ over a Lagrangian subspace $A$ with no decomposable vectors 
is the union $Y_{A^\perp}^{5-n} \cup Y_{A^\perp}^{6-n}$ of strata of the dual EPW stratification 
associated with $A$, hence Theorem~\ref{theorem-ogrady-stratification} applies.
 \end{proof}

 Consider now the action of $\PGL(V_6)$ on the product $\LGr(\bw3V_6) \times \P(V_6^\vee)$.\ 
As we noted above, the line bundle $\cO(2,0)$ has a natural linearization.\ It is   clear that $\cO(0,6)$ also admits a linearization.\ 
Consequently, for any $m \in \Z$, the line bundle $\cO(2m,6)$ admits a $\PGL(V_6)$-linearization.\ 

\begin{coro}
\label{corollary:stability}
% Let  $\k = \C$ and 
Take  $n \in \{2,3,4,5\}$.\
For $m \gg 0$, the subschemes $\brS_n \subset \LGr(\bw3V_6) \times \P(V_6^\vee)$ and $\rS_n \subset \brS_n$
consist of $\PGL(V_6)$-stable points for the $\cO(2m,6)$-linearization.\ 
% Similarly, the scheme $\hhrS_n$ consists of $\rG_n$-stable points for the $\cO(2m,6)$-lineari\-zation.
\end{coro}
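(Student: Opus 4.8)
The plan is to reduce the statement to the stability result already available for the Lagrangian Grassmannian factor (Lemma~\ref{ssig}) by a standard "variation of GIT / asymptotic linearization" argument. First I would recall the general principle: given a reductive group $G$ acting on a projective variety $Z = Z_1 \times Z_2$ with ample $G$-linearized line bundles $\cL_1$ on $Z_1$ and $\cL_2$ on $Z_2$, and a point $z = (z_1, z_2)$ with $z_1$ \emph{stable} for $\cL_1$, then for $m \gg 0$ the point $z$ is stable for the product linearization on $\cO(m,1) = \pr_1^*\cL_1^{\otimes m} \otimes \pr_2^*\cL_2$. The reason is that the Hilbert--Mumford numerical function satisfies $\mu^{\cO(m,1)}(z,\lambda) = m\cdot\mu^{\cL_1}(z_1,\lambda) + \mu^{\cL_2}(z_2,\lambda)$ for every one-parameter subgroup $\lambda$ of $G$; since $z_1$ is stable, $\mu^{\cL_1}(z_1,\lambda) > 0$ for all nontrivial $\lambda$, and by properness/compactness of the relevant space of one-parameter subgroups (after passing to the Tits building, or simply by working over $\bar\k$ and using that the set of destabilizing directions is closed), the finitely many possible values of the integer $\mu^{\cL_1}(z_1,\lambda)$ that could be $0$ do not occur, so the $m$-weighted term dominates the bounded correction $\mu^{\cL_2}(z_2,\lambda)$ once $m$ is large.

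Concretely, I would apply this with $G = \PGL(V_6)$, $Z_1 = \LGr(\bw3 V_6)$ with $\cL_1 = \cO(2)$ (which is $G$-linearized, as noted before Lemma~\ref{ssig}), and $Z_2 = \P(V_6^\vee)$ with $\cL_2 = \cO(6)$ (also $G$-linearized, as noted just before the corollary), so that $\cO(2m,6) = \pr_1^*\cL_1^{\otimes m}\otimes\pr_2^*\cL_2$. By Lemma~\ref{ssig}, every point $A \in \LGradv(\bw3 V_6)$ — that is, every Lagrangian with no decomposable vectors — is a stable point of $\LGr(\bw3 V_6)$ for $\cL_1$. Both $\brS_n$ and $\rS_n$ are contained in $\LGradv(\bw3 V_6) \times \P(V_6^\vee)$ by definition, so their points project to stable points in the first factor. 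Hence by the principle above, for $m \gg 0$ every point of $\brS_n$ (and a fortiori of $\rS_n \subset \brS_n$) is $\PGL(V_6)$-stable for the $\cO(2m,6)$-linearization.

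The main point requiring care — and the step I would spell out most carefully — is the uniformity of the bound $m \gg 0$ over \emph{all} points of $\brS_n$ simultaneously, rather than point by point. This is where one uses that $\LGradv(\bw3 V_6)$ is not just a set of stable points but that the stable locus is \emph{open}, together with the semicontinuity of the Hilbert--Mumford index; alternatively, one invokes the quantitative form of the Hilbert--Mumford criterion (e.g.\ Mumford's GIT, or the formulation of variation of GIT for a pencil of polarizations $\cO(m,1)$) which shows that the stability chamber structure is governed by finitely many walls, so that the "large $m$" regime stabilizes. Since the stable locus for each $\cO(2m,6)$ is open and $\PGL(V_6)$-invariant, and we have shown set-theoretically that $\brS_n$ lies in the stable locus for each sufficiently large $m$, a single $m$ works by the wall-and-chamber finiteness. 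I would also remark that the finiteness of stabilizers — which for $A \in \LGradv$ is already guaranteed by the last sentence of Lemma~\ref{ssig} — is automatic once stability is established, so no separate argument is needed.
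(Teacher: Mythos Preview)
Your proposal is correct and follows the same strategy as the paper. The paper's proof is a one-line citation of Lemma~\ref{ssig} together with \cite[Proposition~2.18]{git}, which is precisely the general ``asymptotic stability via projection to a stable factor'' result whose Hilbert--Mumford argument you have spelled out by hand; your uniformity discussion via the wall-and-chamber structure is likewise subsumed by that reference.
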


\begin{proof}
This follows from Lemma~\ref{ssig} and~\cite[Proposition~2.18]{git} applied to morphisms $\brS_n \to \LGradv(\bw3V_6)$ and~$\rS_n \to \LGradv(\bw3V_6)$.
% , and $\hhrS_n \to \LGradv(\bw3V_6)$.
\end{proof}

The action of $\PGL(V_6)$ on $\brS_n$ also induces an action of $\GL(V_6)$.
% The group $\PGL(V_6)$ acts on the quasiprojective scheme $\brS_n$.\ There is a 
The canonical morphism 
\begin{equation*}
\brS_n / \GL(V_6) \lra \brS_n /\PGL(V_6)
\end{equation*}
of global quotient stacks is a $\Gm$-gerbe, because the center $\Gm \subset \GL(V_6)$ acts trivially on~$\brS_n$.\
In fact, this morphism is the rigidification for the natural embedding of~$\Gm$ 
into the automorphism groups of objects of the stack $\brS_n/\GL(V_6)$.\ 
Recall also that the stack $\cmlag_{n+5}$ is the rigidification of the stack $\tcmlag_{n+5}$ (Remark~\ref{remark:linear-lagrangian-lift}).

\begin{prop}
\label{proposition:cmlag-quotient}
For each $n \in \{2,3,4,5\}$, the moduli stack $\cmlag_{n+5}$ of families of Lagrangian data of rank $n+5$ avoiding decomposable vectors 
is the global quotient stack 
\begin{equation*}
\cmlag_{n+5} \simeq \brS_n/\PGL(V_6).
\end{equation*}
In particular, it is a separated Deligne--Mumford stack of finite presentation over $\Q$.\
Its special locus  is also a global quotient stack
\begin{equation*}
\cmlag_{n + 5,{\rm spe}} \simeq \rS_{n-1}/\PGL(V_6).
\end{equation*}
The stack $\cmlag_{n+5}$
is smooth for $n \in \{2,5\}$; for $n \in \{3,4\}$, it is singular along $\cmlag_{n + 5,{\rm spe}}$.
\end{prop}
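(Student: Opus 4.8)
The plan is to construct the equivalence $\cmlag_{n+5} \simeq \brS_n/\PGL(V_6)$ by first comparing the stack $\tcmlag_{n+5}$ of linearized Lagrangian data with the quotient stack $\brS_n/\GL(V_6)$, then descending to the $\Gm$-rigidifications. First I would observe that an $S$-point of $\tcmlag_{n+5}$ is a quadruple $(\ns,\cV_6,\cV_5,\cA)$; after passing to an \'etale (or Zariski) cover of $S$ that trivializes $\cV_6$, a choice of trivialization $\cV_6 \cong V_6 \otimes \cO_S$ identifies such data with a morphism $S \to \brS_n$, namely $s \mapsto (\cA_s, \cV_{5,s})$ viewed inside $\LGr(\bw3V_6) \times \P(V_6^\vee)$. (Here one must check: the image lands in $\brS_n$ because $\cA$ has rank $n+5$, which by Definition~\ref{def317} means $\dim(\cA_s \cap \bw3\cV_{5,s}) \in \{5-n, 6-n\}$ via the identification $\bw2\cV_5 \otimes (\cV_6/\cV_5)$ has rank $15 = \binom62 - \dim(\bw2 V_5$-piece$)$ — so that the locally closed conditions defining $\brS_n$ are exactly the rank and no-decomposable-vectors conditions.) Two trivializations of $\cV_6$ differ by an element of $\GL(V_6)(S)$, so the induced morphism to $\brS_n$ is well defined up to the $\GL(V_6)$-action; this gives the desired morphism $\tcmlag_{n+5} \to \brS_n/\GL(V_6)$. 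The inverse is clear: a map to $\brS_n/\GL(V_6)$ gives, \'etale-locally, a map to $\brS_n$, hence tautological subbundles $\cV_5 \subset \cV_6 = V_6 \otimes \cO$ and $\cA \subset \bw3\cV_6$, which glue by the equivariance. One checks these functors are mutually quasi-inverse and respect morphisms; then taking $\Gm$-rigidifications on both sides (using Remark~\ref{remark:linear-lagrangian-lift} on the left and the fact that $\Gm \subset \GL(V_6)$ acts trivially on $\brS_n$ on the right) yields $\cmlag_{n+5} \simeq \brS_n/\PGL(V_6)$.

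For the statement about the special locus, recall from Definition~\ref{def317} and Definition~\ref{definition:lagrangian-special-locus} that $S_\lagspe$ is the degeneracy locus of $\varphi\colon \cA \to \bw2\cV_5 \otimes (\cV_6/\cV_5)$, i.e.\ the locus where the rank drops from $n+5$ to $n+4$; under the identification above and using~\eqref{dualYA} this is precisely the locus where $\dim(\cA_s \cap \bw3\cV_{5,s}) = 6-n$ rather than $5-n$, which is the preimage of $\rS_{n-1} \subset \brS_n$. Since the formation of the degeneracy locus is compatible with base change, this identifies $\cmlag_{n+5,{\rm spe}}$ with the substack corresponding to $\rS_{n-1}$, giving $\cmlag_{n+5,{\rm spe}} \simeq \rS_{n-1}/\PGL(V_6)$.

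The remaining assertions are then formal consequences. Separatedness and the Deligne--Mumford property follow from Lemma~\ref{ssig}: $\LGradv(\bw3V_6)$ consists of stable points with finite stabilizers, hence so does $\brS_n$ (it is $\PGL(V_6)$-invariant and the stabilizer of a point of $\brS_n$ is contained in that of the underlying $A \in \LGradv(\bw3V_6)$), so the quotient stack is Deligne--Mumford; separatedness of $\brS_n/\PGL(V_6)$ follows because $\brS_n$ is quasiprojective with a linearized ample line bundle for which all its points are stable (Corollary~\ref{corollary:stability}), so the action is proper. Finite presentation over $\Q$ is inherited from $\brS_n$ and $\PGL(V_6)$. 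Finally, the smoothness/singularity claims transfer directly from Lemma~\ref{lemma:sn-bsn}: a quotient stack $[X/G]$ with $G$ smooth is smooth iff $X$ is smooth, so $\cmlag_{n+5}$ is smooth exactly when $\brS_n$ is, i.e.\ for $n \in \{2,5\}$; and for $n \in \{3,4\}$, $\Sing(\brS_n) = \rS_{n-1}$ is $\PGL(V_6)$-invariant, so $\cmlag_{n+5}$ is singular precisely along the image of $\rS_{n-1}$, which is $\cmlag_{n+5,{\rm spe}}$. The main obstacle is the careful bookkeeping in the first step — verifying that the rank condition in Definition~\ref{def317}, the ``no decomposable vectors'' condition, and the locally closed conditions cutting out $\brS_n$ all match up under the identification, and that the equivalence is genuinely functorial in $S$ rather than merely a bijection on objects — but this is bookkeeping rather than a conceptual difficulty.
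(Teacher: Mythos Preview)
Your proposal is correct and follows essentially the same approach as the paper: first identify $\tcmlag_{n+5}$ with $\brS_n/\GL(V_6)$ via the tautological family in one direction and trivialization of $\cV_6$ in the other, then pass to $\Gm$-rigidifications, and deduce the remaining properties from Corollary~\ref{corollary:stability} and Lemma~\ref{lemma:sn-bsn}. The only cosmetic difference is that the paper uses the $\GL(V_6)$-torsor (frame bundle) of $\cV_6$ to obtain a canonical trivialization, whereas you trivialize \'etale-locally and track the $\GL(V_6)$-ambiguity by hand; these are equivalent formulations of the same construction.
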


\begin{proof}
We first prove that the stack of families of linearized Lagrangian data $\tcmlag_n$ is isomorphic to the quotient stack $\brS_n/\GL(V_6)$  by constructing morphisms in both directions between these stacks.

The scheme $\brS_n$ comes with  {the tautological} family of Lagrangian data on the trivial bundle~$\cV_6 = V_6 \otimes \cO_{\brS_n}$ 
(the Lagrangian subbundle is pulled back from $\LGr(\bw3V_6)$ and the subbundle $\cV_5$ is pulled back from $\P(V_6^\vee)   \simeq \Gr(5,V_6) $).\ 
The definition of $\brS_n$ ensures that this family of Lagrangian data has rank $n+5$ and avoids decomposable vectors.\ 
Hence, it induces  a morphism $\brS_n \to \tcmlag_{n+5}$.\ 
The morphism is $\GL(V_6)$-equivariant, hence factors through a map from the quotient stack $\brS_n/\GL(V_6)  $ to $ \tcmlag_{n+5}$.

% ========================
% 
% Let us prove that this morphism is fully faithful.
% Recall that an $S$-point of the quotient stack $\brS_n/\PGL(V_6)$ is by definition a pair $(P,\sigma)$, where 
% $P \to S$ is a $\PGL(V_6)$-torsor and~$\sigma \colon P \to \brS_n$ is a $\PGL(V_6)$-equivariant map.
% A morphism from $(P,\sigma)$ over $S$ to $(P',\sigma')$ over $S'$ is a pair $(f,\varphi)$ giving a fiber square
% \begin{equation*}
% \xymatrix{
% P \ar[r]^-\varphi \ar[d] &
% P' \ar[d] \ar[r]^-{\sigma'} &
% \brS_n
% \\
% S \ar[r]^-f &
% S'
% }
% \end{equation*}
% such that $\sigma = \sigma' \circ \varphi$.
% Note also that to give a morphism $\sigma \colon P \to \brS_n$ is equivalent to giving 
% a Lagrangian subbundle $\cA \subset \bw3V_6 \otimes \cO_P$ and a rank-5 subbundle $\cV_5 \subset V_6 \otimes \cO_P$
% such that~$(V_6 \otimes \cO_P,\cV_5,\cA)$ is a Lagrangian data of rank $n + 5$ avoiding decomposable vectors.
% Equivariance of $\sigma$ is equivalent to twisted equivariance of the subbundles $\cV_5$ and $\cA$.
% 
% ========================

Let us construct the inverse.\ 
Let $S$ be a scheme and let $(\ns,\cV_6,\cV_5,\cA)$ be a family of linearized Lagrangian data of rank $n+5$, avoiding decomposable vectors.\  
Consider the~$\GL(V_6)$-torsor  {$f \colon \widetilde{S} \to S$} associated with the vector bundle~$\cV_6$, so that
the pullback of the bundle~$\cV_6$ to $\widetilde{S}$ comes with a canonical trivialization $ {f^*}\cV_6 \cong V_6 \otimes \cO_{\widetilde{S}}$.\   
The pullbacks of the bundles~$\cA$ and $\cV_5$ can be considered respectively 
as a Lagrangian subbundle \mbox{$ {f^*}\cA \hookrightarrow \bw3V_6 \otimes \cO_{\widetilde{S}}$} and 
as a corank-1 subbundle $ {f^*}\cV_5 \hookrightarrow V_6 \otimes \cO_{\widetilde{S}}$.\  
Moreover, these subbundles are $\GL(V_6)$-equivariant.\ 
Together they provide a $\GL(V_6)$-equivariant map 
\begin{equation*}
\widetilde{S} \lra \LGr(\bw3V_6)  \times \P(V_6^\vee).
\end{equation*}
As the family $(\ns,\cV_6,\cV_5,\cA)$ has rank $n+5$ and avoids decomposable vectors, the map factors through the subscheme $\brS_n$.\ This map is $\GL(V_6)$-equivariant, hence gives a map 
\begin{equation*}
S = \widetilde{S} /\GL(V_6) \to \brS_n / \GL(V_6).
\end{equation*}
This construction defines a morphism of stacks $\tcmlag_{n+5} \to \brS_n/\GL(V_6)$.

It is easy to see that the  morphisms we constructed above are mutually inverse, hence define an isomorphism of stacks.\ 
Moreover, this isomorphism is compatible with the embeddings of $\Gm(S)$ into automorphisms groups of objects of the stacks $\tcmlag_{n+5}$ and $\brS_n/\GL(V_6)$.\ 
Therefore, the rigidifications of these stacks, $\cmlag_{n+5}$ and $\brS_n/\PGL(V_6)$, are also isomorphic.

Since $\brS_n$ is quasiprojective and, by Corollary~\ref{corollary:stability}, consists of $\PGL(V_6)$-stable points, 
the stack $\brS_n/\PGL(V_6)$ is a separated Deligne--Mumford stack of finite presentation over~$\Q$.\ 
%\footnote{\red{Reference?}}\ 
It is clear that under the isomorphism $\cmlag_{n+5} \simeq \brS_n/\PGL(V_6)$, the special locus of~$\cmlag_{n+5}$ corresponds to the substack $\rS_n/\PGL(V_6)$.\
The description of the singular locus of~$\brS_n/\PGL(V_6)$ follows immediately from Lemma~\ref{lemma:sn-bsn}.
\end{proof}

\subsection{The moduli stack of GM varieties}
\label{subsection:global-quotient-gm}

We now describe  the moduli stack  of smooth GM varieties 
(which we identify with the moduli stack  of smooth normalized GM data).

 {As before}, consider the product $\LGr(\bw3V_6) \times \P(V_6^\vee)$.\ Set $\cV_6 := V_6 \otimes \cO$ and let $\cV_5 \subset \cV_6$ and~$\cA \subset \bw3\cV_6$
be the pullbacks of the tautological subbundles from $\P(V_6^\vee)$ and $\LGr(\bw3V_6)$ respectively.\ 
Note that $\bw3\cV_6 / \bw3\cV_5 $ is isomorphic to $ \bw2\cV_5 \otimes (\cV_6/\cV_5)$ via the map $\lambda_3$, 
so that  the subscheme $\rS_n \subset \LGradv(\bw3V_6) \times \P(V_6^\vee)$ is just the rank-$(n + 5)$ degeneracy locus 
of the morphism~$\varphi$ defined by~\eqref{eq:varphi}.\ 
In particular, $\brS_n$ is a Lagrangian intersection locus 
(as defined in~\cite[Section~4]{DK:coverings}) for the Lagrangian subbundles
\begin{equation*}
\cA \hookrightarrow \bw3\cV_6
\qquad\text{and}\qquad 
\bw3\cV_5 \hookrightarrow \bw3\cV_6.
\end{equation*}
Following~\cite[Section~4.1]{DK:coverings}, we denote by $\cC_n$ the cokernel sheaf of $\varphi$ on $\brS_n$.\ 
By definition of~$\brS_n$, the rank of $\cC_n$ is $5 - n$.\  Consider the reflexive hull of its top exterior power
\begin{equation}
\label{eq:cr-n}
\cR_n := (\bw{5-n}\cC_n)^{\vee\vee}.
\end{equation}
For $n \le 4$, this is a rank-1 reflexive sheaf on $\brS_n$, and for $n = 5$, we have $\cR_5 \cong \cO_{\brS_5}$.\ 
Furthermore, it was shown in the proof of~\cite[Theorem~4.2]{DK:coverings} that if 
$\cL_n$ is  the line bundle
\begin{equation}
\label{eq:cl-n}
\cL_n := \left.\bigl( \det(V_6)^{15-n} \otimes \det(\cA^\vee) \otimes \det(\cV_5^\vee)^{\otimes 6} \bigr)\right\vert_{\brS_n},
\end{equation}
there is a natural morphism 
\begin{equation*}
\bm_n \colon \cR_n \otimes \cR_n \lra \cL_n 
\end{equation*}
which, for $n \le 4$,  identifies $\cL_n$ with the reflexive hull $(\cR_n \otimes \cR_n)^{\vee\vee}$, 
and for~$n = 5$,  is just a global section of $\cL_5$ (in fact, $\bm_5 = \det(\varphi)$).

\begin{lemm}
\label{lemma:ideal-mn}
The subscheme\/ $ {B(\bm_n) \subset {}} \brS_n$, defined by the ideal image of the map $\bm_n$ twisted by~$\cL_n^{-1}$, is equal to $\rS_{n-1} \subset \brS_n$.
\end{lemm}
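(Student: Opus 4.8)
The plan is to work locally on $\brS_n$ and compare two subschemes both of which ``measure'' the failure of $\varphi$ to have full rank $n+5$. On one side we have $\rS_{n-1}\subset\brS_n$, which by definition is cut out by the condition $\dim(A\cap\bw3V_5)\ge 6-n$, i.e.\ it is the rank-$(n+4)$ degeneracy locus of the morphism $\varphi$ from~\eqref{eq:varphi}; its ideal is therefore generated by the $(n+5)\times(n+5)$-minors of $\varphi$, that is, it is the zeroth Fitting ideal $\mathrm{Fit}_0(\cC_n)$ of the cokernel sheaf $\cC_n$. On the other side, $B(\bm_n)$ is by construction the subscheme defined by the image ideal of $\bm_n\otimes\cL_n^{-1}\colon\cR_n\otimes\cR_n\to\cO_{\brS_n}$. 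So the content of the lemma is the identity of ideal sheaves $\mathrm{Image}(\bm_n\otimes\cL_n^{-1}) = \mathrm{Fit}_0(\cC_n)$, and I would reduce everything to this local statement about a single morphism of free modules over a ring.

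First I would recall from~\cite[Section~4.1]{DK:coverings} the local description: over an affine open $U=\Spec R$ where $\cA$, $\bw3\cV_5$, and $\bw3\cV_6$ are trivialized, $\varphi$ is a square matrix $M$ of size $n+5$ whose cokernel $\cC_n$ has rank $5-n$ everywhere on $\brS_n$ (by the definition of $\brS_n$). The key input is the ``Lagrangian'' structure, which makes $M$ (up to units and the line bundle twists bundled into $\cL_n$) symmetric, so that $\cC_n$ and its dual are interchanged and $\bw{5-n}\cC_n$ has a self-pairing; this is precisely what produces $\bm_n$. On such a $U$, I want to show: the image of $\bm_n$, after the $\cL_n^{-1}$ twist, is generated inside $R$ by the same elements as $\mathrm{Fit}_0(\cC_n)$. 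The natural tool is to base change to $\brS_n$ and invoke Lemma~\ref{lemma-support-cokernel}-type reasoning together with the structure theory of Fitting ideals: over the locus $\rS_n$ (where $\cC_n$ has constant rank $5-n$), the first nonvanishing Fitting ideal $\mathrm{Fit}_{5-n}(\cC_n)$ is the unit ideal, so $\cC_n$ is locally free of rank $5-n$ there, $\cR_n$ restricts to an honest line bundle, and $\bm_n$ is an isomorphism — so $B(\bm_n)\cap\rS_n=\emptyset=\rS_{n-1}\cap\rS_n$. The work is therefore concentrated at points of $\rS_{n-1}$, i.e.\ points where $\dim(A\cap\bw3V_5)=6-n$ and the rank of $M$ drops to $n+4$.

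At such a point I would choose, as in the proof of Proposition~\ref{proposition-factorization}, coordinates so that $M$ has an $(n+4)\times(n+4)$ invertible block and everything else expressible through $(n+5)\times(n+5)$-minors; then $\cC_n$ is presented, up to a free summand $\cO^{\oplus(5-n)}$ (which contributes a unimodular row to the presentation matrix and drops out of all Fitting ideals by~\cite[Corollary~20.5]{Eisenbud}), by a $1\times 1$ ``residual'' matrix, namely $\det(\varphi)/(\text{the $(n+4)$-minor})$ times a unit. Thus $\mathrm{Fit}_0(\cC_n)$ is locally principal, generated by this residual scalar, and $\mathrm{Fit}_1(\cC_n)$ is the unit ideal. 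Now $\bw{5-n}\cC_n$ in these coordinates is $\cO_D$-free of rank one where $D$ is that principal subscheme, so $\cR_n=(\bw{5-n}\cC_n)^{\vee\vee}$ and the self-pairing $\bm_n$ are computed directly: the image of $\bm_n\otimes\cL_n^{-1}$ is exactly the ideal generated by the same residual scalar. Hence both ideals agree, proving $B(\bm_n)=\rS_{n-1}$ as subschemes of $\brS_n$. Finally, I would glue the local computations — this is legitimate because $\cR_n$, $\cL_n$, $\bm_n$ and the Fitting ideals are all intrinsic, so the locally-verified identity of ideal sheaves is global.

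The main obstacle is the bookkeeping in the middle step: one must verify that the self-pairing $\bm_n$ induced by the Lagrangian structure really does land in the \emph{principal} ideal generated by the residual scalar and generates all of it, rather than, say, a square or a proper multiple of it. This is where the precise normalization of $\cL_n$ in~\eqref{eq:cl-n} and the compatibility of the symmetric structure with the trivializations matter, and it is essentially the only place where the argument is not formal. I expect this can be handled by the same elementary row/column reduction used in Proposition~\ref{proposition-factorization}, carried out one step beyond rank $r-1$ to extract the residual quadratic form exactly as in Lemma~\ref{lemma-residual-quadric} and Proposition~\ref{proposition-hecke-transform}(b); the content of ``$\bm_n$ is the residual pairing'' was in fact already established in the proof of~\cite[Theorem~4.2]{DK:coverings}, so I would quote that and keep the present argument short.
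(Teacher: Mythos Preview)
Your approach---a direct local computation of the image ideal of $\bm_n$---is different from the paper's, which simply quotes \cite[Theorem~4.2 and Corollary~4.7]{DK:coverings} together with the smoothness of the strata (Lemma~\ref{lemma:sn-bsn}). A direct argument would be a legitimate alternative, but as written yours contains a genuine gap for $n\le 4$.

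The problem is the local model. The map $\varphi$ of~\eqref{eq:varphi} goes between bundles of rank~$10$ (both $\cA$ and $\bw2\cV_5$ have rank~$10$), not rank $n+5$; on $\brS_n$ it has rank at most~$n+5$, and on~$\rS_{n-1}$ rank~$n+4$. Consequently the ideal of $\rS_{n-1}\subset\brS_n$ is $\mathrm{Fit}_{5-n}(\cC_n)$, not $\mathrm{Fit}_0(\cC_n)$; and since $\det(\varphi)\equiv 0$ on $\brS_n$ for $n<5$, the expression ``$\det(\varphi)/(\text{the $(n+4)$-minor})$'' is meaningless. After reducing to an invertible $(n+4)\times(n+4)$ block at a point of $\rS_{n-1}$, the residual block $N$ has size $(6-n)\times(6-n)$ and \emph{vanishes} at that point; the $2\times2$-minors of $N$ vanish on $\brS_n$, but its entries (which generate the ideal of~$\rS_{n-1}$) do not form a principal ideal. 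In particular, your assertion that ``$\mathrm{Fit}_0(\cC_n)$ is locally principal, generated by this residual scalar'' and that $\cC_n$ splits off a free summand $\cO^{\oplus(5-n)}$ are both false: by Lemma~\ref{lemma:sn-bsn}, the codimension of $\rS_{n-1}$ in $\brS_n$ is~$6-n\ge 2$. Your argument is essentially correct only for $n=5$, where indeed $\cR_5\cong\cO$, $\bm_5=\det(\varphi)$, and $\rS_4$ is the Cartier divisor $\{\det(\varphi)=0\}$.

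To salvage the direct approach you would need to compute $\cR_n=(\bw{5-n}\cC_n)^{\vee\vee}$ and $\bm_n$ honestly from the $(6-n)\times(6-n)$ symmetric residual matrix $N$ (with vanishing $2\times2$-minors) and show that the image of $\bm_n\otimes\cL_n^{-1}$ equals the ideal generated by the entries of~$N$. This is exactly the content of \cite[Theorem~4.2 and Corollary~4.7]{DK:coverings}, and uses the smoothness of the strata in an essential way; it is not a consequence of Proposition~\ref{proposition-factorization} or the residual-quadric lemmas, which require the degeneracy locus to be a Cartier divisor.
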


\begin{proof}
By ~\cite[Theorem~4.2]{DK:coverings}, the subscheme $B(\bm_n)$ coincides (locally over $\brS_n$) with the branch locus of the double covering of $\brS_n$ 
associated with the reflexive sheaf $\cR_n$ and the morphism $\bm_n$.\ Since, by Lemma~\ref{lemma:sn-bsn}, the schemes $\rS_n$ are smooth of expected dimensions, \cite[Corollary~4.7]{DK:coverings}
identifies the branch locus with the scheme $\rS_{n-1}$.
\end{proof}

We are therefore in the setup of Appendix~\ref{section:generalized-root-stack}.\ Accordingly, we consider the root stack 
\begin{equation*}
\widehat{\rS}_n \to \brS_n
\end{equation*}
of $(\cR_n,\bm_n)$.\ 
For~$n = 5$, the stack $\hrS_5 $ is isomorphic to  $\sqrt{(\cL_5,\det( {\varphi}))/\,\brS_5}$, the root stack 
with respect to the hypersurface $S_4 \subset \brS_5$ in the sense of~\cite[Section~B.2]{agv}.\
For $n \le 4$, the stacky locus~$\rS_{n-1}$ of $\hrS_n$ has codimension $6 - n \ge 2$; 
in this sense $\hrS_n$ is a \emph{generalized} root stack.

We have the following property.

\begin{lemm}
\label{lemma:hrsn-smooth}
The stack\/ $\hrS_n$ is a smooth   separated  Deligne--Mumford stack.\
The action of the group $\PGL(V_6)$ on $\brS_n$ lifts to an action on $\widehat{\rS}_n$ such that the morphism
$\hrS_n \to \brS_n$ is~$\PGL(V_6)$-equivariant.
\end{lemm}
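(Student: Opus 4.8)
The plan is to deduce both assertions from the general properties of the (generalized) root stack developed in Appendix~\ref{section:generalized-root-stack}, together with the smoothness and normality statements of Lemma~\ref{lemma:sn-bsn} and the identification of the branch locus in Lemma~\ref{lemma:ideal-mn}. First I would recall that, by construction, $\hrS_n \to \brS_n$ is the root stack of the pair $(\cR_n,\bm_n)$ where $\cR_n$ is the reflexive sheaf~\eqref{eq:cr-n} and $\bm_n\colon\cR_n\otimes\cR_n\to\cL_n$ is the morphism from~\cite[Theorem~4.2]{DK:coverings}; the zero locus of $\bm_n$ (twisted by $\cL_n^{-1}$) is the smooth subscheme $\rS_{n-1}\subset\brS_n$ of codimension $6-n$ by Lemma~\ref{lemma:ideal-mn}. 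The key point for smoothness is that a root stack (or generalized root stack) along a \emph{smooth} center inside an ambient scheme which is itself smooth away from that center is a smooth Deligne--Mumford stack: étale-locally over $\brS_n$ one trivializes $\cR_n$ and $\cL_n$, so that $\bm_n$ becomes a regular function cutting out $\rS_{n-1}$ (a complete intersection of codimension $6-n$ by Lemma~\ref{lemma:sn-bsn}), and the local model of the root stack is the quotient of a smooth scheme (a cyclic cover, or in the generalized case the total space of a vector bundle construction from the appendix) by a $\bmu_2$-action which is free outside the preimage of $\rS_{n-1}$. The separatedness and the Deligne--Mumford property are inherited from $\brS_n$ (quasiprojective, hence separated of finite type) because the structure morphism $\hrS_n\to\brS_n$ is representable, proper, and quasi-finite.

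For the second assertion I would argue by functoriality of the root stack construction. The group $\PGL(V_6)$ acts on $\LGr(\bw3V_6)\times\P(V_6^\vee)$ and preserves $\brS_n$ (by Lemma~\ref{ssig} it preserves $\LGradv(\bw3V_6)$, and the numerical condition $\dim(A\cap\bw3V_5)\in\{5-n,6-n\}$ is visibly $\PGL(V_6)$-invariant). The morphism $\varphi$ of~\eqref{eq:varphi}, the tautological Lagrangian subbundle $\cA$, the subbundle $\cV_5$, and the trivial bundle $\cV_6=V_6\otimes\cO$ all carry canonical $\PGL(V_6)$-equivariant (more precisely $\GL(V_6)$-equivariant, descending to $\PGL(V_6)$ since the center acts on the relevant tensor powers in a way absorbed by the $\cO(2m,6)$-type twists) structures; hence so do the cokernel sheaf $\cC_n$, its reflexive hull $\cR_n$, the line bundle $\cL_n$ of~\eqref{eq:cl-n}, and the pairing $\bm_n$. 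Since the root stack of an equivariant pair $(\cR_n,\bm_n)$ on a scheme with group action is canonically acted on by the same group, compatibly with the projection to $\brS_n$, we obtain the lift of the $\PGL(V_6)$-action to $\hrS_n$ making $\hrS_n\to\brS_n$ equivariant.

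Concretely, the cleanest way to see the equivariance of the root stack is to use its universal property: for a $\k$-scheme $T$ with a $\PGL(V_6)$-action, a $\PGL(V_6)$-equivariant map $T\to\brS_n$ together with a $\PGL(V_6)$-equivariant factorization/root datum for the pullback of $(\cR_n,\bm_n)$ is the same as a $\PGL(V_6)$-equivariant map $T\to\hrS_n$; applying this to $T=\brS_n$ with the given action and the tautological root datum, and then transporting along each $g\in\PGL(V_6)$, produces the action morphism $\PGL(V_6)\times\hrS_n\to\hrS_n$ and the associativity/unit constraints by the usual Yoneda argument. I expect the main obstacle to be purely bookkeeping: one must check that the generalized root stack construction of the appendix genuinely preserves smoothness when the center has codimension $\geq 2$ (so that it is not literally the classical cyclic cover but the vector-bundle variant), and that all the line-bundle twists appearing in $\cL_n$ are $\GL(V_6)$-equivariant with weights compatible with descent to $\PGL(V_6)$ — both of these are routine given the appendix and the explicit formula~\eqref{eq:cl-n}, but they are the places where care is needed.
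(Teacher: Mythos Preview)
Your smoothness argument is essentially the paper's: both reduce to the local description $\hrS_n \cong \widetilde{S}/\bmu_2$ from Proposition~\ref{proposition:ts-hs} and invoke the smoothness of the double cover $\widetilde{S}$, which follows from~\cite[Corollary~3.6]{DK:coverings} together with Lemma~\ref{lemma:sn-bsn}.

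The equivariance argument, however, has a genuine gap. You assert that the $\GL(V_6)$-equivariant structures on $\cR_n$, $\cL_n$, and $\bm_n$ ``descend to $\PGL(V_6)$ since the center acts on the relevant tensor powers in a way absorbed by the $\cO(2m,6)$-type twists''. This is not correct for $n<5$: the center $\Gm\subset\GL(V_6)$ acts on $\cR_n$ with weight $3(5-n)$ and on $\cL_n$ with weight $6(5-n)$ (compute directly from~\eqref{eq:cr-n} and~\eqref{eq:cl-n}), so neither sheaf is $\PGL(V_6)$-equivariant. The $\cO(2m,6)$-linearization is irrelevant here---it concerns GIT stability on the ambient product, not the sheaves entering the root stack. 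Consequently your universal-property argument, which needs a canonical identification $g^*(\cR_n,\cL_n,\bm_n)\cong(\cR_n,\cL_n,\bm_n)$ for $g\in\PGL(V_6)$, does not go through as written: different lifts of $g$ to $\GL(V_6)$ give identifications differing by nontrivial scalars.

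The paper's fix is to work instead with the explicit presentation $\hrS_n=\hhrS_n/\Gm$ where $\hhrS_n=\Spec_{\brS_n}(\cO_{\brS_n}[\cL_n^{\pm1},\cR_n])$ and the $\Gm$-action is the grading. The weight computation shows that the subgroup $\bmu_{3(5-n)}\subset\GL(V_6)$ acts trivially on this algebra, so the quotient group $\rG_n=\GL(V_6)/\bmu_{3(5-n)}$ acts on $\hhrS_n$, and its center $\Gm/\bmu_{3(5-n)}\cong\Gm$ acts precisely via the grading. Passing to the $\Gm$-quotient then yields an action of $\rG_n/(\Gm/\bmu_{3(5-n)})\cong\PGL(V_6)$ on $\hrS_n$. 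Your functorial approach can be salvaged, but only after making exactly this observation: the ambiguity in lifting $g$ is a scalar that is already absorbed by the $\Gm$-quotient defining $\hrS_n$. You flagged this as ``the place where care is needed'' but did not supply the care; the paper's weight computation is that missing step.
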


\begin{proof}
To show that $\hrS_n$ is a smooth  {and separated} Deligne--Mumford stack, it is enough, in view of Proposition~\ref{proposition:ts-hs},
to check that the {\'etale} double cover  $\widetilde{S}$ of $\brS_n$ associated with the morphism $\bm_n$
and a square root $\cM$ of $\cL_n$ (which exists locally over $\brS_n$) is smooth.\
This  follows from~\cite[Corollary~3.7]{DK:coverings}, since $\LGradv(\bw3V_6) \times \P(V_6^\vee)$ is smooth and
$\rS_n$ is smooth of expected codimension  {(Lemma~\ref{lemma:sn-bsn})}.\

To show that the $\PGL(V_6)$ action on $\brS_n$ lifts to $\hrS_n$, 
recall from~\eqref{eq:generalized-root-stack} that the stack $\widehat{\rS}_n$  {can be} defined as the quotient stack
\begin{equation*}
 \widehat{\rS}_n =\wwrS_n/\Gm,\qquad\textnormal{where\ \ }
\wwrS_n := \Spec_{\, \brS_n } \bigl(\cO_{\brS_n}[\cL_n^{\pm1},\cR_n]\bigr),
\end{equation*}
  the sheaf of algebras $\cO_{\brS_n}[\cL_n^{\pm1},\cR_n]$ is defined in~\eqref{eq:os-l-r},
and the $\Gm$-action corresponds to its grading.\ The sheaves $\cR_n$ and $\cL_n$ and the morphism $\bm_n$ are $\GL(V_6)$-equivariant and the center $\Gm \subset \GL(V_6)$ 
acts on them with respective weights $3(5 - n)$ and $6(5 - n)$ {by~\eqref{eq:cr-n} and~\eqref{eq:cl-n}}.\ Therefore, the group 
\begin{equation}
\label{eq:rg-n}
\rG_n := \GL(V_6)/\bmu_{3(5-n)}
\end{equation}
acts on the sheaf of algebras $\cO_{\brS_n}[\cL_n^{\pm1},\cR_n]$ defined in~\eqref{eq:os-l-r},
hence also on its relative spectrum~$\hhrS_n$,
in such a way that the action of its center $\Gm/\bmu_{3(5-n)} \cong \Gm$ corresponds to the grading of the algebra.\
Therefore, the stack $\hrS_n$ carries an action of the quotient group 
\begin{equation*}
\bigl( \GL(V_6)/\bmu_{3(5-n)} \bigr) \big/ \bigl( \Gm/\bmu_{3(5-n)} \bigr) \cong \GL(V_6) / \Gm = \PGL(V_6)
\end{equation*}
and the map $\widehat{\rS}_n \to \brS_n$ is $\PGL(V_6)$-equivariant.
\end{proof}

The argument of the proof of the lemma also has the following useful consequence.

\begin{coro}
There is an isomorphism of stacks $\hrS_n / \PGL(V_6) \cong \hhrS_n /\rG_n$.
\end{coro}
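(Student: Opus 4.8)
The plan is to deduce this corollary directly from the proof of Lemma~\ref{lemma:hrsn-smooth}, where both stacks have already been realized as quotients of the same scheme $\hhrS_n = \wwrS_n$ by commuting group actions. Recall from that proof that $\wwrS_n = \Spec_{\brS_n}\bigl(\cO_{\brS_n}[\cL_n^{\pm 1},\cR_n]\bigr)$ carries an action of $\rG_n = \GL(V_6)/\bmu_{3(5-n)}$ extending the $\GL(V_6)$-action on $\brS_n$, and that its center $\Gm/\bmu_{3(5-n)} \cong \Gm$ acts via the grading of the sheaf of algebras, so that $\hrS_n = \wwrS_n/\Gm$. The key structural point is that the $\Gm$-action used to form the root stack and the residual $\PGL(V_6)$-action on $\hrS_n$ are compatible: they both arise from the single $\rG_n$-action on $\wwrS_n$, and $\rG_n$ sits in a central extension $1 \to \Gm \to \rG_n \to \PGL(V_6) \to 1$.

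First I would observe that forming quotient stacks is insensitive to the order in which one quotients by the subgroups of a group acting on a scheme: if a group $G$ with normal subgroup $N$ acts on a scheme $X$, then $(X/N)/(G/N) \cong X/G$ as stacks. This is a standard fact about quotient stacks (it follows, e.g., from the description of $X/G$ as the stack of $G$-torsors with an equivariant map to $X$, together with the torsor exact sequence for $N \subset G$). Applying this with $X = \hhrS_n = \wwrS_n$, $G = \rG_n$, and $N = \Gm/\bmu_{3(5-n)} \cong \Gm$ the central subgroup, one gets
\begin{equation*}
\hhrS_n/\rG_n \cong \bigl(\wwrS_n/(\Gm/\bmu_{3(5-n)})\bigr)\big/\bigl(\rG_n/(\Gm/\bmu_{3(5-n)})\bigr).
\end{equation*}
By the computation in the proof of Lemma~\ref{lemma:hrsn-smooth}, the inner quotient $\wwrS_n/(\Gm/\bmu_{3(5-n)})$ is precisely $\hrS_n$, and the outer quotient group is
\begin{equation*}
\rG_n/(\Gm/\bmu_{3(5-n)}) = \bigl(\GL(V_6)/\bmu_{3(5-n)}\bigr)\big/\bigl(\Gm/\bmu_{3(5-n)}\bigr) \cong \GL(V_6)/\Gm = \PGL(V_6).
\end{equation*}
Combining these identifications yields $\hhrS_n/\rG_n \cong \hrS_n/\PGL(V_6)$, which is the asserted isomorphism.

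The only genuinely delicate point is to make sure the two ways of presenting $\hrS_n$ as a $\Gm$-quotient agree, i.e.\ that the $\Gm$ we are dividing out of $\rG_n$ is the same $\Gm$ whose grading defines the root stack $\hrS_n = \wwrS_n/\Gm$; this is exactly what was checked when computing the weights $3(5-n)$ and $6(5-n)$ of the central torus on $\cR_n$ and $\cL_n$ in the proof of Lemma~\ref{lemma:hrsn-smooth}, so no new work is needed. I expect the main (very minor) obstacle is simply citing or recalling the iterated-quotient-stack identity $(X/N)/(G/N)\cong X/G$ in a form applicable to non-free actions; since all the relevant stabilizers are finite (Lemma~\ref{ssig}) and we are working with well-behaved affine group schemes, this is routine, but it should be stated cleanly. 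I would present the argument in two or three sentences, referring back to the notation and weight computations established in the proof of Lemma~\ref{lemma:hrsn-smooth}.
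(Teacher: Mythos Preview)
Your proposal is correct and takes essentially the same approach as the paper, which simply states that the corollary is a consequence of the argument in the proof of Lemma~\ref{lemma:hrsn-smooth} without giving further details. You have spelled out exactly the iterated-quotient-stack identity $(X/N)/(G/N)\cong X/G$ with $X=\hhrS_n$, $G=\rG_n$, and $N=\Gm/\bmu_{3(5-n)}$ that this argument implicitly relies on.
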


We are now ready to prove the main result of this section.

\begin{theo}
\label{theorem:gm-global-quotient}
For $n \in \{3,4,5\}$, the stack of smooth polarized GM varieties $\ucM_n$ is isomorphic to the global quotient stack 
\begin{equation*}
\ucM_n \cong \widehat{\rS}_n / \PGL(V_6) \cong \hhrS_n /\rG_n.
\end{equation*}
 {In particular, it is a smooth separated Deligne--Mumford stack of finite presentation over~$\Q$.}
\end{theo}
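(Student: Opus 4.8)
The plan is to exhibit two mutually inverse morphisms of stacks between $\ucM_n$ and $\hhrS_n/\rG_n$; combined with the isomorphism $\hhrS_n/\rG_n\cong\hrS_n/\PGL(V_6)$ of the Corollary preceding the theorem, this gives the first assertion, and the ``in particular'' then follows from Lemma~\ref{lemma:hrsn-smooth} (smoothness and separatedness of $\hrS_n$) together with the finiteness of the $\PGL(V_6)$-stabilizers (Lemma~\ref{ssig}), reasoning exactly as in the proof of Proposition~\ref{proposition:cmlag-quotient}. Throughout I identify $\ucM_n$ with the stack $\ucMd_{n,\mathrm{smooth}}$ of smooth normalized GM data by Corollary~\ref{corollary:stack-gm-data}.

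\emph{From GM varieties to the quotient.} Given a family of smooth normalized GM data over a scheme $S$, the morphism $\fa$ of Proposition~\ref{proposition-gm-to-lag} produces a family of Lagrangian data of rank $n+5$ avoiding decomposable vectors whose Lagrangian special locus has ideal $\cJ_\lagspe=\cJ_\gmspe^2$. By Proposition~\ref{proposition:cmlag-quotient} this family is classified by a morphism $S\to\cmlag_{n+5}\simeq\brS_n/\PGL(V_6)$ under which $S_\lagspe$ is pulled back from the branch subscheme $\rS_{n-1}\cong B(\bm_n)\subset\brS_n$ (Lemma~\ref{lemma:ideal-mn}). Since $\cJ_\gmspe$ is a \emph{canonical} square root of this ideal, the universal property of $\hrS_n$ (Proposition~\ref{prop:factorization-through-hats}) provides a canonical lift $S\to\hrS_n/\PGL(V_6)$. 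Functoriality of $\fa$, of the equivalence of Proposition~\ref{proposition:cmlag-quotient}, and of the universal property upgrades this to a morphism of stacks $\Psi\colon\ucM_n\to\hrS_n/\PGL(V_6)\cong\hhrS_n/\rG_n$.

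\emph{From the quotient to GM varieties.} For the opposite direction I would work on the $\Gm$-torsor $\hhrS_n\to\hrS_n$. Through the double covering from $\hhrS_n$ onto a $\Gm$-torsor over $\brS_n$ and the projection to $\brS_n$ one pulls back the tautological family of Lagrangian data $(\cV_6=V_6\otimes\cO,\cV_5,\cA)$; by construction of $\hrS_n$ from the pair $(\cR_n,\bm_n)$, its Lagrangian special locus, namely the scheme-theoretic preimage of $\rS_{n-1}$, has ideal $\cI_Z^2$ for a \emph{smooth} closed subscheme $Z\subset\hhrS_n$ (the preimage of the branch locus), of codimension $6-n$. Blowing up $Z$, $\beta\colon\bS\to\hhrS_n$, the exceptional divisor $E$ is Cartier and $\beta^*S_\lagspe=2E$, so Proposition~\ref{proposition-gm-from-lag-naive} yields a family $(\bS,\cW,\cV_6,\cV_5,\mu,\bq)$ of smooth normalized GM data with GM special locus $E$. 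One then checks this family is the $\beta$-pullback of a family over $\hhrS_n$: away from $Z$ it is the ordinary family already defined on $\hhrS_n\setminus Z$, while along $E$ the canonical orthogonal decomposition $\cW\cong\cW_0\oplus\cK$ of Proposition~\ref{proposition-hecke-transform}(a) (which reproduces \cite[Proposition~2.30]{DK}) shows the fibrewise GM data is constant on the $\P^{5-n}$-fibres of $\beta$, so it descends. The resulting family on $\hhrS_n$ is $\rG_n$-equivariant, by canonicity of the construction and $\GL(V_6)$-equivariance of the tautological data, the subgroup $\bmu_{3(5-n)}$ acting trivially as in the proof of Lemma~\ref{lemma:hrsn-smooth}; hence it descends to a family over $\hhrS_n/\rG_n\cong\hrS_n/\PGL(V_6)$, defining $\Phi$ in the opposite direction.

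\emph{Mutual inversion.} Finally I would check $\Phi\circ\Psi\cong\id$ and $\Psi\circ\Phi\cong\id$. After unwinding the definitions and passing, on a cover, to a chart where the special locus is a Cartier divisor so that the blow-up is an isomorphism, both composites reduce to the statement that $\fa$ and the construction of Section~\ref{subsection-from-lag-to-gm} are inverse equivalences $\ucM_n(S,E)\simeq\cmlag_{n+5}(S,2E)$, which is Proposition~\ref{prop:equivalence-groupoids}, together with the uniqueness clause of the universal property (Proposition~\ref{prop:factorization-through-hats}): applying $\fa$ to the output of $\Phi$ recovers $\cJ_\lagspe$ with its canonical square root $\cJ_\gmspe$, so the lift to $\hrS_n/\PGL(V_6)$ is the one we started from. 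I expect the genuinely delicate step to be the descent of the GM-data family from the blow-up $\bS$ down to $\hhrS_n$, i.e.\ showing that the construction of Section~\ref{subsection-from-lag-to-gm} passes through the generalized root stack when the stacky locus has codimension $\ge2$; the case $n=5$, where $\rS_4\subset\brS_5$ is a genuine divisor and $\hrS_5$ is an ordinary root stack, needs no blow-up and is essentially the classical double-cover construction.
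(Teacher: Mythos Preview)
Your construction of $\Phi$ (from the quotient to GM varieties) matches the paper's approach closely: the blow-up of the ramification locus, the application of Proposition~\ref{proposition-gm-from-lag-naive} on $\bS$, the descent to $\hhrS_n$ via triviality on the fibers of $E$, and the $\rG_n$-equivariance. The mutual-inversion sketch via Proposition~\ref{prop:equivalence-groupoids} is also how the paper implicitly handles it.

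The genuine gap is in $\Psi$. You invoke Proposition~\ref{prop:factorization-through-hats} as ``the universal property of $\hrS_n$'' and feed it the square root $\cJ_\gmspe$ of $\cJ_\lagspe$, but that proposition takes no such input: its hypotheses are that the source be a \emph{smooth scheme} and that the preimage of $B(\bm_n)$ have \emph{codimension $\ge2$}. For an arbitrary test scheme $S$ neither is guaranteed, and for $n=5$ the codimension hypothesis fails outright since the GM-special locus is a divisor. The paper fixes this by first taking an \'etale cover of $\ucM_n$ by a smooth scheme $S$ (Proposition~\ref{proposition:mgm-dm}), then passing to the $\PGL(V_6)$-torsor $\widetilde S\to S$ to trivialize $\cV_6$ and obtain an honest map $\widetilde S\to\brS_n$; at that point the two cases split. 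For $n\in\{3,4\}$ one applies Proposition~\ref{prop:factorization-through-hats} using smoothness of $\widetilde S$ and the codimension bound from Lemma~\ref{eq:special-gerbe}; for $n=5$ one instead uses the universal property of the \emph{ordinary} root stack (\cite[Section~B.1]{agv}), and it is \emph{there} that the square root $\cJ_\gmspe$ enters. Your sentence ``$\cJ_\gmspe$ is a canonical square root, so Proposition~\ref{prop:factorization-through-hats} gives a lift'' conflates these two mechanisms and cites the wrong one in both regimes. A smaller point: your separatedness argument via Lemma~\ref{ssig} and the template of Proposition~\ref{proposition:cmlag-quotient} does not apply directly since $\hrS_n$ is a stack, not a quasiprojective scheme; the paper instead uses properness of $\hrS_n\to\brS_n$ (Corollary~\ref{corollary:hs-s-proper}) together with the already-known separatedness of $\brS_n/\PGL(V_6)$.
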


\begin{proof}
The first step is the construction of a morphism of stacks 
\begin{equation*}
\hhrS_n /\rG_n \cong \widehat{\rS}_n/\PGL(V_6) \lra \ucM_n.
\end{equation*}
This is equivalent to the construction of a $\rG_n$-equivariant family of smooth GM varieties over~$\hhrS_n$, 
which is accomplished by a combination of  several constructions described earlier.

The natural morphism $\hhrS_n \to \brS_n$ can be factored as the composition 
\begin{equation*}
\hhrS_n \xrightarrow{\ \hhf\ } \bbrS_n :=
\Spec_{\,\brS_n}( \cO_{\brS_n}[\cL_n^{\pm1}] ) = 
\Spec_{\,\brS_n} \Bigl(\bigoplus_{i \in \Z} \cL_n^i \Bigr) \xrightarrow{\ \theta\ } {\brS_n},
\end{equation*}
where $\theta \colon  \bbrS_n \to \brS_n$ is the $\Gm$-torsor associated with the line bundle $\cL_n$ 
and $\hhf$ is the double cover associated by~\cite[Proposition~2.5]{DK:coverings} with the reflexive sheaf $ {\theta}^*\cR_n$ 
and the natural morphism
\begin{equation*}
\theta^*\cR_n \otimes \theta^*\cR_n \xrightarrow{\ \theta^*(\bm_n)\ } \theta^*\cL_n \cong \cO.
% \cong \cO \otimes \cO.
\end{equation*}
The sheaf $\theta^*\cR_n$ is the reflexive sheaf 
associated with the Lagrangian intersection of the subbundles $\theta^*\cA$ and $\theta^*(\bw3\cV_5)$ on the scheme $\bbrS_n$.\
Therefore, by~\cite[Corollary~3.6]{DK:coverings}, the double cover $\hhrS_n$ is smooth.

Recall from~\cite[Definition~2.8]{DK:coverings} the notions of branch and ramification loci, $B(f)$ and~$R(f)$, 
for the double cover $\hhf$.\
By~Lemma~\ref{lemma:ideal-mn} and~\cite[Corollary~4.7]{DK:coverings}, we have
\begin{equation*}
B(f) = \theta^{-1}(\rS_{n-1}),
\qquad
R(f) \cong \theta^{-1}(\rS_{n-1}) \subset \hhrS_n,
\end{equation*}
and the preimage $\hhf^{-1}(B(f))$   is the first order infinitesimal neighborhood 
of  $R(f)$.

Denote by 
\begin{equation*}
\bS := \Bl_{R(f)} \bigl(\, \hhrS_n \bigr) \xrightarrow{\ \beta\ } \hhrS_n
\end{equation*}
the blow up of the scheme $\hhrS_n$ along   $R(f)$ and let $E$ be its exceptional divisor.\ 
The preimage of the subscheme $\rS_{n-1}$ under the map $\beta \circ \hhf \circ \theta$ is the Cartier divisor~$2E$.\ The following diagram  collects the   stacks and morphisms that we constructed:
\begin{equation*}
\xymatrix@C=2em{
\bS \ar[rr]^-\beta &&
\hhrS_n \ar[rr]_-{\bmu_2}^-{f} \ar[d]^-{\Gm} &&
\bbrS_n \ar[d]_-{\Gm}^-{\theta}
\\
&&
\hrS_n \ar[rr]^-{\mathrm{root}} &&
\brS_n \ar[r] &
\LGradv(\bw3V_6) \times \P(V_6^\vee).
}
\end{equation*}
 {The labels $\bmu_2$, $\Gm$, and ``$\mathrm{root}$'' in the diagram mean that the corresponding arrows 
are a~$\bmu_2$-torsor, $\Gm$-torsors, and a root stack, respectively.} 

On $\brS_n$, we have the tautological family $(\ns,\cV_6,\cV_5,\cA)$ of Lagrangian data described in Proposition~\ref{proposition:cmlag-quotient}.\ 
Its pullback to the blow up $\bS$ is a family of Lagrangian data on $\bS$ of rank~\mbox{$n + 5$} avoiding decomposable vectors.\ 
Its special locus is the preimage of $\rS_{n-1}$, that is, the Cartier divisor $2E$.\ 
Since this divisor is a double, the construction of Section~\ref{subsection-from-lag-to-gm} applies: 
by Proposition~\ref{prop:equivalence-groupoids}, there exists a family of smooth normalized GM data $(\bS,\cW,\cV_6,\cV_5,\mu,\bq)$ 
with   special locus  $E$.\ We claim that this family is the pullback with respect to the blow up morphism $\beta \colon \bS \to \hhrS_n$.\ Since this is the blow up of a smooth scheme along a smooth center, it is enough to check
that all the bundles $\cW$, $\cV_6$, and $\cV_5$ restrict trivially to the fibers of the exceptional divisor $E$.

For the bundles $\cV_6$ and $\cV_5$, this is obvious, since they do not change
in the construction of Proposition~\ref{prop:equivalence-groupoids}.\ For $\cW$, it is a bit more complicated.\ This bundle is constructed in Lemma~\ref{lemma:bq-gm} and, according to Proposition~\ref{proposition-hecke-transform}, the restriction of $\cW$ to $E$ is a direct sum 
\begin{equation*}
\cW\vert_E \cong (\cW'_E / \cK_E) \oplus \cK_E(E).
\end{equation*}
The first summand $\cW'_E/\cK_E$ is isomorphic to the image 
of the restriction to $E$ of the pullback to $\bS$ of the map  
$\varphi \colon \cA \to \bw2\cV_5 \otimes (\cV_6/\cV_5)$.\ In particular, it is trivial on the fibers of $E$.\ The second summand comes by Proposition~\ref{proposition-hecke-transform}  with a natural isomorphism
\begin{equation*}
\bq_1 \colon \Sym^2(\cK_E(E)) \isomto \cV_6/\cV_5.
\end{equation*}
Its target is a line bundle trivial on the fibers of $E$, hence so is its source.\ Finally, the fibers of $E$ are projective spaces, hence a line bundle on a fiber, whose square is trivial, is trivial itself.\ Thus,~$\cK_E(E)$ is trivial on the fibers of $E$ and so is $\cW$.

We conclude that there is a family of GM data on $\hhrS_n$ whose pullback to $\bS$ is the family of GM data obtained from the family of Lagrangian data $(\ns,\cV_6,\cV_5,\cA)$ by the construction of Section~\ref{subsection-from-lag-to-gm}.\ Let us check that it is $\rG_n$-equivariant.\ 
The family $(\ns,\cV_6,\cV_5,\cA)$ is $\rG_n$-equivariant as a family of Lagrangian data 
(note, however, that it is not equivariant as a family of {linearized Lagrangian data; see Remark~\ref{remark:linear-lagrangian-lift}})
because, in Definition~\ref{definition:lagrangian-data} of a morphism of Lagrangian data, 
we ask for isomorphisms between \emph{projectivizations} of the appropriate bundles.\ The construction of Section~\ref{subsection-from-lag-to-gm} is natural, hence
the resulting family of GM data on the blow up~$\bS$ of $\hhrS_n$ is also $\rG_n$-equivariant.\ 

Finally, the pullback functor for the blow up~$\beta \colon \bS \to \hhrS_n$ is fully faithful, hence the resulting GM data on $\hhrS_n$ 
is $\rG_n$-equivariant (again, it is not equivariant as a family {of linearized GM data; see Remark~\ref{remark:linear-lift}}).\
Consequently, we obtain a family of smooth normalized GM data on the quotient stack $\hhrS_n/\rG_n \cong \hrS_n/\PGL(V_6)$, 
which gives the desired map to $\hrS_n/\PGL(V_6) \to \ucM_n$.

Let us construct the morphism in the opposite direction.\ 
Recall that $\ucM_n$ is a smooth Deligne--Mumford stack by Proposition~\ref{proposition:mgm-dm}.\ Let 
\begin{equation*}
S \lra \ucM_n
\end{equation*}
be an \'etale covering by a smooth scheme $S$, let $(\ns,\cW,\cV_6,\cV_5,\mu,\bq)$ be the corresponding family of smooth normalized GM data, and let $\widetilde{S}$ be the $\PGL(V_6)$-torsor associated with the rank-6 bundle $\cV_6$.\
The pullback of $\cV_6$ to $\widetilde{S}$ is trivial up to a twist.\
Replacing the bundles~$\cV_6$, $\cV_5$, and $\cW$ by appropriate twists, we can assume that 
$\cV_6 $ is the trivial bundle $V_6 \otimes \cO_{\widetilde{S}}$.

Let~$(\widetilde{S},\cV_6,\cV_5,\cA)$ be the family of Lagrangian data obtained from $({\widetilde{S}},\cW,\cV_6,\cV_5,\mu,\bq)$ 
by the construction of Proposition~\ref{proposition-gm-to-lag}.\
Since $\cV_6$ is trivial, we obtain a morphism 
\begin{equation*}
\widetilde{S} \lra \LGr(\bw3V_6) \times \P(V_6^\vee)
\end{equation*}
such that $\cA$ and $\cV_5$ are the pullbacks of the tautological bundles.\ 
Since $(\widetilde{S},\cV_6,\cV_5,\cA)$ has rank~$n+5$ and avoids decomposable vectors, 
this morphism factors through $\brS_n$.\ 
Let us show that it also factors through the stack $\hrS_n \to \brS_n$.

If $n = 5$, the stack $\hrS_n$ is the root stack of the section $\det(\varphi)$ of the line bundle~$\cL_5$,
so, by~\cite[Section~B.1]{agv}, it is enough to check that the pullback of the ideal generated by~$\det( {\varphi})$ is a square.\ 
Since this ideal defines the Lagrangian-special locus for the family~$(\widetilde{S},\cV_6,\cV_5,\cA)$,
it is, by Proposition~\ref{proposition-gm-to-lag},  the square of the ideal defining the GM-special locus on~$\widetilde{S}$.\
The universal property of the root stack gives the required factorization.

If $n < 5$, we apply Proposition~\ref{prop:factorization-through-hats}.\
Its assumptions are satisfied because $\widetilde{S}$ is smooth and {the locus $B(\bm_n)$
associated with} the map $\bm_n$ is equal to $\rS_{n-1}$ by Lemma~\ref{lemma:ideal-mn},
so, by Proposition~\ref{proposition-gm-to-lag}, 
its preimage in~$\widetilde{S}$ is set-theoretically equal to the GM special locus in $\widetilde{S}$, 
which by Lemma~\ref{eq:special-gerbe} has codimension at least 2 since $n < 5$.

Therefore, we obtain a morphism $\widetilde{S} \to \hrS_n$.\
Passing to   quotients by $\PGL(V_6)$, we obtain a morphism
\begin{equation*}
S = \widetilde{S}/\PGL(V_6) \lra \hrS_n/\PGL(V_6).
\end{equation*}
If we replace the \'etale covering $S \to \ucM_n$ by another \'etale covering $S' \to S \to \ucM_n$,
it is easy to see that the morphisms $S \to \hrS_n/\PGL(V_6)$ and $S' \to \hrS_n/\PGL(V_6)$ are compatible.\
Therefore, we obtain a morphism
\begin{equation*}
\ucM_n \lra \hrS_n/\PGL(V_6)
\end{equation*}
which is  inverse to the one constructed before.

 In view of Proposition~\ref{proposition:mgm-dm}, the only thing that remains to be proved is
 the separatedness of the stack $\ucM_n$.\
This follows from the fact that the scheme $\brS_n$ provides a covering of the stack $\brS_n/\PGL(V_6)$ in the smooth topology.\
The morphism $\hrS_n \to \brS_n$ induced by the morphism 
\begin{equation}
\label{eq:hsn-bsn-stacks}
\hrS_n/\PGL(V_6) \lra \brS_n/\PGL(V_6)
\end{equation} 
on this covering is proper by Corollary~\ref{corollary:hs-s-proper}
hence,  {by~\cite[\href{https://stacks.math.columbia.edu/tag/06TZ}{Lemma~06TZ}]{SP}} so is the morphism~\eqref{eq:hsn-bsn-stacks}.\
Consequently, the separatedness of $\brS_n/\PGL(V_6)$ (proved in Proposition~\ref{proposition:cmlag-quotient}) 
implies the separatedness of~$\ucM_n \cong \hrS_n/\PGL(V_6)$.
\end{proof}

One immediate consequence of Theorem~\ref{theorem:gm-global-quotient} is the following.

\begin{coro}
\label{corollary:gm-ord}
For $n \in \{3,4,5\}$, the stack $\ucM_{n, {\rm ord}}$ of ordinary smooth GM varieties of dimension~$n$ 
is isomorphic to the quotient stack $\rS_n/\PGL(V_6)$.\ 
Similarly, the stack ${\ucMtwo}$ of ordinary strongly smooth GM surfaces is isomorphic to the quotient stack $\rS_2/\PGL(V_6)$.\
 {These stacks are smooth separated Deligne--Mumford stacks of finite presentation over $\Q$.}
\end{coro}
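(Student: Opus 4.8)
The plan is to obtain Corollary~\ref{corollary:gm-ord} as a direct consequence of Theorem~\ref{theorem:gm-global-quotient}, Corollary~\ref{corollary:stack-gm-data}, and Lemma~\ref{eq:special-gerbe}, together with the already-established descriptions of $\brS_n$, $\rS_n$ and $\rS_{n-1}$ as open/closed strata. First I would recall from Theorem~\ref{theorem:gm-global-quotient} the isomorphism $\ucM_n \cong \hrS_n/\PGL(V_6)$ and from Lemma~\ref{eq:special-gerbe} that $\ucM_{n,\mathrm{spe}}\subset\ucM_n$ is the closed substack whose $S$-points are those GM data families with $S_\gmspe = S$, with open complement $\ucM_{n,\mathrm{ord}}$. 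The point is to match this decomposition, under the isomorphism of Theorem~\ref{theorem:gm-global-quotient}, with the decomposition of $\brS_n$ into the closed stratum $\rS_{n-1}$ (where $\dim(A\cap\bw3\cV_5)=6-n$) and its open complement $\rS_n$ (where $\dim(A\cap\bw3\cV_5)=5-n$).

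The key observation is that the root stack morphism $\hrS_n\to\brS_n$ is an isomorphism over the open locus $\rS_n$, since the stacky structure is concentrated over $\rS_{n-1}$ (for $n=5$ it is the root stack along the hypersurface $\rS_4$, for $n\le 4$ the stacky locus $\rS_{n-1}$ has codimension $6-n\ge 2$; in all cases the root stack restricts to an isomorphism away from $\rS_{n-1}$). Hence $\hrS_n\times_{\brS_n}\rS_n\cong\rS_n$, and this isomorphism is $\PGL(V_6)$-equivariant by Lemma~\ref{lemma:hrsn-smooth}. So I would argue: the preimage of the open substack $\ucM_{n,\mathrm{ord}}\subset\ucM_n$ under the isomorphism of Theorem~\ref{theorem:gm-global-quotient} is an open substack of $\hrS_n/\PGL(V_6)$; I must identify it with $\rS_n/\PGL(V_6)$. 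For this, trace through the construction of the morphism $\hhrS_n/\rG_n\to\ucM_n$ in the proof of Theorem~\ref{theorem:gm-global-quotient}: the GM data family produced there has GM-special locus equal to (the descent of) $E$, and the image of $E$ in $\brS_n$ is $\rS_{n-1}$ by Lemma~\ref{lemma:ideal-mn}; thus a point of $\hrS_n$ lies over $\rS_n$ iff the corresponding GM variety is ordinary. Conversely, in the construction of the inverse morphism $\ucM_n\to\hrS_n/\PGL(V_6)$, Proposition~\ref{proposition-gm-to-lag} shows the image point lies in $\rS_n$ precisely when $S_\gmspe=\varnothing$. This yields the desired identification $\ucM_{n,\mathrm{ord}}\cong\rS_n/\PGL(V_6)$.

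For the surface case $n=2$, I would invoke Remark~\ref{remark:ordinary-surfaces-equivalence} (the equivalence $\ucMtwo(S)\isomlra\cmlag_7(S)$) together with Proposition~\ref{proposition:cmlag-quotient}, which gives $\cmlag_7\simeq\brS_2/\PGL(V_6)$; since $\brS_2=\rS_2$ is smooth with no closed stratum ($\rS_1=\varnothing$), we get $\ucMtwo\cong\rS_2/\PGL(V_6)$ directly, with no root stack needed. The final sentence of the corollary — that these quotient stacks are smooth separated Deligne--Mumford stacks of finite presentation over $\Q$ — follows because $\rS_n$ is a smooth quasiprojective scheme (Lemma~\ref{lemma:sn-bsn}) whose points are $\PGL(V_6)$-stable (Corollary~\ref{corollary:stability}, applied to the open subscheme $\rS_n\subset\brS_n$), exactly as in the proof of Proposition~\ref{proposition:cmlag-quotient}; alternatively, these properties are inherited as an open substack of $\ucM_n$, using Lemma~\ref{eq:special-gerbe}.

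The main obstacle I anticipate is the bookkeeping needed to verify that the isomorphism of Theorem~\ref{theorem:gm-global-quotient} genuinely carries the open substack $\ucM_{n,\mathrm{ord}}$ onto $\rS_n/\PGL(V_6)$ rather than merely onto \emph{some} open substack — i.e.\ matching the GM-special locus on the $\ucM_n$ side (Definition~\ref{definition:gm-special-locus}) with the stratum $\rS_{n-1}$ on the $\brS_n$ side. This is not deep, but it requires carefully reading off, from both directions of the proof of Theorem~\ref{theorem:gm-global-quotient}, that the scheme-theoretic GM-special locus of the universal GM data family over $\hhrS_n$ is the descent of the Cartier divisor $E$, whose image in $\brS_n$ is $\rS_{n-1}$ (Lemma~\ref{lemma:ideal-mn}), and correspondingly that the section $S\to\hrS_n/\PGL(V_6)$ attached to an ordinary family lands in $\rS_n/\PGL(V_6)$ by Proposition~\ref{proposition-gm-to-lag}. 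Everything else is a formal combination of cited statements.
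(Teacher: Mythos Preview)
Your proposal is correct and follows essentially the same approach as the paper, which proves the first part by a one-line appeal to Theorem~\ref{theorem:gm-global-quotient} and the second part by combining Remark~\ref{remark:ordinary-surfaces-equivalence} with Proposition~\ref{proposition:cmlag-quotient} and the observation $\rS_1=\varnothing$, hence $\brS_2=\rS_2$. You have simply spelled out the bookkeeping (matching the GM-special locus with $\rS_{n-1}$ via Lemma~\ref{lemma:ideal-mn} and Proposition~\ref{proposition-gm-to-lag}, and the fact that $\hrS_n\to\brS_n$ is an isomorphism over $\rS_n$) that the paper leaves implicit in the phrase ``follows from Theorem~\ref{theorem:gm-global-quotient}.''
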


\begin{proof}
The first part follows  from Theorem~\ref{theorem:gm-global-quotient}.\ The second part follows from  Remark~\ref{remark:ordinary-surfaces-equivalence}
and Proposition~\ref{proposition:cmlag-quotient}, since $\rS_1 = \varnothing$, hence $\brS_2 = \rS_2$.
\end{proof}

Theorem~\ref{theorem:gm-global-quotient} does not describe the stack $\ucM_6$ of smooth GM varieties of dimension~6.\
However, since every such variety is special, we have the following result.

\begin{coro}
\label{corollary:gm-6}
We have an equality of stacks $\ucM_6 = \ucM_{6, {\rm spe}}$.\  Therefore,~$\ucM_6$ is a $\bmu_2$-gerbe over $\ucM_{5, {\rm ord}} \cong \rS_5/\PGL(V_6)$.
\end{coro}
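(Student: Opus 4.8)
The plan is to obtain the statement by assembling results already proved: the equality $\ucM_6 = \ucM_{6,{\rm spe}}$ will follow from the nonexistence of ordinary GM sixfolds, and the ``therefore'' clause is then an immediate application of Lemma~\ref{eq:special-gerbe} and Corollary~\ref{corollary:gm-ord}.

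To prove $\ucM_6 = \ucM_{6,{\rm spe}}$, I would first record that every smooth GM variety of dimension $6$ is special: its associated $W$ has dimension $n+5 = 11$, whereas $\bw2V_5$ has dimension $10$, so the map $\mu\colon W \to \bw2V_5$ cannot be injective, and a smooth GM variety is ordinary exactly when $\mu$ is injective (see~\cite[Section~2.5]{DK}). Consequently $\ucM_{6,{\rm ord}}$ has no geometric points; being an open substack of the Deligne--Mumford stack $\ucM_6$ (Proposition~\ref{proposition:mgm-dm}), it is therefore the empty stack, and since by Lemma~\ref{lemma:gm-ord-spe-substacks} it is the open complement of the closed substack $\ucM_{6,{\rm spe}}$, we get $\ucM_{6,{\rm spe}} = \ucM_6$. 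Equivalently, one can argue with families directly: for a family $(\ns,\cW,\cV_6,\cV_5,\mu,\bq)$ of smooth normalized GM data of dimension $6$, each geometric fiber is special, so by the computation in the proof of Lemma~\ref{eq:special-gerbe} the map $\mu_s$ has rank $n+4 = 10 = \rank\bigl(\bw2\cV_5 \otimes (\cV_6/\cV_5)\bigr)$ at every geometric point; thus $\mu$ is fiberwise surjective, hence surjective by Lemma~\ref{lemma-epi-points}, so $\mu^T$ is a fiberwise monomorphism by Lemma~\ref{lemma-fmono-epi} and the cokernel sheaf $\cC$ of Definition~\ref{definition:gm-special-locus} is a line bundle on all of $S$ by Lemma~\ref{lemma-kernel-cokernel-lf}; hence $\cJ_\gmspe = 0$ and $S_\gmspe = S$ scheme-theoretically, which places every object of $\ucM_6(S)$ in the full substack $\ucM_{6,{\rm spe}}(S)$.

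For the second assertion, the equality just established allows us to apply Lemma~\ref{eq:special-gerbe} with $n = 6$ (permitted since $6 \ge 4$), which shows that $\ucM_6 = \ucM_{6,{\rm spe}}$ is a $\bmu_2$-gerbe over $\ucM_{5,{\rm ord}}$; and Corollary~\ref{corollary:gm-ord} identifies $\ucM_{5,{\rm ord}}$ with $\rS_5/\PGL(V_6)$. I do not anticipate any real difficulty; the only delicate point is that $\ucM_6 = \ucM_{6,{\rm spe}}$ must be an equality of stacks and not merely of underlying topological spaces, which is why the argument is phrased via emptiness of the \emph{stack} $\ucM_{6,{\rm ord}}$ (an algebraic stack with no geometric points is empty) or, in the alternative, via the scheme-theoretic identity $S_\gmspe = S$.
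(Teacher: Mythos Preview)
Your proof is correct and follows the same approach as the paper, which simply says the first assertion is obvious and the second follows from Lemma~\ref{eq:special-gerbe} and Corollary~\ref{corollary:gm-ord}. You have merely spelled out why the equality $\ucM_6 = \ucM_{6,{\rm spe}}$ holds (the dimension count forcing $\mu$ to be noninjective), which the paper leaves implicit.
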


\begin{proof}
{The first assertion is obvious and the second follows from Lemma~\ref{eq:special-gerbe} and Corollary~\ref{corollary:gm-ord}}.
\end{proof}

\subsection{Coarse moduli spaces}
\label{subsection:coarse-moduli}

We use  the global quotient descriptions from previous sections to describe the coarse moduli spaces of GM varieties.\

 {Recall that for any $m \in \Z$, the line bundle $\cO(2m,6)$ on $\LGr(\bw3V_6) \times \P(V_6^\vee)$} admits a~$\PGL(V_6)$-linearization.\ 
This line bundle also admits a $\GL(V_6)$-linearization and, since for~$n \in \{3,4,5\}$,
the subgroup $\bmu_{3(5-n)} \subset \GL(V_6)$ acts trivially on it, 
this linearization induces a $\rG_n$-linearization (where $\rG_n = \GL(V_6)/\bmu_{3(5-n)}$ was defined in \eqref{eq:rg-n}).

\begin{coro}
\label{corollary:stability-gn}
 For each $n \in \{2,3,4,5\}$, the scheme $\hhrS_n$ consists of $\rG_n$-stable points for the~$\cO(2m,6)$-lineari\-zation.
\end{coro}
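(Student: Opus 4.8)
The plan is to deduce Corollary~\ref{corollary:stability-gn} from Corollary~\ref{corollary:stability} by tracing through the construction of $\hhrS_n$ and using general GIT machinery on affine morphisms, together with the observation that the relevant bundle $\cO(2m,6)$ is pulled back from $\brS_n$. First I would recall the chain of morphisms built in the proof of Theorem~\ref{theorem:gm-global-quotient}: $\hhrS_n \xrightarrow{\ \hhf\ } \bbrS_n \xrightarrow{\ \theta\ } \brS_n$, where $\theta$ is the $\Gm$-torsor associated with $\cL_n$ and $\hhf$ is the double cover associated with $\theta^*\cR_n$ and $\theta^*(\bm_n)$. Both $\theta$ and $\hhf$ are affine morphisms (a $\Gm$-torsor is affine, and a double cover is finite hence affine), so the composite $\hhrS_n \to \brS_n$ is affine, and it is $\GL(V_6)$-equivariant (hence $\rG_n$-equivariant once we note that $\bmu_{3(5-n)}$ acts trivially on the base because the sheaves $\cR_n$ and $\cL_n$ have $\Gm$-weights divisible by $3(5-n)$).

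Next I would invoke the standard fact that stability is preserved under pullback along an affine equivariant morphism when the linearizing bundle is pulled back from the base: if $p \colon X \to Y$ is affine and $G$-equivariant, $\cM$ is a $G$-linearized ample bundle on $Y$, and $y \in Y$ is a stable point, then every point of $p^{-1}(y)$ is stable for $p^*\cM$ (this is~\cite[Proposition~2.18]{git}, the same reference used for Corollary~\ref{corollary:stability}, applied here to the affine morphism $\hhrS_n \to \brS_n$ rather than to $\brS_n \to \LGradv(\bw3V_6)$). Since $\cO(2m,6)$ on $\hhrS_n$ is by definition the pullback of $\cO(2m,6)$ on $\brS_n$, and since by Corollary~\ref{corollary:stability} the points of $\brS_n$ are $\PGL(V_6)$-stable — equivalently $\GL(V_6)$-stable, equivalently $\rG_n$-stable, as the kernel $\Gm$ (resp.\ $\bmu_{3(5-n)}$) of these surjections acts trivially — it follows that every point of $\hhrS_n$ is $\rG_n$-stable for the $\cO(2m,6)$-linearization. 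One should also remark that $\hhrS_n$ is quasiprojective: it is affine over the quasiprojective scheme $\brS_n$, so the GIT notion of stability makes sense and the semistable locus is quasiprojective.

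The main obstacle I anticipate is making precise the passage between stability for the three groups $\PGL(V_6)$, $\GL(V_6)$, and $\rG_n=\GL(V_6)/\bmu_{3(5-n)}$ acting on the same space with the same line bundle: one must check that the numerical criterion is insensitive to quotienting by a central subgroup acting trivially on both the scheme and the fibers of $\cM$. This is where the weight computation for $\cR_n$ and $\cL_n$ (already recorded in the proof of Lemma~\ref{lemma:hrsn-smooth}) is needed, to guarantee that $\bmu_{3(5-n)}$ does indeed act trivially on $\cO(2m,6)$ pulled back to $\hhrS_n$, so that the $\GL(V_6)$-linearization genuinely descends to a $\rG_n$-linearization. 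Granting that, the finiteness of stabilizers (which is part of the assertion that these are stable points) follows automatically from the finiteness of $\PGL(V_6)$-stabilizers on $\brS_n$ (Lemma~\ref{ssig}) together with the finiteness of the fibers of the affine morphism $\hhrS_n \to \brS_n$, since $\hhf$ is finite and $\theta$ has one-dimensional fibers cut out by the $\Gm$-action.
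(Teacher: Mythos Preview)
Your approach is correct and essentially the same as the paper's; the paper simply applies \cite[Proposition~2.18]{git} directly to the composite $\hhrS_n \to \LGradv(\bw3V_6)$ (using Lemma~\ref{ssig}), whereas you factor through $\brS_n$ and invoke Corollary~\ref{corollary:stability} as an intermediate step. Since Corollary~\ref{corollary:stability} itself is obtained by applying \cite[Proposition~2.18]{git} to $\brS_n \to \LGradv(\bw3V_6)$, the two arguments are the same up to regrouping.

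Two small points of phrasing. First, you write that $\brS_n$ consists of points that are ``equivalently $\rG_n$-stable'': strictly speaking this is false, since the center $\Gm \subset \rG_n$ acts trivially on $\brS_n$, so every $\rG_n$-stabilizer there is positive-dimensional. What you mean (and what matters) is that points of $\brS_n$ are $\PGL(V_6)$-stable and that this, combined with the free action of the center on $\hhrS_n$, yields finite $\rG_n$-stabilizers upstairs; your final paragraph makes exactly this point. Second, you speak of ``the finiteness of the fibers of the affine morphism $\hhrS_n \to \brS_n$'', but these fibers are one-dimensional (a $\Gm$-torsor followed by a degree-$2$ cover). Your parenthetical clarification shows you understand this; the finiteness statement you actually need is that the fibers of $\hhrS_n \to \brS_n$ modulo the free $\Gm$-action are finite, which is what gives finite $\rG_n$-stabilizers.
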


\begin{proof}
 {As in Corollary~\ref{corollary:stability}, this follows from Lemma~\ref{ssig} and~\cite[Proposition~2.18]{git} applied to 
 the morphism $\hhrS_n \to \LGradv(\bw3V_6)$.}
\end{proof}

 {In the next theorem, we prove that the stacks $\ucM_n$, $\ucM_{n, {\rm ord}}$, $\ucM_{n, {\rm spe}}$, and $\cmlag_{n+5}$ all admit coarse moduli spaces, 
which we denote by $\bcM_n$, $\bcM_{n, {\rm ord}}$, $\bcM_{n, {\rm spe}}$, and $\bcmlag_{n+5}$, respectively, and we describe them as GIT quotients.}

\begin{theo}
\label{theorem:gm-coarse}
{\rm(a)}  
For  $n\in\{3,4,5,6\}$, the respective coarse moduli spaces $\bcM_n$ and $\bcmlag_{n+5}$ of the stacks $\ucM_n$ and~$\cmlag_{n+5}$ are 
both isomorphic to the GIT quotient 
\begin{equation*}
\bcM_n \cong 
\bcmlag_{n+5} \cong
\brS_n\gquot\PGL(V_6)
\end{equation*}
taken with respect to the natural linearization of the line bundle $\cO(2m,6)$ for sufficiently large $m$.

{\rm(b)}  For  $n\in\{3,4,5 \}$, the respective coarse moduli spaces $\bcM_{n, {\rm ord}}$  and ${\bcMtwo}$ 
of the stacks $\ucM_{n, {\rm ord}}$ and ${\ucMtwo}$ of ordinary GM varieties 
are isomorphic to the GIT quotients 
\begin{equation*}
\bcM_{n, {\rm ord}} \cong \rS_n\gquot\PGL(V_6),
\qquad 
{\bcMtwo} \cong \rS_2\gquot\PGL(V_6).
\end{equation*}

{\rm(c)}  For $n \in \{3,4,5,6\}$, the coarse moduli space $\bcM_{n, {\rm spe}}$ of the stack $\ucM_{n, {\rm spe}}$ of special GM varieties 
is isomorphic to the GIT quotient 
\begin{equation*}
\bcM_{n, {\rm spe}} \cong \rS_{n-1}\gquot\PGL(V_6).
\end{equation*}
\end{theo}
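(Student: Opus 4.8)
The plan is to combine the global quotient presentations already obtained (Theorem~\ref{theorem:gm-global-quotient}, Proposition~\ref{proposition:cmlag-quotient}, Corollary~\ref{corollary:gm-ord}, Corollary~\ref{corollary:gm-6}, Lemma~\ref{eq:special-gerbe}) with two general principles. \emph{First}, if $[X/G]$ is the quotient stack of a quasiprojective scheme $X$ by a reductive group $G$ equipped with a $G$-linearization all of whose points are stable, then the GIT quotient $X\gquot G$ is a geometric quotient and is a coarse moduli space for $[X/G]$ (\cite{git}); by uniqueness of coarse moduli spaces this computes the coarse space whenever $[X/G]$ is known to be a separated Deligne--Mumford stack of finite type. \emph{Second}, coarse moduli space formation is unaffected by passing to a gerbe or to a (generalized) root stack: if $\cX\to Y$ realizes $Y$ as the coarse space of $\cX$ and is equivariant for compatible $G$-actions, then $[\cX/G]$ and $[Y/G]$ have the same coarse space. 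With these at hand, parts (b) and (c) are immediate. By Corollary~\ref{corollary:gm-ord} one has $\ucM_{n,{\rm ord}}\cong\rS_n/\PGL(V_6)$ and $\ucMtwo\cong\rS_2/\PGL(V_6)$, separated Deligne--Mumford stacks of finite type, and by Corollary~\ref{corollary:stability} the schemes $\rS_n$ consist of $\PGL(V_6)$-stable points for the $\cO(2m,6)$-linearization, $m\gg0$; the first principle gives (b). For (c), Lemma~\ref{eq:special-gerbe} presents $\ucM_{n,{\rm spe}}$ as a $\bmu_2$-gerbe over $\ucM_{n-1,{\rm ord}}$ (or over $\ucMtwo$ when $n=3$), so the second principle together with (b) identifies $\bcM_{n,{\rm spe}}$ with $\rS_{n-1}\gquot\PGL(V_6)$.

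For part (a) with $n\in\{3,4,5\}$ I would argue on both sides. On the Lagrangian side, Proposition~\ref{proposition:cmlag-quotient} gives $\cmlag_{n+5}\cong\brS_n/\PGL(V_6)$, a separated Deligne--Mumford stack of finite type, and $\brS_n$ consists of stable points (Corollary~\ref{corollary:stability}), so $\bcmlag_{n+5}\cong\brS_n\gquot\PGL(V_6)$. On the GM side, Theorem~\ref{theorem:gm-global-quotient} gives $\ucM_n\cong\hhrS_n/\rG_n$ with $\ucM_n$ a separated Deligne--Mumford stack of finite type, and $\hhrS_n$ consists of $\rG_n$-stable points (Corollary~\ref{corollary:stability-gn}), so $\bcM_n\cong\hhrS_n\gquot\rG_n$. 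It then remains to identify these two GIT quotients. The morphism $\hhrS_n\to\brS_n$ is equivariant for the surjection $\rG_n\thra\PGL(V_6)$ and descends to a morphism $\hhrS_n\gquot\rG_n\to\brS_n\gquot\PGL(V_6)$; equivalently, this is the map on coarse spaces induced by the stack morphism $\hrS_n/\PGL(V_6)\to\brS_n/\PGL(V_6)$. This morphism is proper (Corollary~\ref{corollary:hs-s-proper}, properness descending to coarse spaces), quasi-finite (the root stack $\hrS_n\to\brS_n$ has finite geometric fibers), and restricts to an isomorphism over the dense open $\rS_n\gquot\PGL(V_6)$ (over $\brS_n\setminus\rS_{n-1}$ the sheaf $\cR_n$ is a line bundle, $\hhrS_n$ is the associated $\Gm$-torsor, and the central $\Gm\subset\rG_n$ acts simply transitively on its fibers). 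Since $\brS_n$ is normal (Lemma~\ref{lemma:sn-bsn}) and GIT quotients by reductive groups preserve normality, Zariski's main theorem forces it to be an isomorphism, giving $\bcM_n\cong\bcmlag_{n+5}\cong\brS_n\gquot\PGL(V_6)$. For $n=6$, every smooth GM sixfold is special, so $\ucM_6=\ucM_{6,{\rm spe}}$ (Corollary~\ref{corollary:gm-6}) and part (c) gives $\bcM_6\cong\rS_5\gquot\PGL(V_6)=\brS_6\gquot\PGL(V_6)$ since $\brS_6=\rS_5$; the statement for $\bcmlag_{11}$ follows from the argument of Proposition~\ref{proposition:cmlag-quotient}, which applies verbatim for $n=6$.

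The routine inputs — the GIT fact that a stable locus is a geometric quotient realizing the coarse space, invariance of coarse spaces under gerbes, and the $\bmu_2$-gerbe bookkeeping — are standard, so the one point that genuinely needs care is the identification $\hhrS_n\gquot\rG_n\cong\brS_n\gquot\PGL(V_6)$ in part (a): namely that the (generalized) root stack construction producing $\hrS_n$ from $\brS_n$ leaves the coarse moduli space unchanged. This is precisely where normality of $\brS_n$ (Lemma~\ref{lemma:sn-bsn}) and properness of $\hrS_n\to\brS_n$ (Corollary~\ref{corollary:hs-s-proper}) enter, through Zariski's main theorem. One must also check that the stack morphism $\ucM_n\to\cmlag_{n+5}$ implicit in this identification coincides with $\fa$ of Proposition~\ref{proposition-gm-to-lag}, which is how the two coarse-space morphisms are matched; this is built into the construction in the proof of Theorem~\ref{theorem:gm-global-quotient}. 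Everything else is a direct application of the two general principles to the quotient presentations already available.
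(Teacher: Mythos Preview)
Your argument is correct and, for parts~(b), (c), and the case $n=6$ of~(a), follows the paper's proof essentially verbatim. The one genuine difference is in part~(a) for $n\in\{3,4,5\}$, specifically in the identification $\hhrS_n\gquot\rG_n\cong\brS_n\gquot\PGL(V_6)$.

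The paper handles this by a direct algebraic computation: since the central $\Gm/\bmu_{3(5-n)}\subset\rG_n$ acts on the graded algebra $\cO_{\brS_n}[\cL_n^{\pm1},\cR_n]$ via the grading, its invariants are exactly the degree-zero part $\cO_{\brS_n}$, so $\hhrS_n\gquot(\Gm/\bmu_{3(5-n)})\cong\brS_n$; taking the further quotient by $\PGL(V_6)\cong\rG_n/(\Gm/\bmu_{3(5-n)})$ finishes. This is a two-line computation requiring no geometric input beyond the definition of $\hhrS_n$. Your route --- properness (via Corollary~\ref{corollary:hs-s-proper}), quasi-finiteness, birationality over $\rS_n$, normality of the target (via Lemma~\ref{lemma:sn-bsn}), and Zariski's main theorem --- is valid but more circuitous; it essentially re-proves that the coarse moduli space of a generalized root stack coincides with the base, a fact the paper's computation makes transparent. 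The paper's approach is shorter and needs less machinery; yours has the virtue of isolating a general principle (root stacks do not alter coarse spaces) that could be reused elsewhere.
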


\begin{proof}
We first prove part (a) for $n \in \{3,4,5\}$.\ Since, by Corollary~\ref{corollary:stability}, the scheme $\hhrS_n$ consists of stable points for the $\rG_n$-linearization of the bundle $\cO(2m,6)$, the morphism 
\begin{equation*}
\hhrS_n  {/ \rG_n} \lra \hhrS_n \gquot \rG_n
\end{equation*}
to the corresponding GIT quotient 
is a  {tame moduli space (\cite[Theorem~13.6]{Al}),
hence is a coarse moduli space (\cite[Remark~7.3]{Al}).\
It remains to recall that $\ucM_n \cong \hhrS_n / \rG_n$ by Theorem~\ref{theorem:gm-global-quotient}}.\ 
Similarly, the GIT quotient $\brS_n \gquot \PGL(V_6)$ is the coarse moduli space for the stack $\brS_n/\PGL(V_6)$,
which by Proposition~\ref{proposition:cmlag-quotient} is isomorphic to $\cmlag_{n+5}$.
So, it remains to identify the GIT quotients $\hhrS_n \gquot \rG_n$ and $\brS_n \gquot \PGL(V_6)$.

For this note that, since the group $\Gm/\bmu_{3(5-n)}$ acts on the algebra $\cO_{\brS_n}[\cL_n^{\pm1},\cR_n]$ via its grading, we have
\begin{equation*}
\hhrS_n \gquot (\Gm/\bmu_{3(5-n)}) = 
\Spec_{\brS_n}(\cO_{\brS_n}[\cL_n^{\pm1},\cR_n]) \gquot (\Gm/\bmu_{3(5-n)}) \cong 
\Spec_{\brS_n}(\cO_{\brS_n}) \cong 
\brS_n.
\end{equation*}
Moreover, we have $\PGL(V_6) \cong \rG_n / (\Gm/\bmu_{3(5-n)})$.\ Therefore,
\begin{equation*}
\hhrS_n \gquot \rG_n \cong
\Big(\, \hhrS_n \gquot (\Gm/\bmu_{3(5-n)}) \Big) \gquot \Big( \rG_n / (\Gm/\bmu_{3(5-n)}) \Big) \cong 
\brS_n \gquot \PGL(V_6).
\end{equation*}
This proves  {part (a)} for $n \in \{3,4,5\}$.

 {The proof of part~(b) is completely analogous,
using Corollary~\ref{corollary:gm-ord}  instead of Theorem~\ref{theorem:gm-global-quotient}.}

% We now prove part (b).\ Since, by Corollary~\ref{corollary:stability}, the scheme $\rS_n$ consists of stable points for the $\PGL(V_6)$-linearization of the bundle $\cO(2m,6)$, the morphism 
% \begin{equation*}
% \rS_n \to \rS_n \gquot \PGL(V_6)
% \end{equation*}
% to the corresponding GIT quotient is a geometric quotient by~\cite[Theorem~1.10]{git}.\ Consequently, $\rS_n \gquot \PGL(V_6)$ is a coarse moduli space for the stack $\rS_n / \PGL(V_6)$,
% which by Corollary~\ref{corollary:gm-ord} is isomorphic to $\ucM_{n, {\rm ord}}$ if $n \in \{3,4,5\}$, and to ${\ucMtwo}$ for $n = 2$.

Let us prove part (c).\ 
By Lemma~\ref{eq:special-gerbe} and Remark~\ref{remark:special-rigidification}, 
the automorphism group scheme of each object of the stack $\ucM_{n,{\rm spe}}$ contains the constant group scheme $\bmu_2$ 
and the morphisms $\ucM_{n, {\rm spe}} \to \ucM_{n-1, {\rm ord}}$ for $n \ge 4$,
and~$\ucM_{3, {\rm spe}} \to \ucMtwo$ for $n=3$, are the $\bmu_2$-rigidifications.\
Therefore, by~\cite[Theorem~C.1.1(4)]{agv}, they have the same coarse moduli space and we conclude by part~(b).

Finally,  part (a) for $n = 6$ follows from Corollary~\ref{corollary:gm-6} and part (c).
\end{proof}

The coarse moduli {space} for smooth GM sixfolds (and for smooth ordinary GM fivefolds), 
which according to the above results is the GIT quotient $\rS_5/\PGL(V_6)$,
can also be constructed directly by following Mumford's proof for hypersurfaces in the projective space.\ Moreover, this approach gives the additional information that this moduli space is affine.

\begin{prop}\label{lemma:gm5-affine}
The coarse moduli space for smooth ordinary GM fivefolds and for smooth special GM sixfolds is affine.
\end{prop}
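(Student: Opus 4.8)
The plan is to follow Mumford's strategy for smooth hypersurfaces almost verbatim: realize the coarse moduli space as a GIT quotient of an \emph{affine} scheme by the reductive group $\PGL(V_6)$, and use that such quotients are affine. By Theorem~\ref{theorem:gm-coarse}, for $m\gg0$ the coarse moduli spaces of smooth ordinary GM fivefolds and of smooth special GM sixfolds are both isomorphic to the GIT quotient $\rS_5\gquot\PGL(V_6)$ for the linearization of $\cO(2m,6)$; by Corollary~\ref{corollary:stability} every point of $\rS_5$ is stable, so this GIT quotient is also a geometric quotient. Since $\PGL(V_6)$ is reductive, the GIT quotient of an affine scheme by it is affine (it is the spectrum of the finitely generated invariant algebra). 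Thus the entire statement reduces to the claim that the scheme $\rS_5$ is affine, and this reduction is what I would set up first.

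To prove that $\rS_5$ is affine I would realize it as the complement of the support of an ample effective divisor inside the projective variety $X:=\LGr(\bw3V_6)\times\P(V_6^\vee)$. By definition $\rS_5\subset X$ is the locus of pairs $(A,V_5)$ with $A\notin\Sigma$ (no decomposable vectors) and $\dim(A\cap\bw3V_5)=0$. The first condition excludes the divisor $\Sigma\times\P(V_6^\vee)$, where $\Sigma$ is the hypersurface of Lemma~\ref{ssig}, which lies in a positive multiple of the ample generator $\det(\cA^\vee)$ of $\Pic(\LGr(\bw3V_6))$. For the second condition I would use that the morphism $\varphi\colon\cA\to\bw2\cV_5\otimes(\cV_6/\cV_5)$ of~\eqref{eq:varphi} is a map of vector bundles of the same rank $10$ whose fibrewise kernel at $(A,V_5)$ is $A\cap\bw3V_5$ (because $\lambda_3$ has kernel $\bw3\cV_5$); hence $\dim(A\cap\bw3V_5)=0$ is exactly the nonvanishing of $\det(\varphi)$, a section of the line bundle $\cL_5$ of~\eqref{eq:cl-n}. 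Since $\det(V_6)$ is a trivial line bundle, $\det(\cA^\vee)$ is pulled back from $\LGr(\bw3V_6)$, and $\det(\cV_5^\vee)$ is pulled back from $\P(V_6^\vee)$, the bundle $\cL_5$ is ample on $X$; and $\det(\varphi)\not\equiv0$ since $\rS_5\ne\varnothing$ (or by the dimension count underlying Theorem~\ref{theorem-ogrady-stratification}). Therefore $\rS_5=X\setminus\Supp\bigl((\Sigma\times\P(V_6^\vee))+Z(\det\varphi)\bigr)$ is the complement of the support of an ample effective divisor in a projective variety, hence affine; together with the first paragraph this finishes the proof.

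The only genuine bookkeeping — and the point I would be most careful about, though it is not hard — is verifying that the divisor assembled above has a strictly positive class on each factor of $X$, i.e.\ that it is genuinely ample on the product; this rests on $\Sigma$ being a hypersurface in the Picard-rank-one variety $\LGr(\bw3V_6)$ and on the sixth power of $\det(\cV_5^\vee)$ contributing $\cO_{\P(V_6^\vee)}(6)$ to $\cL_5$. One can bypass the product computation altogether by instead observing that $\rS_5\to\LGradv(\bw3V_6)$ is the complement of the zero locus of $\det(\varphi)$, a section of a line bundle that is relatively ample over the \emph{affine} scheme $\LGradv(\bw3V_6)$ of Lemma~\ref{ssig}; the complement of such a zero locus over an affine base is again affine, which yields affineness of $\rS_5$ in the same way.
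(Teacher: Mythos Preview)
Your proof is correct, but it follows the \emph{alternative} route that the paper itself sketches in the paragraph immediately \emph{after} the proof of Proposition~\ref{lemma:gm5-affine}, rather than the paper's main argument. The paper's primary proof does not invoke Theorem~\ref{theorem:gm-coarse} at all: it observes directly that a smooth ordinary GM fivefold is a smooth quadric section of $\Gr(2,V_5)$, hence is parameterized by the complement of the discriminant hypersurface in $\P(H^0(\Gr(2,V_5),\cO(2)))$, which is affine; it then runs Mumford's classical argument with the group $\SL(V_5)$ (not $\PGL(V_6)$) acting on this affine open, using finiteness of automorphism groups to get stability.

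Your approach has the advantage of fitting seamlessly into the Lagrangian-data framework and of making the affineness of $\rS_5$ explicit as the complement of an ample divisor in $\LGr(\bw3V_6)\times\P(V_6^\vee)$; it does, however, rely on the full strength of Theorem~\ref{theorem:gm-coarse}. The paper's main proof is logically independent of the moduli-stack machinery and more elementary---it is essentially the hypersurface argument from~\cite{git}---and as a bonus it exhibits the moduli space as a dense affine open in the projective GIT quotient $\P(H^0(\Gr(2,V_5),\cO(2)))\gquot\SL(V_5)$, giving a natural compactification. One small remark: your opening sentence says you will ``follow Mumford's strategy for smooth hypersurfaces almost verbatim,'' but what you actually do is the $\rS_5$ argument; the verbatim Mumford strategy is precisely the paper's main proof with $\Gr(2,V_5)$ playing the role of projective space.
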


\begin{proof}
The argument is classical   (\cite[Proposition 4.2]{git}).\  A smooth ordinary GM fivefold  is by definition a smooth hypersurface of degree 2 in $G := \Gr(2,V_5)$.\ Inside  the projective space $\P(H^0(G,\cO_G(2)))$,
the subset of points  corresponding to sections whose zero locus in $G$ is singular is a hypersurface.\ This hypersurface is ample, hence its complement~$\P(H^0(G,\cO_G(2)))^0$ is affine and $\SL(V_5)$-invariant.\  The action of the reductive group~$\SL(V_5)$ on this affine set is linearizable and 
since the automorphism group of any smooth ordinary GM fivefold is finite (\cite[Proposition~3.21(c)]{DK}),  
the stabilizers are finite at points of $\P(H^0(G,\cO_G(2)))^0$, which is therefore contained in the stable locus.
  
The  {coarse} moduli space for smooth ordinary GM fivefolds is therefore a dense affine open subset 
of the projective irreducible 25-dimensional GIT quotient 
\begin{equation*}
\P(H^0(G,\cO_G(2)))\gquot\SL(V_5).
\end{equation*}
This proves the proposition.
\end{proof}

 The affineness properties can be also deduced from Theorem~\ref{theorem:gm-coarse}.\ Indeed, we have
\begin{equation*}
\bcM_{5, {\rm ord}} \cong \bcM_{6, {\rm {spe}}} \cong \rS_{5}\gquot\PGL(V_6)
\end{equation*}
and the {scheme} $\rS_5 = \bigl(\LGr(\bw3V_6) \times \P(V_6^\vee)\bigr) \setminus \bigl(\Sigma \cup \{ \det(\varphi) = 0 \}\bigr)$
is affine since the divisor~\mbox{$\Sigma \cup \{ \det(\varphi) = 0 \}$} in $\LGr(\bw3V_6) \times \P(V_6^\vee)$ is   ample.

\section{Applications}

In this section, we work over $\k = \C$.

\subsection{The period map}
\label{subsection:period-map}

The coarse moduli space~$\EPW$ for double EPW sextics was constructed in~\eqref{eq:m-epw}.\ 
It is an affine integral scheme of dimension $20$.\
The composition
\begin{equation*}
\pi_n \colon \ucM_n \to \bcM_n \isomlra \brS_n \gquot \PGL(V_6) \to \LGradv(\bw3V_6) \gquot \PGL(V_6) = \EPW
\end{equation*}
defines a morphism from the stack $\ucM_n$ of smooth GM varieties, or from its coarse moduli space $\bcM_n$, 
to the coarse moduli space~$\EPW$.

The period map
\begin{equation*}
\wpepw \colon \EPW \lhra \cD
\end{equation*}
for double EPW-sextics was constructed by O'Grady,  with values in the appropriate period  domain $\cD$; 
it is an open embedding  by   Verbitsky's Torelli Theorem (\cite[Theorem~1.3]{og6}).
 
\begin{prop}
\label{pm}
Assume $n \in\{4,6\}$.\ The map 
\begin{equation}
\label{eq:period-gm}
\wpgm := \wpepw \circ \pi_n \colon \ucM_n \lra \cD 
\end{equation}
 is the period map for GM varieties of dimension~$n$.
\end{prop}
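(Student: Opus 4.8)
The plan is to reduce Proposition~\ref{pm} to the analogous statement in~\cite{DK:periods}, where the period map for GM varieties of dimension $n\in\{4,6\}$ was constructed via the EPW-sextic associated with a GM variety, rather than via the moduli-theoretic factorization~\eqref{eq:period-gm}. Concretely, recall from~\cite{DK} that to a smooth GM variety $X$ of dimension $n\ge 3$ one associates its Lagrangian subspace $A\subset\bw3 V_6$ (well defined up to the $\PGL(V_6)$-action), and the period map of~\cite{DK:periods} is by definition the composition $X\mapsto A\mapsto [Y_A^{\mathrm{sm}}]\mapsto \wpepw([Y_A])$ for $n$ even, using the Hodge-theoretic identification of the period point of $X$ with that of the double EPW sextic $\widetilde Y_A$ (this identification is the main result of~\cite{DK:periods}; for $n=6$ it is immediate since a GM sixfold is the double cover of $\Gr(2,V_5)$ and its period is that of the associated EPW sextic, while for $n=4$ it is the content of the "period partner / duality" discussion there).

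First I would unwind the definition of $\pi_n$. By Theorem~\ref{theorem:gm-global-quotient} and Theorem~\ref{theorem:gm-coarse}(a), the morphism $\ucM_n\to\bcM_n\xrightarrow{\ \sim\ }\brS_n\gquot\PGL(V_6)$ is the coarse-space map followed by the GIT identification, and then $\brS_n\gquot\PGL(V_6)\to\LGradv(\bw3V_6)\gquot\PGL(V_6)=\EPW$ is induced by the first projection $\brS_n\to\LGradv(\bw3V_6)$, $(A,V_5)\mapsto A$. Therefore, on the level of points, $\pi_n$ sends (the isomorphism class of) a smooth GM variety $X$ of dimension $n$ to the $\PGL(V_6)$-orbit of the Lagrangian $A$ appearing in the family of Lagrangian data $\fa(X)=(\ns,\cV_6,\cV_5,\cA)$ of Proposition~\ref{proposition-gm-to-lag}. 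The key compatibility to check is that this $A$ is exactly the Lagrangian subspace attached to $X$ in~\cite[Section~3]{DK}: for ordinary $X$ this is~\cite[Theorem~3.6]{DK} (and $\fa$ is by construction the relative version of that theorem), and for special $X$ it follows from the gerbe description of Lemma~\ref{eq:special-gerbe} together with~\cite[Lemma~2.33]{DK}, which says that a special GM variety and the corresponding ordinary GM variety of one dimension less share the same Lagrangian $A$; one checks the construction $\fa$ is compatible with passing to the special locus. Thus $\pi_n$ coincides, as a map of coarse spaces, with $X\mapsto [A]\in\EPW$.

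Next I would invoke~\cite[Theorem~1.3]{og6} (Verbitsky's Torelli theorem) exactly as stated in the excerpt: $\wpepw\colon\EPW\hookrightarrow\cD$ is an open embedding into the relevant period domain $\cD$. Composing with the identification of $\pi_n$ above gives $\wpgm=\wpepw\circ\pi_n\colon\ucM_n\to\cD$, $X\mapsto \wpepw([A])$. It then remains to match this with the period map of~\cite{DK:periods}: by the main comparison theorem of that paper, for $n\in\{4,6\}$ the Hodge structure on the (vanishing) middle cohomology of $X$ is isomorphic to the one on the double EPW sextic $\widetilde Y_A$, compatibly with polarizations, so the period point of $X$ in $\cD$ equals $\wpepw([A])$. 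Hence $\wpgm$ as in~\eqref{eq:period-gm} is the period map for GM varieties of dimension $n$. Since we are over $\k=\C$ throughout this section, all Hodge-theoretic inputs apply directly.

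The main obstacle I anticipate is not analytic but bookkeeping: carefully identifying the Lagrangian $\cA$ produced by the \emph{relative, normalized} construction $\fa$ (which involves the monad~\eqref{equation-ax-monad} and the various line-bundle twists suppressed in Proposition~\ref{proposition-gm-to-lag}) with the \emph{absolute, unnormalized} Lagrangian of~\cite[Theorem~3.6]{DK}, \emph{simultaneously} on the ordinary locus and across the special locus $\ucM_{n,\mathrm{spe}}\subset\ucM_n$. The ordinary case is essentially tautological (that is how $\fa$ was built), so the real work is the special case, where one must trace through Lemma~\ref{eq:special-gerbe}, the decomposition $\cW\cong\cW_0\oplus\cW_1$ of~\cite[Proposition~2.30]{DK}, and~\cite[Lemma~2.33]{DK} to see that the monad~\eqref{equation-ax-monad} for the special data $(\cW,V_6,V_5,\mu,\bq)$ has the same cohomology bundle $\cA$ as for the associated ordinary data $(\cW_0,V_6,V_5,\mu_0,\bq_0)$ of dimension $n-1$. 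Once this identification is in place, everything else reduces to quoting~\cite{DK:periods} and~\cite{og6}.
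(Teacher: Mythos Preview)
Your approach is correct and essentially aligned with the paper's: both reduce to the Hodge-theoretic comparison established in~\cite{DK:periods}. The paper's proof, however, is a single citation to~\cite[Proposition~5.27]{DK:periods}, whereas you unpack what that citation amounts to---identifying $\pi_n$ on points with $X\mapsto[A]$ and then invoking the comparison of Hodge structures. This is fine and more self-contained, but not strictly necessary once one has the precise external reference.

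Two minor remarks. First, the ``obstacle'' you flag is overstated: the morphism $\fa$ of Proposition~\ref{proposition-gm-to-lag} is by construction the relative version of~\cite[Theorem~3.6]{DK}, so on every geometric fiber (ordinary or special) the Lagrangian $\cA_s$ is literally the Lagrangian of~\cite{DK}; no separate bookkeeping across the special locus is needed, and the line-bundle twists in the normalized setting disappear once one passes to projectivizations (which is all that matters for the $\PGL(V_6)$-orbit $[A]$). Second, your invocation of Verbitsky's Torelli theorem is unnecessary for this proposition: the injectivity of $\wpepw$ plays no role in showing that $\wpgm$ \emph{is} the period map; it only tells you that the period map is an immersion, which is a separate statement.
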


\begin{proof}
This follows from~\cite[Proposition~5.27]{DK:periods}.
\end{proof}

\begin{rema}
GM varieties of dimension~$n \in\{3,5\}$ have   intermediate Jacobians that are 10-dimensional \ppavs\ (\cite[Proposition~3.1]{DK:periods}).\
We expect   their period maps to factor as
\begin{equation*}
\ucM_n \xrightarrow{\ \pi_n\ } \EPW  \lhook\joinrel\xrightarrow{\ \wpepw\ }  \cD \longrightarrow \cD/r_\cD \dra \cAb_{10},
\end{equation*}
where $r_\cD$ is the involution of the domain $\cD$ defined by O'Grady in \cite{og2} 
(geometrically, it corresponds to passing from an EPW sextic to its dual EPW sextic), 
$\cAb_{10}$ is the  coarse  moduli space for 10-dimensional \ppavs, and the   broken arrow is expected to be generically injective.\
To prove this factorization, however, one would need  an analogue of~\cite[Proposition~5.27]{DK:periods} for periods of odd-dimensional GM varieties.
\end{rema}

We can use Proposition \ref{pm} to describe  the fibers of $\wpgm$   for $n \in \{4,6\}$: they are the same as the fibers of $\pi_n$.\
The stacks $\widehat{Y}_{A^\perp}^{\ge \ell}$ were defined in~\eqref{eq:hya}.

\begin{coro}
\label{corollary:period-fiber}
 If $A \subset \bw3V_6$ is a Lagrangian subspace with no decomposable vectors, there is an isomorphism of stacks
\begin{equation}\label{yhat}
 \pi_4^{-1}([A]) \cong \rho_{A^\perp}^{-1}(Y_{A^\perp} \setminus Y_{A^\perp}^3) / \PGL(V_6)_A \subset \widehat{Y}^{\ge 1}_{A^\perp}/\PGL(V_6)_A,
 \end{equation}
where $\PGL(V_6)_A$ is the stabilizer of $A$ in $\PGL(V_6)$.\
Furthermore, the stack $\pi_6^{-1}([A])$ is a~$\bmu_2$-gerbe over $Y_{A^\perp}^0 / \PGL(V_6)_A$.\
In particular, {there are isomorphisms of coarse moduli spaces}
\begin{equation*}
 \pi_4^{-1}([A])_{\mathrm{coarse}} \cong (Y_{A^\perp} \setminus Y_{A^\perp}^3)\gquot\PGL(V_6)_A
\qquad\text{and}\qquad 
\pi_6^{-1}([A])_{\mathrm{coarse}} \cong Y_{A^\perp}^0\gquot\PGL(V_6)_A.
\end{equation*}
\end{coro}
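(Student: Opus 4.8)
The plan is to compute the fibers of $\pi_n$ explicitly using the global quotient description $\ucM_n \cong \hrS_n/\PGL(V_6)$ from Theorem~\ref{theorem:gm-global-quotient} together with the factorization $\pi_n = \wpepw \circ (\text{composition to }\EPW)$ that passes through $\brS_n/\PGL(V_6)$. First I would fix a Lagrangian subspace $A$ with no decomposable vectors and observe that, since $\EPW = \LGradv(\bw3V_6)\gquot\PGL(V_6)$ and all points of $\LGradv(\bw3V_6)$ are stable with finite stabilizers (Lemma~\ref{ssig}), the fiber of $\LGradv(\bw3V_6)/\PGL(V_6) \to \EPW$ over $[A]$ is the quotient stack $\{A\}/\PGL(V_6)_A \cong B\PGL(V_6)_A$. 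Pulling this back along the map $\brS_n/\PGL(V_6) \to \EPW$, and using that $\brS_n$ is (by construction) a subscheme of $\LGradv(\bw3V_6)\times\P(V_6^\vee)$ whose fiber over $[A]$ in the first factor is the union $Y_{A^\perp}^{5-n}\cup Y_{A^\perp}^{6-n}$ of EPW strata (this identification is exactly~\eqref{dualYA} and was used in the proof of Lemma~\ref{lemma:sn-bsn}), I get that the fiber of $\brS_n/\PGL(V_6)\to\EPW$ over $[A]$ is $(Y_{A^\perp}^{5-n}\cup Y_{A^\perp}^{6-n})/\PGL(V_6)_A$.

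Next I would handle the two cases $n=4$ and $n=6$ separately. For $n=6$: here $\brS_6 = \rS_5$, so the EPW stratum involved is $Y_{A^\perp}^0 = \P(V_6^\vee)\setminus Y_{A^\perp}$, and the root stack $\hrS_6 = \hrS_5\to\brS_5$ is the one associated with $\det(\varphi)$, i.e. the usual square-root stack along the hypersurface $\rS_4\subset\brS_5$; restricted to the fiber over $[A]$, this becomes the square-root stack of $\P(V_6^\vee)$ along $Y_{A^\perp}$. But since we are looking at the open stratum $Y_{A^\perp}^0$, which is disjoint from the branch hypersurface, the root stack contributes only a residual $\bmu_2$-gerbe structure over $Y_{A^\perp}^0/\PGL(V_6)_A$. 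This gives the second assertion, and the coarse-space statement follows since a $\bmu_2$-gerbe has the same coarse space as its base. For $n=4$: the stratum is $Y_{A^\perp}^1\cup Y_{A^\perp}^2 = Y_{A^\perp}\setminus Y_{A^\perp}^{\ge 3}$, and the root stack $\hrS_4\to\brS_4$ restricted to this fiber is, by the construction recalled in Section~\ref{subsection:global-quotient-gm} and~\eqref{eq:hya}, precisely the stack $\widehat{Y}_{A^\perp}^{\ge 1}$ — the stacky double cover of $Y_{A^\perp}^{\ge 1}$ branched over $Y_{A^\perp}^{\ge 2}$ — restricted over the open subset $Y_{A^\perp}\setminus Y_{A^\perp}^3$. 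Writing $\rho_{A^\perp} = \rho_{A^\perp}^1$ for the structure map, this is $\rho_{A^\perp}^{-1}(Y_{A^\perp}\setminus Y_{A^\perp}^3)$, and taking the quotient by $\PGL(V_6)_A$ yields~\eqref{yhat}. The coarse-space description then follows because $\widehat{Y}_{A^\perp}^{\ge 1}$ and $\widetilde{Y}_{A^\perp}^{\ge 1}/\bmu_2$ have coarse space $Y_{A^\perp}^{\ge 1}$, and the $\PGL(V_6)_A$-quotient of a scheme with finite stabilizers has coarse space the GIT quotient.

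The key technical point — and the main obstacle — is verifying that the restriction of the root stack $\hrS_n\to\brS_n$ to the fiber over $[A]$ genuinely coincides with O'Grady's double-EPW stack $\widehat{Y}_{A^\perp}^{\ge \ell}$ (with its prescribed branch behavior), rather than just some formally similar root construction. This requires unwinding the identification, already built into the proof of Theorem~\ref{theorem:gm-global-quotient} and Lemma~\ref{lemma:ideal-mn}, between the reflexive sheaf $\cR_n$ with its quadratic form $\bm_n$ on $\brS_n$ and the data $(\cR,\bm)$ of~\cite[Section~4]{DK:coverings} defining the double covers $\widetilde{Y}_{A^\perp}^{\ge\ell}$; concretely, one restricts~\eqref{eq:cr-n} and~\eqref{eq:cl-n} to the fiber $Y_{A^\perp}^{5-n}\cup Y_{A^\perp}^{6-n}\subset\P(V_6^\vee)$ and checks this agrees with the Lagrangian-intersection data used in~\cite[Theorem~5.2]{DK:coverings}. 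One also needs to make sure the passage from the stack $\hrS_n$ to the quotient $\hrS_n/\PGL(V_6)$ behaves correctly on fibers — i.e. that base change of the quotient stack along $\Spec\k \to \EPW$, $[A]$, produces the quotient of the fiber by the stabilizer $\PGL(V_6)_A$, which is standard for quotient stacks of schemes with the given stability. The rest — the bookkeeping with $\bmu_2$-gerbes and coarse spaces — is routine given Theorem~\ref{theorem:gm-coarse} and~\cite[Theorem~C.1.1]{agv}.
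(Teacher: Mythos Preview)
Your treatment of the case $n=4$ is essentially the paper's argument: identify the fiber of $\brS_4 \to \LGradv(\bw3V_6)$ over $[A]$ with $Y_{A^\perp}^1 \cup Y_{A^\perp}^2 = Y_{A^\perp} \setminus Y_{A^\perp}^{\ge 3}$, observe that the root stack $\hrS_4 \to \brS_4$ restricts on this fiber to the open piece of $\widehat{Y}^{\ge 1}_{A^\perp}$, and then take the $\PGL(V_6)_A$-quotient. The technical compatibility you flag (that the restriction of $(\cR_4,\bm_4)$ to the fiber agrees with the Lagrangian-intersection data defining $\widetilde{Y}^{\ge 1}_{A^\perp}$) is indeed the content behind the one-line step in the paper, and your outline of how to check it is sound.

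Your argument for $n=6$, however, contains a genuine error. There is no stack $\hrS_6$ in the paper; Theorem~\ref{theorem:gm-global-quotient} is stated only for $n\in\{3,4,5\}$. More importantly, your claim that the root stack $\hrS_5 \to \brS_5$ ``contributes only a residual $\bmu_2$-gerbe structure'' over the open stratum $\rS_5$ (or fiberwise over $Y_{A^\perp}^0$) is false: a root stack along a Cartier divisor is an \emph{isomorphism} away from that divisor, not a gerbe. So restricting $\hrS_5$ to $\rS_5$ simply gives $\rS_5$ back, and this route cannot produce the $\bmu_2$-gerbe you want. The correct source of the $\bmu_2$ is entirely different: by Corollary~\ref{corollary:gm-6}, every smooth GM sixfold is special, so $\ucM_6 = \ucM_{6,{\rm spe}}$, and by Lemma~\ref{eq:special-gerbe} (ultimately because a special GM variety is a double cover and carries an extra involution) this is a $\bmu_2$-gerbe over $\ucM_{5,{\rm ord}} \cong \rS_5/\PGL(V_6)$. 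Restricting to the fiber over $[A]$ then gives the $\bmu_2$-gerbe over $Y_{A^\perp}^0/\PGL(V_6)_A$.
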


\begin{proof}
By definition of {the scheme} $\brS_4$, the fiber of the map $\brS_4 \to \LGradv(\bw3V_6)$ over the point $[A]$
is the union 
% \begin{equation*}
$Y_{A^\perp} \setminus Y^{3}_{A^\perp} = Y^{1}_{A^\perp} \sqcup  Y^{2}_{A^\perp}$
% \end{equation*}
of two EPW strata,
hence the fiber of {the composition} $\hrS_4 \to \brS_4 \to \LGradv(\bw3V_6)$ 
is isomorphic to $\rho_{A^\perp}^{-1}(Y_{A^\perp} \setminus Y_{A^\perp}^3) \subset \widehat{Y}^{\ge 1}_{A^\perp}$.\
Thus, the stack $\pi_4^{-1}([A])$ is isomorphic to the quotient stack~$\rho_{A^\perp}^{-1}(Y_{A^\perp} \setminus Y_{A^\perp}^3) / \PGL(V_6)_A$ 
and its coarse moduli space is $(Y_{A^\perp} \setminus Y_{A^\perp}^3)\gquot\PGL(V_6)_A$.

Similarly, Corollary~\ref{corollary:gm-6} identifies $\pi_6^{-1}([A])$ with a $\bmu_2$-gerbe over $Y_{A^\perp}^0 / \PGL(V_6)_A$
and its coarse moduli space with $Y_{A^\perp}^0\gquot\PGL(V_6)_A$.
\end{proof}

\subsection{Complete families of smooth GM varieties}
\label{subsection:complete-families}

Complete nonisotrivial families of smooth projective varieties  are hard to find in general  {(expecially those parameterized by rational curves)} and are interesting for this reason.\
Using our results, one can construct such families of GM varieties, some parameterized by the projective line.\

 {We start with a simple observation.}

\begin{lemm}
\label{lemma:a-constant}
Let $(\cX \to S,\cH)$ be a family of smooth GM varieties of dimension $n$ over a proper {reduced} connected scheme $S$.\ 
The map $\pi_n \colon S \to \EPW$ is constant.
\end{lemm}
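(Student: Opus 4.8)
The plan is to factor $\pi_n$ through the moduli map to $\cmlag_{n+5}$ and then through its coarse space, reducing the claim to the statement that the composition $S \to \LGradv(\bw3 V_6)\gquot\PGL(V_6)$ is constant. First I would apply Theorem~\ref{theorem:gm-substack-gm-data} (or Corollary~\ref{corollary:stack-gm-data}) to replace $(\cX\to S,\cH)$ by the associated family of smooth normalized GM data $(\ns,\cW,\cV_6,\cV_5,\mu,\bq)$ over $S$, and then Proposition~\ref{proposition-gm-to-lag} to get an associated family of Lagrangian data $(\ns,\cV_6,\cV_5,\cA)$ of rank $n+5$ avoiding decomposable vectors; by Proposition~\ref{proposition:cmlag-quotient} this is the same as a morphism $S\to \brS_n/\PGL(V_6)$, and composing with the coarse-space map and the projection $\brS_n\gquot\PGL(V_6)\to\EPW$ recovers $\pi_n$. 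So it suffices to show that the morphism $g\colon S \to \EPW = \LGradv(\bw3V_6)\gquot\PGL(V_6)$ is constant. Since $\EPW$ is affine (see~\eqref{eq:m-epw} and the discussion after Lemma~\ref{ssig}), and $S$ is proper, reduced and connected, any morphism from $S$ to an affine scheme is constant: the pullback of each global regular function on $\EPW$ is a global regular function on the proper reduced connected $S$, hence constant, so $g$ factors through a single closed point.

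The only thing to be careful about is that $\pi_n$ in the statement is the map from the excerpt in Section~\ref{subsection:period-map}, defined as the composition $\ucM_n\to\bcM_n\isomlra\brS_n\gquot\PGL(V_6)\to\EPW$; I would just note that pulling this back along $S\to\ucM_n$ (the classifying map of the family) gives exactly the morphism $g$ described above, using the identification $\bcM_n\cong\brS_n\gquot\PGL(V_6)$ of Theorem~\ref{theorem:gm-coarse}(a) and the compatibility of $\fa$ with passing to coarse spaces. The reducedness hypothesis is what lets me pass from the stack-theoretic statement to a genuine morphism of schemes $S\to\EPW$ without worrying about nonreduced structure (the square-ideal phenomenon of Proposition~\ref{proposition-gm-to-lag} plays no role here since we only care about the underlying map of the coarse space, which is insensitive to nilpotents).

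The expected main obstacle is essentially bookkeeping rather than a genuine difficulty: one must check that the classifying morphism $S\to\ucM_n$ attached to $(\cX\to S,\cH)$ composed with $\pi_n$ really does agree with $g$ as a morphism of schemes (not just of stacks), i.e.\ that the coarse-space map $\ucM_n\to\bcM_n$ is compatible, under the equivalences of Theorem~\ref{theorem:gm-substack-gm-data} and Proposition~\ref{proposition:cmlag-quotient}, with the tautological construction on $\brS_n$. This is exactly the content of the isomorphisms in Theorem~\ref{theorem:gm-coarse} together with the construction of $\fa$ in Section~\ref{subsection-from-gm-to-lag}, so no new argument is needed. Once that is in place, affineness of $\EPW$ plus properness, reducedness and connectedness of $S$ finish the proof immediately.
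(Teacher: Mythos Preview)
Your proposal is correct and takes essentially the same approach as the paper: the entire argument reduces to the fact that $\EPW$ is affine, so a proper reduced connected scheme admits only constant maps to it. The paper's proof is a single sentence to this effect; your additional bookkeeping (factoring through $\fa$, the coarse moduli identifications, etc.) is accurate but not needed, since $\pi_n$ is already defined as a morphism to the affine scheme $\EPW$.
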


\begin{proof}
This follows from the fact that $\EPW$ is affine.
\end{proof}

By Lemma~\ref{lemma:a-constant}, any    family of smooth GM varieties of dimension $n$ parameterized by a proper connected scheme $S$ corresponds 
to a fixed Lagrangian subspace $A \subset \bw3V_6$ and varying Pl\"ucker hyperplanes $V_5 \subset V_6$.\
In other words, repeating the argument of Corollary~\ref{corollary:period-fiber}, we see that such a family corresponds to a morphism 
\begin{equation*}
S \lra \pi_n^{-1}([A]) = (\rho^{5-n}_{A^\perp})^{-1} (Y_{A^\perp}^{\ge 5 - n} \setminus Y_{A^\perp}^{\ge 5 - n}) /\PGL(V_6)_A \subset  \widehat{Y}^{\ge 5-n}_{A^\perp}/\PGL(V_6)_A.
\end{equation*}

{The following result can be used to construct such a map.}

\begin{prop}
\label{proposition:explicit-family}
Let $n\in\{3,4,5,6\}$ and let $S$ be a connected {reduced} scheme.\ 
Assume that $f \colon S \to Y^{\ge 5-n}_{A^\perp}$ is a nonconstant morphism such that 
$f^{-1}(Y^{\ge 7-n}_{A^\perp}) = \vide$ and that
\begin{equation}
\label{eq:gm-family-conditions}
f^{-1}(Y^{\ge 6-n}_{A^\perp}) \hbox{ is equal to $2E$ for some Cartier divisor $E$ on $S$.}
\end{equation} 
Then there is a nonisotrivial family of smooth GM varieties $\cX \to S$ of dimension $n$.
\end{prop}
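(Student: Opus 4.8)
The plan is to turn the pair $(f,A)$ into a family of Lagrangian data over $S$ and then feed it into the constructions of Section~\ref{section-relation}. Recall that $A\subset\bw3V_6$ is Lagrangian with no decomposable vectors. For $n\in\{3,4,5\}$, I would set $\cV_6:=V_6\otimes\cO_S$, take $\cA:=A\otimes\cO_S\subset\bw3\cV_6$ to be the constant Lagrangian subbundle, and let $\cV_5\subset\cV_6$ be the pullback along $f$ of the tautological corank-$1$ subbundle of $\P(V_6^\vee)\cong\Gr(5,V_6)$. The first thing to check is that $(\ns,\cV_6,\cV_5,\cA)$ is a family of Lagrangian data of rank $n+5$ avoiding decomposable vectors in the sense of Definition~\ref{def317}: decomposable vectors are avoided because $\cA_s=A$ for every geometric point $s$; and since the kernel of the map $\varphi$ of~\eqref{eq:varphi} at $s$ is $\cA_s\cap\bw3\cV_{5,s}=A\cap\bw3V_{5,s}$ (the kernel of $\lambda_3$ being $\bw3\cV_5$), one gets $\rank(\varphi_s)=10-\dim(A\cap\bw3V_{5,s})\in\{n+4,n+5\}$ using that $f$ lands in $Y_{A^\perp}^{\ge 5-n}$ while $f^{-1}(Y_{A^\perp}^{\ge 7-n})=\varnothing$. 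Since $S$ is reduced, $\bw{n+6}\varphi$ vanishes identically and $\bw{n+4}\varphi_s\neq 0$ everywhere, and the rank is exactly $n+5$ because the locus where it drops to $n+4$ is $f^{-1}(Y_{A^\perp}^{\ge 6-n})$, which by~\eqref{eq:gm-family-conditions} is the effective Cartier divisor $2E$, hence a proper closed subscheme (such a divisor is never the whole of a nonempty reduced scheme), so on its dense open complement the rank is $n+5$.

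The next step is to identify the Lagrangian special locus $S_\lagspe$ of this family, which is the degeneracy locus of $\varphi$. Since forming minors commutes with base change, $S_\lagspe$ is the $f$-pullback of the degeneracy locus of the universal map $A\otimes\cO\to\bw2\cV_5\otimes(\cV_6/\cV_5)$ over $\Gr(5,V_6)$, which carries O'Grady's scheme structure on $Y_{A^\perp}^{\ge 6-n}$ (cf.~\eqref{yabot} and \cite[Section~2]{og1}). Hence $S_\lagspe=f^{-1}(Y_{A^\perp}^{\ge 6-n})=2E$ is a double Cartier divisor, and Corollary~\ref{corollary:constructing-families-gm} produces a family $(\cX\to S,\cH)$ of smooth GM varieties of dimension $n$. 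For $n=6$ one has $Y_{A^\perp}^{\ge 5-n}=Y_{A^\perp}^{\ge 0}=\P(V_6^\vee)$, so $f$ is merely a morphism to $\P(V_6^\vee)$ with $f^{-1}(Y_{A^\perp}^{\ge 1})=\varnothing$; the family of Lagrangian data built as above now has rank $10=5+5$ and empty special locus, so by the case $n=5$ (equivalently Proposition~\ref{prop:equivalence-groupoids} with $E=\varnothing$) it comes from a family $(\cX_0\to S,\cH_0)$ of smooth \emph{ordinary} GM fivefolds; and since $\cV_6$ is trivial, $\det\cV_6\cong\cO_S$ has the obvious square root, so the family version of the ordinary/special correspondence \cite[Lemma~2.33]{DK} used in the proof of Lemma~\ref{eq:special-gerbe}, applied with $\cW_1=\cO_S$ and $\bq_1=\id$, upgrades it to a family $(\cX\to S,\cH)$ of smooth special GM sixfolds.

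It remains to see that $\cX\to S$ is nonisotrivial. For any $n\in\{3,4,5,6\}$, two geometric fibers $\cX_s$ and $\cX_{s'}$ are isomorphic precisely when the Lagrangian data sets $(V_6,V_{5,s},A)$ and $(V_6,V_{5,s'},A)$ are equivalent (Theorem~\ref{theorem:gm-substack-gm-data} together with \cite[Theorems~3.6 and~3.16, Lemma~2.33]{DK}), that is, when $V_{5,s}$ and $V_{5,s'}$ lie in one orbit of $\PGL(V_6)_A$. By Lemma~\ref{ssig} this stabilizer is finite, so each of its orbits is a finite, hence discrete, subset of closed points of $\P(V_6^\vee)$. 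If all fibers were isomorphic, $f(S)$ would lie in one such orbit; being connected it would then be a single closed point, forcing $f$ to be constant (as $S$ is reduced) — contrary to hypothesis. Hence $\cX\to S$ is nonisotrivial.

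The step I expect to be the main obstacle is the scheme-theoretic identification $S_\lagspe=f^{-1}(Y_{A^\perp}^{\ge 6-n})$, i.e.\ matching O'Grady's scheme structure on the EPW strata with the Fitting-ideal (degeneracy-locus) structure attached to $\varphi$; once this is granted, the proposition is a bookkeeping combination of Corollary~\ref{corollary:constructing-families-gm}, Proposition~\ref{prop:equivalence-groupoids}, Lemma~\ref{eq:special-gerbe} and Lemma~\ref{ssig}.
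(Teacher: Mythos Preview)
Your argument is correct and follows the paper's proof essentially verbatim: build the tautological family of Lagrangian data with trivial $\cV_6$ and $\cA$ and pulled-back $\cV_5$, apply Corollary~\ref{corollary:constructing-families-gm}, and deduce nonisotriviality from the finiteness of~$\PGL(V_6)_A$ (Lemma~\ref{ssig}); the paper phrases the last step via the map to the coarse moduli space $\bcM_n$, but this is the same idea. Your separate treatment of $n=6$ is superfluous: for $n=6$ condition~\eqref{eq:gm-family-conditions} reads $S=f^{-1}(Y^{\ge 0}_{A^\perp})=2E$, which is impossible for a nonempty reduced connected scheme, so that case is vacuous (and your worry about matching O'Grady's scheme structure with the degeneracy-locus structure is unfounded, since that is precisely how the EPW strata are defined in~\cite[Section~2]{og1}).
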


\begin{proof}
The family $\cX \to S$ exists by Corollary~\ref{corollary:constructing-families-gm}: consider the family of Lagrangian data
on $S$ given by the trivial bundles $\cV_6 = \bw3V_6 \otimes \cO_S$ and $\cA = A \otimes \cO_S$, and take for  $\cV_5$  the pullback
of the tautological rank-5 bundle on $\P(V_6^\vee)$ via the map $S \xrightarrow{\ f\ } Y^{\ge 5 - n}_{A^\perp} \hookrightarrow \P(V_6^\vee)$.\

This family is not isotrivial, because the corresponding map from~$S$ to the coarse moduli space $\bcM_n$ is nonconstant: 
this map is  the composition of $f$ with the quotient morphism~{$Y^{\ge 5-n}_{A^\perp} \to Y^{\ge 5-n}_{A^\perp}\gquot\PGL(V_6)_A$}; 
since the group $\PGL(V_6)_A$ is  finite and~\mbox{$\dim(f(S)) > 0$}, this is clear.
\end{proof}

It is not easy to find a map satisfying the condition~\eqref{eq:gm-family-conditions}.\ 
Sometimes, a double covering trick helps.

\begin{exam}
Let $L \subset \P(V_6^\vee)$ be a line such that $L \not\subset Y_{A^\perp}^{\ge 1}$ and $L \cap Y_{A^\perp}^{\ge 2} = \vide$.\
Then $L \cap Y_{A^\perp}^{\ge 1}$ is a divisor of degree 6 (because $Y_{A^\perp}^{\ge 1}$ is a sextic hypersurface).\
Let $\widetilde{L} \to L$ be the normalization of the double cover of $L$ branched over $L \cap Y_{A^\perp}^{\ge 1}$.\
Then,
\begin{itemize}
\item 
if the intersection $L \cap Y_{A^\perp}^{\ge 1}$ is transverse,  $\widetilde{L}$ is an integral curve of genus~2;
\item 
if the intersection $L \cap Y_{A^\perp}^{\ge 1}$ has exactly one nonreduced point, and its multiplicity is~2 or~3,  
$\widetilde{L}$ is an integral curve of genus~1;
\item 
in all other cases, each component of~$\widetilde{L} $ is isomorphic to $ \P^1$.
\end{itemize}
A general line falls into the first case.\  
A general line tangent to $Y_{A^\perp}^{\ge 1}$ at a general point falls into the second case.\    
Bitangent lines to $Y_{A^\perp}^{\ge 1}$ (of which there is a 6-dimensional family) fall into the third case.

Applying Proposition~\ref{proposition:explicit-family} to any of these families, 
we obtain a family of smooth GM varieties of dimension~5 over $\widetilde{L}$.\
It is not isotrivial  by Proposition~\ref{proposition:explicit-family}.
 \end{exam}

To construct families of GM varieties, one can also  apply  directly {Corollary~\ref{corollary:period-fiber}}.

\begin{exam}
Assume $Y_{A^\perp}^{\ge 3} = \emptyset$.\
There is a family of smooth GM fourfolds of maximal variation
parameterized by the double EPW sextic $\widetilde{Y}_{A^\perp}$.\
Indeed, by Corollary~\ref{corollary:period-fiber}, there is a   map
 \begin{equation*}
\widetilde{Y}_{A^\perp} \lra \widetilde{Y}_{A^\perp}  / \bmu_2 = \widehat{Y}_{A^\perp} \lra \widehat{Y}_{A^\perp}/\PGL(V_6)_A = \pi_4^{-1}([A]) \lhra \ucM_4.
 \end{equation*} 
Since any smooth double EPW sextic contains a uniruled divisor
(the Gromov--Witten invariants computed in  \cite{ob} include the degree of the divisor spanned by deformations of a rational curve of minimal degree on any smooth double EPW sextic, 
and this degree is nonzero), 
hence many rational curves, one obtains smooth nonisotrivial families of GM fourfolds  parameterized by~$\P^1$.
\end{exam}

\begin{exam}
Assume $Y_{A^\perp}^{\ge 3} = \emptyset$.\
As in the previous example, we can pull back the universal family of GM threefolds by the composition (see \eqref{yhat} for the notation)
\begin{equation*}
{Y}_{A^\perp}^{\ge 2} \isomlra 
\widehat{Y}_{A^\perp}^{\ge 2} \lra 
\widehat{Y}_{A^\perp}^{\ge 2}/\PGL(V_6)_A \lhra
\hrS_3/\PGL(V_6) \cong \ucM_3
\end{equation*}
and obtain a  family of smooth ordinary GM threefolds with maximal variation 
parameterized by the projective surface  ${Y}_{A^\perp}^{\ge 2}$.

When $A$ is general, the cotangent bundle of ${Y}_{A^\perp}^{\ge 2}$ is  globally generated  (\cite[Corollary~7.3]{dim})  
hence this surface contains no rational curves.\ 
Any Lagrangian $A$ with no decomposable vectors such that ${Y}_{A^\perp}^{\ge 2}$ contains a rational curve and $Y_{A^\perp}^{\ge 3} = \emptyset$ 
would give rise to a smooth nonisotrivial families of GM threefolds parameterized by~$\P^1$, but we do not know   any such Lagrangian.
\end{exam}

\appendix

\section{The generalized root construction}
\label{section:generalized-root-stack}

We discuss a  generalization of the root stack construction of~\cite{agv} which is also   
a particular case of the \emph{canonical stack} construction, as defined (under another name) 
in~\cite[Note~2.9 and proof of Proposition~2.8]{Vistoli} and developed in~\cite{GS}.

Let $S$ be a normal irreducible scheme.\
Let $\cR$ be a reflexive sheaf  of rank~1 on  $S$  such that  {the reflexive hull }
\begin{equation*}
\cL := (\cR \otimes \cR)^{\vee\vee}
\end{equation*}
%of its tensor square
 is a line bundle.\
If 
%$\cL$ is a square in the Picard group of $S$, that is, if 
$\cL \cong \cM^{\otimes 2}$ for some line bundle~$\cM$ on $S$,
there is a scheme
\begin{equation}
\label{eq:wts}
\widetilde{S} := \Spec(\cO_S \oplus (\cM^{{-1}} \otimes \cR))
\end{equation}
equipped with a map $\tilde\rho \colon \widetilde{S} \to S$ 
which is finite of degree 2 and \'etale over the locally free locus of $\cR$ (\cite[Proposition~2.5]{DK:coverings}), 
and an involution $\tau$ of $ \widetilde{S}$ over $S$.\
Let 
\begin{equation*}
\widehat{S} := \widetilde{S} / \bmu_2(\tau)
\end{equation*}
be the quotient stack with respect to the $\bmu_2$-action on $\widetilde{S}$ generated by~$\tau$.\
There is a natural map $\hat\rho \colon \widehat{S} \to S$ which is an isomorphism over the locally free locus of $\cR$; over~$\Sing(\cR)$,  {it is a nilpotent thickening of a $\bmu_2$-gerbe over $\Sing(\cR)$}.

We want to show that the construction that produces the stack $\widehat{S}$ from~$S$
is more natural in a sense than the construction of the double covering.\ 
In particular, it does not require the existence (hence nor the choice)  of a square root $\cM$ of $\cL$.

The construction is very simple.\ 
 {Slightly generalizing the above setup, we assume that}~$\cR$ is a reflexive sheaf of rank~1 on  $S$ 
such that the sheaf $(\cR \otimes \cR)^{\vee\vee}$ is locally free
and let 
\begin{equation*}
\bm \colon \cR \otimes \cR \lra \cL
\end{equation*}
be a nonzero morphism into a line bundle $\cL$.\ Consider the  {quasicoherent} sheaf 
 \begin{equation}
\label{eq:os-l-r}
\begin{split} 
\cO_S[\cL^{\pm1},\cR] &:=  \bigoplus_{i \in \Z} (\cL^i \oplus (\cL^i \otimes \cR))  \\
&\ \cong 
 {}\cdots \oplus \cL^{-1} \oplus (\cL^{-1} \otimes \cR) \oplus 
\cO_S \oplus \cR \oplus \cL \oplus (\cL \otimes \cR) \oplus \cdots{}
\end{split}
\end{equation}
with the $\Z$-grading defined by
\begin{equation*}
\deg(\cL^i) = 2i
\quad \textnormal{and}\quad
\deg(\cL^i \otimes \cR) = 2i + 1.
\end{equation*}

\begin{lemm}
The morphism $\bm$ induces on $\cO_S[\cL^{\pm 1},\cR]$ 
a  commutative associative $\cO_S$-algebra structure.
\end{lemm}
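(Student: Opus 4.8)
The plan is to construct the multiplication explicitly on homogeneous summands and then check commutativity and associativity. First I would observe that the graded pieces of $\cO_S[\cL^{\pm1},\cR]$ come in two flavours: the ``even'' pieces $\cL^i$ in degree $2i$ and the ``odd'' pieces $\cL^i\otimes\cR$ in degree $2i+1$. Multiplication of an even piece with anything is defined by the obvious tensor contraction coming from the line bundle structure of $\cL$: $\cL^i\otimes\cL^j\to\cL^{i+j}$ and $\cL^i\otimes(\cL^j\otimes\cR)\to\cL^{i+j}\otimes\cR$, using that $\cL$ is invertible so these are isomorphisms after the canonical identifications $\cL^i\otimes\cL^j\cong\cL^{i+j}$. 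The only nontrivial case is the product of two odd pieces $(\cL^i\otimes\cR)\otimes(\cL^j\otimes\cR)$, landing in degree $2i+2j+2=2(i+j+1)$, i.e.\ in the even summand $\cL^{i+j+1}$. Here I would use the given morphism $\bm\colon\cR\otimes\cR\to\cL$: define the product as the composition
\begin{equation*}
(\cL^i\otimes\cR)\otimes(\cL^j\otimes\cR)\cong \cL^i\otimes\cL^j\otimes(\cR\otimes\cR)\xrightarrow{\ \id\otimes\bm\ }\cL^i\otimes\cL^j\otimes\cL\cong\cL^{i+j+1}.
\end{equation*}
Since $\bm$ is a genuine sheaf morphism (not merely a rational map), all these composites are honest $\cO_S$-linear maps, so they assemble into a single $\cO_S$-bilinear pairing on $\cO_S[\cL^{\pm1},\cR]$ that respects the grading.

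Next I would verify commutativity. On even$\times$even and even$\times$odd products this is immediate from the symmetry of the canonical identifications $\cL^i\otimes\cL^j\cong\cL^j\otimes\cL^i$. For odd$\times$odd the point is that $\bm\colon\cR\otimes\cR\to\cL$ factors through $\Sym^2\cR$; equivalently, the composition of $\bm$ with the swap $\cR\otimes\cR\to\cR\otimes\cR$ equals $\bm$. This symmetry of $\bm$ is part of the hypotheses (``$\bm\colon\cR\otimes\cR\to\cL$ a nonzero morphism'', understood as a quadratic form on $\cR$; in the application $\bm_n$ of Section~\ref{subsection:global-quotient-gm} it arises as the wedge square of a Lagrangian, which is symmetric). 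Granting this, commutativity on all pieces follows by a direct diagram chase.

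For associativity, I would again split into cases by parity of the three factors. When at least one factor is even, associativity reduces to the associativity of the tensor identifications of powers of the line bundle $\cL$, together with the naturality of $\bm$ with respect to tensoring by line bundles — purely formal. The only case needing the structure of $\bm$ itself is odd$\times$odd$\times$odd, where one must compare the two ways of bracketing. Writing the factors as sections of $\cR$ (up to $\cL$-twists), both bracketings produce an element of an odd summand obtained by applying $\bm$ to one pair of the three $\cR$-factors and multiplying by the third $\cR$-factor; the two results differ by the interchange of which pair gets fed into $\bm$, and they agree because $\bm$ takes values in the \emph{line bundle} $\cL$, so ``$\bm(r_1,r_2)\cdot r_3$'' and ``$\bm(r_2,r_3)\cdot r_1$'' are both sections of $\cL\otimes\cR$ that, after the canonical identification, are literally equal as elements of the rank-one reflexive sheaf — here one uses that $(\cR\otimes\cR)^{\vee\vee}=\cL$ is locally free, so the computation may be checked on the locally free locus of $\cR$ (a dense open whose complement has codimension $\ge 2$ since $S$ is normal and $\cR$ reflexive), where $\cR$ is a line bundle and everything is transparent; the equality of maps of reflexive sheaves then propagates from a dense open by the normality of $S$.

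I expect the main obstacle to be precisely this last reduction: making rigorous the passage ``check it on the locally free locus of $\cR$, then extend by reflexivity/normality''. The key lemma is that a morphism between coherent sheaves that agrees on a dense open with complement of codimension $\ge 2$, with target a reflexive (here even locally free) sheaf on a normal scheme, is determined by its restriction to that open; equivalently, $j_*$ of the restriction recovers the sheaf, where $j$ is the inclusion of the big open set. This is standard ($S_2$ property of reflexive sheaves on normal schemes), and once it is invoked, all the identities — unit axiom with $1\in\cO_S$ in degree $0$, commutativity, associativity — follow from their well-known counterparts for the line bundle $\cL$ and a symmetric pairing valued in a line bundle. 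I would therefore state the proof as: (i) define the graded multiplication piecewise as above; (ii) note all structure maps are honest morphisms of quasicoherent sheaves; (iii) check the algebra axioms on the locally free locus of $\cR$, where $\cR\cong$ a line bundle and the axioms are immediate; (iv) conclude by the reflexivity/normality extension principle.
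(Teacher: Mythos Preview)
Your approach is essentially correct and reaches the same conclusion, but it differs from the paper's argument and contains one small confusion worth flagging.

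The paper does not define the multiplication piecewise; instead it first builds the graded algebra $\cO_S[\cR] := \bigoplus_{k\ge 0} (\cR^{\otimes k})^{\vee\vee}$, whose associativity is immediate from functoriality of the reflexive hull, then realises $\cO_S[\cL^{\pm1},\cR]$ as the tensor product $\cO_S[\cR] \otimes_{\cO_S[(\cR\otimes\cR)^{\vee\vee}]} \cO_S[\cL^{\pm1}]$, which inherits the algebra structure for free. Commutativity is proved exactly by your ``check on the locally free locus'' principle, applied to the swap automorphism of $(\cR\otimes\cR)^{\vee\vee}$. Your direct case-by-case construction is more hands-on and perhaps more transparent about what the multiplication actually is; the paper's version is slicker and avoids the odd$\times$odd$\times$odd case analysis entirely.

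The one genuine wrinkle in your write-up is the claim that symmetry of $\bm$ is ``part of the hypotheses''. It is not: the statement only assumes $\bm \colon \cR \otimes \cR \to \cL$ is a nonzero morphism. Symmetry is a \emph{consequence}: since $\cL$ is reflexive, $\bm$ factors through $(\cR\otimes\cR)^{\vee\vee}$, and the swap on the latter is the identity (this is exactly the reflexivity/normality argument you invoke later for associativity, applied here to the swap). So your proof goes through once you replace ``this is a hypothesis'' with the one-line observation that the swap on the reflexive hull of $\cR\otimes\cR$ is the identity because it is so on the locally free locus of $\cR$. With that fix, your argument is complete.
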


\begin{proof}
There is a natural associative algebra structure on the sheaf
\begin{equation*}
\cO_S[\cR] :=
\cO_S \oplus \cR \oplus (\cR \otimes \cR)^{\vee\vee} \oplus 
(\cR \otimes \cR \otimes \cR)^{\vee\vee} \oplus 
(\cR \otimes \cR \otimes \cR \otimes \cR)^{\vee\vee} \oplus \cdots
\end{equation*}
(the associativity follows from the functoriality of the reflexive hull).\
It is also commutative since the automorphism of $(\cR \otimes \cR)^{\vee\vee}$ induced by the transposition of $\cR \otimes \cR$
is an automorphism of a reflexive sheaf which is the identity on the locally free locus of $\cR$, hence is itself the identity.\
Finally, the morphism $\bm$ induces a morphism 
$\cO_S[(\cR \otimes \cR)^{\vee\vee}] \to \cO_S[\cL] \hookrightarrow \cO_S[\cL^{\pm 1}]$ of commutative associative algebras  and  {we have}
\begin{equation*}
\cO_S[\cL^{\pm1},\cR] = \cO_S[\cR] \otimes_{\cO_S[(\cR \otimes \cR)^{\vee\vee}]} \cO_S[\cL^{\pm1}],
\end{equation*}
because $(\cR^{\otimes 2m})^{\vee\vee} \cong ((\cR \otimes \cR)^{\vee\vee})^{\otimes m}$
and $(\cR^{\otimes (2m+1)})^{\vee\vee} \cong ((\cR \otimes \cR)^{\vee\vee})^{\otimes m} \otimes \cR$.
\end{proof}

Consider the quotient stack
\begin{equation}
\label{eq:generalized-root-stack}
\widehat{S} := \left(\Spec_S(\cO_S[\cL^{\pm1},\cR])\right)/\Gm
\end{equation} 
 for the $\Gm$-action corresponding to  the grading defined above.\  We will call this stack {\sf the root stack of $(\cR,\bm)$}.

\begin{prop}
\label{proposition:ts-hs}
Let $\cM$ be a line bundle  on $S$ with an isomorphism $\cL \cong \cM^{\otimes 2}$.\
Consider the double covering~\eqref{eq:wts} of $S$ corresponding to the morphism
\begin{equation*}
(\cM^{-1} \otimes \cR) \otimes (\cM^{-1} \otimes \cR) \cong 
\cM^{-2} \otimes (\cR \otimes \cR) \xrightarrow{\ \bm\ }
\cM^{-2} \otimes \cL \cong
\cO_S.
\end{equation*}
There is a natural isomorphism  {of stacks} $\widetilde{S}/\bmu_2 \cong \widehat{S}$.
\end{prop}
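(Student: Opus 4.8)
The plan is to exhibit a natural identification between the two quotient stacks by producing an isomorphism of the relevant $\Gm$-equivariant objects after an appropriate change of grading group. Concretely, I would first observe that both stacks are quotient stacks of relative spectra of sheaves of graded $\cO_S$-algebras, and that there is a standard dictionary: $\bmu_2$-quotients correspond to $\Z/2$-graded algebras and $\Gm$-quotients correspond to $\Z$-graded algebras. The double covering $\widetilde S = \Spec_S(\cO_S \oplus (\cM^{-1}\otimes\cR))$ carries a $\bmu_2$-action via the sign on the degree-one summand, so $\widetilde S/\bmu_2$ is the quotient stack of $\Spec_S$ of the $\Z/2$-graded algebra $\cB := \cO_S \oplus (\cM^{-1}\otimes\cR)$, where the algebra structure uses the morphism $(\cM^{-1}\otimes\cR)^{\otimes 2}\to\cO_S$ built from $\bm$ and the trivialization $\cM^{-2}\otimes\cL\cong\cO_S$.

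The key step is to realize $\widehat S = (\Spec_S \cO_S[\cL^{\pm1},\cR])/\Gm$ as the $\bmu_2$-quotient of the same $\widetilde S$. First I would note that the $\Z$-graded algebra $\cA := \cO_S[\cL^{\pm1},\cR]$ is the Rees-type algebra attached to $\cR$ with $\cL$ inverted; choosing the square root $\cM$ of $\cL$, multiplication by a chosen generator of $\cL$ in each degree gives, for each $i$, isomorphisms $\cL^i\cong\cM^{2i}$ and $\cL^i\otimes\cR\cong\cM^{2i}\otimes\cR = \cM^{2i-1}\otimes(\cM^{-1}\otimes\cR)\otimes\cM$; tracking this carefully, one gets a graded isomorphism $\cA \cong \cB\otimes_{\cO_S}\cO_S[t^{\pm1}]$ where $t$ has degree $2$ and $\cB$ is placed in degrees $0$ and $1$ (with the degree-$1$ piece of $\cA$ being $\cM\otimes(\cM^{-1}\otimes\cR)$, matched to the degree-$1$ piece of $\cB$ up to the twist by $\cM$ sitting in degree $1$ of $\cO_S[t^{\pm1}]$ after rescaling). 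Equivalently, $\Spec_S\cA$ is a $\Gm$-torsor over $\widetilde S$, the torsor of the line bundle $\cM$, and this torsor is $\bmu_2$-equivariant. Then the general principle that $(\text{$\Gm$-torsor over }Y)/\Gm \cong Y$ for the induced action—combined with the fact that taking the $\bmu_2$-quotient commutes with taking the $\Gm$-torsor quotient, since the two group actions commute—yields
\[
\widehat S = (\Spec_S\cA)/\Gm = \bigl((\Spec_S\cA)/\bmu_2\bigr)/\Gm \cong \bigl((\widetilde S\times^{\bmu_2}\text{tors}_\cM)/\Gm\bigr)\big/\bmu_2 \cong \widetilde S/\bmu_2.
\]
I would phrase the last chain more carefully as: the $\Gm$-action on $\Spec_S\cA$ restricts on the open where $t$ is invertible to a free action with quotient $\widetilde S$, but here $\cL$ is globally inverted so $t$ is a unit everywhere, hence $(\Spec_S\cA)/\Gm$ is representably the scheme-theoretic quotient, which is $\widetilde S$; after remembering the residual $\bmu_2\subset\Gm$ acting trivially—wait, rather: the grading is by $\Z$, the even part recovers $\cO_S[\cL^{\pm1}]$ with spectrum a $\Gm$-torsor over $S$, and the odd part is where the stacky $\bmu_2$ lives.

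The main obstacle I anticipate is doing this \emph{without} choosing $\cM$, or at least checking that the resulting isomorphism is independent of the choice, since the statement of the proposition fixes a particular $\cM$ and isomorphism $\cL\cong\cM^{\otimes2}$ but the stack $\widehat S$ was defined intrinsically. The cleanest route is: give the map $\widetilde S/\bmu_2 \to \widehat S$ directly by descent—exhibit on $\widetilde S$ a $\bmu_2$-equivariant $\Gm$-torsor together with a $\Gm$-equivariant $\bmu_2$-cover of $\Spec_S\cA$, using the chosen $\cM$ to build it—then verify it is an isomorphism by working étale-locally on $S$ where $\cR$ is free, $\cM$ is trivial, and both sides become the standard presentation $[\A^1/\bmu_2]$ versus $[\Gm\times\A^1/\Gm]$ of the same root stack $\sqrt[2]{\cL/S}$ (here the morphism $\bm$ restricted to the locally free locus being an isomorphism reduces everything to the classical root stack of a line bundle, for which the equivalence $\widetilde S/\bmu_2\cong\widehat S$ is exactly~\cite[Appendix~B]{agv}). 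The only real content beyond bookkeeping is the behavior over $\Sing(\cR)$, where one invokes the functoriality of the reflexive hull (already used in the preceding lemma) to see that the graded algebra $\cA$ is insensitive to the non-free locus in the same way $\cB$ is, so that the locally computed isomorphism glues. I would present the proof as this étale-local comparison together with the gluing remark, which keeps the argument to a paragraph or two.
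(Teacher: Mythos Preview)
Your strategy---relate the two quotient stacks via a common cover carrying commuting $\bmu_2$- and $\Gm$-actions---is exactly the right one, and is what the paper does. But your execution contains a genuine error that the paper's version avoids.

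You assert that ``$\Spec_S\cA$ is a $\Gm$-torsor over $\widetilde S$.'' This is false: if the $\Gm$-action on $\Spec_S\cA$ were free, the quotient $\widehat S$ would be a scheme, whereas the whole point is that $\widehat S$ has $\bmu_2$-stabilizers over $B(\bm)$. Relatedly, your claimed decomposition $\cA\cong\cB\otimes_{\cO_S}\cO_S[t^{\pm1}]$ with $t$ of degree~$2$ does not match degrees: the odd graded pieces of $\cA$ are $\cL^i\otimes\cR\cong\cM^{2i}\otimes\cR$, while the odd pieces of $\cB\otimes\cO_S[t^{\pm1}]$ are $\cM^{-1}\otimes\cR$ tensored with $\cL^i$, i.e.\ $\cM^{2i-1}\otimes\cR$. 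The missing factor of $\cM$ is exactly what prevents $\cA$ from being a torsor over~$\widetilde S$.

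The fix, which is what the paper does, is to enlarge the algebra: set
\[
\cO_S[\cM^{\pm1},\cR] \;:=\; \cA \oplus (\cM\otimes\cA),
\]
graded by $(\Z/2)\oplus\Z$ with $\deg(\cM)=(1,1)$. A direct computation (the one you attempted, corrected by the extra $\cM$) shows that its $i$-th $\Z$-graded piece is $\cM^i\otimes\cB$, so the $\Gm$-action is free with quotient $\widetilde S$; and by construction the $\bmu_2$-action is free with quotient $\Spec_S\cA$. Now both torsor quotients are honest schemes, and passing to the residual quotient on each side gives $\widetilde S/\bmu_2\cong\widehat S$ directly---no \'etale-local check needed.

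Your proposed \'etale-local argument also has a gap: reflexive rank-$1$ sheaves on a normal scheme are not \'etale-locally free in general, so reducing to the case ``$\cR$ free'' does not cover $\Sing(\cR)$, which is precisely where the stacky content lives. The global algebra argument above handles all points uniformly.
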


\begin{proof}
Consider the sheaf of {commutative} algebras 
\begin{equation}
\label{eq:cos-cm-cr}
\cO_S[\cM^{\pm1},\cR] := 
\cO_S[\cL^{\pm1},\cR] \oplus (\cM \otimes \cO_S[\cL^{\pm1},\cR])
\end{equation} 
with multiplication induced by the multiplication in the algebra $\cO_S[\cL^{\pm1},\cR]$ 
and the isomorphism $\cM \otimes \cM \isomto \cL  \hookrightarrow \cO_S[\cL^{\pm 1},\cR]$.\
This algebra carries a natural $((\Z/2) \oplus \Z)$-grading induced by the $\Z$-grading of $\cO_S[\cL^{\pm1},\cR]$ and
\begin{equation*}
\deg(\cM) = (1,1).
\end{equation*}
This grading corresponds to a $(\bmu_2 \times \Gm)$-action on $\Spec_S(\cO_S[\cM^{\pm1},\cR])$.

By the definition~\eqref{eq:cos-cm-cr} of the algebra $\cO_S[\cM^{\pm1},\cR]$,
the $\bmu_2$-action on $\Spec_S(\cO_S[\cM^{\pm1},\cR])$ is free   and the invariant part is equal to $\cO_S[\cL^{\pm1},\cR]$.\
Therefore,
we get an \'etale double covering
\begin{equation*}
\Spec_S(\cO_S[\cM^{\pm1},\cR]) \lra \Spec_S(\cO_S[\cL^{\pm1},\cR]).
\end{equation*}
{On the other hand}, forgetting the $\Z/2$-grading and keeping the $\Z$-grading, we see that 
the $i$-th component of the algebra is isomorphic to $\cM^i \otimes (\cO_S \oplus (\cM^{-1} \otimes \cR))$.\
Therefore, the corresponding $\Gm$-action on $\Spec_S(\cO_S[\cM^{\pm1},\cR])$ is also free
 and the invariant part is equal to $\cO_S \oplus (\cM^{-1} \otimes \cR)$.\ Therefore,
we get a $\Gm$-torsor
\begin{equation*}
\Spec_S(\cO_S[\cM^{\pm1},\cR]) \lra \Spec_S(\cO \oplus (\cM^{-1} \otimes \cR)) = \widetilde{S}.
\end{equation*}
 Combining these maps, we obtain a diagram
\begin{equation*}
\xymatrix{
\Spec_S(\cO_S[\cM^{\pm1},\cR]) \ar[r]_-{\bmu_2} \ar[d]^{\Gm} &
\Spec_S(\cO_S[\cL^{\pm1},\cR]) 
\\
\widetilde{S} ,
}
\end{equation*}
where the horizontal arrow is  {a $\bmu_2$-torsor} and the vertical arrow is a $\Gm$-torsor.\
It induces  {a~$\bmu_2$-torsor} 
\begin{equation*}
\widetilde{S} \lra \Spec_S(\cO_S[\cL^{\pm1},\cR]) / \Gm = \widehat{S}.
\end{equation*}
It follows that $\widehat{S} \cong \widetilde{S}/\bmu_2$ and since $\deg(\cM^{-1} \otimes \cR) = (1,0) \in (\Z/2) \oplus \Z$,
the action of~$\bmu_2$ on~$\widetilde{S}$ is induced by the involution of the double covering $\widetilde{S} \to S$.
\end{proof}

\begin{rema}
\label{remark:usual-root-stack}
In the case where $\cR = \cO_S$ and the morphism $\bm \colon \cR \otimes \cR \to \cL$ is given by a global section $s$ of  {the} line bundle $\cL$,
the stack $\widehat{S}$ coincides with the usual root stack $\sqrt{(\cL,s)/S}$ defined in~\cite[Section~B.2]{agv}: this follows from Proposition~\ref{proposition:ts-hs} applied (\'etale locally) 
to the double covering $\widetilde{S}$ of $S$ branched over the zero locus of $s$.
\end{rema}

 We now discuss some properties of the root stack $\widehat{S}$.

\begin{coro}
\label{corollary:hs-s-proper}
The natural morphism $\hat\rho\colon \widehat{S} \to S$ is proper.
\end{coro}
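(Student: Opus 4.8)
The plan is to use the local description of $\widehat{S}$ from Proposition~\ref{proposition:ts-hs} to present $\hat\rho$ as a $\bmu_2$-quotient of a finite morphism, and then to descend properness.

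Since properness of a morphism of algebraic stacks is local on the target for the smooth topology (\cite[Lemma~06TZ]{SP}), I may pass to a Zariski open cover of $S$ over which the line bundle $\cL=(\cR\otimes\cR)^{\vee\vee}$ is trivial; in particular I may assume $\cL\cong\cM^{\otimes2}$ for some line bundle $\cM$ on $S$. By Proposition~\ref{proposition:ts-hs} there is then an isomorphism $\widehat{S}\cong\widetilde{S}/\bmu_2$ under which $\hat\rho\circ q=\tilde\rho$, where $q\colon\widetilde{S}\to\widehat{S}$ is the quotient morphism and $\tilde\rho\colon\widetilde{S}\to S$ is the degree-$2$ covering of~\eqref{eq:wts}; recall from the discussion preceding Proposition~\ref{proposition:ts-hs} that $\tilde\rho$ is finite, hence proper.

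It then remains to descend properness of $\tilde\rho$ to $\hat\rho$ along $q$. The map $q$ is a torsor under the finite flat group scheme $\bmu_2$, hence surjective, flat, and locally of finite presentation. Since $\tilde\rho=\hat\rho\circ q$ is universally closed and $q$ is surjective, $\hat\rho$ is universally closed: in any base change $T\times_S\widehat{S}$, a closed subset is the image, under the surjection $T\times_S\widetilde{S}\to T\times_S\widehat{S}$, of its (closed) preimage, whose image in $T$ is closed because $\tilde\rho$ is universally closed. Likewise, $\hat\rho$ is quasi-compact and locally of finite type by fppf descent of these properties along $q$, since $\tilde\rho$ has them. Finally, $\hat\rho$ is separated: $\widetilde{S}$ is separated over $S$ (being finite over it), so the diagonal of $\widehat{S}=\widetilde{S}/\bmu_2$ over $S$ is proper — after the surjective flat base change $\widetilde{S}\times_S\widetilde{S}\to\widehat{S}\times_S\widehat{S}$ it becomes the morphism $\widetilde{S}\times_S\bmu_2\to\widetilde{S}\times_S\widetilde{S}$, $(x,g)\mapsto(x,gx)$, which is finite over the first factor $\widetilde{S}$ and therefore proper, so properness of the diagonal follows by descent. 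Combining these, $\hat\rho$ is proper.

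I expect the only point requiring genuine care to be the separatedness of $[\widetilde{S}/\bmu_2]$ over $S$; the remaining assertions are a routine combination of descent of properties of morphisms along faithfully flat morphisms of finite presentation with the fact that finite morphisms are proper.
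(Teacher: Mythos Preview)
Your proof is correct and follows essentially the same strategy as the paper: reduce locally to the situation of Proposition~\ref{proposition:ts-hs}, use that $\widetilde{S}\to S$ is finite (hence proper), and deduce properness of the $\bmu_2$-quotient. The paper simply packages your final descent step into a single citation (\cite[Lemma~0CQK]{SP}, which says directly that the quotient stack of a proper algebraic space by a finite locally free group scheme is proper), whereas you verify universal closedness, finite type, and separatedness by hand; both arguments are valid.
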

\begin{proof}
The question is local over $S$, so we may assume we are in the setup of Proposition~\ref{proposition:ts-hs} and $\widehat{S} = \widetilde{S}/\bmu_2$.\
Then $\widetilde{S}$ is proper over $S$ by~\eqref{eq:wts}, hence so is $\widehat{S}$ 
 {by~\cite[\href{https://stacks.math.columbia.edu/tag/0CQK}{Lemma~0CQK}]{SP}.}
\end{proof}

Consider the subscheme $B(\bm) \subset S$ defined by the ideal image of the map
\begin{equation}
\label{eq:branch}
\cL^{-1} \otimes \cR \otimes \cR \xrightarrow{\ \bm\ } \cL^{-1} \otimes \cL = \cO_S.
\end{equation}
 Proposition~\ref{proposition:ts-hs} implies the main properties of the root stack $\widehat{S}$.

\begin{coro}
The natural morphism $\hat\rho\colon \widehat{S} \to S$ is an isomorphism over the complement of~$B(\bm) \subset S$ 
 and is a  {nilpotent thickening of a} $\bmu_2$-gerbe over~$B(\bm)$.
\end{coro}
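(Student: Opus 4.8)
The plan is to reduce everything to the étale-local picture of Proposition~\ref{proposition:ts-hs} and then invoke the already-established properties of the double covering $\widetilde S\to S$ from~\cite[Proposition~2.5]{DK:coverings}. Since the claim that $\hat\rho$ is an isomorphism over $S\setminus B(\bm)$ and a nilpotent thickening of a $\bmu_2$-gerbe over $B(\bm)$ is local on $S$, I may cover $S$ by open subsets over which the line bundle $\cL$ admits a square root $\cM$; by Proposition~\ref{proposition:ts-hs}, on each such open we have a canonical identification $\widehat S\cong\widetilde S/\bmu_2$, where $\widetilde S=\Spec_S(\cO_S\oplus(\cM^{-1}\otimes\cR))$ is the double covering associated with the morphism $\cM^{-2}\otimes(\cR\otimes\cR)\xrightarrow{\bm}\cM^{-2}\otimes\cL\cong\cO_S$. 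The first step is therefore to match the branch locus $B(\bm)$, defined globally via~\eqref{eq:branch}, with the branch locus of this local double covering: both are cut out by the ideal image of $\bm$ after the twist by $\cL^{-1}$, and twisting the target further by the invertible sheaf $\cM^{-2}$ does not change the ideal image, so the two subschemes of $S$ coincide over the chosen open set.

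Next I would recall the structure of $\widetilde S\to S$ from~\cite[Proposition~2.5]{DK:coverings}: it is finite of degree $2$ and étale over the locally free locus of $\cR$, and—more to the point—over the complement of the branch locus $B(\bm)$ it is an étale double covering, so the involution $\tau$ acts freely there and the quotient stack $\widetilde S/\bmu_2$ is simply $S$ again, i.e. $\hat\rho$ is an isomorphism over $S\setminus B(\bm)$. Over $B(\bm)$ itself, the involution $\tau$ fixes the reduced preimage $\widetilde S_{\mathrm{red}}\times_S B(\bm)_{\mathrm{red}}$, and the quotient of a scheme by a $\bmu_2$-action fixing a closed subscheme is a $\bmu_2$-gerbe over that fixed locus, possibly thickened by the nilpotents coming from the non-reducedness of $\widetilde S$ along the ramification (as in the analogous discussion of $\hat\rho\colon\widehat S\to S$ in the paragraph following~\eqref{eq:wts}). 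So the second step is to spell out that $\widetilde S/\bmu_2\to S$ restricted to $B(\bm)$ is a nilpotent thickening of the $\bmu_2$-gerbe $B\bmu_2\times B(\bm)_{\mathrm{red}}$; this is the content already summarized in the text right after the definition of $\widehat S$, and it transports verbatim through the isomorphism $\widehat S\cong\widetilde S/\bmu_2$.

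The last step is to check that these local descriptions glue: the identification $\widehat S\cong\widetilde S/\bmu_2$ depends on the choice of $\cM$, but the stack $\widehat S=\bigl(\Spec_S(\cO_S[\cL^{\pm1},\cR])\bigr)/\Gm$ and the morphism $\hat\rho$ are defined globally without reference to $\cM$, so the property ``$\hat\rho$ is an isomorphism over $S\setminus B(\bm)$'' and the property ``$\hat\rho$ over $B(\bm)$ is a nilpotent thickening of a $\bmu_2$-gerbe'' are statements about the global $\hat\rho$ that hold on each member of an open cover, hence hold globally. I expect the only mildly delicate point to be the precise meaning of ``nilpotent thickening of a $\bmu_2$-gerbe'': one should observe that the structure sheaf of $\widehat S\times_S B(\bm)$ has a canonical filtration whose associated graded recovers (powers of) the conormal data of the ramification divisor of $\widetilde S$, and whose quotient by nilpotents is the structure sheaf of a $\bmu_2$-gerbe over $B(\bm)_{\mathrm{red}}$; everything else is a routine transcription of~\cite[Proposition~2.5]{DK:coverings} through Proposition~\ref{proposition:ts-hs}.
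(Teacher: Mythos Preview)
Your approach is essentially the paper's: reduce locally via Proposition~\ref{proposition:ts-hs} to the double cover $\widetilde S$ and read off both assertions from its structure. The first half (isomorphism over $S\setminus B(\bm)$) is fine.

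Your treatment over $B(\bm)$, however, is imprecise in a way that obscures the actual mechanism. You pass to reductions---speaking of $\widetilde S_{\mathrm{red}}\times_S B(\bm)_{\mathrm{red}}$ and of a gerbe over $B(\bm)_{\mathrm{red}}$---but the statement asserts a $\bmu_2$-gerbe over $B(\bm)$ with its given scheme structure, and passing to $B(\bm)_{\mathrm{red}}$ is neither needed nor correct here. The clean argument is purely algebraic: by definition of $B(\bm)$ as the zero locus of the ideal image of~\eqref{eq:branch}, the multiplication $(\cM^{-1}\otimes\cR)\otimes(\cM^{-1}\otimes\cR)\to\cO_S$ restricts to zero over $Z:=B(\bm)$. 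Hence $\widetilde S\vert_Z=\Spec_Z\bigl(\cO_Z\oplus(\cM^{-1}\otimes\cR)\vert_Z\bigr)$ with the second summand squaring to zero, so the quotient map $\cO_Z\oplus(\cM^{-1}\otimes\cR)\vert_Z\twoheadrightarrow\cO_Z$ gives a closed embedding $Z\hookrightarrow\widetilde S\vert_Z$ whose complement is cut out by nilpotents. The $\bmu_2$-action is trivial on this copy of $Z$ (it acts by $-1$ on the nilpotent summand), so $Z/\bmu_2\hookrightarrow\widetilde S\vert_Z/\bmu_2$ is a $\bmu_2$-gerbe over $Z$ and $\hat\rho^{-1}(Z)$ is its nilpotent thickening. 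This is what the paper does, and it avoids any appeal to fixed-point heuristics or to~\cite[Proposition~2.5]{DK:coverings} beyond what is already built into the construction.
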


\begin{proof}
Set $Z := B(\bm)$.\
Over $S \setminus Z$, we have an isomorphism $\cR \otimes \cR \cong \cL$, hence $\cR$ is invertible.\ 
The double covering $\widetilde{S} \to S$ (which exists locally over $S$) is therefore  \'etale over~$S \setminus Z$, 
hence its quotient stack $\widehat{S} \to S$ is an isomorphism over $S \setminus Z$.

On the other hand, over $Z$, the multiplication in the algebra defining $\widetilde{S}$ is zero, 
hence there is a natural embedding $Z \to \widetilde{S}$ over $Z \subset S$, 
 {and the schematic preimage of $Z \subset S$ in~$\widetilde{S}$ is a nilpotent thickening of $Z \subset \widetilde{S}$}.\
The $\bmu_2$-action on $Z \subset \widetilde{S}$ is trivial, hence gives a $\bmu_2$-gerbe~$Z/\bmu_2 \hookrightarrow \widetilde{S}/\bmu_2$ over $Z$
 {and the preimage of $Z$ in $\widehat{S}$ is its nilpotent thickening}.
\end{proof}

The following property of the stack $\widehat{S}$ is quite useful.\
 It is similar to the universal property of canonical smooth Deligne--Mumford stacks proved in~\cite[Theorem~4.6]{FMN} (see also~\cite[Lemma~2.4.1]{AV}).

\begin{prop}
\label{prop:factorization-through-hats}
Let $\widehat{S}  \xrightarrow{ \ \hat\rho\  } S$ be the root stack defined by~\eqref{eq:generalized-root-stack}
and let $B(\bm) \subset S$ be the subscheme defined by~\eqref{eq:branch}.\
Let $T$ be a smooth scheme and let $f \colon T \to S$ be a morphism 
such that $\codim_T(f^{-1}(B(\bm))) \ge 2$.\
There is a unique factorization 
\begin{equation*}
f \colon T\xrightarrow{ \  \ }  \widehat{S}  \xrightarrow{ \ \hat\rho\  } S.
\end{equation*}
\end{prop}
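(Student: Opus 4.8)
The plan is to prove the factorization by working locally over $S$, reducing to the double covering picture of Proposition~\ref{proposition:ts-hs}, and then extending across a codimension-$\ge 2$ locus using the smoothness (hence normality) of $T$. First I would note that the statement is local on $S$: since $\hat\rho \colon \widehat S \to S$ is an isomorphism over $S \setminus B(\bm)$, on that open set the factorization exists and is unique, so the only issue is to extend the resulting morphism $T \setminus f^{-1}(B(\bm)) \to \widehat S$ across $f^{-1}(B(\bm))$, and uniqueness of any such extension will follow from separatedness of $\widehat S$ over $S$ (Corollary~\ref{corollary:hs-s-proper}) together with the fact that $T$ is reduced and $T \setminus f^{-1}(B(\bm))$ is dense (its complement has codimension $\ge 2 \ge 1$).

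The key step is existence of the extension. Here I would pass to an \'etale (or Zariski) cover of $S$ on which $\cL$ admits a square root $\cM$, so that by Proposition~\ref{proposition:ts-hs} we may write $\widehat S = \widetilde S/\bmu_2$, where $\widetilde S = \Spec_S(\cO_S \oplus (\cM^{-1}\otimes\cR))$ is the double covering associated with the morphism $(\cM^{-1}\otimes\cR)^{\otimes 2} \xrightarrow{\bm} \cO_S$; its branch locus is exactly $B(\bm)$. Giving a morphism $T \to \widehat S = \widetilde S/\bmu_2$ lifting $f$ amounts to giving a $\bmu_2$-torsor on $T$ together with an equivariant map to $\widetilde S$, equivalently a line bundle $\cN$ on $T$ with $\cN^{\otimes 2} \cong \cO_T$ and an isomorphism identifying $f^*(\cM^{-1}\otimes\cR)$-data appropriately; concretely, over $T \setminus f^{-1}(B(\bm))$ the pullback $f^*\cR$ is invertible and the map $\bm$ trivializes $f^*(\cR^{\otimes 2}) \otimes f^*\cL^{-1}$, producing the required square-trivial line bundle and the lift. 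Now I would invoke normality of $T$: a line bundle on the complement of a codimension-$\ge 2$ closed subset of a smooth (hence normal) scheme extends uniquely to a line bundle on $T$ (sections of $\cO_T$ and of line bundles satisfy Hartogs/$S_2$-extension), and the isomorphisms extend as well because $\Hom$ of vector bundles is computed by sections of a vector bundle, again subject to Hartogs. So the lift $T \setminus f^{-1}(B(\bm)) \to \widetilde S$ extends to $T \to \widetilde S$, whence to $T \to \widehat S$.

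I would then check that the extension is independent of the choice of square root $\cM$ and of the cover, so that the locally constructed extensions glue: different choices of $\cM$ differ by a square-trivial line bundle, and the intrinsic description of $\widehat S$ via $\cO_S[\cL^{\pm 1},\cR]$ from~\eqref{eq:generalized-root-stack} shows the glued object is canonical; alternatively one observes that the extension, once it exists, is unique (by the density/separatedness argument above), so any two local extensions automatically agree on overlaps. Finally, uniqueness of the factorization globally: if $g_1, g_2 \colon T \to \widehat S$ both lift $f$, they agree over the dense open $T \setminus f^{-1}(B(\bm))$ where $\widehat S \cong S$, and since $\widehat S \to S$ is separated and $T$ is reduced, the locus of agreement is closed and dense, hence all of $T$.

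The main obstacle I anticipate is the extension-across-codimension-$2$ step done \emph{on the level of the stack}: one must be careful that extending the associated line bundle data and gluing local choices really produces a morphism to the \emph{stack} $\widehat S$ rather than merely to its coarse space or to $S$. The clean way around this is to phrase everything in terms of the $\Gm$-torsor presentation $\widehat S = \bigl(\Spec_S \cO_S[\cL^{\pm1},\cR]\bigr)/\Gm$: a morphism $T \to \widehat S$ is a $\Gm$-torsor $P \to T$ plus a $\Gm$-equivariant $S$-morphism $P \to \Spec_S \cO_S[\cL^{\pm1},\cR]$, i.e.\ a $\Z$-graded $\cO_T$-algebra map $f^*\cO_S[\cL^{\pm1},\cR] \to \bigoplus_i \cN^{\otimes i}$ for some line bundle $\cN$ on $T$; over $T \setminus f^{-1}(B(\bm))$ such data exists canonically, and all the sheaves involved are reflexive (indeed locally free away from codimension $2$), so Hartogs extension applies uniformly and the extension is forced. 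This reduces the whole proof to the normality of $T$ together with the already-established structure of $\widehat S$, and no further input is needed.
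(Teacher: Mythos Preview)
Your ``clean way'' at the end is essentially the paper's approach, and it is correct. The paper makes it more direct: rather than constructing the line bundle $\cN$ on the open set $T \setminus f^{-1}(B(\bm))$ and then extending it by Hartogs, the paper defines $\cM := (f^*\cR)^{\vee\vee}$ globally on $T$ from the start (a rank-1 reflexive sheaf on a smooth scheme is automatically a line bundle), observes that $f^*\bm$ factors through a map $\cM \otimes \cM \to f^*\cL$ which is an isomorphism in codimension~1 and hence everywhere, and then writes down the graded $\cO_T$-algebra homomorphism $\cO_T[f^*\cL^{\pm1}, f^*\cR] \to \cO_T[\cM^{\pm1}]$ directly. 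This yields the $\Gm$-equivariant map and the factorization $T \to \widehat S$ in one stroke, with no separate extension step and no gluing over an \'etale cover of~$S$.

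Your first approach via the local double cover $\widetilde S$ is sound in spirit but contains an imprecise statement: there is in general no morphism $T \setminus f^{-1}(B(\bm)) \to \widetilde S$ to extend, since $\widetilde S \to S$ is a nontrivial \'etale double cover over that locus; what exists is a $\bmu_2$-torsor on $T \setminus f^{-1}(B(\bm))$ together with an equivariant map to $\widetilde S$, and it is this pair that must be extended. Once phrased correctly this works, but it is strictly more bookkeeping than the $\Gm$-torsor argument, which is why the paper (and you, in your final paragraph) prefer the latter.
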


\begin{proof}
Consider the scheme 
\begin{equation*}
T \times_S \Spec_S(\cO_S[\cL^{\pm 1},\cR]) \cong \Spec_T(\cO_T[f^*\cL^{\pm1},f^*\cR]).
\end{equation*}
The sheaf $\cM := (f^*\cR)^{\vee\vee}$
is a rank-1 reflexive sheaf on a smooth scheme $T$, hence is a line bundle.\
Therefore, there is a natural epimorphism
\begin{equation*}
f^*\cR \twoheadrightarrow \cM \otimes \cI,
\end{equation*}
where $\cI$ is an ideal sheaf such that the support of $\cO/\cI$ has codimension at least $2$.\ 
%and $\codim(\cI) \ge 2$.
Furthermore, the morphism $\bm\colon \cR \otimes \cR \to \cL$ induces the morphism
$f^*\bm \colon f^*\cR \otimes f^*\cR \to f^*\cL$ which factors through  {the tensor product of the reflexive hulls}
\begin{equation*}
f^*\cR \otimes f^*\cR \to \cM \otimes \cM \to f^*\cL,
\end{equation*}
and is an isomorphism away from $f^{-1}(B(\bm))$ and   the support of $\cO/\cI$, that is, in codimension~1.\
Since $T$ is smooth, it follows that 
\begin{equation*}
f^*\cL \cong \cM^{\otimes2}.
\end{equation*}
Therefore, we have a natural morphism of graded $\cO_T$-algebras
\begin{equation*}
 \cO_T[f^*\cL^{\pm1},f^*\cR] \twoheadrightarrow \cO_T[\cM^{\pm 2},\cM \otimes \cI] \hookrightarrow \cO_T[\cM^{\pm 2},\cM] = \cO_T[\cM^{\pm1}].
\end{equation*}
 It induces a morphism
\begin{multline*}
\qquad\Spec_T(\cO_T[\cM^{\pm1}]) \to \Spec_T(\cO_T[f^*\cL^{\pm1},f^*\cR]) \\ = 
T \times_S \Spec_S(\cO_S[\cL^{\pm 1},\cR]) \to
\Spec_S(\cO_S[\cL^{\pm 1},\cR])\qquad
\end{multline*}
compatible with the $\Gm$-actions corresponding to the gradings of the algebras.\
Since the source is a $\Gm$-torsor over $T$, passing to the quotients by $\Gm$, we obtain a morphism $T \to \widehat{S}$.\
By construction, the composition $T \to \widehat{S} \to S$ is equal to $f$ and the constructed morphism is unique with this property.
\end{proof}

\end{document}